\newcommand{\clr}{rgb:black,1;blue,4;red,1}
\newcommand{\wdot}{ node[circle, draw, fill=white, thick, inner sep=0pt, minimum width=4pt]{}}
\newcommand{\bdot}{ node[circle, draw, fill=\clr, thick, inner sep=0pt, minimum width=4pt]{}}
\newcommand{\ob}[1]{\mathsf{#1}}
\newcommand{\down}{\downarrow}
\newcommand{\AOB}{\mathcal{AOB}}
\newcommand{\OB}{\mathcal{OB}}
\newcommand{\AK}{\mathcal{AK}}
\newcommand{\CK}{\mathcal{CK}}
\newcommand{\CB}{\mathcal{CB}}
\newcommand{\AB}{\mathcal{AB}}
\newcommand{\lcap}{
\begin{tikzpicture}[baseline = 3pt, scale=0.5, color=\clr]
        \draw[-,thick] (1,0) to[out=up, in=right] (0.53,0.5) to[out=left, in=right] (0.47,0.5);
        \draw[-,thick] (0.49,0.5) to[out=left,in=up] (0,0);
\end{tikzpicture}
}
\newcommand{\lcup}{
\begin{tikzpicture}[baseline = 6pt, scale=0.5, color=\clr]
        \draw[-,thick] (1,1) to[out=down, in=right] (0.53,0.5) to[out=left, in=right] (0.47,0.5);
        \draw[-,thick] (0.49,0.5) to[out=left,in=down] (0,1);
\end{tikzpicture}
}
 \providecommand{\og}{``}
\providecommand{\fg}{''} \providecommand{\smfandname}{and}
\def\crulefill{\leavevmode\leaders\hrule height 1pt\hfill\kern 0pt}
\long\def\QUERY#1{%
\leavevmode\newline%
\noindent$\star\star\star$\thinspace\textsf{Comment/Query}\crulefill\newline%
   \space #1\newline\hbox to 120mm{\crulefill}$\star\star\star$\newline}
\newtheorem{Theorem}{Theorem}[section]%[chapter] theorem number will %continue
\newtheorem{Lemma}[Theorem]{Lemma}
\newtheorem{Cor}[Theorem]{Corollary}
\newtheorem{Prop}[Theorem]{Proposition}
\theoremstyle{definition}
\newtheorem{Defn}[Theorem]{Definition}
\newtheorem{rem}[Theorem]{Remark}
\newtheorem{Assumption}[Theorem]{Assumption}
\numberwithin{equation}{section}
\theoremstyle{definition}
\definecolor{white}{HTML}{FFFFFF}
\definecolor{darkblue}{HTML}{111199}
\definecolor{darkgreen}{HTML}{336633}
\definecolor{darkred}{HTML}{993333}
\definecolor{darkpurple}{HTML}{995599}
\def\enumerate{\begingroup\ifnum\@enumdepth>3\@toodeep\else
      \advance\@enumdepth\@ne
      \edef\@enumctr{enum\romannumeral\the\@enumdepth}%
      \topsep\z@\parskip\z@
      \list{\csname label\@enumctr\endcsname}
        {\@nmbrlisttrue\let\@listctr\@enumctr
         \parsep\z@\itemsep\z@\topsep\z@
         \setcounter{\@enumctr}{0}
         \def\makelabel##1{\hss\llap{\rm ##1}}
       }\fi}
\let\bar=\overline
\let\epsilon=\varepsilon
\def\({\big(}
\def\){\big)}
\def\0{\underline{0}}
 \DeclareMathOperator{\Ker}{Ker}
\DeclareMathOperator{\End}{End}
\def\s{\mathfrak s}
\def\t{\mathfrak t}
\def\u{\mathfrak u}
\def\v{\mathfrak v}
\def\Hom{\text{Hom}}
  \gdef\set#1{\mathinner{\lbrace\,{\mathcode`\|"8000%
                                   \let|\midvert #1}\,\rbrace}}
  \gdef\seT#1{\mathinner{\Big\lbrace\,{\mathcode`\|"8000%
                                   \let|\midverT #1}\,\Big\rbrace}}
\def\midvert{\egroup\mid\bgroup}
\def\midverT{\egroup\,\Big|\,\bgroup}
\def\Set[#1]#2|#3|{\Big\{\ #2\ \Big| \
           \vcenter{\hsize #1mm\centering #3}\Big\}}
\def\Hom{{\rm Hom}}
\def\Set{{\rm Set}}
\def\s{\mathfrak s}%
\def\t{\mathfrak t}%
\def\v{\mathfrak v}%
\def\Hom{\text{Hom}}%
\def\textsf#1{{\textit{#1}}}%
\begin{document}
\title{Representations of  weakly triangular categories}
\author{Mengmeng Gao, Hebing Rui, Linliang Song}
\address{M.G.  School of Mathematical Science, Tongji University,  Shanghai, 200092, China}\email{1810414@tongji.edu.cn}
\address{H.R.  School of Mathematical Science, Tongji University,  Shanghai, 200092, China}\email{hbrui@tongji.edu.cn}
\address{L.S.  School of Mathematical Science, Tongji University,  Shanghai, 200092, China}\email{llsong@tongji.edu.cn}

\thanks{Both M. Gao and H. Rui are supported  partially by NSFC (grant No.  11571108).  L. Song is supported  partially by NSFC (grant No. 12071346). }
%\date{\today}
\sloppy \maketitle

\begin{abstract} A new class of locally unital and locally finite dimensional algebras $A$ over an arbitrary  algebraically closed field is discovered. Each of them  admits an  upper finite weakly triangular  decomposition, a generalization of  an upper finite triangular decomposition. Any  locally unital algebra which admits an upper finite Cartan decomposition is Morita equivalent to some special  locally unital algebra $A$  which admits an  upper finite weakly triangular  decomposition.
It is established that the category $A$-lfdmod of locally finite dimensional left $A$-modules is  an upper finite fully stratified category in the sense of Brundan-Stroppel.  Moreover, $A$ is semisimple if and only if its  centralizer subalgebras associated to certain idempotent elements are semisimple. Furthermore, certain  endofunctors  are defined and   give  categorical actions of some   Lie algebras  on the subcategory of $A$-lfdmod
 consisting  of  all objects  which have a finite standard filtration. In the case $A$ is the locally unital algebra associated to one of  cyclotomic oriented Brauer categories,
 cyclotomic Brauer categories  and   cyclotomic Kauffman categories,  $A$ admits an upper finite  weakly triangular decomposition.  This leads to categorifications of  representations of the classical limits of coideal algebras, which  come from all integrable highest weight modules of $\mathfrak {sl}_\infty$ or $\hat {\mathfrak{sl}}_e$. Finally, we study representations of $A$ associated to  either
  cyclotomic Brauer categories  or  cyclotomic Kauffman categories in details, including    explicit criteria on the semisimplicity of $A$ over an arbitrary field, and on  $A$-lfdmod being upper finite highest weight category in the sense of Brundan-Stroppel, and  on Morita equivalence between $A$ and  direct sum of infinitely many (degenerate) cyclotomic Hecke algebras.
 \end{abstract}

\tableofcontents

\section{Introduction}

Throughout the paper,   $\Bbbk$ is  an arbitrary algebraically closed field. All algebras, categories and functors  will be assumed to be  $\Bbbk$-linear.

Motivated by  attempting  to categorify representations of the classical limits of coideal algebras, which  come from all integrable highest weight modules of $\mathfrak {sl}_\infty$ or $\hat {\mathfrak{sl}}_e$  by using representations of cyclotomic Brauer categories $\CB(\mathbf u)$ and cyclotomic Kauffman categories $\CK(\mathbf u)$~\cite{RS3, GRS}, we investigate a locally unital algebra $A$  which admits an upper finite weakly triangular  decomposition in this paper. This work is also influenced by Brundan-Stroppel's work in \cite{BS}.  A notion of  an  upper finite fully stratified category was introduced in ~\cite{BS}  in the study of representations of locally unital  and locally finite dimensional algebras. If   a locally unital algebra is an upper finite based stratified algebra, then  the category  $A$-lfdmod  is  an  upper finite fully stratified category ~\cite{BS}. Conversely, an upper finite fully stratified category with tilting-rigid property can be realized as $A$-lfdmod for some upper finite based stratified algebra $A$.
Introduced by Brundan and Stroppel, an upper finite Cartan decomposition of a locally unital algebra is a generalization of an upper finite triangular  decomposition \cite{SS, BS}. It is important that a locally unital algebra with an upper finite Cartan decomposition is an  upper finite based stratified algebra \cite{BS} although it is not easy to verify that an algebra has an upper finite Cartan decomposition in certain cases. The notion of  upper finite weakly triangular  decomposition given in this paper is another  generalization of an upper finite triangular  decomposition.
While it is not clear to us whether an algebra $A$ coming from   $\CB(\mathbf u)$, $\CK(\mathbf u)$ or
 cyclotomic oriented  Brauer categories $\OB(\mathbf u)$~\cite{BCNR} admits an upper finite Cartan decomposition, it is rather easy to verify that such $A$  does admit an upper finite weakly triangular  decomposition (see Propositions~\ref{COBWA},~\ref{CBWA},~\ref{CKWA}). Our first main result is that  $A$-lfdmod  is  an  upper finite fully stratified category if
  $A$ is
  a locally unital algebra  which admits an  upper finite weakly triangular  decomposition (see  Theorem~\ref{newstratification}).  The second main result is that a locally unital algebra $A$ which admits an upper finite weakly triangular decomposition is semisimple if and only if    its  centralizer subalgebras associated to certain idempotent elements are semisimple (see Theorem~\ref{ss}).
  As  applications, we study  the locally unital algebras  associated to $\CB(\mathbf u)$, $\CK(\mathbf u)$ or
  $\OB(\mathbf u)$. When $A$ comes from either $\CB(\mathbf u)$ or $\CK(\mathbf u)$, we classify  irreducible $A$-modules and show that $A$-lfdmod is an upper finite highest weight category in the sense of Brundan-Stroppel if all the (degenerate) cyclotomic Hecke algebras are semisimple.
  Furthermore,  an explicit  sufficient condition is given  for $A$  being  Morita equivalent to direct sum of infinitely many (degenerate) cyclotomic Hecke algebras. Finally we give an explicit criterion for $A$ being semisimple over  an arbitrary field.
  The last main result is on categorifications of representations of the classical limits of coideal algebras, which  come from all integrable highest weight modules of $\mathfrak {sl}_\infty$ or $\hat {\mathfrak{sl}}_e$  by using representations of $\CB(\mathbf u)$ and  $\CK(\mathbf u)$ (see Theorems~\ref{gsharpisobrauercyc}, \ref{gsharpisoKaucyc}).

Now, we discuss the contents of this paper in details.   For any small finite dimensional $\Bbbk$-linear category $\mathcal A$, one often replaces $\mathcal A$ with its  associated locally unital algebra \begin{equation}\label {Ba1} A=\bigoplus_{ a,  b\in\text{ob }\mathcal A}\text{Hom}_{\mathcal A}( a, b),\end{equation}
whose representations are equivalent to those for $\mathcal A$.
Brundan and Stroppel considered $A$ which admits an  upper finite Cartan decomposition\cite[Definition~5.23]{BS}. In this case, $A$ has  three subspaces $A^\flat$, $A^\circ$ and  $A^\sharp$ satisfying certain axioms. The key point is that   $A^\circ$ is a locally unital subalgebra of $A$
 and  $A^\flat$ (resp., $A^\sharp$) is projective right (resp., left) $A^\circ$-module.
Many $\Bbbk$-linear categories come into their theory.  Archetypal examples include the Brauer category~\cite{LZ, RS4},  the oriented Brauer category~\cite{BCNR},   the oriented skein category \cite{Br} and  those triangular categories in \cite{SS}.
All locally unital algebras associated to these categories admit upper finite triangular decompositions.

On the other hand, when  $q\in \Bbbk^\times $ is not a root of unity, the category of right  modules for  the Birman-Murakami-Wenzl algebra~\cite{BW} shares similar properties to those for the Brauer algebra~\cite{B} over $\mathbb C$.  See \cite{CVM, RuiS3} on  the classification of blocks for  these algebras.  Since the  Brauer category (resp., Kauffman category~\cite{VT}) is  the category  version of Brauer  (resp., Birman-Murakami-Wenzl) algebras, it is natural to expect that $A$-lfdmod  is also an upper finite fully stratified category where $A$ is associated to the Kauffman category. In fact, it is the case. However, it is not clear to us whether $A$ admits an upper finite Cartan decomposition although $A$  has  three  similar subspaces.
%The reason is that     $A^\circ$ is not  a subalgebra.

We introduce the notion of  an  upper finite weakly triangular decomposition for a locally unital algebra $A$ in Definition~\ref{WT}.
 In this case, we have three subspaces   $A^-, A^\circ$ and $A^+$ and we do not assume that $A^\circ $ is a subalgebra! Any locally unital algebra  which admits an upper finite triangular decomposition has an  upper finite weakly triangular decomposition. However, the converse is not true in general since $A^\circ $ may not be a subalgebra!

 Influenced by Brundan-Stroppel's work in  \cite{BS}, we consider   a class of quotient algebras   $A_{\preceq  \ob a}$  of $A$ such that each  $A_{\preceq \ob a}$ still admits an upper finite weakly triangular decomposition and  $A_{\preceq  \ob a}\bar 1_{\ob a}$ (resp., $\bar 1_\ob a A_{\preceq  \ob a}$) is a projective right (resp., left) $\bar A_{\ob a}$-module in general (see Definition~\ref{bara1} and Lemma~\ref{exactj}).   This leads us to construct  exact tensor functors and exact hom functors  from  $\bar A_{\ob a}$-fdmod to $A_{\preceq \ob a}$-lfdmod and hence to $A$-lfdmod, where $\bar A_{\ob a}$-fdmod is the category of all finite dimensional left $\bar A_{\ob a}$-modules. Such functors are  called
 standardization functors and costandardization  functors in the sense of \cite{LW}.  Consequently, we are able to prove that $A$-lfdmod  is an upper finite fully stratified category if $A$ admits an upper finite weakly triangular decomposition. This result  has its own interests and important  applications elsewhere.
  For example,  we show in \cite{GRS1} that  the representations of  $\OB(\mathbf u)$  with suitably chosen parameters can be used to give  tensor product categorifications (in the general sense of Losev-Webster \cite{LW})    of integrable  lowest weight and integrable  highest weight representations for the Lie algebra $\mathfrak {sl}_\infty$ or $\hat{\mathfrak {sl}}_p$ introduced by Webster in \cite{W}. When $\Bbbk=\mathbb C$, such a result was  expected by Brundan et. al in \cite[page~77]{BCNR}.

After proving  that  $A$-lfdmod  is an upper finite fully stratified category, we  consider its subcategory  $A\text{-mod}^\Delta$  consisting  of all  objects with  a finite standard filtration.
We focus on the locally unital algebras $A$ which come from   certain quotients of
  two kinds of strict monoidal categories, say  $\mathcal C_1$ and $ \mathcal C_2 $ in  section~5.
  Under some mild assumptions, we are able to prove that
 there is a short exact sequence of exact functors  from the category $\bar A^\circ$-fdmod to the category $A$-lfdmod in \eqref{keyses}, where $\bar A^\circ$ is the direct sum of all possible $\bar A_{\ob a}$'s. Consequently, there is a categorical $\mathfrak g^\sharp$-action on  $A\text{-mod}^\Delta$ if
 we assume that there is a   categorical $\mathfrak g$-action on the category $\bar A^\circ$-pmod of finite generated projective modules for some  Kac-Moody algebra $\mathfrak g$, where
 $\mathfrak g^\sharp$ is a suitable Lie subalgebra of $\mathfrak g$. In other words,  $A\text{-mod}^\Delta$ can be used to categorify certain representations of $\mathfrak g^\sharp$.
In particular,  when $\mathcal C_1$ (resp.,  $\mathcal C_2$ ) is the   affine Brauer category (resp.,  affine  Kauffman category),  $A$ comes from   $\CB(\mathbf u)$ (resp., $\CK(\mathbf u)$).
  In this case,
  $\mathfrak g$ is the direct sum of certain $\mathfrak{sl}_\infty$ or $\widehat{\mathfrak{sl}}_e$ for some positive integer $e$.   Such a result   can be regarded as  categorifications related to  cyclotomic Birman-Murakami-Wenzl  algebras~\cite{Olden} and cyclotomic Nazarov-Wenzl algebras~\cite{AMR}.
 When  $\mathfrak g^\sharp= \mathfrak g$, the category
  $A$-lfdmod  is  Morita equivalent   to the direct sum of  categories of left  modules for infinitely many  (degenerate) cyclotomic Hecke algebras. However, the most interesting case is $\mathfrak g^\sharp \neq \mathfrak{g}$. In this case,     ($\mathfrak{g}, \mathfrak g^\sharp$)    is (the infinite rank limit of) the quantum symmetric pair (in the sense of Letzter\cite{Le} in the finite type or Kolb \cite{Ko} in the Kac-Moody setting) at the limit $q$ to 1.
 Using the $\mathfrak g^\sharp $-action on  $A\text{-mod}^\Delta$ as above, it is possible to   connect deeper representations (decomposition numbers, etc.) of cyclotomic Birman-Murakami-Wenzl  algebras  and cyclotomic Nazarov-Wenzl algebras over the complex field $\mathbb C$ to the Kazhdan-Lusztig theory ($\imath$-canonical basis, etc.) of certain $ \mathfrak g^\sharp $-modules studied in \cite{B1, BWa,BWan,BWang,BWW}. This is a project in progress.

    Thanks to the results on the semisimplicity of  cyclotomic Nazarov-Wenzl algebras and cyclotomic Birman-Murakami-Wenzl algebras in \cite{RuiS, RuiS1, RuiS2}, we are able to give an explicit criterion  on the semisimplicity of $A$ associated to $\CB(\mathbf u)$ and $\CK(\mathbf u)$. In the non-semisimple case, blocks of $A$ are determined. In particular, when the (degenerate)  cyclotomic Hecke algebras are semisimple and $\text{char}~ \Bbbk\neq 2$, we use results  on the classification of blocks for cyclotomic Nazarov-Wenzl algebras and cyclotomic Birman-Murakami-Wenzl algebras  in \cite{RuiS4, RuiS3} to classify blocks of $A$ completely. In this case,   $A$-lfdmod is an upper finite highest weight
    category in the sense of \cite{BS}.

The paper is  organized  as follows.  In section~2,   we introduce  the notion of an upper finite weakly triangular decomposition for a  locally unital $\Bbbk$-algebra.
As an example, we prove that $\OB(\mathbf u)$ admits this structure.
 In section~3, we give  a systematic understanding of  $A$-lfdmod  if $A$ admits  an  upper finite weakly triangular decomposition. In particular, we prove that  $A$-lfdmod
is an upper finite fully stratified category in the sense of  Brundan-Stroppel. We also give a criterion on the semisimplicity of $A$.
  Weakly cellular structures of   centralizer subalgebras of $A$ will be given in section~4 under the assumption~\eqref{ssgm}.
    Using truncation functors, we establish an explicit relationship  between  $A$-lfdmod and  representations  of  all  centralizer subalgebras of $A$. In section~5,  we investigate properties of certain endofunctors on  $A\text{-lfdmod}$. This gives a  categorical $\mathfrak g^\sharp$-action on  $A\text{-mod}^\Delta$. In section~6, we apply general theory in sections~2-5 to study   representations of $\CB(\mathbf u)$ and $\CK(\mathbf u)$ and categorify those  representations of the classical limits of coideal algebras, which come from all integrable highest weight modules of  $\mathfrak {sl}_\infty$ or $\hat {\mathfrak{sl}}_e$.

\section{  Upper finite weakly triangular categories }

 In this paper,  $\mathcal A$ is a  small finite dimensional $\Bbbk$-linear category  whose object set is $J$. Then  $$\dim_{\Bbbk} \Hom_{\mathcal A}( a, b)<\infty,$$   for all $a,  b\in J$.
  Assume that there is an equivalence relation $\sim$ on $J$ such that each equivalence class contains finitely many   objects. Define  \begin{equation}
\label{pos} I=J/\sim ,\end{equation}  the set of all  equivalence classes of $J$.  For any  $\ob a\in I$, let
  \begin{equation} 1_{\ob a}=\sum_{b\in \ob a} 1_b,\end{equation}
where  $1_b$ is the identity morphism in $\Hom_{\mathcal A}( b, b)$. When each equivalence class contains a unique object,  $I$ can be identified with $J$. In this case,  we replace each equivalence  class by the object in it.

 The  $\Bbbk$-algebra $A$ associated  to  $\mathcal A$ is given in \eqref{Ba1}, where the multiplication  is induced by the composition of morphisms in $\mathcal A$.
 For any subspace $B\subseteq A$ and any $a, b\in J$,  let $B_{a, b}=1_a B 1_b$.
  Similarly,  let    $B_{\ob a, \ob b}=\bigoplus_{c\in \ob a, d\in \ob b} 1_{c} B 1_{d}$ for any   $\ob a, \ob b\in I$.  Then $B_{\ob a, \ob b}=1_{\ob a} B 1_{\ob b}$.
 When $b=a$ (resp., $\ob a=\ob b$), $B_{a, b}$ (resp., $B_{\ob a, \ob b}$) is denoted  by $B_a$ (resp., $B_{\ob a}$). Then $A_{a, b} =\Hom_{\mathcal A}( b, a)$ and \begin{equation}\label {Bal2} A= \bigoplus_{ a,  b \in J} A_{a, b}.\end{equation}
 So,  $A$ is  a locally unital and locally finite dimensional  $\Bbbk$-algebra, and  the set  $\{1_{ a} \mid  a \in J\}$  serves as the system of  mutually orthogonal idempotents of $A$. Conversely,  any locally unital and locally finite dimensional algebra can be obtained in this way~\cite[Remark~2.1]{BRUNDAN}.

\subsection{Upper finite weakly triangular decompositions} Let $A$ be given in \eqref{Bal2}.
 \begin{Defn}\label{WT} The data $(I, A^-, A^\circ, A^+)$ is called a
 \textsf{weakly triangular decomposition}  of  $A$   if:
\begin{itemize} \item[(W1)]  $(I, \preceq)$ is a poset, where $I$ is given in \eqref{pos}.
\item[(W2)] $A^\diamond$ are subspaces (not subalgebras!) of $A$, where $\diamond\in \{\pm,\circ \}$. Moreover,
 $A^\pm =\bigoplus_{   b, c \in J}  A_{b,c}^\pm $ and  $ A^\circ =\bigoplus_{\ob a\in I}\bigoplus_{b, c\in \ob a}   A_{b, c}^\circ $.
\item [(W3)] $ A^-_{\ob a,  \ob b}=0$ and   $A^+_{\ob b,    \ob a}=0$ unless $    \ob a\preceq   \ob b$.  Furthermore,  $ A^-_{\ob a }=A^+_{\ob a}=\bigoplus_{c\in  \ob a}\Bbbk 1_{c}$.
    \item[(W4)]  $A^-\otimes_{\mathbb K }  A^\circ \otimes_{\mathbb K}  A^+\cong A$
as  $\Bbbk$-spaces where $\mathbb K=\oplus_{  b\in J}  \Bbbk 1_{ b}$.   The required isomorphism  is given by the multiplication on $A$.\end{itemize}\end{Defn}
Following \cite{BS}, $(I, \preceq) $ is upper finite if $\{\ob b\in I\mid \ob a\preceq \ob b\}$ is finite for all $\ob a\in I$.
We call $(I, A^-, A^\circ, A^+)$  an upper finite weakly triangular decomposition if $(I, \preceq)$ is upper finite. In this case,  the category $\mathcal A$ is called  an upper finite  weakly triangular category and
  the algebra $A$ is said to  admit  an upper finite weakly triangular decomposition. Furthermore,
if $I=J$,  $A^-$,   $A^\circ$, $A^+$, $A^-A^\circ$ and $A^\circ A^+$ are locally unital subalgebras of $A$, then
    $A$ admits a    \textsf{triangular decomposition} in the sense of  \cite[Definition~5.24]{BS}. The algebra $A$ associated to a triangular category in \cite[Definition~4.2]{SS}  admits an upper finite  triangular decomposition. Furthermore, $A^\circ$ is assumed to be semisimple in~\cite{SS}.
     Any triangular decomposition is a weakly triangular decomposition and the converse is not true in general since $A^\circ$ may  not be  a subalgebra. Similar reason shows that a weakly triangular decomposition may not be a Cartan decomposition in the sense of Brundan-Stroppel~\cite{BS}.
Archetypal examples on  upper finite  weakly triangular categories are triangular categories in \cite{SS} together with   cyclotomic Brauer categories~\cite{RS3}, cyclotomic Kauffman categories~\cite{GRS} and cyclotomic oriented Brauer  categories~\cite{BCNR}. However, it is not clear to us   whether $A$ admits either  an upper finite triangular decomposition  or an upper finite Cartan decomposition  if $A$ comes from one of  cyclotomic Brauer categories, cyclotomic Kauffman categories and cyclotomic oriented Brauer  categories.

\subsection{Cyclotomic oriented Brauer categories} \label{COBC} Fix  $\mathbf u=(u_1, u_2, \ldots, u_m)\in \Bbbk^{m}$ where $m$ is any fixed positive integer.  Let  $\OB(\mathbf u)$ be the  cyclotomic oriented Brauer category associated to the polynomial $f(t)=\prod_{i=1}^m(t-u_i)$
 (denoted by $\OB^f(\delta_1, \delta_2,\ldots, \delta_{m})$ in  \cite{BCNR}). The objects in
 $\OB(\mathbf u)$ are given by \begin{equation}\label{oobj} J=\langle \uparrow, \down\rangle, \end{equation}
 the set of all finite sequences of the symbols $\uparrow, \down$, including the empty word $\emptyset$.
  In order to explain that  $\OB(\mathbf u)$ is  an upper finite weakly triangular category,
  we need a  basis of any  morphism space in $\OB(\mathbf u)$~\cite{BCNR}.

   We begin by recalling  an   $(a, b)$-Brauer diagram \cite[Definition~ 2.1]{LZ} for any $a, b\in \mathbb N$. It is a diagram on which $a+b$ points are placed on  two parallel horizontal lines, and $a$  points on the lower line and $b$ points on the upper line, and each point joins precisely to one other point.
  If two points at the upper (resp., lower) line join each other, then this strand  is called a cup (resp.,
cap). Otherwise, it is called a vertical strand.
%Later on, cups and caps and vertical strands of a Brauer diagram will also be called strands.

Any $(a, b)$-Brauer diagram can also  be considered as a partitioning of the set $\{1,2,\ldots, a+b\}$ (hence there is no $(a, b)$-Brauer diagram if $a+b$ is odd). Two $(a, b)$-Brauer diagrams are said to be equivalent if they give the same partitioning of
$\{1,2,\ldots, a+b\}$.

 An oriented Brauer diagram is   obtained by adding consistent orientation to each strand in  a    Brauer diagram as above.
 Given any oriented Brauer diagram $d$, let $a$ (resp., $b$) be the element in $J$ which is indicated
 from the orientation of the endpoints of the lower line (resp., upper line) and $d$ is called of type $a\rightarrow b$.
 For example, the following diagram is of type  $\uparrow\uparrow\down \down \rightarrow  \down\uparrow$:

 \begin{center}\begin{tikzpicture}[baseline = 25pt, scale=0.35, color=\clr]
        \draw[->,thick] (5,0) to[out=up,in=down] (8,4);
         \draw[<-,thick] (10,0) to[out=up,in=down] (6,4);
         \draw[->,thick] (2,0) to[out=up,in=left] (4,1.5) to[out=right,in=up] (6,0);
           \end{tikzpicture}.
\end{center}
Two oriented Brauer diagrams of type $a\rightarrow b$ are said to be  equivalent if their underlying Brauer diagrams are equivalent.

A dotted  oriented Brauer diagram of type $a\rightarrow b$ is a oriented Brauer diagram of type $a\rightarrow b$  such that
there are finitely many $\bullet$'s (called dots) on each strand.
A normally ordered dotted oriented Brauer diagram  of type $a\rightarrow b$ is  a dotted oriented Brauer diagram  of type $a\rightarrow b$ such that:
 \begin{itemize}\item  whenever a dot appears on a strand, it is on the outward-pointing boundary,
\item there are at most $ m-1$ dots on each strand.\end{itemize}
     %We give an example on normally ordered dotted oriented Brauer diagrams.
     Suppose $m=4$.  The  right one is a normally ordered dotted oriented Brauer diagram of  type  $\uparrow\uparrow\down \down \rightarrow \down\uparrow$  and the left one is not:
\begin{center}
     \begin{tikzpicture}[baseline = 25pt, scale=0.35, color=\clr]
        \draw[->,thick] (5,0) to[out=up,in=down] (8,4);
         \draw[<-,thick] (10,0) to[out=up,in=down] (6,4);
         \draw  (2.1,0.3) \bdot;
         \draw[->,thick] (2,0) to[out=up,in=left] (4,1.5) to[out=right,in=up] (6,0);
           \end{tikzpicture}~,
    \qquad\qquad\qquad
     \begin{tikzpicture}[baseline = 25pt, scale=0.35, color=\clr]
        \draw[->,thick] (5,0) to[out=up,in=down] (8,4);
         \draw[<-,thick] (10,0) to[out=up,in=down] (6,4);
         \draw[->,thick] (2,0) to[out=up,in=left] (4,1.5) to[out=right,in=up] (6,0);
          \draw  (6,0.48) \bdot;
           \draw  (7.8,3.2) \bdot;
            \draw (8.5,3.2) node{$3$};
           \end{tikzpicture},
\end{center}
 where $ \bullet~ 3$ represents that   there are  three  $ \bullet$'s on the outward-pointing boundary of this strand.
 Two  normally ordered dotted oriented Brauer diagrams are said to be equivalent if the underlying oriented Brauer diagrams are equivalent and there are the same number of dots on their corresponding strands.
 Let $ \mathbb{OB}_{a,b}$ be the set of all equivalence classes of   normally ordered dotted  oriented Brauer diagrams  of type $ a\rightarrow b$.
Thanks to  \cite[Theorem~1.5]{BCNR},
   $\Hom_{\OB(\mathbf u)} ( a,   b)$ has  basis given by $ \mathbb{OB}_{a,b}$ and  two equivalent diagrams represent the same morphism in  $\OB(\mathbf u)$. So,  each equivalence class can be identified    with any element in it.  Let $\mathbb{OB}=\bigcup_{a,b\in J}\mathbb{OB}_{a,b}$.

 \begin{Prop}\label{COBWA} The cyclotomic oriented Brauer  category $\OB(\mathbf u)$ is  an upper finite weakly triangular category.
 \end{Prop}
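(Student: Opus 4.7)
The plan is to exhibit the quadruple $(I, A^-, A^\circ, A^+)$ in terms of the diagrammatic basis $\mathbb{OB}$ from \cite[Theorem~1.5]{BCNR} and then verify the axioms (W1)--(W4) of Definition~\ref{WT}.

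First, I would declare $a \sim b$ on $J = \wrd$ if and only if $a$ and $b$ contain the same number of $\up$'s and the same number of $\down$'s. Each equivalence class is finite, being the set of permutations of a fixed multiset; identifying $I = J/{\sim}$ with $\mathbb{N} \times \mathbb{N}$ via $a \mapsto (p_a, q_a)$ (the counts of $\up$'s and $\down$'s in $a$), I equip $I$ with the partial order $(p', q') \preceq (p, q)$ iff $p' = p + k$ and $q' = q + k$ for some $k \in \mathbb{N}$. This is clearly reflexive, antisymmetric, and transitive, and the upward closure $\{(p, q) : (p', q') \preceq (p, q)\} = \{(p' - k, q' - k) : 0 \leq k \leq \min(p', q')\}$ is finite, so $(I, \preceq)$ is upper finite. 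This handles (W1).

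Using the basis $\mathbb{OB}$, I single out three subsets: $\mathbb{OB}^+$ (resp.\ $\mathbb{OB}^-$) consists of those diagrams having no cups (resp.\ no caps) whose through-strands are dotless and joined by the identity matching, and $\mathbb{OB}^\circ$ consists of those diagrams having neither caps nor cups. Set $A^\diamond := \Bbbk\,\mathbb{OB}^\diamond$ for $\diamond \in \{+, -, \circ\}$. Axiom (W2) is then immediate from the grading of $\mathbb{OB}$ by source and target. For (W3): any nonzero basis element $d \in A^-_{\ob a, \ob b}$ is a cup-only morphism $b \to a$ (with $b \in \ob b$, $a \in \ob a$) having $k \geq 0$ cups, each contributing one $\up$ and one $\down$ to the top; hence $\ob a = \ob b + (k, k)$, i.e., $\ob a \preceq \ob b$. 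The argument for $A^+$ is symmetric. When $\ob a = \ob b$, the constraint $k = 0$ forces no cups/caps and the identity matching with no dots on through-strands leaves only the identities $\{1_c : c \in \ob a\}$, spanning $\bigoplus_{c \in \ob a} \Bbbk 1_c$.

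For (W4), I would construct the inverse of the multiplication map $A^- \otimes_{\mathbb{K}} A^\circ \otimes_{\mathbb{K}} A^+ \to A$ via a canonical factorization of each basis diagram $d \in \mathbb{OB}$. Decompose $d$ into its cap part (at the bottom), its through-strand part (carrying the permutation of through-strands), and its cup part (at the top). Set $d^+ \in \mathbb{OB}^+$ to be the cap part of $d$ (with the dots that normal form places on the outward-pointing bottom ends of the caps) stacked below an identity matching of through-strands; set $d^\circ \in \mathbb{OB}^\circ$ to be the through-strand part with its permutation and dots; and set $d^- \in \mathbb{OB}^-$ to be an identity matching of through-strands stacked below the cup part of $d$ (with the dots on the outward-pointing top ends of the cups). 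Because $d^+$ carries identity through-strands on top and $d^-$ carries identity through-strands on bottom, the composition $d^- \circ d^\circ \circ d^+$ generates no new caps or cups and reproduces $d$; conversely, the cap, through-, and cup-parts of $d$ are intrinsically determined, giving uniqueness. The main technical delicacy is dot-bookkeeping: one must verify that distributing the dots of $d$ to the three factors (dots on caps/cups going to $d^\pm$ and dots on through-strands going to $d^\circ$) produces factors already in the normal form of $\mathbb{OB}$, so that $d^- \circ d^\circ \circ d^+$ lies in $\mathbb{OB}$ on the nose without requiring any reduction via the cyclotomic relations of \cite{BCNR}. Once this is checked, the bijection on bases furnishes the $\Bbbk$-linear isomorphism of (W4), and combined with (W1)--(W3) this shows that $\OB(\mathbf u)$ is an upper finite weakly triangular category.
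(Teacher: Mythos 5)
Your setup for (W1)--(W3) is exactly the paper's (same equivalence relation, same poset on $\mathbb N^2$, same three subspaces), and that part is fine. The gap is in (W4). You claim that every normally ordered diagram $d\in\mathbb{OB}$ factors as $d^-\circ d^\circ\circ d^+$ \emph{on the nose}, i.e.\ that distributing the dots of $d$ to the three factors gives a composite lying in $\mathbb{OB}$ with no reduction needed, so that multiplication restricts to a bijection on basis elements. This is false in general. Dots do not slide freely past crossings: by \eqref{movedot1}, pushing a dot through a crossing produces extra terms with fewer dots. Concretely, take a normally ordered $d$ containing a downward through-strand carrying a dot whose bottom (outward-pointing) endpoint lies strictly between the two feet of a cap of $d$. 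In your factorization the dot is placed on the middle factor $d^\circ$, so in the composite it sits at the interface of the middle and bottom blocks, while the $d^+$-portion of that strand must cross the cap below it (the interleaved matching forces this crossing). In $d$ itself the dot sits below that crossing, at the very bottom of the strand. Hence $d^-\circ d^\circ\circ d^+$ is not normally ordered as drawn, and rewriting it in the basis via \eqref{movedot1} yields $d$ \emph{plus} a nonzero combination of diagrams with fewer dots (the correction replaces the crossing by a cup--cap or identity term). So your proposed map $d\mapsto d^-\otimes d^\circ\otimes d^+$ is not inverse to multiplication, and the "main technical delicacy" you flag cannot be resolved in the way you assert; the same phenomenon occurs for upward dotted through-strands interleaved with cups in $d^-$.

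The correct resolution, and the one the paper uses, is a triangularity argument rather than an exact bijection on basis elements: order the basis $\mathbb{OB}$ by total number of dots and observe that the multiplication map sends each basis tensor $y\otimes h\otimes x$ to the corresponding basis diagram (the one with the same underlying matching and the same dot count on each strand) plus a linear combination of basis diagrams with strictly fewer dots, precisely because each dot-slide past a crossing via \eqref{movedot1} only creates lower-order terms. Since the index sets on the two sides are in bijection and the map is clearly surjective, this unitriangularity gives the $\Bbbk$-linear isomorphism required in (W4) of Definition~\ref{WT}. With that replacement for your last step, the rest of your argument goes through and coincides with the paper's proof.
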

 %$\ell( a)=h$ and
\begin{proof} Let $A$ be the algebra associated to $\OB(\mathbf u)$. Suppose    $ a= a_1a_2\cdots  a_h$, where $a_i\in \{\uparrow, \downarrow\}$, $1\le i\le h$.
Define
 $\ell_\down( a)=|\{i:  a_i=\down\}|$ and $ \ell_\uparrow( a)=|\{i:  a_i=\uparrow\}| $,  where $|D|$ is the cardinality of  a set $D$.
 When $h=0$, i.e.,  $a$ is the empty word,  $\ell_\down( a)= \ell_\uparrow( a)=0$.
For any $a, b\in J$,   write  $ a\sim  b$ if $(\ell_\uparrow( a),\ell_\down(a))=( \ell_\uparrow( b),\ell_\down(b))$.
Then $\sim$ is an equivalence relation on $J$. As sets, $\mathbb N^2  \cong J/\sim$. For the data in  (W1), define
 \begin{equation}\label{ocbap} I=\mathbb N^2 \text{ and   $(r,s)\preceq (r_1,s_1)$ if $r=r_1+k$ and $s=s_1+k$ for some $k\in\mathbb N$}.\end{equation}
  Obviously,   $\(I, \preceq)$ is upper finite. For the data in (W2), define
\begin{itemize}\item [(a)] $A^-$:  the $\Bbbk$-space with basis consisting  of  elements in $ \mathbb{OB}$ on which there are  neither  caps  nor crossings among vertical strands and there are no dots on vertical strands.
\item [(b)] $A^\circ:$ the $\Bbbk$-space with basis consisting of elements in $ \mathbb{OB}$  on which there are neither  cups nor caps.
 \item [(c)] $A^+$ : the $\Bbbk$-space with basis consisting  of elements in $ \mathbb{OB}$  on which there are neither cups  nor crossings among vertical strands and  there are no dots on vertical strands.
\end{itemize}
 Obviously, $\{1_ a\}=\mathbb {OB}\cap A^-_{  a}=\mathbb {OB}\cap A^+_{ a}$, where $1_a$ is defined in an obvious way. For example,  if $a=\uparrow\uparrow\downarrow$, then  $$1_a=\begin{tikzpicture}[baseline = 15pt, scale=0.35, color=\clr]
          \draw[->,thick] (1.5,1.1) to[out=up, in=down] (1.5,2.2);
            \draw[->,thick] (2.5,1.1) to[out=up, in=down] (2.5,2.2);
         \draw[<-,thick](3.5,1.1)to[out= up,in=down](3.5,2.2);
           \end{tikzpicture}.$$
By the definitions of $A^\pm$  in (a) and (c), $\mathbb {OB}\cap A_{a, b}^-=\mathbb {OB}\cap A_{b, a}^+=\emptyset$ if $(\ell_\uparrow( a),\ell_\down(a))\npreceq( \ell_\uparrow( b),\ell_\down(b))$. Moreover, $A_{b, c}^{\pm}=0$ if $b\sim c$ and $b\neq c$. So,   (W3) follows.

   For any $(g,h,k)\in (\mathbb {OB}\cap A^-1_b, \mathbb {OB}\cap A^\circ_{b,c},\mathbb{OB}\cap 1_cA^+ )$, the composition $g\circ h\circ k$ is not  normally ordered in general, i.e., the dots on the vertical strands may not be on  the outward-pointing boundary. For example: $$g\circ h \circ k=\begin{tikzpicture}[baseline = 25pt, scale=0.35, color=\clr]
        \draw[<-,thick] (2,0) to[out=up,in=down] (0,5);
       %\draw[-,line width=4pt,white] (0,0) to[out=up,in=left] (2,1.5) to[out=right,in=up] (4,0);
       \draw[<-,thick] (0,0) to[out=up,in=left] (2,1.5) to[out=right,in=up] (4,0);%(cap)
        \draw[->,thick] (2,5) to[out=down,in=left] (3,4) to[out=right,in=down] (4,5);%(cup)
       \draw (0.2,0.8) \bdot;
        \draw  (3.8,4.4) \bdot;
        \draw  (1.2,2.1) \bdot;
           \end{tikzpicture}$$ if $(g, h, k)=\left(\ \ \begin{tikzpicture}[baseline = -0.5mm]
 \draw[-,thick,darkblue] (0,0) to[out=down,in=left] (0.28,-0.28) to[out=right,in=down] (0.56,0);
 \node at (0.56,0) {$\color{darkblue}\scriptstyle\bullet$};
  \draw[-,thick,darkblue] (0,0) to (0,.2);
  \draw[<-,thick,darkblue] (-0.1,-0.28) to (-0.1,.2);
   \draw[->,thick,darkblue] (0.56,0) to (0.56,.2);
 \end{tikzpicture},~\begin{tikzpicture}[baseline = -0.5mm]
 \node at (0,0) {$\color{darkblue}\scriptstyle\bullet$};
  \draw[<-,thick,darkblue] (0,-0.28) to (0,.2);
 \end{tikzpicture},~\begin{tikzpicture}[baseline = 2.5mm]
 \draw[<-,thick,darkblue] (0.28,0) to[out=90,in=-90] (-0.28,.6);
 \draw[<-,thick,darkblue] (-0.28,0) to[out=90,in=-90] (0.28,.6);
 \node at (-0.26,0.15) {$\color{darkblue}\scriptstyle\bullet$};
  \draw[-,thick,darkblue] (0.28,.6) to[out=up,in=left] (0.56,0.85) to[out=right,in=up] (0.56+0.28,0.6);
  \draw[-,thick,darkblue] (0.56+0.28,0.6) to (0.56+0.28,0);
\end{tikzpicture}\  \ \right)$.
Thanks to \cite[Theorem~1.5]{BCNR},   $\OB(\mathbf u)$ is a quotient category of the affine  oriented Brauer category $\AOB$,    a $\Bbbk$-linear strict monoidal category.
By the defining relations of $\AOB$ in \cite[(1.10)]{BCNR}, one has
\begin{equation}\label{movedot1}
 \begin{tikzpicture}[baseline = 7.5pt, scale=0.5, color=\clr]
            \draw[-,thick] (0,0) to[out=up, in=down] (1,2);
            \draw[->,thick] (0,2) to[out=up, in=down] (0,2.2);
            \draw[-,thick] (1,0) to[out=up, in=down] (0,2);
            \draw[->,thick] (1,2) to[out=up, in=down] (1,2.2);
             \draw(1,1.7)\bdot;
        \end{tikzpicture}
        ~=~~  \begin{tikzpicture}[baseline = 7.5pt, scale=0.5, color=\clr]
            \draw[->,thick] (0,0) to[out=up, in=down] (1,2);\draw[-,thick] (0,0) to[out=up, in=down] (0,-0.2);
             \draw[->,thick] (1,0) to[out=up, in=down] (0,2);\draw[-,thick] (1,0) to[out=up, in=down] (1,-0.2);
                        \draw(0,0.1)\bdot;
        \end{tikzpicture} +~   \begin{tikzpicture}[baseline = 7.5pt, scale=0.5, color=\clr]
            \draw[->,thick] (0,0) to[out=up, in=down] (0,2);
            \draw[->,thick] (1,0) to[out=up, in=down] (1,2);
                   \end{tikzpicture}~,\quad \quad  \begin{tikzpicture}[baseline = 7.5pt, scale=0.5, color=\clr]
            \draw[<-,thick] (0,0) to[out=up, in=down] (1,2);
            \draw[->,thick] (0,2) to[out=up, in=down] (0,2.2);
            \draw[-,thick] (1,0) to[out=up, in=down] (0,2);
            \draw[-,thick] (1,2) to[out=up, in=down] (1,2.2);
             \draw(0,1.7)\bdot;
        \end{tikzpicture}
               ~=~\begin{tikzpicture}[baseline = 7.5pt, scale=0.5, color=\clr]
            \draw[<-,thick] (0,0) to[out=up, in=down] (1,2);
           % \draw[-,thick] (0,0) to[out=up, in=down] (0,-0.2);
             \draw[->,thick] (1,0) to[out=up, in=down] (0,2);
            % \draw[-,thick] (1,0) to[out=up, in=down] (1,-0.2);
                        \draw(0.9,0.4)\bdot;
        \end{tikzpicture}~+~
       \begin{tikzpicture}[baseline = 10pt, scale=0.5, color=\clr]
         \draw[->,thick] (2,2) to[out=down,in=right] (1.5,1.5) to[out=left,in=down] (1,2);
            \draw[->,thick] (2,0) to[out=up, in=right] (1.5,0.5) to[out=left,in=up] (1,0);
        \end{tikzpicture}.
    \end{equation}
and \eqref{movedot1} explain how to move dots pass crossings in any diagram~\cite[page~81]{BCNR}.  Consequently,  dots  can be slid freely up to a linear combination of some elements in $\mathbb {OB}$ with fewer dots.
This shows that  the multiplication map from $A^-\otimes_{\mathbb K }  A^\circ \otimes_{\mathbb K}  A^+$ to $A$ sends  any basis element $g\otimes h\otimes k$,  $(g,h,k)\in(\mathbb {OB}\cap A^-1_b, \mathbb {OB}\cap A^\circ_{b,c},\mathbb{OB}\cap 1_cA^+ )$,  to a unique corresponding  basis element in $\mathbb {OB}$ (up to  a  linear combination of some elements in $\mathbb {OB}$  with fewer dots). Furthermore, the multiplication map is obviously surjective.
So, it is the required  isomorphism and (W4) holds.
\end{proof}

If we allow  $m=1$, then $(I, A^-, A^\circ, A^+)$ given in the proof of Proposition~\ref{COBWA} is the same as that for the oriented Brauer category in \cite{Re}. In fact, it is an upper finite triangular decomposition in the sense of \cite{BS}.

\subsection{Quotient algebras}
To conclude this section, we give some elementary observations about the structure of $A$, where $A$  always admits an upper finite weakly triangular decomposition.

\begin{Defn}\label{defniogfxhy} For any $ a\in J$ and $\ob a\in I$, define
  \begin{itemize} \item $Y( a)=\bigcup_{   b\in J} Y(  b,  a)$, where $Y(b, a)$ is a basis of  $A^-_{b, a}$.
   \item  $ X(   a)=\bigcup_{ b\in J} X(   a,    b)$,   where $X(a, b)$ is a basis of $A^+_{a, b}$.
   \item  $ H(  \ob a)=\bigcup_{b, c\in\ob  a} H(b, c)$, where $H(b, c)$ is a basis of  $A^\circ_{b, c}$.
    \end{itemize}\end{Defn}
  Since we are assuming that $\mathcal A$ is   finite dimensional, all   $Y(b, a)$, $X(b, a)$ and $H(a, b)$ are finite sets.
  The partial order $\preceq$ on $I$ induces a partial order on $J$ such that  $a\prec b$ if $a\in \ob a$ and $b\in \ob b$ with $\ob a\prec \ob b$.
 Thanks to (W3), $X(a,a)=Y(a,a)=\{1_a\}$ for any $a\in J$, and  $Y(  b,  a)=X(  a,  b)=\emptyset$ unless  $  b \preceq  a$.
The following  result follows from Definition~\ref{WT}, immediately.

\begin{Lemma}\label{cellbasis}
For all $  a,  c\in J$,   $  A_{a, c}$ has basis
 $   \{ yhx\mid (y, h, x)\in \bigcup_{  d\succeq  c,   b\succeq a, b\sim d} Y(  a,   b)\times H(  b,d)\times X(d,  c)\}$.
\end{Lemma}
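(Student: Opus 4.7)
The plan is to unpack axiom (W4) after restricting via the idempotents $1_a$ and $1_c$. Since $A_{a,c} = 1_a A 1_c$ and the multiplication isomorphism $A^- \otimes_{\mathbb K} A^\circ \otimes_{\mathbb K} A^+ \xrightarrow{\sim} A$ is $\mathbb K$-linear on both sides, multiplying by $1_a$ on the left and $1_c$ on the right yields an isomorphism
\[
A_{a,c} \;\cong\; 1_a A^- \otimes_{\mathbb K} A^\circ \otimes_{\mathbb K} A^+ 1_c.
\]

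Next I would decompose each factor using (W2). The direct sum decompositions $A^\pm = \bigoplus_{b', b\in J} A^\pm_{b',b}$ and $A^\circ = \bigoplus_{\ob a' \in I} \bigoplus_{b,d\in \ob a'} A^\circ_{b,d}$ show that the surviving summands are precisely those where the right index of one factor matches the left index of the next. After applying $1_a$ on the left and $1_c$ on the right, one obtains
\[
A_{a,c} \;\cong\; \bigoplus_{b \sim d} A^-_{a,b} \otimes_\Bbbk A^\circ_{b,d} \otimes_\Bbbk A^+_{d,c},
\]
where the condition $b \sim d$ comes directly from the $A^\circ$-component of (W2). Now (W3) states that $A^-_{a,b} = 0$ unless $b \succeq a$ and $A^+_{d,c} = 0$ unless $d \succeq c$, so only the triples $(b,d)$ with $b \succeq a$, $d \succeq c$ and $b \sim d$ contribute.

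Finally, I would pick the bases $Y(a,b)$, $H(b,d)$, $X(d,c)$ of Definition~\ref{defniogfxhy} for these nonzero summands. Since the isomorphism in (W4) is induced by multiplication, an elementary tensor $y \otimes h \otimes x$ is sent to the product $yhx \in A_{a,c}$. Collecting over all admissible $(b,d)$ then yields the basis described in the statement. The argument is essentially a direct application of (W2), (W3), (W4) in sequence; the only mild point to be careful about is noting that since each factor in the tensor product is pinned by a single idempotent $1_b$ or $1_d$ once the sums are expanded, the tensor product over $\mathbb K$ collapses to an ordinary tensor product over $\Bbbk$ on each summand, which is why a basis of the tensor product is obtained by taking basis-element products.
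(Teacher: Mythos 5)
Your argument is correct and is exactly the intended one: the paper offers no details, stating only that the lemma ``follows from Definition~\ref{WT}, immediately,'' and your unpacking of (W4) through the idempotents $1_a$, $1_c$, followed by the decompositions from (W2) and the vanishing from (W3), is precisely that immediate argument made explicit. The one small imprecision—(W3) is stated at the level of equivalence classes together with the diagonal condition $A^\pm_{\ob a}=\bigoplus_{c\in\ob a}\Bbbk 1_c$, rather than literally ``$A^-_{a,b}=0$ unless $b\succeq a$''—is harmless, since the corresponding sets $Y(a,b)$, $X(d,c)$ are empty in the excluded cases anyway.
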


 \begin{Defn}\label{ideals}  For any $\ob a\in I$,  and
  $\diamond\in \{\succ, \not\preceq, \nprec\}$,  let $A^{\diamond \ob a}$ be  the two-sided ideal of $A$ generated by $I^{\diamond \ob a}=\{1_\ob b\mid \ob b\diamond \ob a\}$.    \end{Defn}
A subset  $K\subseteq I$ is said to be an  upper set (or a coideal) of $I$ if $\{ \ob b\mid  \ob a\preceq  \ob b\}\subseteq K$ for any $  \ob a\in K$.   Obviously,  $I^{\succ\ob a}$,   $I^{\npreceq\ob a}$ and  $I^{\nprec\ob a}$ are upper sets.

\begin{Lemma}\label{W5}
   For any $h_1,h_2\in A_{\ob a}^\circ $, there is an  $h_3\in  A_{\ob a}^\circ $ such that
$h_1h_2\equiv h_3 \pmod {A^{\succ \ob a}}$.

\end{Lemma}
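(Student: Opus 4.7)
The plan is to exploit the weakly triangular decomposition (W4) to rewrite $h_1 h_2$ in the $A^- \otimes A^\circ \otimes A^+$ basis, and then show that every summand is either already in $A^\circ_{\ob a}$ or sits in $A^{\succ \ob a}$. The key point is that although $A^\circ$ is not a subalgebra, the ``failure'' of closure under multiplication can only push us to higher strata.

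First, since $h_1, h_2 \in A^\circ_{\ob a}$ and $A^\circ_{\ob a} \subseteq 1_{\ob a} A 1_{\ob a}$, the product $h_1 h_2$ lies in $1_{\ob a} A 1_{\ob a}$. By (W4), I expand
\[
h_1 h_2 = \sum_i y_i h_i' x_i,
\]
with $y_i \in A^-_{b_i, c_i}$, $h_i' \in A^\circ_{c_i, d_i}$, $x_i \in A^+_{d_i, e_i}$, where the matching of idempotents across the tensor factors over $\mathbb{K}$ forces the internal objects to pair up, and the outer condition $h_1 h_2 \in 1_{\ob a} A 1_{\ob a}$ forces $b_i, e_i \in \ob a$. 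Because $h_i' \in A^\circ$, by (W2) the indices satisfy $c_i \sim d_i$, so they both belong to a single class $\ob b_i \in I$. Finally, (W3) applied to $y_i$ (with $b_i \in \ob a$) and $x_i$ (with $e_i \in \ob a$) gives $\ob a \preceq \ob b_i$ whenever the summand is nonzero.

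Now I split on $\ob b_i$. If $\ob b_i \succ \ob a$, then $1_{\ob b_i} \in I^{\succ \ob a}$ and $y_i = y_i \cdot 1_{c_i} = y_i \cdot 1_{c_i} \cdot 1_{\ob b_i}$ lies in $A \cdot 1_{\ob b_i}$, so the whole summand $y_i h_i' x_i$ lies in the two-sided ideal $A^{\succ \ob a}$. If instead $\ob b_i = \ob a$, then $y_i \in A^-_{b_i, c_i}$ with $b_i, c_i \in \ob a$; by the second half of (W3), $A^-_{\ob a} = \bigoplus_{c \in \ob a} \Bbbk 1_c$, which forces $c_i = b_i$ and $y_i \in \Bbbk 1_{b_i}$. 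Symmetrically $d_i = e_i$ and $x_i \in \Bbbk 1_{e_i}$. Hence $y_i h_i' x_i$ is a scalar multiple of $1_{b_i} h_i' 1_{e_i} = h_i' \in A^\circ_{b_i, e_i} \subseteq A^\circ_{\ob a}$.

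Setting $h_3$ to be the sum of the summands of the second type (i.e.\ those with $\ob b_i = \ob a$) yields $h_3 \in A^\circ_{\ob a}$ with $h_1 h_2 - h_3 \in A^{\succ \ob a}$. The only subtle point, and the main thing to be careful about, is the bookkeeping of the matched idempotents in the tensor product over $\mathbb{K}$ together with the distinction between object-indexed pieces $A^-_{b,c}$ and class-indexed pieces $A^-_{\ob b, \ob c}$; once that is handled correctly, (W3) immediately collapses the ``diagonal'' contributions onto $A^\circ_{\ob a}$ while (W3) together with the definition of $A^{\succ \ob a}$ absorbs everything ``off-diagonal.''
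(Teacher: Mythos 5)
Your proof is correct and follows essentially the same route as the paper: the paper's one-line proof invokes Lemma~\ref{cellbasis} (which is just the basis form of (W4)) together with (W3), which is exactly your expansion of $h_1h_2$ in the weakly triangular decomposition followed by the observation that the terms with middle class $\ob b_i\succ\ob a$ fall into $A^{\succ\ob a}$ while the terms with $\ob b_i=\ob a$ have trivial outer factors and hence lie in $A^\circ_{\ob a}$.
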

 \begin{proof}The result follows  immediately from (W3) and Lemma~\ref{cellbasis}.
\end{proof}

\begin{Lemma}\label{linearcom} Suppose  $\ob a\in I$. \begin{itemize}\item[(1)] The two-sided ideal  $A1_{\ob a}A$ is the same as the two-sided ideal generated by all $1_b, b\in \ob a$.
\item [(2)] Any  $g\in A1_{\ob a}A$ is  a linear combination of elements  $yhx$, where
 $(y, h, x)\in Y( c)\times  H( c,d)\times X( d)$ such that $ c,d\in\ob b$ with $\ob  b\succeq \ob a$.\end{itemize}
\end{Lemma}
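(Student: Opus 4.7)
The plan is to verify (1) by a direct manipulation of idempotents, and to deduce (2) by showing that the $\Bbbk$-span $V$ of those basis elements $y h x$ of $A$ (in the sense of Lemma~\ref{cellbasis}) whose middle class $\ob b$ satisfies $\ob b \succeq \ob a$ already contains $A 1_{\ob a} A$.

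For (1), the two inclusions are immediate: $1_{\ob a} = \sum_{b \in \ob a} 1_b$ forces $A 1_{\ob a} A$ to sit inside the ideal generated by $\{1_b : b \in \ob a\}$, while the identity $1_b = 1_b 1_{\ob a} 1_b$ for each $b \in \ob a$ gives the reverse inclusion.

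The first observation needed for (2) is that $A 1_b \subseteq V$ and $1_b A \subseteq V$ for every $b \in \ob a$. Indeed, Lemma~\ref{cellbasis} expresses every element of $A_{c, b}$ as a linear combination of basis elements $y h x$ with $x \in X(d, b)$ and $d \succeq b$, so the middle class $\ob d \succeq \ob b = \ob a$; the argument for $1_b A$ is symmetric. Since any generator of $A 1_{\ob a} A$ factors as $u 1_b v = (u 1_b)(1_b v) \in V \cdot V$, in view of (1) the statement of (2) will follow once the closure property $V \cdot V \subseteq V$ is established.

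To prove $V \cdot V \subseteq V$, take basis elements $w_1 = y_1 h_1 x_1$ and $w_2 = y_2 h_2 x_2$ with middle classes $\succeq \ob a$, matching indices so that the product is nonzero, and with $x_1 \in A^+_{d_1, c_1}$ and $y_2 \in A^-_{c_1, b_2}$; in particular $\ob{d_1}, \ob{b_2} \succeq \ob a$. Write
\[ w_1 w_2 = y_1 h_1 (x_1 y_2) h_2 x_2, \]
and use Lemma~\ref{cellbasis} to expand $x_1 y_2 \in A_{d_1, b_2}$: every basis element in this expansion has middle class $\succeq \ob{d_1} \succeq \ob a$ (and also $\succeq \ob{b_2} \succeq \ob a$). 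Substituting this expansion and then iterating the same procedure on each subsequently appearing adjacent product (of the shapes $h_1 y'$, $x' h_2$, $y_1 y''$, $x'' x_2$, and so on), at every step Lemma~\ref{cellbasis} forces the middle classes of the freshly produced expansions to remain $\succeq \ob a$, because one of the two adjacent factors always supplies a relevant index lying in a class $\succeq \ob a$. Since $w_1 w_2$ is a fixed element of the finite-dimensional space $A_{a_1, c_2}$ which by (W4) admits a unique basis expansion, the procedure terminates after finitely many steps and exhibits $w_1 w_2$ as a linear combination of basis elements with middle class $\succeq \ob a$; hence $w_1 w_2 \in V$.

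The main obstacle is the careful tracking of the ``middle-class $\succeq \ob a$'' invariant through the iterative decomposition: each individual reduction step is an unambiguous application of Lemma~\ref{cellbasis} together with axiom (W3), but monitoring the various middle indices across successive rounds of rewriting, and arguing that the procedure does indeed compute the unique basis expansion of $w_1 w_2$ guaranteed by (W4), requires systematic bookkeeping.
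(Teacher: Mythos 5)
Part (1) of your argument is fine, and the overall reduction in (2) is sensible: letting $V$ be the span of the basis elements $yhx$ of Lemma~\ref{cellbasis} whose middle class is $\succeq\ob a$, your observations that $A1_b\subseteq V$ and $1_bA\subseteq V$ for $b\in\ob a$ are correct, so everything hinges on $V\cdot V\subseteq V$. It is exactly there that your proof has a genuine gap: the termination of the iterative rewriting is never established. Each application of Lemma~\ref{cellbasis} to an adjacent pair replaces two factors by a linear combination of words with \emph{three} factors, so the length of the words can grow at every step and new adjacencies of the ``wrong'' shapes ($H$ against $Y$, $Y$ against $Y$, $X$ against $X$, \dots) keep being created; nothing in (W1)--(W4) makes $A^-$ or $A^+$ closed under multiplication, so these cannot be collapsed. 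The appeal to (W4) --- that $w_1w_2$ has a unique basis expansion in the finite dimensional space $A_{a_1,c_2}$ --- is a non sequitur: uniqueness of the target expansion says nothing about whether your particular rewriting procedure reaches it in finitely many steps. Without a decreasing well-founded invariant the argument is incomplete (the ``middle class $\succeq\ob a$'' bookkeeping you worry about is in fact the unproblematic part; the halting of the procedure is the problem).

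The paper avoids this by a well-founded induction on the upper finite poset $(I,\preceq)$, downward from maximal classes, combined with two collapses your scheme never uses. One first reduces $g\in A1_{\ob a}A$ to a single product $y_1h_1x_1y_2h_2x_2$ with both middle classes $\ob b_1,\ob b_2\succeq\ob a$; if some $\ob b_i\succ\ob a$ then $g\in A1_{\ob b_i}A$ and the induction hypothesis applies, while if $\ob b_1=\ob b_2=\ob a$ then (W3) forces $x_1=y_2=1_a$, so $g=y_1h_1h_2x_2$, and Lemma~\ref{W5} rewrites $h_1h_2$ as an element of $A^\circ_{\ob a}$ plus an element of $A^{\succ\ob a}$, the latter being absorbed by the induction hypothesis again. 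Your approach could be repaired along the same lines --- e.g.\ by inducting on the middle classes and invoking the (W3) collapse whenever an expansion fails to strictly increase them --- but as written the key termination claim is unsupported, so the proof of $V\cdot V\subseteq V$, and hence of (2), is incomplete.
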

\begin{proof} (1) is obviously since $1_{\ob a}=\sum_{b\in \ob a} 1_b$ and all   $1_b$'s are pairwise orthogonal idempotents.
We  prove (2) by induction on $\preceq$. Thanks to
 Lemma~\ref{cellbasis} and (W3), we can assume  $g=y_1h_1x_1y_2h_2x_2$, where
  $(y_1, h_1, x_1)\in Y( b_1)\times  H( b_1,c_1)\times  X(c_1, a)$ and  $(y_2, h_2, x_2)\in Y( a, b_2)\times H( b_2,c_2)\times X( c_2)$ with $a\in\ob a$ and $b_i, c_i\in \ob b_i$ for some $\ob b_i\in I$ with $\ob b_i\succeq\ob a$, $i=1,2$. If  $ \ob b_i\succ \ob  a$ for some $i=1,2$, then  $g\in A 1_{\ob b_i}A$, and the result follows  from induction assumption. So, it is enough to consider the case $\ob b_1=\ob b_2=\ob a$. Thanks to  (W3),
   $x_1=1_{a} =y_2$ and hence  $g=y_1h_1h_2x_2$.
By Lemma~\ref{W5}, $g=y_1h_3x_2+h_4$ for some $h_3\in A^\circ_{\ob a} $ and $h_4\in A1_{\ob c}A$ with $\ob c\succ  \ob a$.  So, the result follows from induction assumption again.
\end{proof}

 \begin{Lemma}\label{twoside}  For any upper set $I^\uparrow$ of $I$,  let  $A^\uparrow$ be the   two-sided ideal
of $A$ generated by $\{1_{ \ob a}\mid  \ob a\in I^\uparrow\}$.  Then  $S_{a,c}$ is a $\Bbbk$-basis of  $ A^\uparrow_{a,c}$ for all $a,c\in J$, where $$S_{a, c}:=\{yhx\mid (y,h,x)\in \bigcup_{ b,d\in\ob a, \ob a\in I^\uparrow}Y(   a, b)\times H(b,d)\times X(d,c)\}.$$\end{Lemma}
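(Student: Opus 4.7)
The plan is to prove the two halves of the basis assertion separately: first, that $S_{a,c}$ spans $A^\uparrow_{a,c}$, and second, that $S_{a,c}$ is linearly independent. The bulk of the work lies in the spanning half; linear independence will fall out of Lemma~\ref{cellbasis} essentially for free.

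For the spanning claim, I would first rewrite $A^\uparrow = \sum_{\ob e \in I^\uparrow} A 1_{\ob e} A$, using that the generating idempotents of $A^\uparrow$ are the $1_{\ob e}$ with $\ob e \in I^\uparrow$. Taking the $(a,c)$-component gives $A^\uparrow_{a,c} = \sum_{\ob e \in I^\uparrow} (A 1_{\ob e} A)_{a,c}$. I would then apply Lemma~\ref{linearcom}(2) to each summand: any element of $(A 1_{\ob e} A)_{a,c}$ is a linear combination of products $yhx$ with $(y,h,x) \in Y(a,b) \times H(b,d) \times X(d,c)$ for some $b,d$ lying in a common equivalence class $\ob f$ with $\ob f \succeq \ob e$. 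Here the upper-set hypothesis enters in a crucial way: because $I^\uparrow$ is an upper set and $\ob e \in I^\uparrow$, we also have $\ob f \in I^\uparrow$, and therefore $yhx \in S_{a,c}$. This yields $A^\uparrow_{a,c} \subseteq \operatorname{span} S_{a,c}$. The reverse containment is immediate: any $yhx \in S_{a,c}$ factors as $y \cdot 1_b h 1_d \cdot x$ with $b \in \ob e \in I^\uparrow$, so $yhx \in A 1_{\ob e} A \subseteq A^\uparrow$.

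For linear independence, I would identify $S_{a,c}$ as a subset of the global basis of $A_{a,c}$ produced by Lemma~\ref{cellbasis}. Indeed, every triple $(y,h,x) \in Y(a,b) \times H(b,d) \times X(d,c)$ contributing to $S_{a,c}$ satisfies $b,d \in \ob e$ for some $\ob e \in I^\uparrow$ (so automatically $b \sim d$), while the non-vanishing of $Y(a,b)$ and $X(d,c)$ forces $b \succeq a$ and $d \succeq c$ by (W3). These are precisely the indexing constraints of Lemma~\ref{cellbasis}, so $S_{a,c}$ is a subset of a linearly independent set, and hence itself linearly independent.

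I do not anticipate any serious obstacle, since Lemmas~\ref{cellbasis} and~\ref{linearcom} do the essential combinatorial work. The only point requiring care is a routine bookkeeping check: that the upper-set hypothesis on $I^\uparrow$ is invoked precisely at the step where one promotes $\ob f \succeq \ob e$ to $\ob f \in I^\uparrow$, and that the indexing set defining $S_{a,c}$ matches the appropriate truncation of the Lemma~\ref{cellbasis} basis.
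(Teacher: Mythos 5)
Your proposal is correct and follows essentially the same route as the paper: spanning of $A^\uparrow_{a,c}$ via Lemma~\ref{linearcom}(2) together with the upper-set property promoting $\ob f\succeq\ob e$ to $\ob f\in I^\uparrow$, the reverse containment from the factorization $yhx=y\,1_{\ob e}\,hx$, and linear independence because $S_{a,c}$ sits inside the basis of $A_{a,c}$ from Lemma~\ref{cellbasis}. The only difference is cosmetic: you spell out the indexing check against Lemma~\ref{cellbasis} (using (W3)) that the paper leaves implicit.
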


 \begin{proof}By Lemma~\ref{cellbasis}, it is enough to prove that $A^\uparrow$ is the $\Bbbk$-space  spanned by $S$,  where  $S=\dot{\bigcup}_{a,c\in J}S_{a,c}$.
For any $yhx\in S$, $yhx=y1_{ \ob a}hx$ for some $ \ob a\in I^\uparrow$ and  hence the subspace spanned by $S$ is contained in
$A^\uparrow$.
On the other hand,
any element in $A^\uparrow$ is a linear combination of elements of $A1_{\ob b}A$ for
some $ \ob b\in I^\uparrow$. By Lemma~\ref{linearcom}(2), any element in  $A1_{ \ob b} A$
is a linear combination of some  $y_1h_1x_1$  in $ A1_\ob c A$
with $\ob c\succeq \ob b$, where
 $(y_1, h_1, x_1)\in Y( c)\times  H( c,d)\times X( d)$ such that $ c,d\in\ob c$.  Since $I^\uparrow$ is an upper set, $\ob c\in  I^\uparrow$. So,   $A^\uparrow$ is contained in the $\Bbbk$-space spanned by $S$.  \end{proof}
For any $x\in A$, let  $\bar x$ be its image in any quotient algebra of $A$.
The following definition is motivated by \cite{BS}.
\begin{Defn}\label{bara1}For any $\ob a\in I$,  define  $A_{\preceq \ob a}= A/A^{\not\preceq \ob a}$, $A_{\prec \ob a}= A/A^{\not\prec \ob a}$ and  $\bar A_{\ob a}=\bar 1_{ \ob a } A_{\preceq \ob a}\bar 1_{ \ob a}$.  \end{Defn}

\begin{Prop}\label{bara} Suppose $\ob a\in I$. We have
\begin{itemize}
\item [(1)]  $\{\bar h\mid h\in H( \ob a)\}$ is a $\Bbbk$-basis of  $\bar A_{ \ob a}$.
\item[(2)] Both $ A_{\preceq  \ob a}$  and $ A_{\prec   \ob a}$ admit    upper finite weakly triangular decompositions.

    \item[(3)] $A_{\prec \ob a}\cong A_{\preceq\ob a}/Q$, where $Q$ is the two-sided ideal of $A_{\preceq\ob a}$ generated by $\bar 1_\ob a$.
   \item [(4)] $\pi_{\ob a}  (A_{\ob a})=\bar A_{\ob a}$ where $ \pi_{\ob a}: A\twoheadrightarrow  A_{\preceq \ob a}$ is the
 canonical epimorphism.
  \end{itemize}\end{Prop}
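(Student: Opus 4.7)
The plan is to handle the four claims in order, with (2) requiring the bulk of the work via axiom (W4) while the other three follow directly from the structural lemmas already proved.

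For (1), I combine Lemmas~\ref{cellbasis} and~\ref{twoside}. Lemma~\ref{cellbasis} gives a basis of $1_{\ob a} A 1_{\ob a} = \bigoplus_{c, e \in \ob a} A_{c, e}$ as triples $yhx$ with $(y, h, x) \in Y(c, b) \times H(b, d) \times X(d, e)$ and $\ob b = \ob d \succeq \ob a$. Applying Lemma~\ref{twoside} to the upper set $I^{\not\preceq \ob a}$ identifies the triples lying in $A^{\not\preceq \ob a}$ as precisely those with $\ob b \not\preceq \ob a$, so the surviving triples have $\ob b = \ob a$. Axiom (W3) then forces $Y(c, b)$ (resp., $X(d, e)$) to be empty for $b, c \in \ob a$ with $b \neq c$ (resp., $d \neq e$), and to equal $\{1_c\}$ (resp., $\{1_e\}$) when $b = c$ (resp., $d = e$). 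The basis of $\bar A_{\ob a}$ therefore collapses to $\{\bar h : h \in H(\ob a)\}$.

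For (2), I treat $A_{\preceq \ob a}$ in detail; $A_{\prec \ob a}$ is handled analogously once $I^{\not\prec \ob a}$ is verified to be an upper set (if $\ob c \not\prec \ob a$ and $\ob c \preceq \ob d$, assuming $\ob d \prec \ob a$ forces $\ob c \preceq \ob a$, which splits into either $\ob c \prec \ob a$, contradicting $\ob c \not\prec \ob a$, or $\ob c = \ob a \preceq \ob d \prec \ob a$, an internal contradiction). The sub-poset $I_{\preceq \ob a} = \{\ob b : \ob b \preceq \ob a\}$ is upper finite because its upper sets are contained in those of $I$, giving (W1). Setting $\bar A^\diamond \subseteq A_{\preceq \ob a}$ to be the image of $A^\diamond$ for $\diamond \in \{-, \circ, +\}$, axioms (W2) and (W3) transfer directly, since the relevant decompositions are respected by the projection. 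The crux is (W4): Lemma~\ref{twoside} characterises $A^{\not\preceq \ob a}$ inside the weakly triangular basis by the condition that the middle factor $h$ lies in $H(\ob b)$ with $\ob b \not\preceq \ob a$, a condition that depends only on the middle tensor component. Combined with the block decomposition $A^\circ = \bigoplus_{\ob b \in I} A^\circ_{\ob b}$, this identifies the kernel of the multiplication $A^- \otimes_{\mathbb K} A^\circ \otimes_{\mathbb K} A^+ \twoheadrightarrow A_{\preceq \ob a}$ with precisely the subspace whose quotient realises $\bar A^- \otimes_{\mathbb K} \bar A^\circ \otimes_{\mathbb K} \bar A^+$, so the induced multiplication map is the required $\Bbbk$-linear isomorphism.

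For (3), the logical equivalence $\ob b \not\prec \ob a \Leftrightarrow (\ob b = \ob a \text{ or } \ob b \not\preceq \ob a)$ gives $A^{\not\prec \ob a} = A^{\not\preceq \ob a} + A 1_{\ob a} A$, and the Third Isomorphism Theorem yields $A_{\prec \ob a} \cong A_{\preceq \ob a} / Q$, where $Q$ is the image of $A 1_{\ob a} A$ in $A_{\preceq \ob a}$, i.e., the two-sided ideal generated by $\bar 1_{\ob a}$. For (4), surjectivity of $\pi_{\ob a}$ gives $\pi_{\ob a}(A_{\ob a}) = \pi_{\ob a}(1_{\ob a} A 1_{\ob a}) = \bar 1_{\ob a} A_{\preceq \ob a} \bar 1_{\ob a} = \bar A_{\ob a}$. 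The main obstacle lies in (W4) of part (2): one must carefully track how the triple tensor-product structure $A^- \otimes_{\mathbb K} A^\circ \otimes_{\mathbb K} A^+$ interacts with the ideal $A^{\not\preceq \ob a}$, ultimately relying on the fact that the defining condition for that ideal involves only the middle tensor factor, so that the quotient remains a triple tensor product.
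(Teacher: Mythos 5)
Your argument is correct and follows the paper's own proof in all essentials: parts (1) and (2) rest on exactly the same combination of Lemma~\ref{cellbasis}, Lemma~\ref{twoside} and (W3), with (W4) for the quotient coming from the fact that membership in $A^{\not\preceq \ob a}$ is detected purely by the middle factor of the triple basis, which is precisely the displayed basis the paper uses. Your handling of (3) (the identity $A^{\not\prec \ob a}=A^{\not\preceq \ob a}+A1_{\ob a}A$ plus the third isomorphism theorem) and of (4) (surjectivity of $\pi_{\ob a}$ alone) is marginally more direct than the paper's citations, but amounts to the same thing.
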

 \begin{proof}By Lemmas~\ref{cellbasis},~\ref{twoside}, we immediately have (1). Moreover, for any $e,c\in J$,
 $ 1_eA_{\diamond  \ob a}1_c$ has basis
 $$\{\bar y\bar h\bar x\mid (y,h,x)\in \bigcup_{b,d\in \ob b, \ob b\diamond \ob a} Y(  e, b)\times H(  b,d)\times X(d,c)\},$$
 where $\diamond\in \{\preceq,\prec\}$. So, (2) follows.
  (3)  follows immediately from  (2) and Lemma~\ref{twoside}. (4) follows from (1),  Lemma~\ref{cellbasis} and the basis of $ A_{\preceq  \ob a}$ given above.
   \end{proof}

\section{Upper finite fully stratified categories}
For any locally unital  $\Bbbk$-algebra $A=\bigoplus_{a,b\in J}1_a A1_b$,
 a left $A$-module means a module $M$ such that $M=\bigoplus_{a\in J}1_a M$. It is locally finite dimensional if $\dim 1_aM<\infty$ for all $a\in J$. Let $A$-mod ( resp., $A\text{-lfdmod}$, resp., $A\text{-pmod}$) be the category of all (resp., locally finite dimensional, resp., finitely generated projective) left  $A$-modules.
When $A$ is finite dimensional,    $A\text{-fdmod}$ is  the category of all finite dimensional left $A$-modules. If  $A$ is locally finite dimensional, then any finitely generated module is locally finite dimensional (e.g. \cite[\S~2.2]{BRUNDAN}).

Throughout this section, $A$   always admits  an upper finite weakly triangular decomposition    in the sense of Definition~\ref{WT}.
In particular, $A$ is locally finite dimensional and hence   $A\text{-pmod}\subset A\text{-lfdmod}$.
\subsection{Standardization and costandardization functors} Recall $A_{\preceq\ob  a}$ and $A_{\prec\ob  a}$ in Definition~\ref{bara1} and $A^{\not\preceq \ob a}$, $A^{\not\prec \ob a}$ in Definition~\ref{ideals}.
Obviously, $A^{\not\preceq \ob a}$ and   $A^{\not\prec \ob a}$  are   idempotent ideals. So,
$ A_{\preceq\ob  a}\text{-lfdmod}$ and $ A_{\prec\ob  a}\text{-lfdmod}$ are  Serre subcategories  of $A\text{-lfdmod}$.
Since  $\bar A_\ob a=\bar 1_\ob a A_{\preceq\ob  a}\bar 1_\ob a$ and $\bar 1_\ob a$ is an idempotent, by   \cite[Theorem ~II.4.3]{Re},
the exact idempotent truncation functor
$$  j^{ \ob a}: A_{\preceq  \ob a}\text{-lfdmod}\rightarrow   \bar A_{\ob a} \text{-fdmod}
$$ satisfying  $j^{ \ob a} V  = \bar 1_{\ob a} V$
 for any $V\in A_{\preceq \ob a}\text{-lfdmod}$,
is the quotient functor by the Serre subcategory $\mathcal C$ of $ A_{\preceq\ob  a}\text{-lfdmod}$ consisting of modules $V$ with $\bar 1 _\ob a V=0$.
So $\mathcal C= B$-lfdmod, where $B=A_{\preceq  \ob a}/Q$ and  $Q$ is the two-sided ideal of $A_{\preceq\ob a}$ generated by $\bar 1_\ob a$.
By Proposition~\ref{bara}(3), $B\cong A_{\prec \ob a}$ and hence  $\mathcal C=  A_{\prec\ob  a}\text{-lfdmod}$, i.e.,
   $\bar A_{ \ob a}\text{-fdmod}$ is the Serre quotient category $ A_{\preceq  \ob a}\text{-lfdmod}/A_{\prec\ob  a }\text{-lfdmod}$.
 It's known that $j^{ \ob a}$ has the left (resp., right) adjoint   $j^{ \ob a}_!$ (resp., $ j^{ \ob a}_*$), where
       \begin{equation}\label{adj123} j^{ \ob a}_!:= A_{\preceq \ob a}\bar 1_{ \ob a}\otimes_{\bar A_{\ob a}} ?,~~ \text{  $ j^{ \ob a}_*:=\bigoplus_{ \ob b\in I}\Hom_{\bar A_{\ob a}}(\bar 1_{ \ob a} A_{\preceq \ob a}\bar 1_{\ob b},?)$}.\end{equation}
      % See \cite[\S 2.5]{BS} and \cite[Lemma~2.21]{BS}.
    They give two functors  $\Delta, \nabla:  \bigoplus_{\ob a \in I} \bar A_{\ob a}\text{-fdmod}\rightarrow  A\text{-lfdmod}$ such that
\begin{equation}\label{exa}   \Delta=\bigoplus_{\ob a\in I}j^{\ob a}_!, ~~ \text{   $ \nabla=\bigoplus_{\ob a\in I}j^{\ob a}_*$.}\end{equation}
 %where $\bar A^0=\bigoplus_{\ob a\in I}\bar  A_{\ob a}$.
 Following \cite{LW}, $j^{\ob a}_!$'s  and $ j^{ \ob a}_*$'s are called the
   \textsf{standardization functors and costandardization functors}, respectively.

   Let $\{L_{ \ob a}(\lambda)\mid \lambda\in \bar \Lambda_{ \ob a}\}$ be the complete    set of pairwise  inequivalent irreducible  $\bar A_{ \ob a}$-modules. For  each $\lambda\in \bar\Lambda_{\ob a}$,  let $P_{ \ob a}(\lambda)$ (resp., $I_{\ob a} (\lambda)$) be the  projective cover (resp., injective hull) of $L_{ \ob a} (\lambda)$.

\begin{Defn}\label{stanpro}
For any $\lambda\in \bar\Lambda_\ob a$,
define $\Delta(\lambda)=\Delta(P_{\ob a} (\lambda))$, $\bar\Delta(\lambda)= \Delta(L_{\ob a} (\lambda))$, $\nabla(\lambda)=\nabla(I_{\ob a}(\lambda))$ and $\bar \nabla(\lambda)= \nabla(L_{\ob a}(\lambda))$, and call them the standard, proper standard, costandard and proper costandard  modules, respectively. In other words,
$$\begin{aligned}
\Delta(\lambda)&= j^\ob a_!(P_\ob a(\lambda)), \quad\quad \bar\Delta(\lambda)= j^\ob a_!(L_\ob a(\lambda)),\\
\nabla(\lambda)&=j^\ob a_*( I_\ob a(\lambda)), \quad \quad \ \bar\nabla(\lambda)=j^\ob a_*(L_\ob a(\lambda)).
\end{aligned}$$ \end{Defn}

\begin{Lemma}\label{exactj}  Suppose $\ob a\in I$.
\begin{itemize}
\item [(1)] $A_{\preceq \ob a}\bar 1_{\ob a}$ (resp., $\bar 1_{\ob a}A_{\preceq \ob a}$) is a  projective  right (resp., left)  $\bar A_{\ob a}$-module.
Furthermore, it is a free right (resp., left)  $\bar A_{\ob a}$-module if $ \ob a $ contains a unique element in $J$.
%\item [(2)]$\bar 1_{\ob a}A_{\preceq \ob a}$ is a  free left  $\bar A_{\ob a}$-module  with  basis given by $\{\bar x \mid  x\in  X(\ob a)\}$.
\item [(2)] Both $j^\ob a_!$ and $ j^\ob a_*$ are exact and so are  $\Delta$ and $\nabla$.
\item[(3)] For any $b\in \ob a$, $A_{\preceq \ob a}\bar 1_b\cong j^\ob a_!( \bar A_\ob a\bar 1_b)$ as $A$-modules.
\end{itemize}
\end{Lemma}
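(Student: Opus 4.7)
The plan is to prove (1) by explicit basis analysis using the weakly triangular decomposition, and then deduce (2) and (3) essentially for free.

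For (1), the first step is to pin down the basis of $A_{\preceq\ob a}\bar 1_\ob a$. By Proposition~\ref{bara}(2), for $c\in\ob a$, $1_eA_{\preceq\ob a}1_c$ has basis $\{\bar y\bar h\bar x\}$ with $(y,h,x)\in Y(e,b)\times H(b,d)\times X(d,c)$ and $b,d\in\ob b$ for some $\ob b\preceq\ob a$. Since $x\in X(d,c)\subseteq A^+_{d,c}$ with $c\in\ob a$, (W3) forces $\ob a\preceq\ob d=\ob b$; combined with $\ob b\preceq\ob a$ this yields $\ob b=\ob a$. The condition $A^+_{\ob a}=\bigoplus_{c'\in\ob a}\Bbbk 1_{c'}$ from (W3) then forces $d=c$ and $x=1_c$. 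Thus $A_{\preceq\ob a}\bar 1_\ob a$ has basis $\{\bar y\bar h\mid y\in Y(e,b),\,h\in H(b,c),\,b,c\in\ob a,\,e\in J\}$, and by Proposition~\ref{bara}(1) the basis of $\bar 1_b\bar A_\ob a$ is $\{\bar h\mid h\in H(b,c),\,c\in\ob a\}$.

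The second step is to define the right $\bar A_\ob a$-linear map
\[
\Phi:\bigoplus_{b\in\ob a}\ \bigoplus_{e\in J,\,y\in Y(e,b)}\bar 1_b\bar A_\ob a\longrightarrow A_{\preceq\ob a}\bar 1_\ob a,\qquad \bar z\longmapsto\bar y\bar z
\]
on the summand indexed by $(b,e,y)$; its right $\bar A_\ob a$-linearity is immediate since left and right multiplication commute. By the basis analysis above $\Phi$ sends basis to basis bijectively, so it is an isomorphism. Each $\bar 1_b\bar A_\ob a$ is projective because $\bar 1_b$ is an idempotent in $\bar A_\ob a$ (as $b\in\ob a$ makes $\bar 1_b=\bar 1_\ob a\bar 1_b\bar 1_\ob a$ and $\bar 1_b^2=\bar 1_b$), hence $A_{\preceq\ob a}\bar 1_\ob a$ is projective. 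When $\ob a=\{a\}$ is a singleton, $\bar 1_b=\bar 1_\ob a$ is the identity of $\bar A_\ob a$, every summand equals $\bar A_\ob a$, and freeness follows. The left $\bar A_\ob a$-module assertion is symmetric: a parallel basis computation shows $\bar 1_\ob a A_{\preceq\ob a}$ has basis $\{\bar h\bar x\mid h\in H(b,d),\,x\in X(d,e),\,b,d\in\ob a\}$, and the map $\bar z\mapsto\bar z\bar x$ indexed by pairs $(d,e,x)$ with $x\in X(d,e)$ and $d\in\ob a$ gives the analogous decomposition.

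For (2), the projectivity of $A_{\preceq\ob a}\bar 1_\ob a$ as a right $\bar A_\ob a$-module makes $j^\ob a_!=A_{\preceq\ob a}\bar 1_\ob a\otimes_{\bar A_\ob a}(-)$ exact. For $j^\ob a_*$, observe that $\bar 1_\ob a A_{\preceq\ob a}\bar 1_\ob b$ is a direct summand of the projective left $\bar A_\ob a$-module $\bar 1_\ob a A_{\preceq\ob a}$ (via the idempotent $\bar 1_\ob b$), hence is itself projective, so $\Hom_{\bar A_\ob a}(\bar 1_\ob a A_{\preceq\ob a}\bar 1_\ob b,-)$ is exact; direct sums of exact functors being exact, so is $j^\ob a_*$. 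Exactness of $\Delta$ and $\nabla$ then follows from \eqref{exa}.

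For (3), apply the standard identification $M\otimes_R Re\cong Me$ for a right $R$-module $M$ and an idempotent $e\in R$, with $M=A_{\preceq\ob a}\bar 1_\ob a$, $R=\bar A_\ob a$, and $e=\bar 1_b$. This yields
\[
j^\ob a_!(\bar A_\ob a\bar 1_b)=A_{\preceq\ob a}\bar 1_\ob a\otimes_{\bar A_\ob a}\bar A_\ob a\bar 1_b\cong A_{\preceq\ob a}\bar 1_\ob a\cdot\bar 1_b=A_{\preceq\ob a}\bar 1_b,
\]
and the isomorphism is visibly left $A$-linear. The main obstacle in the whole argument is the basis computation in the first step: one must use both the quotient restriction $\ob b\preceq\ob a$ and the (W3) constraint $\ob a\preceq\ob d$ to conclude $\ob b=\ob a$, and then invoke $A^+_{\ob a}=\bigoplus_{c'\in\ob a}\Bbbk 1_{c'}$ to collapse the $X$-factor to a single identity $\bar 1_c$. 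Once this collapse is observed, the decomposition of $A_{\preceq\ob a}\bar 1_\ob a$ into copies of $\bar 1_b\bar A_\ob a$ is essentially forced.
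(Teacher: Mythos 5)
Your proof is correct and follows essentially the same route as the paper: the same decomposition of $A_{\preceq \ob a}\bar 1_{\ob a}$ into copies of $\bar 1_b\bar A_{\ob a}$ indexed by pairs $(b,y)$ with $y\in Y(b)$, the same deduction of exactness from projectivity of the corner bimodules, and the same identification $A_{\preceq\ob a}\bar 1_{\ob a}\otimes_{\bar A_{\ob a}}\bar A_{\ob a}\bar 1_b\cong A_{\preceq\ob a}\bar 1_b$ for (3). You merely make explicit the basis collapse (forcing $\ob b=\ob a$ and $x=1_c$) that the paper leaves implicit in its appeal to Proposition~\ref{bara}(1)--(2).
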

\begin{proof}By Proposition~\ref{bara}(1), $\bar A_\ob a$ has basis $\{\bar h\mid h\in H(b,c), b,c\in \ob a\}$.
 Consider the right $\bar A_{\ob a}$-homomorphism
$$  \phi:
\bigoplus_{b\in \ob a, y\in Y(b)} \bar 1_b \bar A_\ob a \rightarrow A_{\preceq \ob a}\bar 1_{\ob a}
$$ such that the image of  $\bar 1_b\bar  h$ in  the $y$th copy of  $\bar 1_b \bar A_\ob a$ is $\bar y \bar h \in A_{\preceq \ob a}\bar 1_{\ob a}$ for all $h\in H(b,c)$. By Proposition~\ref{bara}(1)--(2), $\phi$ sends a basis of $\bigoplus_{b\in \ob a, y\in Y(b)} \bar 1_b \bar A_\ob a$ to a basis of $A_{\preceq \ob a}\bar 1_{\ob a}$. So, $\phi$ is a linear   isomorphism.
Now $A_{\preceq \ob a}\bar 1_{\ob a}$ is projective since   $\bar 1_b \bar A_\ob a$ is a projective right $\bar A_{\ob a}$-module. Similarly, $\bar 1_\ob a  A_{\preceq \ob a}$ is a projective left $\bar A_\ob a $-module.  If $ \ob a$ contains only one element $b$, then $1_\ob a=1_b$. So, $\bar 1_b\bar A_\ob a=\bar A_\ob a$, proving  the last statement in (1).
  By (1),   $\bar 1_{\ob a}  A_{\preceq \ob a}\bar 1_{\ob b}$ is a  finitely generated and projective  left $\bar A_{\ob a}$-module. So,  (2) follows.

  Consider the  isomorphism of $A$-modules
  $$\psi: A_{\preceq \ob a}\bar 1_\ob a\otimes _{\bar A_\ob a} \bar A_\ob a \rightarrow  A_{\preceq \ob a}\bar 1_\ob a, \quad \bar y\otimes \bar h \mapsto \bar y\bar h$$
   for any $h\in H(c,b)$, $y\in Y(c)$,    $b,c\in\ob a$. Recall that $ 1_\ob a=\sum_{b\in \ob a}1_ b$. We have $ A_{\preceq \ob a}\bar 1_\ob a=\bigoplus _{b\in \ob a}A_{\preceq \ob a}\bar 1_b$ and
  $$A_{\preceq \ob a}\bar 1_\ob a\otimes _{\bar A_\ob a} \bar A_\ob a=\bigoplus_{b\in\ob a}A_{\preceq \ob a}\bar 1_\ob a\otimes _{\bar A_\ob a} \bar A_\ob a1_b=\bigoplus_{b\in\ob a}  j^\ob a_!( \bar A_\ob a\bar 1_b)
  .$$
  Moreover, the restriction of $\psi$ to  $j^\ob a_!( \bar A_\ob a\bar 1_b)$ gives a homomorphism of $A$-modules $\psi_b$ from $j^\ob a_!( \bar A_\ob a\bar 1_b)$ to $A_{\preceq \ob a}\bar 1_b$.  Furthermore, $\psi_b$ is an isomorphism since it sends generating elements of
  $j^\ob a_!( \bar A_\ob a\bar 1_b)$ to the basis $\{\bar y\bar h\mid  h\in H(c,b), y\in Y(c), c\in\ob a\}$ of $A_{\preceq \ob a}\bar 1_b$.
\end{proof}
%If  $\lambda\in \bar \Lambda_{\ob a}$, then $\{\bar y\otimes w\mid y\in Y( \ob a),  w\in \mathcal P_{\ob a}(\lambda)\}$ is a basis of  $\Delta(\lambda)$,
%where $\mathcal P_{\ob a} (\lambda)$  is a basis
%of $P_{\ob a}(\lambda)$, and  $\{\bar y\otimes w\mid y\in Y(\ob  a), w\in  \mathcal L_{\ob a}(\lambda)\}$ is a basis of  $\bar\Delta(\lambda)$,
%where  $\mathcal L_{\ob a}(\lambda)$ is a basis of $L_{\ob a}(\lambda)$.

\begin{Defn}\label{rho}  Let  $\rho : \bar \Lambda\rightarrow I$ be the function
 such that $ \rho(\lambda)=\ob a$  for any  $\lambda\in \bar \Lambda_{\ob a}$,  where $\bar \Lambda=\bigcup_{\ob a\in I} \bar \Lambda_{\ob a}$. Following \cite{BS}, for  any given sign function $\varepsilon: I \rightarrow \{\pm\}$, define
 \begin{equation}\label{sgn}\Delta_\varepsilon(\lambda)=\left\{
                       \begin{array}{ll}
                         \Delta(\lambda), & \hbox{if $\varepsilon(\rho(\lambda))=+$,} \\
                         \bar\Delta(\lambda), & \hbox{if $\varepsilon(\rho(\lambda))=-$.}
                       \end{array}
                     \right.
 \end{equation}
\end{Defn}

An $A$-module $V$  has a finite $\Delta$-flag if it has a finite filtration  such that its  sections are  isomorphic to $\Delta (\lambda)$ for various  $\lambda\in \bar \Lambda$. Similarly we have the notion of finite $\Delta_\varepsilon$-flag.
The following result is motivated by  \cite[Theorem~5.14]{BS}.
\begin{Theorem}\label{striateddndn}
 Suppose  $\varepsilon: I \rightarrow \{\pm\}$ is a sign function.
\begin{itemize}
\item [(1)] For any   $\lambda\in\bar  \Lambda$, there is a projective $A$-module $P$ such that $P $ has a finite $\Delta_\varepsilon$-flag with top section $\Delta_\varepsilon(\lambda)$,  and  other sections are of forms $\Delta_\varepsilon(\mu)$ for  $\mu\in\bar \Lambda$ with  $\rho(\mu)\succeq \rho (\lambda)$.
\item [(2)] For any   $\lambda\in\bar  \Lambda$, $\Delta(\lambda)$
has a unique irreducible quotient denoted by $L(\lambda)$.
\item  [(3)]  $\{L(\mu)\mid \mu\in \bar \Lambda\}$ is a complete set of pairwise inequivalent
irreducible $A$-modules.
\end{itemize}
\end{Theorem}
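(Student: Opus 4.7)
The plan is to prove (1) by constructing an explicit projective $P$ with a $\Delta_\varepsilon$-flag, then to deduce (2) and (3) from the structure of $\bar\Delta(\lambda)$ together with the exactness properties of the standardization functors (Lemma~\ref{exactj}).

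For (1), given $\lambda\in\bar\Lambda_{\ob a}$, pick $b\in\ob a$ so that $P_{\ob a}(\lambda)$ is a direct summand of $\bar A_{\ob a}\bar 1_b$; such $b$ exists because the finite dimensional algebra $\bar A_{\ob a}$ decomposes as $\bigoplus_{b\in\ob a}\bar A_{\ob a}\bar 1_b$ as a right module, so every indecomposable projective appears in some summand. Set $P:=A1_b$, which is projective in $A\text{-lfdmod}$. Since $(I,\preceq)$ is upper finite, $I^{\succeq\ob a}$ is finite; choose any linear extension $\ob a=\ob c_1,\ob c_2,\ldots,\ob c_n$ of $\preceq$ restricted to this set. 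Each $K_i:=\{\ob c_j\mid j\ge i\}$ is then an upper set of $I$ (if $\ob b=\ob c_j\in K_i$ and $\ob b\preceq\ob d$, then $\ob d\succeq\ob a$, so $\ob d=\ob c_l$, and the linear extension forces $l\ge j\ge i$), and $M_i:=A^{K_i}1_b$ gives a descending chain $P=M_1\supseteq M_2\supseteq\cdots\supseteq M_{n+1}=0$ of $A$-submodules.

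The crucial identification, which is also the main technical obstacle, is
\[
  M_i/M_{i+1}\;\cong\; j^{\ob c_i}_!(V_i),\qquad V_i:=\bigoplus_{d\in\ob c_i}\bigoplus_{x\in X(d,b)}\bar A_{\ob c_i}\bar 1_d,
\]
as $A$-modules. Lemma~\ref{twoside} gives $M_i/M_{i+1}$ a basis $\{\overline{yhx}\}$ with $h\in H(b',d)$, $b',d\in\ob c_i$, $x\in X(d,b)$, which matches via Proposition~\ref{bara}(1) and Lemma~\ref{exactj}(3) a basis of $j^{\ob c_i}_!(V_i)$. To upgrade this bijection to an $A$-module isomorphism, the key point is that multiplication in $A$ pushes middle-factor levels upward in the poset: for basis elements $u_1, u_2$ with middles at levels $\ob d_1, \ob d_2$, the product $u_1u_2$ lies in $A1_{\ob d_1}A\cap A1_{\ob d_2}A$, so Lemma~\ref{linearcom} expresses it as a combination of basis elements whose middles are at levels $\succeq\ob d_1,\ob d_2$. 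Applying this to $a\cdot yhx$ with $a\in A^{\not\preceq\ob c_i}$ (expanded via Lemma~\ref{twoside} as a combination of basis elements with middles $\ob e\not\preceq\ob c_i$) forces every contributing middle to be $\succ\ob c_i$, hence equal to some $\ob c_l$ with $l>i$ by the linear extension, hence in $A^{K_{i+1}}1_b=M_{i+1}$; thus $A^{\not\preceq\ob c_i}$ annihilates $M_i/M_{i+1}$, which therefore descends to an $A_{\preceq\ob c_i}$-module, and the basis bijection becomes the desired $A$-module isomorphism. Given this, refine the filtration by decomposing each $V_i$ either into indecomposable projective summands $P_{\ob c_i}(\mu)$ (if $\varepsilon(\ob c_i)=+$) or via a composition series in $\bar A_{\ob c_i}\text{-fdmod}$ (if $\varepsilon(\ob c_i)=-$), and applying the exact $j^{\ob c_i}_!$. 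For $i=1$ arrange the topmost section to be $\Delta_\varepsilon(\lambda)$ by placing $P_{\ob a}(\lambda)$ first in the decomposition of $V_1=\bar A_{\ob a}\bar 1_b$ (when $\varepsilon(\ob a)=+$) or by further refining the leading $\Delta(\lambda)$ via a composition series of $P_{\ob a}(\lambda)$ whose topmost factor is $L_{\ob a}(\lambda)$ (when $\varepsilon(\ob a)=-$). All remaining sections are $\Delta_\varepsilon(\mu)$ with $\rho(\mu)=\ob c_i\succeq\ob a=\rho(\lambda)$.

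For (2), applying the exact $j^{\ob a}_!$ to $P_{\ob a}(\lambda)\twoheadrightarrow L_{\ob a}(\lambda)$ produces $\Delta(\lambda)\twoheadrightarrow\bar\Delta(\lambda)$, so it suffices to locate the unique simple head of $\bar\Delta(\lambda)$. Since $j^{\ob a}\circ j^{\ob a}_!=\id$, one has $j^{\ob a}(\bar\Delta(\lambda))=L_{\ob a}(\lambda)$, and the construction $\bar\Delta(\lambda)=A_{\preceq\ob a}\bar 1_{\ob a}\otimes_{\bar A_{\ob a}}L_{\ob a}(\lambda)$ shows $\bar\Delta(\lambda)$ is generated over $A$ by its $\bar 1_{\ob a}$-weight space. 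Any proper submodule $N\subsetneq\bar\Delta(\lambda)$ therefore satisfies $\bar 1_{\ob a}N=0$ (else $N$ contains the generating set), so the sum of all proper submodules remains proper, yielding a unique maximal proper submodule whose quotient is the unique simple head $L(\lambda)$. For (3), any simple $L\in A\text{-lfdmod}$ is cyclic and hence a quotient of some $A1_b$; applying (1) with $\varepsilon\equiv-$, $A1_b$ carries a $\bar\Delta$-flag, so $L\cong L(\mu)$ for some $\mu\in\bar\Lambda$. For pairwise inequivalence, $\bar\Delta(\lambda)$ and hence $L(\lambda)$ is a module over $A_{\preceq\rho(\lambda)}$, hence supported within $\{\ob c\preceq\rho(\lambda)\}$, while $\bar 1_{\rho(\lambda)}L(\lambda)=L_{\rho(\lambda)}(\lambda)\neq 0$; thus $\rho(\lambda)$ is recovered as the maximum of the support and $\lambda$ from the $\bar A_{\rho(\lambda)}$-isomorphism type of this weight space.
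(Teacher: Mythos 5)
Your construction in (1) is essentially the paper's own argument: the paper filters the projective module $A1_{\ob a}$ (rather than $A1_b$ for a single $b\in\ob a$) by the spans of basis elements whose middle level runs over the successive upper sets coming from an enumeration of the finite set $I^{\succeq\ob a}$, identifies each subquotient with a direct sum of modules $A_{\preceq\ob b_i}\bar 1_{c_i}\cong j^{\ob b_i}_!(\bar A_{\ob b_i}\bar 1_{c_i})$ via Lemma~\ref{exactj}(3), and then refines using exactness of $\Delta$; your variant with $A1_b$, the linear extension, and $P_{\ob a}(\lambda)$ as a summand of $\bar A_{\ob a}\bar 1_b$ is an equivalent packaging, and your proof of (3) (support of $L(\lambda)$ plus $\bar 1_{\rho(\lambda)}L(\lambda)\cong L_{\rho(\lambda)}(\lambda)$, cf.\ \eqref{kkk1234}) is the same mechanism. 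One point of care in (1): knowing that $A^{\not\preceq\ob c_i}$ annihilates $M_i/M_{i+1}$ and that the two modules have matching bases does not by itself yield an isomorphism; what actually verifies that your bijection intertwines the actions is the same expansion $a\cdot yh=\sum c_{p,q}y_ph_q+z$, $z\in A1_{\ob e}A$ with $\ob e\succ\ob c_i$, applied to an \emph{arbitrary} $a\in A$ (not only $a$ in the ideal), noting that $zx$ dies in the quotient while the level-$\ob c_i$ terms match the $j^{\ob c_i}_!$ action -- this is exactly the computation the paper writes out for its map $\varphi$ in \eqref{kkk1}. Your text contains all the ingredients for this, so it is a matter of stating it, not of a missing idea.

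The step that does not stand as written is the reduction opening (2): from the surjection $\Delta(\lambda)\twoheadrightarrow\bar\Delta(\lambda)$ it does not follow that it ``suffices'' to show $\bar\Delta(\lambda)$ has a unique simple head, since a module can surject onto a module with simple head and still have several inequivalent simple quotients (e.g.\ $L_1\oplus L_2\twoheadrightarrow L_1$). What (2) requires is that $\Delta(\lambda)$ itself has a unique maximal submodule, and your own method closes the gap in one more line: $\Delta(\lambda)=j^{\ob a}_!(P_{\ob a}(\lambda))$ is generated by its weight space $\bar 1_{\ob a}\Delta(\lambda)=P_{\ob a}(\lambda)$; if $N\subsetneq\Delta(\lambda)$ is a proper submodule, then $\bar 1_{\ob a}N$ is a proper $\bar A_{\ob a}$-submodule of $P_{\ob a}(\lambda)$ (otherwise $N$ contains the generators), hence $\bar 1_{\ob a}N\subseteq\operatorname{rad}P_{\ob a}(\lambda)$ because $P_{\ob a}(\lambda)$ has simple head; consequently the sum of all proper submodules has $\bar 1_{\ob a}$-part inside $\operatorname{rad}P_{\ob a}(\lambda)$ and is again proper. (The paper reaches the same conclusion by citing \cite[Lemma~2.22]{BS} for the adjoint triple $(j^{\ob a}_!,j^{\ob a},j^{\ob a}_*)$, which also records that every simple quotient $S$ of $\Delta(\lambda)$ has $\bar 1_{\ob a}S\cong L_{\ob a}(\lambda)$.) With this repair, and the remark above on the subquotient isomorphism, your proposal is complete and follows the paper's route.
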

\begin{proof}Suppose $\lambda\in \bar \Lambda_{\ob a}$ for some $\ob a\in I$.
We claim that the projective $A$-module  $A 1_{\ob a}$ is the required $P$ in (1).
In fact, since  $I$ is  upper finite, there are $\ob b_i\in I$, $1\le i\le n$ such that
 $ \ob a=\ob b_1$, $X(  b_i,  a) \neq \emptyset$ for some $b_i\in \ob b_i$ and $a\in \ob a$ and  $ \ob b_i\preceq  \ob b_j\Rightarrow i\leq j$.
Let $P_i$ be the subspace of $A 1_{\ob a}$  spanned by
$$\{yhx\mid (y,h,x)\in\bigcup_{b_j, c_j\in \ob b_j, a\in \ob a}  Y( b_j)\times  H( b_j,c_j)\times X( c_j,  a), i+1\leq j\leq n\}. $$
Thanks to  Lemma~\ref{linearcom}(2),
$A 1_{\ob a}=P_0\supseteq P_1\supseteq \cdots \supseteq P_n=0$
is a filtration of $A$-modules such that  $P_{i-1}/P_{i}$
has  basis given by $\{yhx+P_i \mid (y, h, x)\in Y(  b_i)\times  H(   b_i,c_i)\times  X( c_i,
d), b_i,c_i\in \ob b_i, d\in \ob a\}$.
Define the $\Bbbk$-linear isomorphism
\begin{equation} \label{kkk1} \varphi: \bigoplus_{ d\in\ob a, c_i\in \ob b_i}\bigoplus_{x\in X( c_i,d)}A_{\preceq \ob b_i}\bar 1_{c_i}\rightarrow P_{i-1}/P_i\end{equation}
sending the basis vector $\bar y\bar h\bar 1_{c_i}$  in the  $x$th copy of $A_{\preceq \ob b_i}\bar 1_{ c_i}$ to
$yhx+P_i$, $(y, h)\in Y(  b_i)\times H(   b_i,c_i)$ for any $b_i\in \ob b_i$. Then
$\varphi$ is  an $A$-homomorphism. To see this, take
 $(y, h)\in Y(  c,    b_i)\times  H(   b_i,c_i)$ and any element  $u\in 1_{  d} A 1_{   c}$ for any $  c,   d\in J$.
 %It follows from (W3)  that $   c \preceq  \ob b_i$ whenever $Y( \ob c,  \ob b_i)\neq \emptyset$.
 By Lemma~ \ref{linearcom}(2), we  have
  $$ uyh= \sum c_{p,q}y_ph_q+ z$$
for  some  $c_{p,q}\in \Bbbk$, $y_p\in Y(  d,   b'_i), h_q\in H(   b'_i,c_i)$, $b'_i\in\ob b_i$ and  $z \in A1_\ob e A$ with $\ob  e\succ \ob  b_i$. Since
 $z$ acts  on $x+ P_{i}$ as zero,
$u\varphi(yh\bar 1_{c_i})= \sum c_{p,q}y_ph_qx+P_i $.
 On the other hand, any $ \bar z$ is zero in $A_{\preceq \ob b_i}$. So,
 $$\varphi (uyh\bar 1_{c_i})=\varphi( \sum c_{p,q}y_ph_q) =\sum c_{p,q}y_ph_qx+P_i=u\varphi(yh\bar 1_{c _i}).$$
 This proves that $\varphi$ is an $A$-isomorphism.
 Thanks to Lemma~\ref{exactj}(3),
  $$ A_{\preceq \ob b_i}\bar 1_{c_i} \cong
 j^{\ob b_i}_!(\bar A_{\ob b_i}\bar 1_{c_i})\cong  \bigoplus _{\mu\in\bar \Lambda_{\ob b_i}}\Delta(\mu)^{\oplus n(\mu)}
 $$
 if $ \bar A_{\ob b_i}\bar 1_{c_i}\cong \bigoplus _{\mu\in\bar \Lambda_{\ob b_i}}P_{\ob b_i}(\mu)^{\oplus n(\mu)}$.
 So, $A 1_{\ob a}$ has a finite $\Delta$-flag such that each section is  of form $\Delta(\mu)$ with $\rho(\mu)\succeq \ob a$.
 If $i=1$, then $\ob b_1=\ob a$. In this case for $c_1,d\in \ob a$, $X(c_1,d)=\{1_d\}$ if $c_1=d$ and $\emptyset$ otherwise. So,
  $$\bigoplus_{ d\in\ob a, c_1\in \ob a}\bigoplus_{x\in X( c_1,d)}A_{\preceq \ob a}\bar 1_{c_1}=\bigoplus_{ d\in\ob a}A_{\preceq \ob a}\bar 1_{d}=A_{\preceq \ob a}\bar1_\ob a  \cong A_{\preceq \ob a}\bar 1_{\ob a}\otimes _{\bar A_{\ob a}}\bar A_{\ob a}=
 j^{\ob a}_!(\bar A_{\ob a})\cong  \bigoplus _{\mu\in\bar \Lambda_{\ob a}}\Delta(\mu)^{\oplus\text{dim}L_{\ob a}(\mu)}. $$
 So,  we can choose $\Delta(\lambda)$ as the top section and $A 1_{\ob a}$ is the required $P$ in (1)
 if  $\varepsilon(\rho(\lambda))=+$ for any $\lambda\in \bar \Lambda$.
 By Lemma~\ref{exactj}(2),  $\Delta$   is exact. So,  for any $\mu\in \bar \Lambda_{\ob b}$,  $\Delta(\mu)$     has a finite $\bar\Delta$-flag such that  each section is of form $\bar\Delta(\nu)$ for some   $\nu\in\bar \Lambda_{\ob b}$.  Therefore,  $A 1_{\ob a}$ is still the required $P$ in (1)  for any $\epsilon$.

Using the adjoint triple in \eqref{adj123} and  \cite[Lemma~2.22]{BS},
we have that  $\Delta(\lambda)$ is an indecomposable projective module in $A_{\preceq \ob a}\text{-lfdmod}$ with the simple head $L(\lambda)$. This proves (2). Moreover, $L(\lambda)$
 is the unique (up to isomorphism) irreducible $A_{\preceq \ob a}$-module such that \begin{equation}\label{kkk1234} \bar 1_{\ob a} L(\lambda)\cong L_{\ob a}(\lambda).\end{equation} So, $\{L(\lambda)\mid \lambda\in\bar\Lambda\}$'s are  pairwise inequivalent. Suppose that $L$ is any irreducible $A$-module. Then there exists $\ob a\in I$ such that $1_{\ob a}L\neq0$, and hence  $L$ is a quotient of $A 1_{\ob a} $. By (1) together with arguments on induction for the length of the $\Delta$-flag of $A 1_{\ob a}$, there is a  $\mu\in \bar \Lambda$ with $\rho(\mu)\succeq \ob a$ such that $\Hom_A(\Delta(\mu),L)\neq0$, forcing $L=L(\mu)$,  the  simple head of $\Delta(\mu)$.  \end{proof}

\subsection{Stratified categories and highest weight categories}
 Let $A$-mod$^{\Delta}$ be the full subcategory of $A$-mod
consisting of all modules with a finite $\Delta$-flag. Since $\Delta(\lambda)\in A\text{-lfdmod}$ for any $\lambda\in\bar \Lambda$, $A$-mod$^{\Delta}$ is a subcategory
of  $A\text{-lfdmod}$.

\begin{Theorem}\label{ext1}The category $A\text{-lfdmod}$ is an upper finite  fully stratified category in the sense of \cite[Definition~3.36]{BS}. The   corresponding stratification    $\rho$ is in Definition~\ref{rho}, and  the standard, proper standard, costandard and proper costandard objects are  defined in Definition~\ref{stanpro}.
\end{Theorem}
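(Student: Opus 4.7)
The plan is to verify directly the axioms of an upper finite fully stratified category from \cite[Definition~3.36]{BS}, assembling the ingredients already constructed in Sections~2--3. The stratification $\rho:\bar\Lambda\to I$ is the one fixed in Definition~\ref{rho}, upper finiteness of $(I,\preceq)$ is built into the hypothesis on $A$, and the four standard/costandard families are those of Definition~\ref{stanpro}. A complete set of pairwise inequivalent simples has already been produced by Theorem~\ref{striateddndn}(3).

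First I would identify the projective cover $P(\lambda)$ of $L(\lambda)$ in $A$-lfdmod and equip it with a $\Delta$-flag of the required shape. Writing $\rho(\lambda)=\ob a$, the top layer of the filtration of $A1_{\ob a}$ built in the proof of Theorem~\ref{striateddndn}(1) is
\[
P_0/P_1\cong A_{\preceq\ob a}\bar 1_{\ob a}\cong\bigoplus_{\mu\in\bar\Lambda_{\ob a}}\Delta(\mu)^{\oplus\dim L_{\ob a}(\mu)},
\]
so $L(\lambda)$ already appears in the head of $A1_{\ob a}$. Since each $1_bA1_b$ is finite dimensional, Krull--Schmidt applies to $A1_{\ob a}$, and I would extract an indecomposable projective summand $P(\lambda)$ with simple head $L(\lambda)$; the filtration inherited from $A1_{\ob a}$ endows $P(\lambda)$ with a finite $\Delta$-flag whose top section is $\Delta(\lambda)$ and whose remaining sections are of the form $\Delta(\mu)$ with $\rho(\mu)\succ\ob a$.

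Second, $\Delta(\lambda)$ itself has to carry a finite $\bar\Delta$-flag confined to the stratum $\ob a$. I would take a composition series of the $\bar A_{\ob a}$-module $P_{\ob a}(\lambda)$ and apply the exact functor $j^{\ob a}_!=\Delta$ of Lemma~\ref{exactj}(2): the resulting filtration of $\Delta(\lambda)=j^{\ob a}_!(P_{\ob a}(\lambda))$ has sections $j^{\ob a}_!(L_{\ob a}(\nu))=\bar\Delta(\nu)$, $\nu\in\bar\Lambda_{\ob a}$, with $\bar\Delta(\lambda)$ as the unique top section because $P_{\ob a}(\lambda)$ has simple head $L_{\ob a}(\lambda)$. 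Moreover, any composition factor $L(\mu)$ of $\bar\Delta(\lambda)$ satisfies $\bar 1_{\ob a}L(\mu)\neq 0$, since $j^{\ob a}$ is exact and $j^{\ob a}\bar\Delta(\lambda)=L_{\ob a}(\lambda)\neq 0$; by \eqref{kkk1234} and Theorem~\ref{striateddndn}(3), this forces $\rho(\mu)=\ob a$, confining the composition factors of each proper standard object to the correct stratum. The dual statement, that $\nabla(\lambda)$ has a $\bar\nabla$-flag whose sections lie in $\bar\Lambda_{\ob a}$ with $\bar\nabla(\lambda)$ as socle, is proved analogously using $j^{\ob a}_*=\nabla$ and a composition series of $I_{\ob a}(\lambda)$.

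Finally, the requisite $\Hom$- and $\mathrm{Ext}^1$-vanishing between $\Delta$- and $\bar\nabla$-objects is extracted from the adjoint triple $(j^{\ob a}_!,j^{\ob a},j^{\ob a}_*)$ together with the acyclicity consequences of Lemma~\ref{exactj}: adjunction reduces $\Hom_A(\Delta(\lambda),\bar\nabla(\mu))$ to a computation inside $\bar A_{\rho(\lambda)}$-fdmod that is $\delta_{\lambda\mu}$-controlled, while exactness of $j^{\ob a}_*$ makes objects in its image acyclic for $\Hom_A(\Delta(\lambda),-)$, yielding the corresponding $\mathrm{Ext}^1$-vanishing. I expect the main obstacle not to lie in any single one of these verifications---each is essentially formal once Theorem~\ref{striateddndn}, Lemma~\ref{exactj} and Proposition~\ref{bara} are in hand---but in the bookkeeping needed to match our data with every clause of \cite[Definition~3.36]{BS} simultaneously, in particular the multiplicity $\dim L_{\ob a}(\mu)$ that enters the $\bar\Delta$-filtration of $\Delta(\mu)$ and that distinguishes the \emph{fully stratified} notion from the stronger upper finite \emph{highest weight} one.
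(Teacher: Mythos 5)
Two steps of your plan do not hold up as written. First, the passage from $A1_{\ob a}$ to the projective cover $P(\lambda)$ is circular at this stage: Krull--Schmidt does give you an indecomposable projective summand with head $L(\lambda)$, but a direct summand of a module with a finite $\Delta$-flag does not visibly inherit such a flag --- in the paper this is exactly Proposition~\ref{dektss1}, whose proof invokes the Ext$^1$-vanishing criterion \cite[Theorem~3.39]{BS} and therefore already presupposes Theorem~\ref{ext1}. The detour is also unnecessary, and this is how the paper avoids it: property $(\widehat{P\Delta_\varepsilon})$ in \cite[Definition~3.36]{BS} only asks for \emph{some} projective object with a suitable $\Delta_\varepsilon$-flag, and $A1_{\ob a}$ itself serves, since its top layer $A_{\preceq\ob a}\bar 1_{\ob a}\cong\bigoplus_\mu\Delta(\mu)^{\oplus\dim L_{\ob a}(\mu)}$ lets one refine the flag so that $\Delta_\varepsilon(\lambda)$ sits at the top (Theorem~\ref{striateddndn}(1)). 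Second, your claim that every composition factor $L(\mu)$ of $\bar\Delta(\lambda)$ satisfies $\bar 1_{\ob a}L(\mu)\neq 0$, hence $\rho(\mu)=\ob a$, is false: exactness of $j^{\ob a}$ together with $j^{\ob a}\bar\Delta(\lambda)=L_{\ob a}(\lambda)$ only says that exactly one composition factor (with multiplicity one) survives $\bar 1_{\ob a}$; the remaining factors are killed by $\bar 1_{\ob a}$ and lie in strictly lower strata, which is precisely the content of Lemma~\ref{barcom}. Fortunately this stronger confinement is not an axiom of \cite[Definition~3.36]{BS}, but the deduction as you state it is wrong.

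There is also a genuine omission: you take for granted that the objects of Definition~\ref{stanpro} \emph{are} the standard/costandard objects in the sense of \cite[Definition~3.36]{BS}. Those are defined intrinsically from the recollement attached to the Serre subcategories generated by $\{L(\lambda)\mid\rho(\lambda)\preceq\ob a\}$ and $\{L(\lambda)\mid\rho(\lambda)\prec\ob a\}$, so one must identify these Serre subcategories with $A_{\preceq\ob a}$-lfdmod and $A_{\prec\ob a}$-lfdmod before any flag computation with $j^{\ob a}_!$, $j^{\ob a}_*$ verifies the axioms. This identification is the heart of the paper's proof: it follows by applying Theorem~\ref{striateddndn}(3) to the quotient algebras $A_{\preceq\ob a}$ and $A_{\prec\ob a}$, which carry their own upper finite weakly triangular decompositions by Proposition~\ref{bara}(2); only then does the adjoint triple $(j^{\ob a}_!,j^{\ob a},j^{\ob a}_*)$ agree with the intrinsic one and does $(\widehat{P\Delta_\varepsilon})$, checked for every sign function $\varepsilon$, yield the fully stratified structure. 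Your final paragraph on Hom- and Ext$^1$-vanishing against $\bar\nabla$ is not needed for the definition at all (it is a consequence, \cite[Lemma~3.49]{BS}, used later in \eqref{abc1}), so effort spent there does not close the gaps above.
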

\begin{proof}
By Theorem ~\ref{striateddndn}(3), the function $\rho$ in Definition~\ref{stanpro} is a stratification of $A\text{-lfdmod}$ in the sense of \cite[Definition~3.1]{BS}.
For any $\ob a\in I$,  using   Theorem ~\ref{striateddndn}(3) again for the upper finite weakly triangular structures of $A_{\preceq\ob a}$ and $A_{\prec\ob a}$ (see Proposition~\ref{bara}(2)), we see that  the Serre subcategory $A\text{-lfdmod}_{\preceq \ob a}$ (resp., $A\text{-lfdmod}_{\prec \ob a}$) generated by $\{L(\lambda)\mid \rho(\lambda)\preceq \ob a\}$ (resp., $\{L(\lambda)\mid \rho(\lambda)\prec \ob a\}$) is  $A_{\preceq\ob a}$-lfdmod (resp., $A_{\prec\ob a}$-lfdmod). So, the adjoint triple   $(j^{\ob a}_!,j^{\ob a} , j^{\ob a}_*)$ agree with the  adjoint triple between $A\text{-lfdmod}_{\preceq \ob a}$ and  its Serre quotient $A\text{-lfdmod}_{\preceq \ob a}/A\text{-lfdmod}_{\prec \ob a} $. Hence,    the standard, proper standard, costandard and proper costandard objects defined in Definition~\ref{stanpro} are the required objects.
Thanks to Theorem ~\ref{striateddndn}(1), $A\text{-lfdmod}$ satisfies the property  ($\widehat{P\Delta_\varepsilon}$) in \cite{BS} for any $\varepsilon$ as follows:

($\widehat{P\Delta_\varepsilon}$):  For each $\lambda\in \bar \Lambda$, there exists a projective object $P_\lambda$ admitting a finite  $\Delta_\varepsilon$-flag
with $\Delta_\varepsilon(\lambda)$ at the top and other sections $\Delta_\varepsilon(\mu)$ for $\mu\in\bar\Lambda$ with $\rho(\mu)\succeq \rho(\lambda)$.

In our case,  $P_\lambda$  is  $A 1_{\ob a}$ if $\rho(\lambda)=\ob a$. By \cite[Definition~3.36]{BS}, $A\text{-lfdmod}$ is an upper finite fully stratified category.
\end{proof}

 \begin{rem} Brundan and Stroppel introduced  the notion of upper finite based stratified algebra $B$~\cite[Definition~5.17]{BS} and proved that $B$-lfdmod is an upper finite fully stratified category~\cite[Theorem~5.22]{BS}. They also proved  that any locally unital algebra   which admits an  upper finite Cartan decomposition is an upper finite based stratified algebra~\cite[Theorem~5.30]{BS}. 
   In this paper, we give another sufficient condition for $A$-lfdmod being  an upper finite fully stratified category for a locally unital algebra. By \cite[Definition~5.17]{BS}, any upper finite based stratified algebra is Morita equivalent to $A$ which admits an upper finite  weakly triangular decomposition such that
 $\bar A_\ob a$ is a basic algebra and the set of  primitive idempotents  is $\{\bar 1_ b\mid b\in\ob a\}$ for all $\ob a\in I$ (i.e., $\bar \Lambda_\ob a=\ob a$).
\end{rem}

 The following result follows from Proposition~\ref{COBWA} and Theorem~\ref{ext1}, immediately. This is the first step to give tensor product categorifications  of integrable lowest weight  and integrable highest weight representations for $\mathfrak {sl}_\infty$ or $\hat{\mathfrak {sl}}_p$ introduced by Webster in \cite{W}. Details will be given in \cite{GRS1}.
 \begin{Theorem}\label{COBMW1} Let $A$ be the $\Bbbk$-algebra  associated to the cyclotomic oriented Brauer category $\OB(\mathbf u)$ in  Proposition~\ref{COBWA}. Then
  $A$-lfdmod is an upper finite fully  stratified category in the sense of \cite[Definition~3.36]{BS} with respect to the stratification $\rho: \bar\Lambda\rightarrow I$ in Definition~\ref{rho}, where $I$ is given in \eqref{ocbap}.\end{Theorem}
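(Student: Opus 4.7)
The plan is to observe that this theorem is a direct composition of two results already established in the excerpt: Proposition~\ref{COBWA} provides the structural input, and Theorem~\ref{ext1} converts that structure into the desired categorical conclusion. Hence I would not prove anything substantively new; rather I would spell out how the hypotheses of Theorem~\ref{ext1} are met by the algebra $A$ associated to $\OB(\mathbf u)$.

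First, I would invoke Proposition~\ref{COBWA} to recall that $\OB(\mathbf u)$ is an upper finite weakly triangular category. Concretely, this means the locally unital and locally finite dimensional algebra $A$ obtained via \eqref{Ba1} from the $\Bbbk$-linear category $\OB(\mathbf u)$ carries an upper finite weakly triangular decomposition $(I, A^-, A^\circ, A^+)$ in the sense of Definition~\ref{WT}, with $I=\mathbb N^2$ and partial order $\preceq$ as in \eqref{ocbap}. The three subspaces $A^-$, $A^\circ$, $A^+$ are those spanned by the three types of normally ordered dotted oriented Brauer diagrams described in items~(a), (b), (c) of the proof of Proposition~\ref{COBWA}.

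Next, I would apply Theorem~\ref{ext1} directly to this $A$. That theorem asserts that whenever a locally unital algebra admits an upper finite weakly triangular decomposition, the category $A$-lfdmod is upper finite fully stratified in the sense of \cite[Definition~3.36]{BS}, with stratification given by the function $\rho$ of Definition~\ref{rho} and with standard, proper standard, costandard and proper costandard objects as defined in Definition~\ref{stanpro}. Since Proposition~\ref{COBWA} has already verified axioms (W1)--(W4) for $\OB(\mathbf u)$, no further verification is needed.

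Since the theorem is essentially a formal consequence of two prior statements, there is no genuine obstacle in this proof; the real work was done earlier. If I had to point to one step that deserves emphasis, it would be the verification of axiom (W4) in Proposition~\ref{COBWA}, which relies on the defining relations of $\AOB$ (notably \eqref{movedot1}) to slide dots past crossings modulo lower-dot terms; this is what guarantees that the multiplication map $A^-\otimes_{\mathbb K} A^\circ \otimes_{\mathbb K} A^+ \to A$ is an isomorphism. Once that is in place, the present theorem is immediate.
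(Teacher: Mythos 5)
Your proposal is correct and matches the paper exactly: the paper states that Theorem~\ref{COBMW1} ``follows from Proposition~\ref{COBWA} and Theorem~\ref{ext1}, immediately,'' which is precisely the composition you spell out. Nothing further is needed.
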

Suppose  $L$ is a simple $A$-module and $V\in A$-mod. Define $$[V:L]=\text{sup}|\{i\mid V_{i+1}/V_i\cong L\}|$$ the supremum being taken over all filtrations by submodules $0=V_0\subset \cdots \subset V_n=V$.
The number $[V:L]$ is called the composition multiplicity  of  $L$ in $V$ although  $V$ may not have a composition series.
Using the adjoint triple  $(j^{\ob a}_!,j^{\ob a} , j^{\ob a}_*)$  yields the following result (see also \cite[Lemma~3.3]{BS}).

\begin{Lemma}\label{barcom}
For any $\lambda, \mu\in \bar \Lambda$, and $\lambda\neq \mu$,
\begin{enumerate}
\item[(1)] $[\bar\Delta(\mu):L(\mu)] =1$ and  $[\bar\Delta(\mu):L(\lambda)] = 0$  unless   $\rho(\mu)\succ \rho(\lambda) $,
\item [(2)]$[\bar\nabla(\mu):L(\mu)] =1$ and $[\bar\nabla(\mu):L(\lambda)] =0$ unless $\rho(\mu)\succ \rho(\lambda)$.
\end{enumerate}
\end{Lemma}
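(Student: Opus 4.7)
Fix $\mu \in \bar\Lambda_{\ob a}$ with $\ob a = \rho(\mu)$. By construction $\bar\Delta(\mu) = j^{\ob a}_!(L_{\ob a}(\mu))$ and $\bar\nabla(\mu) = j^{\ob a}_*(L_{\ob a}(\mu))$ lie in $A_{\preceq \ob a}\text{-lfdmod}$, which, as observed in the proof of Theorem~\ref{ext1}, is exactly the Serre subcategory of $A\text{-lfdmod}$ generated by $\{L(\nu) : \rho(\nu)\preceq \ob a\}$. Hence every composition factor $L(\lambda)$ appearing in either $\bar\Delta(\mu)$ or $\bar\nabla(\mu)$ automatically satisfies $\rho(\lambda)\preceq\ob a$. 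For $\lambda\neq\mu$ the lemma therefore reduces to ruling out the case $\rho(\lambda)=\ob a$, together with establishing the multiplicity-one statements for $L(\mu)$ itself.

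The main engine is the pair of standard counit/unit isomorphisms $j^{\ob a}\circ j^{\ob a}_!\cong\text{id}$ and $j^{\ob a}\circ j^{\ob a}_*\cong\text{id}$, which hold for any adjoint triple coming from an idempotent truncation of a Serre quotient. These yield
\[
\bar 1_{\ob a}\bar\Delta(\mu)\cong L_{\ob a}(\mu)\cong\bar 1_{\ob a}\bar\nabla(\mu).
\]
Since $j^{\ob a}=\bar 1_{\ob a}\cdot(-)$ is exact, applying it to any finite filtration of $\bar\Delta(\mu)$ (respectively $\bar\nabla(\mu)$) by $A$-submodules produces a filtration of the simple $\bar A_{\ob a}$-module $L_{\ob a}(\mu)$. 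A section $L(\nu)$ with $\rho(\nu)=\ob a$ contributes the simple $\bar A_{\ob a}$-module $L_{\ob a}(\nu)$, which is nonzero by \eqref{kkk1234}, whereas any section with $\rho(\nu)\prec\ob a$ is annihilated. Because $L_{\ob a}(\mu)$ is simple, the supremum defining $[V:L(\nu)]$ is at most $1$ when $\nu=\mu$ and is $0$ for any other $\nu$ with $\rho(\nu)=\ob a$. Combined with the previous paragraph, this settles all vanishing statements and establishes the upper bounds $[\bar\Delta(\mu):L(\mu)]\leq 1$ and $[\bar\nabla(\mu):L(\mu)]\leq 1$.

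For the matching lower bound in (1), I would exploit right exactness of $j^{\ob a}_!$: the canonical surjection $P_{\ob a}(\mu)\twoheadrightarrow L_{\ob a}(\mu)$ induces a surjection $\Delta(\mu)\twoheadrightarrow\bar\Delta(\mu)$, and the simple-head statement $\Delta(\mu)\twoheadrightarrow L(\mu)$ from Theorem~\ref{striateddndn}(2) then forces $L(\mu)$ to be the simple head of $\bar\Delta(\mu)$ as well (using $\bar 1_{\ob a}\bar\Delta(\mu)\neq 0$ to see $\bar\Delta(\mu)\neq 0$), so $[\bar\Delta(\mu):L(\mu)]\geq 1$. For (2), the adjunction
\[
\Hom_A(L(\mu),\bar\nabla(\mu))\cong\Hom_{\bar A_{\ob a}}(L_{\ob a}(\mu),L_{\ob a}(\mu))\neq 0
\]
exhibits $L(\mu)$ as a submodule of $\bar\nabla(\mu)$. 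The only subtlety, rather than a genuine obstacle, is that $\bar\Delta(\mu)$ and $\bar\nabla(\mu)$ need not carry a full composition series, since strata $\prec\ob a$ may contain infinitely many objects; the supremum-over-finite-filtrations definition of $[V:L]$ together with the exactness of $j^{\ob a}$ handles this cleanly.
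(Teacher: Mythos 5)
Your proof is correct and follows exactly the route the paper intends: the paper disposes of this lemma in one line by invoking the adjoint triple $(j^{\ob a}_!, j^{\ob a}, j^{\ob a}_*)$ (citing \cite[Lemma~3.3]{BS}), and your argument is just that standard recollement reasoning written out — $j^{\ob a}\circ j^{\ob a}_!\cong\mathrm{id}\cong j^{\ob a}\circ j^{\ob a}_*$ plus exactness of $\bar 1_{\ob a}\cdot(-)$ for the vanishing and multiplicity-one bounds, and the simple head of $\Delta(\mu)$ (Theorem~\ref{striateddndn}(2)) respectively the adjunction $\Hom_A(L(\mu),\bar\nabla(\mu))\cong\End_{\bar A_{\ob a}}(L_{\ob a}(\mu))$ for the lower bounds. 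Your handling of the possible failure of composition series via the supremum definition of $[V:L]$ is also consistent with the paper's conventions.
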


 For any $V\in A$-{\rm mod}$^{\Delta}$, let $(V:\Delta(\lambda))$  be    the   multiplicity
of $\Delta(\lambda)$ in a $\Delta$-flag of $V$.  By  \cite[Lemma 3.49]{BS} and Theorem~\ref{ext1}, $\dim\text{Ext}^i_{A}(\Delta(\nu), \bar\nabla(\mu))=\delta_{i,0}\delta_{\nu,\mu},~i\geq0, \nu,\mu\in\bar \Lambda$. So,  \begin{equation}\label{abc1}(V:\Delta(\lambda))=\dim \Hom_A(V, \bar\nabla(\lambda)), \end{equation}
which  is independent of a flag.

\begin{Prop}\label{dektss1} For any $\lambda\in\bar \Lambda$, let $P(\lambda)$ be the projective cover of $L(\lambda)$.
We have:
 \begin{itemize} \item[(1)] $P(\lambda)\in A$-{\rm mod}$^{\Delta}$  with top section $\Delta(\lambda)$.
 \item [(2)] $(P(\lambda): \Delta(\mu))=[\bar\nabla(\mu): L(\lambda)]$, which is  non-zero   for  $\mu\neq \lambda$  only if  $\rho(\mu)\succ\rho(\lambda)$.
 In particular, $(P(\lambda):\Delta(\lambda))=1$.
 % \item [(2)] and  appears as the of $P(\lambda)$,
  \end{itemize} \end{Prop}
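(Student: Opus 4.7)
The plan is to deduce both statements from the material already established, in particular from Theorem~\ref{striateddndn}, Theorem~\ref{ext1}, Lemma~\ref{barcom}, and the identity \eqref{abc1}. First I would locate $P(\lambda)$ as a direct summand of some $A1_b$ with $b\in\ob a:=\rho(\lambda)$. By Theorem~\ref{striateddndn}(2) the head of $L(\lambda)$ satisfies $\bar 1_\ob a L(\lambda)=L_\ob a(\lambda)\neq 0$, so there exists $b\in\ob a$ with $1_bL(\lambda)\neq 0$. Hence $\Hom_A(A1_b,L(\lambda))\neq 0$, which forces $P(\lambda)$ to appear as a summand of $A1_b$ in a Krull--Schmidt decomposition; the latter exists because the local endomorphism rings needed can be detected from the finite dimensional idempotent truncations $\bar A_{\ob a}$ via the isomorphism $A_{\preceq\ob a}\bar 1_b\cong j^{\ob a}_!(\bar A_\ob a \bar 1_b)$ of Lemma~\ref{exactj}(3).

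To obtain the finite $\Delta$-flag, I would restrict the filtration $A1_{\ob a}=P_0\supset P_1\supset\cdots\supset P_n=0$ constructed in the proof of Theorem~\ref{striateddndn}(1) by multiplying by $1_b$. This yields a filtration of $A1_b$ whose sections are direct sums of $\Delta(\mu)$ for various $\mu\in\bar\Lambda$, the top section being the part of $\bigoplus_{\mu\in\bar\Lambda_{\ob a}}\Delta(\mu)^{\oplus\dim L_\ob a(\mu)}$ supported at $b$. Then a standard argument (available to us because of the adjunction and exactness of $j^\ob a_!$) shows that a direct summand of a module with a $\Delta$-flag again has a $\Delta$-flag; applying this to the summand $P(\lambda)$ of $A1_b$ gives part (1). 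Moreover, the head of $P(\lambda)$ is $L(\lambda)$, so the top section of its $\Delta$-flag must be a $\Delta(\nu)$ whose head is $L(\lambda)$, and by uniqueness in Theorem~\ref{striateddndn}(2) this forces $\nu=\lambda$.

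For part (2), I would apply the identity \eqref{abc1} to $V=P(\lambda)$:
\begin{equation*}
(P(\lambda):\Delta(\mu))=\dim\Hom_A(P(\lambda),\bar\nabla(\mu)).
\end{equation*}
Since $P(\lambda)$ is the projective cover of $L(\lambda)$, the right hand side counts $[\bar\nabla(\mu):L(\lambda)]$; the only delicate point is justifying this equality when $\bar\nabla(\mu)$ is not obviously of finite length, which I would handle by truncating $\bar\nabla(\mu)$ at the coideal $\{\ob c\preceq\rho(\mu)\}$ (every morphism from $P(\lambda)$ lands in the part of $\bar\nabla(\mu)$ generated in degrees $\preceq\rho(\mu)$ since $P(\lambda)$ itself has the required finite $\Delta$-support from part (1)), and then invoking Lemma~\ref{barcom}(2) to see that only finitely many composition factors $L(\lambda')$ with $\lambda'=\lambda$ can occur. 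Combined with Lemma~\ref{barcom}(2), which gives $[\bar\nabla(\mu):L(\lambda)]=0$ unless $\mu=\lambda$ or $\rho(\mu)\succ\rho(\lambda)$, this yields the claimed vanishing, and the equality $[\bar\nabla(\lambda):L(\lambda)]=1$ gives $(P(\lambda):\Delta(\lambda))=1$.

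The main obstacle will be the finite-length style identification $\dim\Hom_A(P(\lambda),\bar\nabla(\mu))=[\bar\nabla(\mu):L(\lambda)]$ in an infinite-dimensional setting, since both sides a priori need not be finite. The resolution should use that $P(\lambda)$ has only finitely many composition factors above $L(\lambda)$ in the stratification order (equivalently, a finite $\Delta$-flag by part (1)), together with Lemma~\ref{barcom}(2) restricting where $L(\lambda)$ can appear in $\bar\nabla(\mu)$; this is precisely the kind of bookkeeping that the upper finite hypothesis on $(I,\preceq)$ was designed to make work.
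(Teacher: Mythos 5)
Your part (2) is essentially the paper's argument: apply \eqref{abc1} to $V=P(\lambda)$, use $\dim\Hom_A(P(\lambda),\bar\nabla(\mu))=[\bar\nabla(\mu):L(\lambda)]$, and conclude with Lemma~\ref{barcom}(2); your extra truncation remark about finiteness is harmless and if anything more careful than the paper, which treats this identification as immediate. Part (1) is where you take a genuinely different, and longer, route. The paper's proof is a one-liner: since $P(\lambda)$ is projective, $\operatorname{Ext}^1_A(P(\lambda),\bar\nabla(\mu))=0$ for all $\mu\in\bar\Lambda$, and by \cite[Theorem~3.39]{BS} together with Theorem~\ref{ext1} a finitely generated module lies in $A\text{-mod}^\Delta$ if and only if all these $\operatorname{Ext}^1$ groups vanish; the top section is then forced to be $\Delta(\lambda)$ by Theorem~\ref{striateddndn}(2), exactly as you say. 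You instead realize $P(\lambda)$ as a summand of $A1_b$, restrict the explicit filtration from the proof of Theorem~\ref{striateddndn}(1), and then appeal to ``a standard argument'' that a direct summand of a $\Delta$-filtered module is again $\Delta$-filtered. That closure statement is true in this setting, but your parenthetical justification (``the adjunction and exactness of $j^{\ob a}_!$'') does not prove it; the standard proof of summand-closure is precisely the homological characterization of $A\text{-mod}^\Delta$ just quoted, via additivity of $\operatorname{Ext}^1(-,\bar\nabla(\mu))$. So your detour through $A1_b$, Krull--Schmidt, and the explicit flag ultimately rests on the same criterion the paper invokes directly, and once that criterion is available the detour buys nothing: projectivity of $P(\lambda)$ gives the required $\operatorname{Ext}^1$-vanishing for free. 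Either cite \cite[Theorem~3.39]{BS} (or the summand-closure result of \cite{BS}) explicitly at that step, or simply replace your part (1) by the paper's shorter argument.
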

  \begin{proof} Since $P(\lambda)$ is projective,  $\text{Ext}^1_A(P(\lambda),\bar\nabla(\mu))=0$ for all $\mu\in\bar\Lambda$. By \cite[Theorem~3.39]{BS} and  Theorem ~\ref{ext1}, for any finitely generated $A$-module $M$,  $M \in A$-{\rm mod}$^{\Delta}$ if  and only if $\text{Ext}^1_A(M,\bar\nabla(\mu))=0$ for all $\mu\in\bar\Lambda$. So, $P(\lambda) \in A$-{\rm mod}$^{\Delta}$.
 Since $P(\lambda)$ is the projective cover of $L(\lambda)$, by Theorem ~\ref{striateddndn}(2), $\Delta(\lambda)$ must appear as the top section. This proves (1). Finally, (2) follows immediately from \eqref{abc1} and Lemma~\ref{barcom}(2).
   \end{proof}

For any $\ob a\in I$, let $\bar A_\ob a=\bigoplus_{\omega\in \mathbf B_\ob a} \bar A_{\ob a,\omega}$ be the block decomposition of $\bar A_\ob a$.
Define $\mathbf B=\bigcup_{\ob a\in I}\mathbf B_\ob a$. The partial order on $I$ induces a partial order on $\mathbf B$
such that $\omega\prec \omega'$ if $ \omega\in\mathbf B_\ob a$, $\omega'\in \mathbf B_\ob b$ and $\ob a\prec \ob b$.
Define  $\tilde \rho:\bar \Lambda\rightarrow \mathbf B$ such that for $\lambda\in\bar\Lambda_\ob a$, $\tilde \rho(\lambda)=\omega$ if
$L_{\ob a}(\lambda)\in \bar A_{\ob a,\omega}$-fdmod. It is clear that $\tilde \rho $ is a new stratification  of $A\text{-lfdmod}$ in the sense of \cite[Definition~3.1]{BS}.

 For any $\omega\in \mathbf B_\ob a$, let $A_{\preceq \ob a}\text{-lfdmod}_{\preceq\omega}$ be the Serre subcategory of $A_{\preceq \ob a}\text{-lfdmod}$
with irreducible objects $\{L(\lambda)\mid \tilde\rho(\lambda)\preceq \omega\}$. Similarly we have $A_{\preceq \ob a}\text{-lfdmod}_{\prec\omega}$.
Note that $A_{\preceq \ob a}\text{-lfdmod}_{\prec\omega}=A_{\prec \ob a}\text{-lfdmod}$ by Theorem ~\ref{striateddndn}(3).
Let $j^{\ob a,\omega}$ be the restriction of $j^\ob a$ to $A_{\preceq \ob a}\text{-lfdmod}_{\preceq\omega}$.
Since  $j^{\ob a,\omega}(M)\in \bar A_{\ob a,\omega}$-fdmod for any $M\in A_{\preceq \ob a}\text{-lfdmod}_{\preceq\omega}$, $j^{\ob a,\omega}$ is actually a functor from  $A_{\preceq \ob a}\text{-lfdmod}_{\preceq\omega}$ to $\bar A_{\ob a,\omega}$-fdmod.
Moreover,  $j^{\ob a,\omega}$ induces an equivalence of categories between $A_{\preceq \ob a}\text{-lfdmod}_{\preceq\omega}/A_{\preceq \ob a}\text{-lfdmod}_{\prec\omega}$ and $\bar A_{\ob a,\omega}$-fdmod.

Let $j^{\ob a,\omega}_!$ and $j^{\ob a,\omega}_*$ be the restriction of  $j^{\ob a }_!$ and $j^{\ob a }_*$ to $\bar A_{\ob a,\omega}$-fdmod, respectively.
 Then $$\Delta=\bigoplus_{\omega\in \mathbf B_\ob a,\ob a\in I}j^{\ob a,\omega}_!, \quad \nabla=\bigoplus_{\omega\in \mathbf B_\ob a,\ob a\in I}j^{\ob a,\omega}_*.$$
Moreover, $j^{\ob a,\omega}_!$ and $j^{\ob a,\omega}_*$ are actually functors from $\bar A_{\ob a,\omega}$-fdmod to $A_{\preceq \ob a}\text{-lfdmod}_{\preceq\omega}$.
In fact, for any $M\in \bar A_{\ob a,\omega}$-fdmod, $j^{\ob a,\omega}_!(M)$ has a $\bar\Delta$-flag, and  $\bar\Delta(\mu)$ appears as a section if  $[M:L_\ob a(\mu)]\neq 0$. In this case,  $\tilde\rho(\mu)=\omega$. By Lemma~\ref{barcom} and Lemma~\ref{exactj}(2), we see that $j^{\ob a,\omega}_!(M)\in A_{\preceq \ob a}\text{-lfdmod}_{\preceq\omega}$.
The result on  $j^{\ob a,\omega}_*$ follows from similar arguments.
Furthermore, since  $(j^{\ob a}_!,j^{\ob a} , j^{\ob a}_*)$ is an  adjoint triple, so is
  $(j^{\ob a,\omega}_!,j^{\ob a,\omega} , j^{\ob a,\omega}_*)$.

\begin{Theorem}\label{newstratification}
The category $A\text{-lfdmod}$ is an upper finite  fully stratified category in the sense of \cite[Definition~3.36]{BS} with respect to the stratification    $\tilde\rho$.
In particular,   $A\text{-lfdmod}$ is an upper finite  highest weight  category in the sense of \cite[Definition~3.36]{BS} if $ \bar A_\ob a$ is semisimple for all $\ob a\in I$.
\end{Theorem}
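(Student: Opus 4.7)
The plan is to verify the axioms of \cite[Definition~3.36]{BS} for the refined stratification $\tilde\rho$, leveraging the work already completed for $\rho$ in Theorem~\ref{ext1}. The discussion immediately preceding the statement already identifies the adjoint triples $(j^{\ob a,\omega}_!, j^{\ob a,\omega}, j^{\ob a,\omega}_*)$ for $\omega\in\mathbf B_{\ob a}$ and matches the standard, proper standard, costandard and proper costandard objects of $\tilde\rho$ with those defined in Definition~\ref{stanpro}, since the projective cover and injective hull of $L_{\ob a}(\lambda)$ inside $\bar A_{\ob a}$-fdmod agree with the corresponding objects inside the block $\bar A_{\ob a,\omega}$-fdmod. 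So the outstanding points are the upper finiteness of $(\mathbf B,\preceq)$ and the flag property $(\widehat{P\Delta_\varepsilon})$.

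To see that $\mathbf B$ is upper finite, I first note that Proposition~\ref{bara}(1) furnishes the finite $\Bbbk$-basis $H(\ob a)$ of $\bar A_{\ob a}$, so $\bar A_{\ob a}$ is a finite dimensional algebra and therefore has only finitely many blocks; hence each $\mathbf B_{\ob a}$ is finite. Combined with upper finiteness of $I$ and the definition $\omega\prec\omega'\Leftrightarrow\ob a\prec\ob b$, this forces $\{\omega'\in\mathbf B\mid\omega\preceq\omega'\}\subseteq\bigsqcup_{\ob b\succeq\ob a}\mathbf B_{\ob b}$ to be finite.

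The main step will be to verify $(\widehat{P\Delta_\varepsilon})$ for $\tilde\rho$. For each $\lambda\in\bar\Lambda$ I take $P_\lambda$ to be the projective cover $P(\lambda)$ of $L(\lambda)$ in $A$-lfdmod. Proposition~\ref{dektss1} already tells us that $P(\lambda)\in A\text{-mod}^\Delta$ with top section $\Delta(\lambda)$ and $(P(\lambda):\Delta(\mu))=[\bar\nabla(\mu):L(\lambda)]$, which by Lemma~\ref{barcom}(2) is nonzero for $\mu\neq\lambda$ only when $\rho(\mu)\succ\rho(\lambda)$; by the definition of $\preceq$ on $\mathbf B$ this upgrades to $\tilde\rho(\mu)\succ\tilde\rho(\lambda)$. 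To refine this $\Delta$-flag into a $\Delta_\varepsilon$-flag for an arbitrary sign function $\varepsilon:\mathbf B\to\{\pm\}$, I apply the exact functor $j^{\rho(\mu)}_!$ to a composition series of $P_{\rho(\mu)}(\mu)$; since all composition factors $L_{\rho(\mu)}(\nu)$ of $P_{\rho(\mu)}(\mu)$ sit in the same block of $\bar A_{\rho(\mu)}$ as $\mu$, every subsection produced is a $\bar\Delta(\nu)$ with $\tilde\rho(\nu)=\tilde\rho(\mu)$. This block-preservation is the decisive technical point: it ensures that after refinement the top section is $\Delta_\varepsilon(\lambda)$ and every other section is $\Delta_\varepsilon(\nu)$ with $\tilde\rho(\nu)\succ\tilde\rho(\lambda)$, independent of the chosen $\varepsilon$.

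For the highest weight statement, suppose $\bar A_{\ob a}$ is semisimple for every $\ob a\in I$. Then each block $\bar A_{\ob a,\omega}$ is a simple algebra and hence has a single (up to isomorphism) irreducible module, i.e.\ $|\tilde\rho^{-1}(\omega)|=1$ for every $\omega\in\mathbf B$. In this semisimple setting $P_{\ob a}(\lambda)=L_{\ob a}(\lambda)=I_{\ob a}(\lambda)$, so $\Delta(\lambda)=\bar\Delta(\lambda)$ and $\nabla(\lambda)=\bar\nabla(\lambda)$, matching the additional singleton-stratum condition characterizing an upper finite highest weight category in the sense of \cite[Definition~3.36]{BS}. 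The only genuinely new ingredient beyond what the preliminaries already supply is the block-preservation of $\bar\Delta$-flags used in the $\Delta_\varepsilon$-refinement; everything else is essentially bookkeeping around Theorem~\ref{ext1} and Proposition~\ref{dektss1}.
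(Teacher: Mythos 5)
Your proposal is correct and follows essentially the same route as the paper: the paper's proof likewise takes $P_\lambda=P(\lambda)$ and invokes Proposition~\ref{dektss1} for the flag axiom, with the block-preservation of the $\bar\Delta$-refinement (your "decisive technical point") established in the paragraph immediately preceding the theorem via Lemma~\ref{barcom} and Lemma~\ref{exactj}(2), and the highest weight claim deduced exactly as you do from the strata $\bar A_{\ob a,\omega}$-fdmod being simple. One cosmetic remark: after refining for a sign function with $\varepsilon(\tilde\rho(\lambda))=-$, the sections coming from the top $\Delta(\lambda)$ are $\bar\Delta(\nu)$ with $\tilde\rho(\nu)=\tilde\rho(\lambda)$, so your claim that all non-top sections satisfy $\tilde\rho(\nu)\succ\tilde\rho(\lambda)$ should read $\succeq$, which is all that ($\widehat{P\Delta_\varepsilon}$) requires.
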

\begin{proof}
By Proposition~\ref{dektss1}, for any $\lambda$ the projective module $P(\lambda)$  is the required $P_\lambda$ in  ($\widehat{P\Delta_\varepsilon}$) for $\tilde \rho$ and any sign function $ \varepsilon: \mathbf B\rightarrow \{\pm\}$. So, the first statement holds.
If $ \bar A_\ob a$ is semisimple for all $\ob a\in I$, then the stratum $ \bar A_{\ob a,\omega}$-fdmod is simple for any $\omega$ and hence $A\text{-lfdmod}$ is an upper finite highest weight category.
\end{proof}

\subsection{Duality} In this subsection, we assume that there is an anti-involution $\sigma_A: A\rightarrow A$  such that
\begin{equation} \label{ssgm}\sigma_A(1_{  a})=1_{  a}\end{equation}  for any $  a\in J$.
Then $\sigma_A$ stabilizes both $A_{\ob a}$ and $A^{ \npreceq\ob a}$, and   results in an  anti-involution $\sigma_{A_{\preceq \ob a}}$ on $
A_{\preceq\ob  a}$. Restricting $\sigma_{A_{\preceq \ob a}}$ to $\bar A_{\ob a}$ yields an anti-involution
 $\sigma_{\bar A_{\ob a}}$ on $ \bar A_{\ob a} $.
 For any $V\in  A\text{-lfdmod}$, let
 \begin{equation}
 V^\circledast=\bigoplus_{a\in J}\Hom_{\Bbbk}(1_{a}V,\Bbbk).
 \end{equation}
 Then  $V^\circledast$ is an $A$-module such that for any $x\in  A$ and $f\in V^\circledast$,
$(xf)(v)=f(\sigma_A(x)v)$.
This induces an exact contravariant duality functor $\circledast$  on $A\text{-lfdmod}$.
Similarly, we have the contravariant duality functor on  $\bigoplus_{\ob a\in I} \bar A_{\ob a}\text{-fdmod}$.

 \begin{Lemma}\label{key11} As functors from $\bigoplus_{\ob a\in I} \bar A_{\ob a}\text{-fdmod}$ to $A\text{-lfdmod}$,
 $\circledast\circ \Delta\cong \nabla\circ \circledast$.
\end{Lemma}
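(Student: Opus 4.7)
The plan is to build, for each fixed $\ob a \in I$ and $V \in \bar A_{\ob a}\text{-fdmod}$, a natural $A$-module isomorphism $\Phi_V : \Delta(V)^\circledast \to \nabla(V^\circledast)$ weight space by weight space. For any $b \in J$ one has $1_b \Delta(V)^\circledast = \Hom_{\Bbbk}(\bar 1_b A_{\preceq \ob a}\bar 1_{\ob a} \otimes_{\bar A_{\ob a}} V,\, \Bbbk)$ while $1_b \nabla(V^\circledast) = \Hom_{\bar A_{\ob a}}(\bar 1_{\ob a} A_{\preceq \ob a}\bar 1_b,\, V^\circledast)$, so the task reduces to producing a natural bijection between these $\Bbbk$-spaces that is then compatible with the ambient $A$-action.

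First I would apply the standard tensor--hom adjunction over $\bar A_{\ob a}$ to identify $\Hom_{\Bbbk}(\bar 1_b A_{\preceq \ob a}\bar 1_{\ob a} \otimes_{\bar A_{\ob a}} V,\, \Bbbk)$ with $\Hom_{\bar A_{\ob a}^{\mathrm{op}}}(\bar 1_b A_{\preceq \ob a}\bar 1_{\ob a},\, V^*)$, where $V^* := \Hom_{\Bbbk}(V,\Bbbk)$ carries the dual right $\bar A_{\ob a}$-module structure $(fr)(v) = f(rv)$. Because $\sigma_A$ fixes every $1_c$, it descends to an anti-involution of $A_{\preceq\ob a}$ and restricts to $\sigma_{\bar A_{\ob a}}$; this yields a $\Bbbk$-linear bijection $\bar 1_b A_{\preceq\ob a}\bar 1_{\ob a} \to \bar 1_{\ob a} A_{\preceq\ob a}\bar 1_b$ that transports the right $\bar A_{\ob a}$-module structure on the source to the natural left $\bar A_{\ob a}$-module structure on the target, and by construction $V^\circledast$ is precisely $V^*$ viewed as a left module via $\sigma$. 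Composing $\phi \mapsto \phi \circ \sigma$ therefore delivers the desired bijection $1_b \Delta(V)^\circledast \cong 1_b \nabla(V^\circledast)$, manifestly natural in $V$ since both tensor--hom and the $\sigma$-transport are.

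The step requiring attention is verifying that the assembled map $\Phi_V = \bigoplus_b \Phi_{V,b}$ respects the left $A$-action. For $x \in 1_c A 1_b$, the contragredient action on the source sends $f$ to the functional $f \circ (\sigma(x) \cdot -)$, while the left $A$-action on $\nabla(V^\circledast)$, inherited from right multiplication of $A$ on $\bar 1_{\ob a} A_{\preceq\ob a}$, sends $\phi$ to the map $n \mapsto \phi(nx)$; these match under $\Phi$ thanks to the anti-multiplicativity identity $\sigma(nx) = \sigma(x)\sigma(n)$. This bookkeeping is the principal (if routine) obstacle: one is juggling two distinct dualizations in parallel, the $\Bbbk$-dual and the $\sigma$-twist, and the content of the lemma is precisely that together they transport the tensor-product construction into the Hom construction. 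The projectivity of $\bar 1_b A_{\preceq\ob a}\bar 1_{\ob a}$ as a right $\bar A_{\ob a}$-module from Lemma~\ref{exactj}(1) guarantees that the tensor--hom adjunction behaves cleanly at the finite-dimensional level, and a routine diagram chase confirms naturality in $V$, completing the proof.
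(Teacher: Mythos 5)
Your proposal is correct and is essentially the paper's own argument: unwinding your composite of the tensor--hom adjunction with the $\sigma$-transport gives exactly the map $\alpha\mapsto\bar\alpha$, $\bar\alpha(f)(v)=\alpha(\sigma_{A_{\preceq\ob a}}(f)\otimes v)$, that the paper writes down directly (together with its explicit inverse), and your equivariance check matches theirs. The only cosmetic difference is that you invoke Lemma~\ref{exactj}(1), which is not actually needed since the adjunction and the $\sigma$-twist require no projectivity.
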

\begin{proof} The required natural isomorphism is  $\phi:  \circledast\circ \Delta\rightarrow  \nabla\circ \circledast$ such   that the  $A$-isomorphism
$\phi_V: \Delta(V)^\circledast\rightarrow \nabla(V^\circledast)$   sends  $
\alpha$ to $ \bar \alpha$ for any finite dimensional $\bar A_{\ob a}$-module $V$,
where $$\bar\alpha(f)(v)=\alpha(\sigma_{A_{\preceq\ob  a}}(f)\otimes v)$$ for all $v\in V$ and  $f\in \bar 1_{\ob a}A_{\preceq \ob a} \bar 1_{\ob b}$.
The  inverse of $\phi_V$ is the homomorphism
$ \nabla(V^\circledast)\rightarrow\Delta(V)^\circledast$, sending $ \beta$ to $ \tilde \beta$,
where $\tilde \beta(f\otimes v)=\beta(\sigma_{A_{\preceq \ob a}}(f))(v)$.
\end{proof}
\begin{Assumption}\label{pdual}   For all  $\lambda\in \bar \Lambda$,  $M^\circledast\cong M$  as $\bar A_{\rho(\lambda)}$-modules, where $ M\in\{ P_{\rho(\lambda) }(\lambda), L_{\rho(\lambda)}(\lambda)\}$.
\end{Assumption}
\begin{Lemma}\label{isodual1}Keep the Assumption~\ref{pdual}. Suppose $\lambda, \mu\in \bar \Lambda$. We have
    \begin{itemize} \item [(1)]  $\Delta(\lambda)^\circledast\cong \nabla(\lambda)$, $\bar\Delta(\lambda)^\circledast\cong \bar \nabla(\lambda)$ and $L(\lambda)^\circledast\cong L(\lambda)$. \item [(2)]
$[\Delta(\lambda):L(\mu)]=[\nabla(\lambda):L(\mu)]$ and $[\bar\Delta(\lambda): L(\mu)]=[\bar \nabla(\lambda) : L(\mu)]$.\end{itemize}
\end{Lemma}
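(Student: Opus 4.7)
The plan is to treat Lemma~\ref{key11} as the main engine, so that everything on the ``$A$-lfdmod'' side reduces to statements inside a single $\bar A_{\ob a}$-fdmod, where the required dualities are built into Assumption~\ref{pdual}. For part (1) the first claim is $\Delta(\lambda)^\circledast\cong\nabla(\lambda)$. Since $\Delta(\lambda)=\Delta(P_{\rho(\lambda)}(\lambda))$ and $\nabla(\lambda)=\nabla(I_{\rho(\lambda)}(\lambda))$, Lemma~\ref{key11} gives $\Delta(\lambda)^\circledast\cong\nabla(P_{\rho(\lambda)}(\lambda)^\circledast)$, and I only need to know $P_{\rho(\lambda)}(\lambda)^\circledast\cong I_{\rho(\lambda)}(\lambda)$ inside $\bar A_{\rho(\lambda)}$-fdmod. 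This in turn is a standard consequence of the assumption: the $\circledast$-duality on the finite dimensional algebra $\bar A_{\rho(\lambda)}$ swaps projective covers and injective hulls, so $P_{\rho(\lambda)}(\lambda)^\circledast$ is injective with simple socle $L_{\rho(\lambda)}(\lambda)^\circledast\cong L_{\rho(\lambda)}(\lambda)$, hence isomorphic to $I_{\rho(\lambda)}(\lambda)$. (The assumption $P_{\rho(\lambda)}(\lambda)^\circledast\cong P_{\rho(\lambda)}(\lambda)$ then gives $P_{\rho(\lambda)}(\lambda)\cong I_{\rho(\lambda)}(\lambda)$, which is what actually gets used.) The proof of $\bar\Delta(\lambda)^\circledast\cong\bar\nabla(\lambda)$ is even quicker: Lemma~\ref{key11} yields $\bar\Delta(\lambda)^\circledast\cong\nabla(L_{\rho(\lambda)}(\lambda)^\circledast)\cong\nabla(L_{\rho(\lambda)}(\lambda))=\bar\nabla(\lambda)$, using Assumption~\ref{pdual} directly.

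For $L(\lambda)^\circledast\cong L(\lambda)$, I would argue via the characterization \eqref{kkk1234}: among irreducible $A_{\preceq\rho(\lambda)}$-modules $L$, the isomorphism type is determined by $\bar 1_{\rho(\lambda)}L\cong L_{\rho(\lambda)}(\lambda)$. The ideal $A^{\not\preceq\rho(\lambda)}$ is $\sigma_A$-stable by \eqref{ssgm}, so $L(\lambda)^\circledast$ is still an irreducible $A_{\preceq\rho(\lambda)}$-module. Using \eqref{ssgm} to compute $1_{\rho(\lambda)}L(\lambda)^\circledast\cong(1_{\rho(\lambda)}L(\lambda))^\circledast$ at the level of $\Bbbk$-duals, the $\bar A_{\rho(\lambda)}$-module $\bar 1_{\rho(\lambda)}L(\lambda)^\circledast$ identifies with $L_{\rho(\lambda)}(\lambda)^\circledast\cong L_{\rho(\lambda)}(\lambda)$ by the second half of Assumption~\ref{pdual}, forcing $L(\lambda)^\circledast\cong L(\lambda)$.

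Part (2) then comes for free. Since $\circledast$ is an exact contravariant functor on $A$-lfdmod and sends each $L(\mu)$ to itself by (1), it preserves every composition multiplicity. Applying $\circledast$ to a filtration of $\Delta(\lambda)$ (respectively $\bar\Delta(\lambda)$) realising $[\Delta(\lambda):L(\mu)]$ and using $\Delta(\lambda)^\circledast\cong\nabla(\lambda)$ (respectively $\bar\Delta(\lambda)^\circledast\cong\bar\nabla(\lambda)$) gives the two equalities at once. The only place any real work occurs is in identifying $P_{\rho(\lambda)}(\lambda)^\circledast$ with $I_{\rho(\lambda)}(\lambda)$ in the first paragraph; the rest is bookkeeping with Lemma~\ref{key11}, \eqref{ssgm} and \eqref{kkk1234}.
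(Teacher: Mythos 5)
Your proposal is correct and follows essentially the same route as the paper: the first two isomorphisms in (1) come from Lemma~\ref{key11} together with Assumption~\ref{pdual}, the self-duality of $L(\lambda)$ comes from \eqref{ssgm}, \eqref{kkk1234} and the fact that $\circledast$ is an involutive duality, and (2) follows by exactness of $\circledast$. The only difference is cosmetic: you spell out the identification $P_{\rho(\lambda)}(\lambda)^\circledast\cong I_{\rho(\lambda)}(\lambda)$ (which indeed only needs $L_{\rho(\lambda)}(\lambda)^\circledast\cong L_{\rho(\lambda)}(\lambda)$ and the fact that $\circledast$ sends projective covers to injective hulls), a step the paper leaves implicit in its one-line citation of Lemma~\ref{key11} and Assumption~\ref{pdual}.
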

\begin{proof} The first and the second isomorphisms in (1) follow from  Lemma~\ref{key11} and Assumption~\ref{pdual}.
Suppose $\ob a=\rho(\lambda)$. Thanks to \eqref{ssgm}, we have $$1_{\ob a} L(\lambda)^\circledast=(1_{\ob a}L(\lambda))^\circledast=L_{\ob a}(\lambda)^\circledast\cong L_{\ob a}(\lambda)=1_{\ob a} L(\lambda).$$
 Since  $ L(\lambda)$ is the unique irreducible module (up to isomorphism) in $A_{\preceq \ob a}$-lfdmod such that $1_{\ob a} L(\lambda)=L_{
 \ob a}(\lambda)$, and $\circledast$ is an involution, $L(\lambda)^\circledast$ is irreducible, forcing     $L(\lambda)\cong L(\lambda)^\circledast$.
 Finally, (2) follows from (1) since $\circledast$ is exact.\end{proof}

\subsection{Semisimplicity} We are going to give a criterion on the semi-simplicity of $A$ over $\Bbbk$.
%Therefore, Theorem~\ref{ss} is independent of the Assumption~\ref{pdual}.
 \begin{Lemma}\label{delps}
 Suppose   $\bigoplus_{\ob a\in I}A_\ob a$ is semisimple over $\Bbbk$. If $\lambda, \mu \in \bar\Lambda$ and $\mu\neq \lambda$ then  $(P(\lambda): \Delta(\mu))=0$.
 \end{Lemma}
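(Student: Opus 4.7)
The plan is to prove the equivalent statement $P(\lambda)=\Delta(\lambda)$ for every $\lambda\in\bar\Lambda$ by induction on the height of $\ob a := \rho(\lambda)$ in the upper-finite poset $I$, since Proposition~\ref{dektss1}(2) already forces $(P(\lambda):\Delta(\lambda))=1$ and gives the identity $(P(\lambda):\Delta(\mu))=[\bar\nabla(\mu):L(\lambda)]$. By Lemma~\ref{barcom}(2) the multiplicity vanishes unless $\rho(\mu)\succ\rho(\lambda)$, so when $\ob a$ is maximal there is nothing further to check; this handles the base case.

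For the inductive step I would assume $P(\nu)=\Delta(\nu)$ for every $\nu$ with $\rho(\nu)\succ\ob a$. Because $A_\ob a$ is semisimple, Proposition~\ref{bara}(4) exhibits $\bar A_\ob a$ as the quotient of $A_\ob a$ by the two-sided ideal $K:=A_\ob a\cap A^{\not\preceq\ob a}$, and semisimplicity forces this quotient to split as an internal direct sum of algebras $A_\ob a = \bar A_\ob a \oplus K$, with a corresponding decomposition $1_\ob a = c' + c$ into orthogonal central idempotents of $A_\ob a$ where $c'$ is the identity of the $\bar A_\ob a$-summand and $c\in K$. Refining $c'$ via primitive idempotents of the semisimple algebra $\bar A_\ob a$ produces primitive $e\in A_\ob a$ with $eAe=eA_\ob a e=\Bbbk$, and each such $Ae$ is the indecomposable projective cover $P(\lambda_e)$ of $L(\lambda_e)$ for the unique $\lambda_e\in\bar\Lambda_\ob a$ with $\bar eL_\ob a(\lambda_e)\neq 0$.

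The heart of the argument is the claim $A^{\not\preceq\ob a}\cdot c'=0$. Granting this, the canonical surjection $Ac'\twoheadrightarrow A_{\preceq\ob a}\bar 1_\ob a$ has zero kernel and so becomes an isomorphism of left $A$-modules. On the right, Lemma~\ref{exactj}(3) and the semisimple decomposition of $\bar A_\ob a$ combine to give $A_{\preceq\ob a}\bar 1_\ob a \cong \bigoplus_{\mu\in\bar\Lambda_\ob a}\Delta(\mu)^{\oplus\dim L_\ob a(\mu)}$. On the left, $Ac'=\bigoplus_e Ae$ runs over the primitive idempotents of $c'$, a direct sum of indecomposable projectives $P(\lambda_e)$. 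Krull--Schmidt applied to both decompositions then forces $P(\mu)=\Delta(\mu)$ for every $\mu\in\bar\Lambda_\ob a$, completing the induction.

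The main obstacle will be verifying $A^{\not\preceq\ob a}\cdot c'=0$. I plan to reduce this via Lemma~\ref{twoside} to showing $x'c'=0$ for each basis element of the form $yhx'$ of $A^{\not\preceq\ob a}\cdot 1_\ob a$, where the ``cap'' $x'\in X(d,a_0)$ has $\ob d\succ\ob a$; equivalently, to showing $x'\cdot c = x'$. By the inductive hypothesis each $A 1_{\ob d}$ with $\ob d\succ\ob a$ is a direct sum of standard modules, and re-expressing the element $x'$ through this standard decomposition should, together with the orthogonality $c\cdot c'=0$ inside the semisimple corner $A_\ob a$ and the fact that $c$ is the ring-identity of $K$, force $x'c'=0$. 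Making this interaction between the inductive cup-cap calculus on $A^+$ and the central idempotent decomposition of $A_\ob a$ fully rigorous is the step I expect to require the most care.
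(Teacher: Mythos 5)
Your structural reductions are correct as far as they go: with $K=A_{\ob a}\cap A^{\not\preceq \ob a}$ and $1_{\ob a}=c'+c$ the splitting of the semisimple algebra $A_{\ob a}$ does exist, $Ac'\cong\bigoplus_{\mu\in\bar\Lambda_{\ob a}}P(\mu)^{\oplus\dim L_{\ob a}(\mu)}$, and, \emph{granted} $A^{\not\preceq \ob a}c'=0$, the comparison of $Ac'$ with $A_{\preceq \ob a}\bar 1_{\ob a}\cong\bigoplus_{\mu}\Delta(\mu)^{\oplus\dim L_{\ob a}(\mu)}$ via Krull--Schmidt gives $P(\mu)\cong\Delta(\mu)$. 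The problem is that the step you defer is not a technical verification: it is logically equivalent to the inductive step you are trying to prove. Indeed $A^{\not\preceq \ob a}c'=0$ holds if and only if $Ac'$ is an $A_{\preceq \ob a}$-module, if and only if $P(\mu)\cong\Delta(\mu)$ (which equals $\bar\Delta(\mu)$ here, $\bar A_{\ob a}$ being semisimple) for all $\mu\in\bar\Lambda_{\ob a}$, i.e.\ if and only if $P(\mu)$ has no composition factor $L(\nu)$ with $\rho(\nu)\succ\ob a$; by Proposition~\ref{dektss1} and Lemma~\ref{barcom} this is exactly the assertion of the lemma at stratum $\ob a$. So the proposal reduces the lemma to the lemma.

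Moreover the tools you name for the deferred step cannot close this circle. The orthogonality $c'K=Kc'=0$ gives, for a cap $x\in X(d,a_0)$ with $d\in\ob c\succ\ob a$, only that $1_{\ob a}A\,xc'\subseteq 1_{\ob a}A1_{\ob c}A1_{\ob a}c'\subseteq Kc'=0$, i.e.\ the submodule $Axc'\subseteq Ac'$ has vanishing $1_{\ob a}$-truncation; it does not give $xc'=0$. The inductive decomposition $A1_{\ob c}\cong\bigoplus\Delta(\nu)=\bigoplus P(\nu)$ with $\rho(\nu)\succ\ob a$ then only translates ``$xc'\neq 0$'' into the existence of a nonzero map $P(\nu)\to Ac'$, i.e.\ $[Ac':L(\nu)]\neq 0$ for some $\nu$ with $\rho(\nu)\succ\ob a$ --- and nothing in your argument excludes this; excluding it \emph{is} the lemma. (One can check that $c'$ kills $1_{\ob a}L(\nu)$ for $\rho(\nu)\succ\ob a$, so the head of $Ac'$ lies in stratum $\ob a$, but that controls only the head, whereas the multiplicities $(P(\lambda):\Delta(\mu))$ live in the lower layers.) The missing ingredient is a reciprocity/duality input relating $(P(\lambda):\Delta(\mu))$ to composition numbers of proper (co)standards: since all $\bar A_{\ob b}$ are semisimple, Assumption~\ref{pdual} holds automatically, so $(P(\lambda):\Delta(\mu))=[\bar\nabla(\mu):L(\lambda)]=[\bar\Delta(\mu):L(\lambda)]$ by Proposition~\ref{dektss1}(2) and Lemma~\ref{isodual1}(2); applying the exact truncation $1_{\ob a}$ (where semisimplicity of $A_{\ob a}$ forces $1_{\ob a}P(\lambda)=1_{\ob a}L(\lambda)\neq 0$ and hence $1_{\ob a}\Delta(\mu)=0$) then yields the contradiction, as in the paper's proof. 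Without this (or some genuinely new substitute) your key claim $A^{\not\preceq\ob a}c'=0$ remains unproved, so the proposal has an essential gap.
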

\begin{proof}If  $(P(\lambda): \Delta(\mu))\neq 0$, by Proposition~\ref{dektss1}, there is a short exact sequence
\begin{equation}\label{sss1234}  0\rightarrow N \rightarrow P(\lambda)\rightarrow \Delta(\lambda)\rightarrow 0\end{equation}
such that $N\in A\text{-mod}^\Delta $ and $(N:\Delta(\mu))\neq0$. Let $\ob a=\rho(\lambda)$. Then $1_\ob aL(\lambda)=\bar 1_\ob a L(\lambda)=L_\ob a(\lambda)\neq 0$.
 Applying the idempotent  $1_\ob a$ on \eqref{sss1234}  yields
\begin{equation}\label{esksjxs} 0\rightarrow 1_\ob aN \rightarrow 1_\ob aP(\lambda)\rightarrow 1_\ob a\Delta(\lambda)\rightarrow 0.
\end{equation}
 By the general result on the exact functor defined by an idempotent (e.g., \cite[Lemma~2.22]{BS}), $1_\ob a P(\lambda)$ is the projective cover of the irreducible $A_{\ob a}$-module $1_\ob aL(\lambda)$.  Since we are assuming that $A_{\ob a}$ is semisimple,   $1_\ob a P(\lambda)=1_\ob a L(\lambda)$, forcing   $1_\ob a  \Delta(\lambda)=1_\ob a P(\lambda)$ and   $1_\ob a N=0$.  In particular, $ 1_\ob a\Delta(\mu)=1_\ob a\bar\Delta(\mu)=0$.  Note that  Assumption~\ref{pdual} holds automatically when  $\bigoplus_{\ob a\in I}A_\ob a$ is semisimple (hence $\bar A_\ob a$ is semisimple for all $\ob a\in I$ by Proposition ~\ref{bara}(4)).
By Lemma~\ref{isodual1}(2) and Proposition~\ref{dektss1}(2),
 $$[1_{\ob a} \bar\Delta(\mu): 1_{\ob a} L(\lambda)]=[\bar\Delta(\mu):L(\lambda)]=[\bar\nabla(\mu):L(\lambda)]=(P(\lambda):\Delta(\mu))\neq0$$
 whenever $1_{\ob a} L(\lambda)\neq 0$. This is a contradiction since $ 1_\ob a\bar\Delta(\mu)=0$.
\end{proof}

   \begin{Theorem}\label{ss} The $\Bbbk$-algebra $A$ is semisimple if and only if $\bigoplus_{\ob a\in I}  A_{\ob a} $ is semisimple.
 \end{Theorem}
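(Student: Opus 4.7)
The plan is to prove the two implications separately, both leveraging the stratification theory from Section~3 together with the upper finite weakly triangular structure.

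The forward direction is a standard fact about idempotent truncation of a semisimple algebra. For any $\ob a \in I$, the exact functor $1_\ob a(-) \colon A\text{-lfdmod} \to A_\ob a\text{-fdmod}$ carries each simple $A$-module either to zero or to a simple $A_\ob a$-module, and its left adjoint $A 1_\ob a \otimes_{A_\ob a}(-)$ satisfies $1_\ob a(A 1_\ob a \otimes_{A_\ob a} N) \cong N$, so every $A_\ob a$-module arises as $1_\ob a M$ for some $A$-module $M$. If $A$ is semisimple then $M$ is semisimple, hence so is $1_\ob a M$, and thus $A_\ob a$ is semisimple for each $\ob a$.

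For the backward direction, assume $\bigoplus_{\ob a \in I} A_\ob a$ is semisimple. By Proposition~\ref{bara}(4), each $\bar A_\ob a = \pi_\ob a(A_\ob a)$ is also semisimple, so $P_\ob a(\mu) = L_\ob a(\mu)$ for every $\mu \in \bar\Lambda_\ob a$, and therefore $\Delta(\mu) = \bar\Delta(\mu)$ for all $\mu \in \bar\Lambda$. By Proposition~\ref{dektss1}(1), $P(\lambda)$ has a finite $\Delta$-flag with $\Delta(\lambda)$ as its top section, and combining Proposition~\ref{dektss1}(2) with Lemma~\ref{delps} forces $(P(\lambda):\Delta(\mu)) = \delta_{\lambda,\mu}$. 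Hence $P(\lambda) = \Delta(\lambda) = \bar\Delta(\lambda)$ for every $\lambda$.

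It remains to show $\bar\Delta(\lambda) = L(\lambda)$. The formula $(P(\lambda):\Delta(\mu)) = \dim\Hom_A(P(\lambda), \bar\nabla(\mu))$ from \eqref{abc1}, together with the standard identity $\dim\Hom_A(P(\lambda),M) = [M:L(\lambda)]$ (valid since $P(\lambda)$ is a projective cover and $\End_A(L(\lambda)) = \Bbbk$), yields $(P(\lambda):\Delta(\mu)) = [\bar\nabla(\mu):L(\lambda)]$. Lemma~\ref{isodual1}(2) is then applicable because Assumption~\ref{pdual} holds automatically under our semisimplicity hypothesis, as observed in the proof of Lemma~\ref{delps}; it gives $[\bar\nabla(\mu):L(\lambda)] = [\bar\Delta(\mu):L(\lambda)]$. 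Invoking Lemma~\ref{delps} once more, we conclude $[\bar\Delta(\mu):L(\lambda)] = 0$ whenever $\mu \neq \lambda$. Fixing $\mu = \lambda$ and letting the simple factor vary therefore shows that $\bar\Delta(\lambda)$ admits no composition factor other than $L(\lambda)$, which appears with multiplicity one by Lemma~\ref{barcom}(1). Hence $\bar\Delta(\lambda) = L(\lambda)$, so $P(\lambda) = L(\lambda)$ for every $\lambda \in \bar\Lambda$, and $A$ is semisimple. The subtle point in this argument is justifying the projective cover multiplicity formula for the possibly infinite-dimensional $\bar\nabla(\mu)$, but this follows from the supremum-over-filtrations definition of composition multiplicity introduced just before Lemma~\ref{barcom} combined with the additivity of $\Hom_A(P(\lambda),-)$ on short exact sequences.
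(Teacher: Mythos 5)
Your proof is correct, and while the ``only if'' direction and the first half of the ``if'' direction coincide with the paper's argument (idempotent truncation for the forward implication; Lemma~\ref{delps} plus Proposition~\ref{dektss1} to get $P(\lambda)=\Delta(\lambda)$), your finish is genuinely different. The paper concludes by observing that $\Delta$ then sends the complete set of indecomposable projectives $P_{\rho(\lambda)}(\lambda)$ of $\bar A^\circ=\bigoplus_{\ob a}\bar A_{\ob a}$ to the projectives $P(\lambda)$ of $A$, so that by \cite[Corollary~2.5]{BRUNDAN} $\Delta$ is an equivalence $\bar A^\circ\text{-mod}\to A\text{-mod}$, and semisimplicity of $\bar A^\circ$ (a quotient of $\bigoplus_{\ob a}A_{\ob a}$ by Proposition~\ref{bara}(4)) transfers to $A$. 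You instead use semisimplicity of the $\bar A_{\ob a}$ to identify $\Delta(\lambda)=\bar\Delta(\lambda)$ and then run a composition-multiplicity computation, $[\bar\Delta(\lambda):L(\nu)]=[\bar\nabla(\lambda):L(\nu)]=(P(\nu):\Delta(\lambda))=\delta_{\nu,\lambda}$ via Lemma~\ref{isodual1}(2), Proposition~\ref{dektss1}(2) and Lemma~\ref{delps}, to conclude each $P(\lambda)$ is simple. Your route is more self-contained (it avoids the Morita-type equivalence and the external citation) at the cost of invoking the duality machinery of Lemma~\ref{isodual1} (legitimate here, since Assumption~\ref{pdual} holds automatically, as the paper notes in the proof of Lemma~\ref{delps}) and of leaving two routine steps implicit: that a nonzero locally finite dimensional module with total composition multiplicity one is simple, and that each $A1_a$ decomposes as a finite direct sum of the $P(\lambda)$'s so that simplicity of all $P(\lambda)$ yields semisimplicity of $A$; the paper's approach buys more, since the equivalence $\Delta:\bar A^\circ\text{-mod}\to A\text{-mod}$ it establishes is exactly the argument reused later in Theorem~\ref{sjdsdh}.
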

 \begin{proof} Let $B=\bigoplus_{\ob a\in I}  A_{\ob a} $.  For any $M\in  A_{\ob a}\text{-mod}$ and any $\ob a\in I$, $$1_{\ob a}  A\otimes _{B} M=1_{\ob a} A1_{\ob a} \otimes _{B} M\cong M$$ as $ A_{\ob a}$-modules. If  $A$ is semisimple, then $A\otimes _{B} M$ is a direct sum of simple $A$-modules, and hence $1_{\ob a}  A\otimes _{B} M$ is a direct sum of simple $A_{\ob a}$-modules. So,  $A_{\ob a}$ is semisimple for any $\ob a\in I$, forcing $B$ to be semisimple.

Conversely, suppose $B$ is semisimple.
 Thanks to Proposition~\ref{dektss1} and Lemma~\ref{delps}, $P(\lambda)=\Delta(\lambda)$ and  the exact functor $\Delta$ sends  projective $(\bigoplus_{\ob a\in I} \bar A_{\ob a})$ -modules $P_{\rho(\lambda)}(\lambda)$'s  to  projective $A$-modules $P(\lambda)$'s. Since
$\{P_{\ob a}(\lambda)\mid \lambda\in \bar\Lambda_{\ob a},\ob a\in I\}$ gives a complete set of representatives of all indecomposable objects in  $(\bigoplus_{\ob a\in I} \bar A_{\ob a})\text{-pmod} $, thanks
  to the general  result on locally unital  algebras in  \cite[Corollary~2.5]{BRUNDAN}, $\Delta$ is an equivalence  between  $(\bigoplus_{\ob a\in I} \bar A_{\ob a})$-mod and  $A$-mod.
  Since  $\bigoplus_{\ob a\in I} \bar A_{\ob a}$ (a quotient of $B$ by Proposition ~\ref{bara}(4)) is semisimple, so is $A$.
  \end{proof}

\section{Weakly cellular bases } In this section, we consider a $\Bbbk$-algebra $A$   which  admits an upper finite weakly triangular decomposition in the sense of Definition~\ref{WT} such that $I=J$. Furthermore, on each $A$, there is  an anti-involution
$\sigma_A$ satisfying  \eqref{ssgm}. We will prove that any $\Bbbk$-algebra  $A_{  a}$ is a weakly cellular algebra if   $\bar A_{  b}$ is a cellular algebra for any $  b\in J$.
 We start by  recalling  the definition of a cellular algebra  in \cite{GL}.

 A cellular algebra $H$ is  an  associative $\Bbbk$-algebra which  has    basis   $\{  c_{\s,\t}^\lambda\mid \s,\t\in T(\lambda),\forall \lambda\in \Upsilon\}$ such that \begin{itemize} \item[(C1)]  $(\Upsilon,\unlhd) $ is a poset,
  \item[(C2)]  $T(\lambda)$ is a finite set for any $\lambda\in \Upsilon$,
  \item [(C3)] $ \sigma (  c^\lambda_{\s,\t})=  c^\lambda_{\t,\s}$, where $\sigma$  is a $\Bbbk$-linear anti-involution on $H$,

\item[(C4)]
$  c^\lambda_{\s,\t} x\equiv\sum_{\u} r_{\t, \u}(x)    c^\lambda_{\s,\u} \pmod {H^{\rhd \lambda}}$ for any $x\in H$, where \begin{enumerate}\item  $r_{\t, \u}(x)$'s $\in \Bbbk$ and are independent of $\s$,\item  $ H^{\rhd \lambda}$ is  spanned by
 $\{  c_{\s,\t}^\mu\mid \s,\t\in T(\mu), \mu \in \Upsilon, \mu \rhd \lambda\}$.\end{enumerate}
 \end{itemize}

 Thanks to (C4),    $ H^{\rhd \lambda}$ is
a   two-sided ideal of $H$.  Goodman  keeps (C1), (C2), (C4) and
replaces (C3)  by \eqref{sg1} as follows:
\begin{equation} \label{sg1} \sigma (  c^\lambda_{\s,\t})\equiv   c^\lambda_{\t,\s} \pmod{H^{\rhd \lambda}}.\end{equation}
 He calls the corresponding algebra  a weakly cellular algebra~\cite{G05}.
It has been pointed in \cite{G05} that  all results  for the representation theory of cellular algebras in \cite{GL} are still true
for weakly cellular algebras.

  Thanks to \cite{GL}, for any $\lambda\in \Upsilon$, there is  a cell module, say $S(\lambda)$  with respect to the (weakly)  cellular basis as above.
It is known that there is an invariant form on   $S(\lambda)$.  Let  $\bar \Upsilon=\{\lambda\in \Upsilon\mid S(\lambda)/\text{rad}\neq 0\}$  where $\text{rad}$ is the radical of this invariant form.
Then  the non-zero quotient module $S(\lambda)/\text{rad}$, denoted by  $D(\lambda)$,  is absolutely irreducible.
Later on, we will say that  $S(\lambda)$ has the simple head $D(\lambda)$ if $D(\lambda)\neq 0$.
 Following  \cite{GL},
 such $D (\lambda)$'s consist of  a complete set of pairwise inequivalent  simple  $H$-modules.
For any $\lambda, \mu\in \Upsilon$, say $\lambda$ and $\mu$ are \textsf{cell-link} for a (weakly) cellular algebra $H$  if $\lambda\in \bar\Upsilon $ such that $[S(\mu): D(\lambda)]\neq 0$~\cite[Page 14]{GL}.
We will use cell-link to study  blocks of the $\Bbbk$-algebras associated to
cyclotomic Brauer categories and  cyclotomic Kauffman categories at the end of this paper.

  \begin{Lemma}\label{sysanti}For all $ a, c \in J$,
$| X(  c,   a)|=|Y(  a,   c)|$.
\end{Lemma}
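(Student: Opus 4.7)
The plan is to reduce the claim to a dimension computation inside the upper-finite quotient $A_{\preceq c}$, where the weakly triangular basis collapses almost to a tensor product.

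First I would check that $\sigma_A$ descends to an anti-involution $\bar\sigma$ on $A_{\preceq c}$. Since $A^{\not\preceq c}$ is the two-sided ideal generated by the $\sigma_A$-fixed set $\{1_b \mid b\not\preceq c\}$ and $\sigma_A$ is an antihomomorphism, this ideal is $\sigma_A$-stable, so the descent makes sense; moreover $\bar\sigma$ still fixes every $\bar 1_b$. Consequently it restricts to a $\Bbbk$-linear isomorphism
\[
\bar 1_a A_{\preceq c}\bar 1_c \;\xrightarrow{\ \sim\ }\; \bar 1_c A_{\preceq c}\bar 1_a.
\]

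Next I would compute the two dimensions using Lemma~\ref{twoside} applied to the upper set $I^{\not\preceq c}$. Since $I=J$ in this section, the equivalence class of $c$ is the singleton $\{c\}$, which forces the middle index $b$ in any surviving basis element $\bar y\bar h\bar x$ of $A_{\preceq c}$ at position $(a,c)$ to satisfy both $b\succeq c$ and $b\preceq c$, hence $b=c$; together with $X(c,c)=\{1_c\}$ this leaves the basis $\{\bar y\bar h \mid y\in Y(a,c),\ h\in H(c,c)\}$ of $\bar 1_a A_{\preceq c}\bar 1_c$, and symmetrically the basis $\{\bar h\bar x \mid h\in H(c,c),\ x\in X(c,a)\}$ of $\bar 1_c A_{\preceq c}\bar 1_a$. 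Equating dimensions via $\bar\sigma$ gives
\[
|Y(a,c)|\cdot|H(c,c)| \;=\; |H(c,c)|\cdot|X(c,a)|.
\]

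Finally I would conclude. If $a\not\preceq c$ then both $Y(a,c)$ and $X(c,a)$ are empty by (W3) and there is nothing to prove. Otherwise $|H(c,c)|=\dim\bar A_c\geq 1$, because the local algebra $\bar A_c$ is unital with nonzero identity $\bar 1_c$; cancellation then delivers the desired equality. The main obstacle is precisely verifying this nondegeneracy $\bar A_c\neq 0$: it holds automatically in every situation considered in the paper, since the Cartan fibres $\bar A_c$ turn out to be (degenerate) cyclotomic Hecke algebras, and in any case a $c$ with $\bar A_c=0$ could be removed from $I$ without affecting the representation theory of $A$.
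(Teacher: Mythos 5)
Your proof is correct, and it takes a genuinely different route from the paper's. The paper stays inside $A$ itself: it uses $\sigma_A(A_{a,c})=A_{c,a}$ to equate $\dim A_{a,c}=\dim A_{c,a}$, expands both sides via Lemma~\ref{cellbasis} as in \eqref{dimequl}, and then cancels all terms with middle index $b\succ c$ by a downward induction along the upper finite poset, leaving $|Y(a,c)||H(c)|=|H(c)||X(c,a)|$. You instead pass to the quotient $A_{\preceq c}$, where those higher terms vanish outright: the $\sigma_A$-stability of $A^{\not\preceq c}$ (also recorded at the start of \S 3.3) gives an anti-involution of $A_{\preceq c}$ exchanging $\bar 1_aA_{\preceq c}\bar 1_c$ and $\bar 1_cA_{\preceq c}\bar 1_a$, and the quotient basis of Lemma~\ref{twoside}/Proposition~\ref{bara}(2), combined with (W3), forces the middle index to equal $c$ and yields the same identity $|Y(a,c)|\,|H(c)|=|H(c)|\,|X(c,a)|$ with no induction at all. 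Your version is thus slicker, at the mild cost of invoking the earlier quotient results, which are available here and create no circularity. One caveat, shared equally by both arguments: the final step cancels $|H(c)|$ and hence needs $H(c)\neq\emptyset$; the paper does this silently in the word ``forcing,'' and your fallback remark about discarding objects with $\bar A_c=0$ does not literally establish the statement for such a $c$. Since in all categories treated in the paper the identity $1_c$ lies in $A^\circ$, so that $H(c)\neq\emptyset$ automatically, this is harmless in exactly the same way for your proof as for the paper's.
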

\begin{proof}We can assume  $ c\succeq  a$ without loss of any generality. Otherwise,  $ X( c,  a)=Y( a,  c)=\emptyset$. Since we are assuming that there is an
anti-involution
$\sigma_A$ satisfying  \eqref{ssgm},
 \begin{equation}\label{antihsh} \sigma_A( A_{a ,c})=A_{c, a}\end{equation} for  all  $ a, c \in J$. In particular,  $\dim  A_{c, a}=
\dim A_{a,  c}$. By  Lemma~\ref{cellbasis},
 \begin{equation}\label{dimequl}
| Y( a,  c)|| H( c)|+\sum_{ b\succ c} |Y(  a,   b)| |H(  b)|| X(  b,  c)|=
|H(  c)||X(  c,   a)|+\sum_{  b\succ  c} | Y(  c,   b)|| H(  b)|| X(  b,  a)|.
\end{equation}
If $  c$ is maximal,  then $|Y(  a,   c)|| H(  c)|=|H(  c)|| X(  c,  a)|$,
forcing $|Y(  a,   c)|= |X(  c,  a)|$. Otherwise, by
 induction assumption, $| Y(  a,   b)|= | X(  b,  a)|$ and $|Y(  c,   b)|= |X(  b,  c)|$ for all $  b\succ  c$.
By \eqref{dimequl},  we still have  $|Y(  a,   c)|= | X(  c,  a)|$.
\end{proof}

\begin{Theorem}\label{cellstructureofa0} For any $  b\in J$, suppose  $\bar A_{  b}$ is a cellular algebra  with respect to  the anti-involution $\sigma_{\bar A_{  b}}$ induced by $\sigma_{A}$. Then  $A_{  a}$ is a weakly cellular algebra for any $  a\in J$.
\end{Theorem}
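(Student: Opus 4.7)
The plan is to construct a weakly cellular basis for $A_{a}$ by combining the cellular bases of each $\bar A_b$ (for $b \succeq a$) with the subspace data $Y(a,b)$, using the anti-involution $\sigma_A$ to produce the ``right-hand'' factor of each basis element. For each $b \succeq a$, I will fix a $\Bbbk$-linear lift $\tilde c^\lambda_{\s,\t} \in A^\circ_b$ of each cellular basis element $c^\lambda_{\s,\t}$ of $\bar A_b$; by Proposition~\ref{bara}(1) these lifts exist and form a basis of $A^\circ_b$. The poset will be
\[ \Upsilon^{(a)} := \bigsqcup_{b \succeq a} \{b\} \times \Upsilon_b, \]
with lexicographic order $(b,\lambda) \unrhd (b',\lambda')$ iff $b \succ b'$, or $b=b'$ and $\lambda \unrhd_b \lambda'$; this is upper finite since $I$ is. For each $(b,\lambda)$ set $T^{(a)}(b,\lambda) := Y(a,b) \times T_b(\lambda)$, and take as candidate weakly cellular basis
\[ C^{(b,\lambda)}_{(y,\s),(y',\t)} := y\,\tilde c^\lambda_{\s,\t}\,\sigma_A(y'). \]

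The verification of the anti-involution axiom \eqref{sg1} should be clean: since $\bar A_b$ is cellular with respect to $\sigma_{\bar A_b}$, we have $\sigma_{\bar A_b}(c^\lambda_{\s,\t}) = c^\lambda_{\t,\s}$, and so $\sigma_A(\tilde c^\lambda_{\s,\t}) - \tilde c^\lambda_{\t,\s}$ lies in $\ker(\pi_b) \cap A_b = A^{\succ b} \cap A_b$. Hence
\[ \sigma_A(C^{(b,\lambda)}_{(y,\s),(y',\t)}) = y'\,\sigma_A(\tilde c^\lambda_{\s,\t})\,\sigma_A(y) \equiv y'\,\tilde c^\lambda_{\t,\s}\,\sigma_A(y) = C^{(b,\lambda)}_{(y',\t),(y,\s)} \pmod{A^{\succ b} \cap A_{a}}, \]
and $A^{\succ b} \cap A_{a} \subseteq A_{a}^{\rhd (b,\lambda)}$ by definition of the order. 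For axiom (C4), I will expand $\sigma_A(y')\,z \in A_{b,a}$ (for $z \in A_{a}$) in the basis of Lemma~\ref{cellbasis}: contributions at levels $d \succ b$ land in $A^{\succ b} \subseteq A_{a}^{\rhd(b,\lambda)}$, and the $d=b$ contribution has the form $\sum_{h,x} \gamma^{y',z}_{h,x}\,h\,x$. Applying cellularity of $\bar A_b$ to reduce $c^\lambda_{\s,\t}\,\bar h$ modulo $\bar A_b^{\rhd \lambda}$ and lifting, the leading coefficients which survive modulo $A_{a}^{\rhd(b,\lambda)}$ depend only on $(y',\t,z)$ and not on $(y,\s)$, as required.

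The main obstacle is to verify that $\{C^{(b,\lambda)}_{(y,\s),(y',\t)}\}$ is indeed a $\Bbbk$-basis of $A_{a}$. The cardinality matches the known basis $\{y\,\tilde c^\lambda_{\s,\t}\,x\}$ from Lemma~\ref{cellbasis} via the equality $|Y(a,b)| = |X(b,a)|$ of Lemma~\ref{sysanti}, so it suffices to prove linear independence (equivalently, spanning). Because $\sigma_A$ fixes each idempotent $1_c$, it stabilizes the two-sided ideal $A^{\succ b}$ and thus descends to a bijection on the graded piece $A_{a}^{\succeq b}/A_{a}^{\succ b} \cong A^-_{a,b} \otimes_\Bbbk \bar A_b \otimes_\Bbbk A^+_{b,a}$. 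Setting $\omega_{y'} := \pi_b(\sigma_A(y')) \in \bar A_b \otimes A^+_{b,a}$, the claim reduces to showing that $\{c^\lambda_{\s,\t}\cdot\omega_{y'}\}$ spans $\bar A_b \otimes A^+_{b,a}$ over $\Bbbk$. The subtlety is that $\omega_{y'}$ generically mixes nontrivial $\bar A_b$- and $A^+_{b,a}$-components rather than being a pure tensor; I expect the spanning to follow from the $\sigma_A$-induced graded symmetry on $A_{a}^{\succeq b}/A_{a}^{\succ b}$ together with the dimension match from Lemma~\ref{sysanti}. Once this is done, the datum $(\Upsilon^{(a)}, T^{(a)}, \{C^{(b,\lambda)}_{(y,\s),(y',\t)}\})$ satisfies (C1), (C2), \eqref{sg1} and (C4), giving the weakly cellular structure on $A_{a}$.
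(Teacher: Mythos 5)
Your construction is the mirror image of the paper's own proof rather than a new route: the paper takes $\sigma_A(x_2)\,h^{\lambda}_{\s,\t}\,x_1$ with $x_1,x_2\in X(b,a)$, you take $y\,\tilde c^{\lambda}_{\s,\t}\,\sigma_A(y')$ with $y,y'\in Y(a,b)$, and the poset, the \eqref{sg1} computation and the (C4) computation are the same in substance. The genuine gap is exactly the step you defer: the claim that replacing one outer factor by its $\sigma_A$-image still yields a basis. Your reduction of it to the graded piece $\bar A_b\otimes A^+_{b,a}$ is correct, and the ingredients you name do suffice (since $\sigma_A(1_c)=1_c$, $\sigma_A$ stabilizes $A^{\succ b}$ and carries the free right $\bar A_b$-module $1_aA_{\preceq b}\bar 1_b$ of Lemma~\ref{exactj}(1), with basis indexed by $Y(a,b)$, anti-linearly onto $\bar 1_bA_{\preceq b}1_a$, so the $\omega_{y'}$ form a free left $\bar A_b$-basis there; the count is Lemma~\ref{sysanti}), but you only ``expect'' this, and you never assemble the graded statements into a basis of $A_a$ itself. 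The paper does precisely this by an induction up the upper finite poset on the filtration $A_a\cap A^{\succ b}$, using Lemma~\ref{linearcom}(2), the identity $\sigma_A(A_{a,c})=A_{c,a}$ from \eqref{antihsh}, and Lemma~\ref{sysanti}; without some such induction your proof is incomplete.

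Moreover, this missing step is silently used twice in your axiom checks. In \eqref{sg1} you assert $A^{\succ b}\cap A_a\subseteq A_a^{\rhd(b,\lambda)}$ ``by definition of the order''; it is not by definition, since $A^{\succ b}\cap A_a$ is spanned (Lemma~\ref{cellbasis}, Lemma~\ref{linearcom}(2)) by elements $y\,h\,x$ with $x\in X(d,a)$, $d\succ b$, and placing these inside the span of \emph{your} higher basis elements $y\,\tilde c^{\mu}\,\sigma_A(y'')$ is exactly the change-of-basis claim at the higher levels. Similarly, in (C4) the error terms produced by lifting the cellular relation from $\bar A_b$ come out in the form $y\,\tilde c^{\mu}_{\s,\v}\,x_1$ with $\mu\rhd\lambda$ and $x_1\in X(b,a)$, i.e.\ with the ``wrong'' right-hand factor, so you need the change of basis compatibly with the dominance order on $\Lambda_b$ (for every up-set $\Phi\subseteq\Lambda_b$ the two spans agree modulo $A^{\succ b}\cap A_a$; this does hold because the transition matrix expressing the $\omega_{y'}$ in the free basis $X(b,a)$ has entries in $\bar A_b$ and the span of $\{c^{\mu}\mid\mu\in\Phi\}$ is a two-sided ideal, but it must be said). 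Once the filtered basis claim is proved — as in the paper's inductive argument — the rest of your verification goes through.
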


\begin{proof} For any $  a\in J$, suppose that $\bar A_{  a}$ has cellular basis
\begin{equation}\label{ceboaa}
\{\bar h_{\s,\t}^\lambda\mid \s,\t\in T_{  a}(\lambda),\forall \lambda\in \Lambda_  a\},
\end{equation}
 where $\bar h^\lambda_{\s,\t}=\pi_  a(h^\lambda_{\s,\t})$,  $h^\lambda_{\s,\t}\in A_  a$ and $\pi_  a$ is given in Proposition~\ref{bara}(4).
    We use $T_{  a} $ and $\Lambda_{  a}$ here to emphasis that both of them depend on $  a$.
 Thanks to Lemma~\ref{cellbasis} and Proposition~\ref{bara}(1),(4),  we can choose all $h^\lambda_{\s,\t}\in  A^\circ_  a$ such that
\begin{equation}\label{cellbasisiss}\{h^\lambda_{\s,\t}\mid \s,\t\in T_  a(\lambda), \lambda\in \Lambda_  a\} \text{ is a  basis of }A^\circ_  a.
\end{equation}
%So, we can assume $H(  b)=\{h^\lambda_{\s,\t}\mid \s,\t\in T_  b(\lambda), \lambda\in \Lambda_  b\} $.
Moreover, by Lemma~\ref{cellbasis} and  Lemma~\ref{linearcom}(2),  $  A_  a\cap A^{\succ  b}$  has  basis given by
\begin{equation}\label{cella} \{   y_1h_{\s,\t}^\lambda x_1 \mid (x_1, y_1)\in X(  c,  a)\times  Y(  a,  c),\forall \lambda\in \Lambda_  c,  \s,\t\in T_  c(\lambda),   c \succ   b \}.\end{equation}
Replacing  $   c \succ   b $ by  $  c \succeq   b$ in \eqref{cella} yields a basis of  $ A_  a\cap A^{\succeq   b}$.

We claim that we still get a basis of $  A_  a\cap A^{\succ  b}$   if  $y_1$  in \eqref{cella} is  replaced  by $\sigma_A(x_2), x_2\in X(  c,   a)$.
If $  b$ is maximal, then $  A_  a\cap A^{\succ  b}=0$, and  there is nothing to prove.
In general,   by  Lemma~\ref{linearcom}(2) and  \eqref{antihsh}, for $\lambda\in \Lambda_  c $ and $y_1\in Y(  a,  c) $,
$$ y_1h_{\s,\t}^\lambda \equiv \sum_{x_2\in  X(  c,   a), \mu\in \Lambda_  c, \s',\t'\in T_  c(\mu)} r_{x_2, \s', \t'} \sigma_A(x_2)h_{\s',\t'}^\mu  \pmod { A^{\succ   c}}, $$
for some   $r_{x_2, \s', \t'}\in \Bbbk$. Hence,
$$ y_1h_{\s,\t}^\lambda x_1 \equiv \sum_{x_2\in  X(  c,   a), \mu\in \Lambda_  c, \s',\t'\in T_  c(\mu)} r_{x_2, \s', \t'} \sigma_A(x_2)h_{\s',\t'}^\mu x_1 \pmod {A_{  a}\cap  A^{\succ   c}}.$$
 By Lemma~\ref{sysanti}, the cardinalities of two sets are equal.
 So,
our claim follows.  Similarly, we obtain another basis of  $A_{  a}\cap A^{\succeq   b} $. Define \begin{equation}\label{gam1}  \Gamma_  a=\{  b\in J|   b \succeq   a, X(  b,   a)\neq \emptyset\}.\end{equation}
Since  $A_{  a}=A_{  a}\cap  A^{\succeq   a}$,
 $A_  a$ has basis given by  \begin{equation}\label{wcs} S_{  a} =\bigcup_{  b \in \Gamma_  a} \bigcup_{\lambda\in \Lambda_{  b}} \{ \sigma_A(x_2)h_{\s,\t}^\lambda x_1 \mid x_1,x_2\in X(  b,  a), \s,\t\in T_  b(\lambda)\}.\end{equation} To see that $S_{  a}$ is a weakly cellular basis, we need to verify (C1),(C2), (C4) and  \eqref{sg1}.
Suppose
\begin{equation}\label{jdhujdsdji}
\Lambda_{\succeq   a}^\sharp=\bigcup_{  b\in \Gamma_  a}\Lambda_  b.
\end{equation}
  For any $\lambda, \mu\in \Lambda_{\succeq   a}^\sharp$,  write $\lambda \unlhd \mu$ if either   $ \rho(\lambda) \prec \rho(\mu)$ or $\rho(\lambda)=\rho(\mu)$ and $\lambda \unlhd \mu$ in $ \Lambda_{\rho(\lambda)}$.
Then $( \Lambda_{\succeq   a}^\sharp , \unlhd)$ is the required poset in (C1) for $A_{  a}$. Obviously, (C2) holds.

Suppose $\lambda\in \Lambda_  b\subseteq\Lambda_{\succeq  a}^\sharp$. Then   $  b\succeq   a$ and $X(  b,   a)\neq\emptyset$. By  Lemma~\ref{cellbasis},  $\Ker\pi_  b|_{A_  b}=A_{  b} \cap A^{\succ   b}$. Since
  $\overline{\sigma_A(h_{\s,\t}^\lambda )}= \sigma_{\bar A_{  b}} (\bar h_{\s,\t}^\lambda )=\bar h_{\t, \s}^\lambda$
  we have $ \sigma_A(h_{\s,\t}^\lambda )\equiv h_{\t, \s}^\lambda \pmod{A_{  b} \cap A^{\succ   b}}$. So,

$$\sigma_A(\sigma_A(x_2)h_{\s,\t}^\lambda x_1 )\equiv  \sigma_A(x_1)h_{\t,\s}^\lambda x_2   \pmod{A_{  a} \cap A^{\succ   b}}.$$
Thanks to \eqref{cella}, $A_{  a} \cap A^{\succ   b} \subset   A_  a^{\rhd \lambda}$,
where  $ A_  a^{\rhd \lambda}$ is the subspace spanned by all $\sigma_A(x_2')h_{\s',\t'}^\mu x_1'$ with $\mu \rhd\lambda$ in $\Lambda_{\succeq  a}^\sharp$. This proves  \eqref{sg1} for $S_{  a}$.  Finally, we verify (C4). More explicitly, we want to  prove
\begin{equation} \label{cel123} \sigma_A(x_2)h_{\s,\t}^\lambda x_1 u\equiv \sum_{\v\in  T_  b(\lambda), x_3\in X(  b,  a) }  r_{\v,x_3} \sigma_A(x_2) h_{\s,\v}^\lambda x_3 \pmod { A_  a^{\rhd \lambda}},\end{equation}
 for any $u\in A_{  a}$, where  $r_{\v,x_3}$'s are independent of $x_2$ and $\s$.
 In fact, thanks to Lemma~\ref{linearcom}(2) and (W3), there is a $z\in A^{\succ  b}$ such that
 $$  x_1u= 1_{  b}x_1u= \sum  r_{ h', x_3}   h'   x_3+z,$$
where the summation is   over $(   h',  x_3)\in    H(  b)\times X(  b,  a)$.
Since  $A_  a A^{\succ   b}A_  a\subset A_  a\cap A^{\succ   b}$, we have  $\sigma_A(x_2)h_{\s,\t}^\lambda z\in A_{  a}\cap A^{\succ   b}\subset A_  a^{\rhd \lambda}$.
So,
$$\begin{aligned} \sigma_A(x_2)h_{\s,\t}^\lambda x_1u&\equiv\sigma_A(x_2)h_{\s,\t}^\lambda \sum r_{  h',  x_3}   h'   x_3
&\equiv
\sum_{\v\in  T_  b(\lambda), x_3\in X(  b,  a) }  r_{\v,x_3} \sigma_A(x_2)h_{\s,\v}^\lambda x_3 ~(\text{mod } A_  a^{\rhd \lambda}),
\end{aligned}$$
where the second $ \equiv$ follows  from Lemma~\ref{W5} and (C4) for $\bar A_  b$. Obviously,  the scalars  $r_{\v,x_3}$'s  are independent of $x_2$ and $\s$.
\end{proof}

In the remaining part of this section, we  keep conditions in Theorem~\ref{cellstructureofa0}.
So, $\bar A_  a$ (resp., $A_  a$)  has a cellular basis (resp., weakly cellular basis) as above.
For any $ \lambda\in \Lambda_{  a}$, let  $S_{  a}(\lambda)$ be  the corresponding left  cell module  of $\bar A_  a$.  Without loss of any generality, we can assume that  $S_{  a}(\lambda)$  has  basis
  \begin{equation}\label{basisofcell}
  \{\bar  h^\lambda_{\s,\t}+\bar A_{  a}^{\rhd \lambda}  \mid\s\in T_  a(\lambda)\}
  \end{equation} where $\t$ is any fixed element in  $ T_  a(\lambda)$.
   Let $\bar \Lambda_{  a}=\{\lambda\in \Lambda_{  a} \mid S_{  a}(\lambda)/\text{rad}\neq 0\} $, where  $\text{rad}$ is the radical of the invariant form induced by the corresponding cellular basis. Then  $\bar \Lambda_{  a}$ parameterizes all pairwise  inequivalent simple $\bar A_{  a}$-modules. Let  $L_{  a} (\lambda)=S_{  a}(\lambda)/\text{rad} $,  $\lambda\in \bar \Lambda_{  a}$.
 Similarly, for each $\lambda\in \Lambda_{  b}\subseteq \Lambda_{\succeq   a}^\sharp$ (see \eqref{jdhujdsdji}), the left cell module
 $\tilde S_{  a}(\lambda)$ has basis
 \begin{equation}\label{cellbasis1}
 \{ \sigma_A(x_2)h_{\s,\t}^\lambda x_1+ A_  a^{\rhd \lambda} \mid  x_2\in X(  b,  a), \s\in T_  b(\lambda)\},
 \end{equation}
 where $ x_1$ is any fixed element in $   X(  b,  a)$ and $\t$ is in \eqref{basisofcell}.
  Suppose simple $A_{  a} $-modules are parameterized by  $\tilde\Lambda_{\succeq   a}^\sharp \subseteq \Lambda_{\succeq   a}^\sharp $ in the sense that $\tilde S_{  a}(\lambda)$ has the  simple head  $\tilde L_a(\lambda)$ for any $\lambda\in \tilde\Lambda_{\succeq   a}^\sharp$.

   For any $\lambda\in \Lambda_  a$, let
\begin{equation}\label{tildelta} \tilde \Delta(\lambda)= \Delta( S_  a(\lambda)).\end{equation}
Thanks to \eqref{basisofcell} and Lemma~\ref{exactj}(1), $\tilde \Delta(\lambda)$ has basis
$\bigcup_{  b\preceq   a} V_{  b}$, where
\begin{equation}\label{tildeltabasis} V_{  b}= \{y\otimes_{\bar A_  a}(\bar  h^\lambda_{\s,\t}+\bar A_{  a}^{\rhd \lambda})\mid y\in Y(  b,  a), \s\in T_  a(\lambda) \}.\end{equation}

Let  $\rho: \bigcup_{ a \in J} \Lambda_{  a} \rightarrow J$ such that  $\rho(\lambda)=  a $ if $\lambda\in \Lambda_{  a}$, where $\Lambda= \bigcup_{ a \in J} \Lambda_{  a}$.
  Then  $\rho|_{\bar \Lambda}$ is  $\rho$ in Definition~\ref{rho}.
\begin{Prop}\label{xeijdiec1} Suppose  $(\lambda, \mu)\in\bar \Lambda\times \Lambda$.

\begin{itemize}
\item [(1)] $1_{  a}\tilde \Delta(\mu)=0$ if  $\rho(\mu)\not\in \Gamma_{  a}$; and $1_{  a}\tilde \Delta(\mu) \cong \tilde S_{  a}(\mu) $  if   $\rho(\mu)\in \Gamma_{  a}$ (see \eqref{gam1}).
\item [(2)]  $[\tilde \Delta( \mu): L(\lambda)]=\sum_{\nu\in\bar\Lambda_{\rho(\mu)}} [S_{\rho(\mu) } (\mu): L_{\rho(\mu)} (\nu)](P(\lambda):\Delta(\nu))$ if Assumption~\ref{pdual} holds.
\item [(3)] $1_{  a } L(\lambda)=\tilde L_a(\lambda)$ if $ \lambda\in\tilde\Lambda_{\succeq  a}^\sharp\cap \bar \Lambda$, and  $1_{  a } L(\lambda)=0$ if $ \lambda\not\in\tilde\Lambda_{\succeq  a}^\sharp\cap \bar \Lambda$.

%\item [(4)] $[\tilde \Delta( \mu):L(\lambda)]=[1_{  a}\tilde \Delta(\mu): 1_{  a} L(\lambda)]$  if $\lambda\in\tilde \Lambda_{\succeq   a}^\sharp\cap\bar \Lambda$ and $\rho(\mu) \in\Gamma_  a$.
\end{itemize}
\end{Prop}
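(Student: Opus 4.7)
The plan is to handle the three parts separately. For~(1), I would apply Lemma~\ref{exactj}(1), which since $I=J$ simplifies to give $1_aA_{\preceq b}\bar 1_b\cong\bigoplus_{y\in Y(a,b)}\bar A_b$ as a right $\bar A_b$-module (with $b=\rho(\mu)$). Using the description $\tilde\Delta(\mu)=A_{\preceq b}\bar 1_b\otimes_{\bar A_b}S_b(\mu)$ from~\eqref{tildelta}, this yields $1_a\tilde\Delta(\mu)\cong\bigoplus_{y\in Y(a,b)}S_b(\mu)$; if $b\notin\Gamma_a$ then $Y(a,b)=\emptyset$ by Lemma~\ref{sysanti}, forcing the module to vanish. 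If $b\in\Gamma_a$, I would construct the required $A_a$-module isomorphism $1_a\tilde\Delta(\mu)\cong\tilde S_a(\mu)$ via the natural map $y\otimes(\bar h^\mu_{\s,\t}+\bar A_b^{\rhd\mu})\mapsto yh^\mu_{\s,\t}x_1+A_a^{\rhd\mu}$, with bijectivity coming from the change-of-basis in the proof of Theorem~\ref{cellstructureofa0}, which becomes an exact identity in $A_{\preceq b}$ because $A^{\succ b}\subseteq A^{\not\preceq b}$.

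For~(2), I would use the exactness of $\Delta$ (Lemma~\ref{exactj}(2)) to convert a composition series of $S_b(\mu)$ into a $\bar\Delta$-flag of $\tilde\Delta(\mu)=\Delta(S_b(\mu))$ in which each $\bar\Delta(\nu)$ appears $[S_b(\mu):L_b(\nu)]$ times. Hence $[\tilde\Delta(\mu):L(\lambda)]=\sum_\nu[S_b(\mu):L_b(\nu)]\,[\bar\Delta(\nu):L(\lambda)]$. Combining Assumption~\ref{pdual} with Lemma~\ref{isodual1}(2) gives $[\bar\Delta(\nu):L(\lambda)]=[\bar\nabla(\nu):L(\lambda)]$, and Proposition~\ref{dektss1}(2) rewrites this as $(P(\lambda):\Delta(\nu))$.

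For~(3), I would exploit the standard bijection between $\{\lambda\in\bar\Lambda:1_aL(\lambda)\neq 0\}$ and $\tilde\Lambda_{\succeq a}^\sharp$ induced by $L(\lambda)\mapsto 1_aL(\lambda)$. Suppose $1_aL(\lambda)=\tilde L_a(\mu)\neq 0$; by~(1), $\tilde L_a(\mu)$ is then a simple quotient of $\tilde S_a(\lambda)$, so cellular theory forces $\mu\unrhd\lambda$ in $\Lambda_{\succeq a}^\sharp$. Exactness of the functor $M\mapsto 1_aM$ combined with the bijection gives $[\tilde S_a(\mu):\tilde L_a(\mu)]=[\tilde\Delta(\mu):L(\lambda)]=1$; expanding the right-hand side using~(2) and the $\bar\Delta$-flag of $\tilde\Delta(\mu)$, together with Lemma~\ref{barcom}, forces either $\rho(\lambda)=\rho(\mu)$ (in which case the cellular orders on both sides yield $\mu=\lambda$) or $\rho(\lambda)\succ\rho(\mu)$ (contradicting $\mu\unrhd\lambda$). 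Hence $\mu=\lambda$ always, which immediately gives the vanishing statement: if $\lambda\notin\tilde\Lambda_{\succeq a}^\sharp$ then $\mu=\lambda\notin\tilde\Lambda_{\succeq a}^\sharp$ leaves no valid $\mu$, so $1_aL(\lambda)=0$. For the identification on $\tilde\Lambda_{\succeq a}^\sharp\cap\bar\Lambda$, applying~(2) with $\mu=\lambda$ together with Lemma~\ref{barcom} yields $[\tilde\Delta(\lambda):L(\lambda)]=1$; since some simple $A$-module maps to $\tilde L_a(\lambda)$ under the bijection, the rigidity result forces that module to be $L(\lambda)$, so $1_aL(\lambda)=\tilde L_a(\lambda)\neq 0$.

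The main obstacle is the rigidity step in~(3), where the interplay between the cellular order on $\Lambda_{\succeq a}^\sharp$ and the stratification order on $J$ must be carefully tracked via Lemma~\ref{barcom} applied inside the $\bar\Delta$-flag. Part~(1) is also somewhat technical, as the change of basis from Theorem~\ref{cellstructureofa0} is only valid modulo $A^{\succ b}$ in the ambient algebra $A$ and must be pushed down to the quotient $A_{\preceq b}$ to yield the required identity.
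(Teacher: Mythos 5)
Your overall route coincides with the paper's own proof in all three parts: (1) is done via the basis of $\tilde\Delta(\mu)$ supplied by Lemma~\ref{exactj}(1) together with the identification of $1_a\tilde\Delta(\mu)$ with the cell module basis of \eqref{cellbasis1} (your map is simply the inverse of the isomorphism $\phi$ written in the paper, and the vanishing when $\rho(\mu)\notin\Gamma_a$ does come from $Y(a,\rho(\mu))=\emptyset$ via Lemma~\ref{sysanti} and (W3)); (2) is exactly the paper's computation with the $\bar\Delta$-flag of $\Delta(S_{\rho(\mu)}(\mu))$, Lemma~\ref{isodual1}(2) and Proposition~\ref{dektss1}(2); and (3) is the same ``pincer'' between the cellular order on $\Lambda_{\succeq a}^\sharp$ and the stratification order that the paper uses.

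In part (3), however, both of your order-theoretic claims are stated with the wrong direction relative to the paper's conventions, and as written they contradict the results you invoke. With the cellular structure of Section~4 (ideals spanned by $\rhd$-\emph{larger} labels), a composition factor $\tilde L_a(\mu)$ of the cell module $\tilde S_a(\lambda)$ satisfies $\mu\unlhd\lambda$, not $\mu\unrhd\lambda$; this is precisely the direction the paper itself uses (``$[\tilde S_a(\nu):\tilde L_a(\lambda)]\neq 0$, hence $\lambda\unlhd\nu$''), so your first step should give $\rho(\mu)\preceq\rho(\lambda)$. Likewise, Lemma~\ref{barcom} and Proposition~\ref{dektss1}(2) allow nonvanishing for $\nu\neq\lambda$ only when $\rho(\nu)\succ\rho(\lambda)$; since the relevant $\nu$ lies in $\bar\Lambda_{\rho(\mu)}$, expanding $[\tilde\Delta(\mu):L(\lambda)]\neq 0$ through part (2) yields the dichotomy ``$\rho(\lambda)=\rho(\mu)$ or $\rho(\mu)\succ\rho(\lambda)$'', not ``$\rho(\lambda)\succ\rho(\mu)$''. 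Because you flipped \emph{both} inequalities, the contradiction still closes and you land on $\rho(\lambda)=\rho(\mu)$, after which the two cell-module constraints ($\mu\unlhd\lambda$ and $\lambda\unlhd\mu$ inside $\Lambda_{\rho(\mu)}$) force $\lambda=\mu$ exactly as in the paper; so the skeleton is sound, but the two intermediate statements must be corrected as above. A minor point in (1): the change of basis from the proof of Theorem~\ref{cellstructureofa0} is used modulo $A_a\cap A^{\succ b}\subseteq A_a^{\rhd\mu}$, i.e.\ inside the cell module of the weakly cellular algebra $A_a$, rather than in the quotient $A_{\preceq b}$; with that reading your map is precisely the paper's isomorphism read backwards.
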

\begin{proof}The first part of (1) is obvious.
Suppose
 $\rho(\mu)=b\in \Gamma_  a$. Thanks to \eqref{tildeltabasis},   $V_{  a}   $  is a basis of  $1_{  a}\tilde \Delta( \mu) $.
  By \eqref{cellbasis1},
$\{ yh_{\s,\t}^\mu x + A_  a^{\rhd \mu} \mid y\in Y(  a,  b), \s\in T_  b(\mu)\}
 $ is a basis of  $\tilde S_{  a}(\mu)$
 for some fixed $( x, \t)  \in X(b,  a)\times T_{b}(\mu)$.
Let  $\phi: \tilde S_{  a}(\mu)\rightarrow 1_{  a}\tilde \Delta( \mu) $ be the $\Bbbk$-linear isomorphism  such that
$$\phi(yh_{\s,\t}^\mu x + A_  a^{\rhd \mu})=y\otimes_{\bar A_ b}(\bar  h^\mu_{\s,\t}+\bar A_{  b}^{\rhd \mu}).$$
Thanks to  Lemma~\ref{linearcom}(2) and (W3), it is routine to check that $\phi$ is an $A_{  a}$-homomorphism, proving (1).

 Since $\Delta$ is an exact functor,   $\tilde \Delta( \mu)$ has a finite  $\bar\Delta$-flag such that each section is of form $\bar\Delta(\nu)$ for some $\nu\in\bar \Lambda_{\rho(\mu)}$. Furthermore, $(\tilde \Delta( \mu): \bar\Delta(\nu))=  [S_{\rho(\mu)}(\mu): L_{\rho(\mu)}(\nu)]$ by Lemma~\ref{exactj}(2).  So,
$$\begin{aligned}~\quad
 [\tilde \Delta( \mu): L(\lambda)] &=\sum_{\nu\in\bar \Lambda_{\rho(\mu)}} (\tilde \Delta( \mu): \bar\Delta(\nu))[\bar\Delta(\nu):L(\lambda)]
=\sum_{\nu\in\bar \Lambda_{\rho(\mu)}} [S_{\rho(\mu)}(\mu): L_{\rho(\mu)}(\nu)][\bar\Delta(\nu):L(\lambda)]\\
&=\sum_{\nu\in\bar\Lambda_{\rho(\mu)}} [S_{\rho(\mu)}(\mu): L_{\rho(\mu)}(\nu)](P(\lambda):\Delta(\nu)), \text{by  Lemma~\ref{isodual1}(2), Proposition~\ref{dektss1}(2)}.\end{aligned}$$

Suppose $\lambda\in \bar\Lambda_  b$. If $1_  aL(\lambda)\neq 0$, by Lemma~\ref{barcom},  $1_{  a}\bar\Delta(\lambda)\neq 0$ and    $  b\succeq  a$.
Since $1_{  a}$ is an idempotent,  $\{1_{  a}L(\lambda)\neq0\mid \lambda\in \bigcup_{  b\succeq  a} \bar\Lambda_  b\}$ consists of a complete set of all pairwise inequivalent  simple $ A_  a$-modules.
So, $$\{1_{  a}L(\lambda)\neq0\mid \lambda\in \bigcup_{  b\succeq  a} \bar\Lambda_  b\}=\{ \tilde L_a(\lambda)\mid \lambda\in \tilde \Lambda_{\succeq  a}^\sharp \}.$$ In order to prove (3),
it suffices to verify  that  $1_  a L(\lambda)\cong \tilde L_a(\lambda)$  if $1_  a L(\lambda)\neq 0$.
If the result were false, then
there is $\nu\in \bar\Lambda_  c$ with $  c \succeq  a$ such that   $0\neq 1_  a L(\nu )\cong \tilde L_a(\lambda)$,  for some $\lambda\in\Lambda_  b$ with $  b\succeq   a$ and $\lambda\neq \nu$. Since $L_  c(\nu)$ is a simple head of $S_  c(\nu)$, there is  an epimorphism  $\tilde\Delta( \nu)\twoheadrightarrow L(\nu)$.
Applying $1_  a$ yields an epimorphism  $\tilde S_  a(\nu )\cong 1_  a\tilde\Delta( \nu)\twoheadrightarrow 1_  aL(\nu)$. So,
$[\tilde S_  a(\nu ):\tilde L_a(\lambda) ]\neq 0$ and hence $\lambda  \unlhd \nu$ and $\rho(\nu) \succeq  b$.
On the other hand, since $1_{  a}$ is an idempotent,
 $$[\tilde\Delta(\lambda): L(\nu)]=[1_  a\tilde\Delta(\lambda): 1_  aL(\nu)]=[\tilde S_  a(\lambda): 1_  aL(\nu)]=[\tilde S_  a(\lambda): \tilde L_a(\lambda)]\neq 0.$$
  By (2), $\rho(\nu)\preceq   b$, forcing   $\rho(\nu)=  b$.
   So, $[S_  b(\lambda): L_  b(\nu)]=[1_  b\tilde \Delta(\lambda): 1_  b L(\nu)]=[\tilde \Delta(\lambda): L(\nu)]\neq 0$. This proves
     $\nu\unlhd \lambda$. Therefore,   $\lambda=\nu$,   a contradiction, and (3) follows.
\end{proof}
\begin{Cor}\label{ijxxexeu} Keep the Assumption~\ref{pdual}.  Suppose  $\lambda\in\bar\Lambda_  a$. Then   $P(\lambda)$ has a finite $\tilde\Delta$-flag  such that $\mu\in \bigcup_{  b\succeq  a}  \Lambda_  b$ if $\tilde \Delta( \mu)$ appears as a section. Furthermore, the  multiplicity of $\tilde \Delta( \mu)$ in this flag is  $[\tilde \Delta( \mu):L(\lambda)]$.
\end{Cor}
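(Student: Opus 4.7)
The plan is to refine the $\Delta$-flag of $P(\lambda)$ provided by Proposition~\ref{dektss1} into a $\tilde\Delta$-flag by exploiting the cellular structure of each $\bar A_b$ together with the exactness of the standardization functor (Lemma~\ref{exactj}(2)).

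By Proposition~\ref{dektss1}, $P(\lambda)\in A$-mod$^{\Delta}$ with top section $\Delta(\lambda)$, and every other section is of the form $\Delta(\nu)$ with $\nu\in \bar\Lambda$ and $\rho(\nu)\succ a$; the multiplicity of $\Delta(\nu)$ is $(P(\lambda):\Delta(\nu))$. For each such $\nu$ (including $\nu=\lambda$), setting $b=\rho(\nu)\succeq a$, Definition~\ref{stanpro} gives $\Delta(\nu)=j^b_!(P_b(\nu))$. Since $\bar A_b$ is cellular with the anti-involution $\sigma_{\bar A_b}$ induced by $\sigma_A$, the standard theory of cellular algebras~\cite{GL} supplies a cell filtration of $P_b(\nu)$ whose sections are cell modules $S_b(\mu)$ for $\mu\in\Lambda_b$, with multiplicity
\begin{equation*}
(P_b(\nu):S_b(\mu))=[S_b(\mu):L_b(\nu)].
\end{equation*}

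Applying the exact functor $j^b_!$ to this cell filtration yields a filtration of $\Delta(\nu)$ whose successive quotients are the modules $\tilde\Delta(\mu)=\Delta(S_b(\mu))$ appearing with the same multiplicities $[S_b(\mu):L_b(\nu)]$. Inserting these refinements into the original $\Delta$-flag of $P(\lambda)$ assembles a finite $\tilde\Delta$-flag of $P(\lambda)$; each section $\tilde\Delta(\mu)$ therein satisfies $\rho(\mu)=b=\rho(\nu)\succeq a$, so $\mu\in \bigcup_{b\succeq a}\Lambda_b$ as required.

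For the multiplicity formula, observe that a section $\tilde\Delta(\mu)$ can appear in the refinement of $\Delta(\nu)$ only when $\rho(\nu)=\rho(\mu)$. Summing over all such $\nu$ gives
\begin{equation*}
(P(\lambda):\tilde\Delta(\mu))=\sum_{\nu\in \bar\Lambda_{\rho(\mu)}}[S_{\rho(\mu)}(\mu):L_{\rho(\mu)}(\nu)]\,(P(\lambda):\Delta(\nu)),
\end{equation*}
which equals $[\tilde\Delta(\mu):L(\lambda)]$ by Proposition~\ref{xeijdiec1}(2). The main obstacle is verifying the cellular reciprocity $(P_b(\nu):S_b(\mu))=[S_b(\mu):L_b(\nu)]$ in the form we need; once this is in hand, the rest of the argument amounts to tracking how cell filtrations of $P_b(\nu)$ compose with the given $\Delta$-flag of $P(\lambda)$ under the exact functor $j^b_!$.
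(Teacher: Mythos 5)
Your proposal is correct and follows essentially the same route as the paper: refine the $\Delta$-flag of $P(\lambda)$ from Proposition~\ref{dektss1} by applying the exact functor $j^{b}_!$ to cell filtrations of the projectives $P_{b}(\nu)$, invoke the cellular reciprocity $(P_{b}(\nu):S_{b}(\mu))=[S_{b}(\mu):L_{b}(\nu)]$, and match the resulting multiplicity with $[\tilde\Delta(\mu):L(\lambda)]$. The only cosmetic difference is that you cite Proposition~\ref{xeijdiec1}(2) wholesale (which is where Assumption~\ref{pdual} enters) and read off $\rho(\mu)\succeq a$ directly from the construction, whereas the paper redoes the same computation via Lemma~\ref{isodual1}(2) and Proposition~\ref{dektss1}(2).
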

\begin{proof}By Proposition ~\ref{dektss1}(1),  $P(\lambda)$ has a finite $\Delta$-flag. Thanks to \cite[Lemmas~2.9, 2.10]{GL}, any finitely generated projective module for a cellular algebra has a cell filtration. So,  $P(\lambda)$ has a finite $\tilde \Delta$-flag.
Let  $(P(\lambda):\tilde \Delta( \mu))$ be the
 multiplicity of $\tilde \Delta( \mu)$ in this flag. Suppose $\mu\in \Lambda_{  b}$. Thanks to Lemma~\ref{exactj}(2), Proposition ~\ref{dektss1} and Lemma~\ref{isodual1}(2) (the place that we need the Assumption~\ref{pdual}), we have
$$\begin{aligned} (P(\lambda):\tilde \Delta(\mu)) &  =\sum_{\nu\in \bar \Lambda_{  b}  } (P(\lambda): \Delta(\nu)) (\Delta(\nu):\tilde \Delta(\mu))
 =\sum_{\nu\in \bar\Lambda_{  b}} [\bar \Delta(\nu): L(\lambda)](P_{  b}(\nu): S_{  b}(\mu)),\\
  [\tilde \Delta(\mu): L(\lambda)] &
 = \sum_{\nu\in \bar\Lambda_{  b}}  [\bar \Delta(\nu): L(\lambda)](\tilde \Delta(\mu): \bar \Delta(\nu))=\sum_{\nu\in \bar\Lambda_{  b}} [\bar \Delta(\nu): L(\lambda)] [S_{  b }(\mu): L_{  b }(\nu)].\\
 \end{aligned}
 $$
 By \cite[Lemmas~2.18,2.19]{Ma},
 $(P_{  b}(\nu): S_{  b}(\mu))=[S_{  b }(\mu): L_{  b }(\nu)]$, and hence $(P(\lambda):\tilde \Delta(\mu))=[\tilde \Delta(\mu): L(\lambda)]$.
 Thanks to   Propositions~\ref{xeijdiec1}(2),~\ref{dektss1}(2),    $\mu\in \bigcup_{  b\succeq  a}  \Lambda_  b$
 if $(P(\lambda):\tilde \Delta(\mu))\neq 0$.
  \end{proof}
\begin{Prop}\label{blockp} Keep the Assumption~\ref{pdual}. For any $\lambda, \mu\in \bar \Lambda$, $L(\lambda)$ and $L(\mu)$ are in the same
block  (and write $L(\lambda)\sim L(\mu)$)   if  there is a sequence $\lambda=\lambda_1,\lambda_2,\ldots,\lambda_n=\mu$ such that each pair
  $\lambda_{i}$ and $\lambda_{i+1}$ are cell-link in a weakly cellular algebra  $A_{  a}$ for some $  a\in J$.
\end{Prop}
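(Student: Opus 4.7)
The plan is to use transitivity of the ``same block'' relation to reduce to the single step $n=2$, and then exhibit $L(\mu)$ as a composition factor of the indecomposable projective cover $P(\lambda)$. Since being in a common block is an equivalence relation on isomorphism classes of simples, it suffices to prove the claim when $\lambda,\mu\in\bar\Lambda$ are cell-link in a single weakly cellular algebra $A_{a}$, that is, $\lambda\in\tilde\Lambda_{\succeq a}^\sharp\cap\bar\Lambda$, $\rho(\mu)\in\Gamma_{a}$, and $[\tilde S_{a}(\mu):\tilde L_{a}(\lambda)]\neq 0$. One then iterates along the chain $\lambda=\lambda_1,\ldots,\lambda_n=\mu$.

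The crucial step is to promote the cell-link multiplicity in $A_{a}$ to the inequality
$$
[\tilde\Delta(\mu):L(\lambda)]\;\geq\;[\tilde S_{a}(\mu):\tilde L_{a}(\lambda)]\;>\;0
$$
in $A$-lfdmod. To see this, I would exploit the exactness of the idempotent truncation $1_{a}\cdot$. By Proposition~\ref{xeijdiec1}(1), $1_{a}\tilde\Delta(\mu)\cong\tilde S_{a}(\mu)$, and by Proposition~\ref{xeijdiec1}(3), $1_{a}L(\lambda)\cong\tilde L_{a}(\lambda)\neq 0$, while for every other $\nu\in\bar\Lambda$ the module $1_{a}L(\nu)$ is either zero or a simple $A_{a}$-module non-isomorphic to $\tilde L_{a}(\lambda)$ (the non-conflation coming from~\eqref{kkk1234}). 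Taking a composition series of the finite-dimensional $A_{a}$-module $\tilde S_{a}(\mu)$ and lifting it via the correspondence $W\mapsto AW\subset\tilde\Delta(\mu)$ produces a filtration of an $A$-submodule of $\tilde\Delta(\mu)$ whose sections truncate back to the original factors under $1_{a}\cdot$; each section whose truncation is $\tilde L_{a}(\lambda)$ must itself contain $L(\lambda)$ as a composition factor, giving the desired inequality.

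With this positivity in hand, Corollary~\ref{ijxxexeu} (applicable under Assumption~\ref{pdual}) provides a finite $\tilde\Delta$-flag of $P(\lambda)$ in which $\tilde\Delta(\mu)$ occurs with positive multiplicity $[\tilde\Delta(\mu):L(\lambda)]$, so every composition factor of $\tilde\Delta(\mu)$ appears as one of $P(\lambda)$. To extract $L(\mu)$, I would use the hypothesis $\mu\in\bar\Lambda_{\rho(\mu)}$ to conclude that the cell module $S_{\rho(\mu)}(\mu)$ has simple head $L_{\rho(\mu)}(\mu)$; the exactness of the standardization functor $\Delta$ in Lemma~\ref{exactj}(2) then yields a surjection $\tilde\Delta(\mu)\twoheadrightarrow\bar\Delta(\mu)$, and Theorem~\ref{striateddndn}(2) identifies $L(\mu)$ as the simple head of $\bar\Delta(\mu)$. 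Hence $L(\mu)$ is a composition factor of $P(\lambda)$, and the indecomposability of the projective cover forces $L(\mu)\sim L(\lambda)$.

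The step I expect to require the most care is the lifting through $1_{a}\cdot$: one must verify both that distinct simples in $\bar\Lambda$ are not conflated under truncation and that every ``truncated copy'' of $\tilde L_{a}(\lambda)$ inside $\tilde S_{a}(\mu)$ genuinely lifts to an $L(\lambda)$ composition factor upstairs. Both points are ultimately controlled by Proposition~\ref{xeijdiec1}(3) together with the uniqueness statement~\eqref{kkk1234}; after that, the remaining arguments are formal manipulations with the exact functor $\Delta$ and the indecomposability of $P(\lambda)$.
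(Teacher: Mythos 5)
Your single-link argument is correct and close in spirit to the paper's: the truncation identity $[\tilde\Delta(\mu):L(\lambda)]=[1_{a}\tilde\Delta(\mu):1_{a}L(\lambda)]=[\tilde S_{a}(\mu):\tilde L_{a}(\lambda)]$ (justified by Proposition~\ref{xeijdiec1}(1),(3), which is also the right reference for the non-conflation point, rather than \eqref{kkk1234}), followed by Corollary~\ref{ijxxexeu} and the indecomposability of $P(\lambda)$, does give $L(\lambda)\sim L(\mu)$ when both labels lie in $\bar\Lambda$. The paper reaches the same goal slightly differently: it proves that every $\tilde\Delta(\nu)$, $\nu\in\Lambda$, lies in a single block, by realizing $S_{\rho(\nu)}(\nu)$ as a section of a cell filtration of $P_{\rho(\nu)}(\nu')$ via \cite[Lemmas~2.18, 2.19]{Ma}, applying the exact functor $\Delta$ (Lemma~\ref{exactj}(2)) and the indecomposability of $\Delta(\nu')$ from Theorem~\ref{striateddndn}(2); both routes rest on the same cellular-algebra input.

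The genuine gap is in your reduction to $n=2$ with both members of each pair in $\bar\Lambda$. Cell-link only requires one member of a pair to index a simple module, so the intermediate labels $\lambda_2,\ldots,\lambda_{n-1}$ may lie in $\Lambda\setminus\bar\Lambda$; then $L(\lambda_i)$ does not exist and transitivity of the same-block relation on simples cannot be iterated. This generality is used essentially later: in the proof of Proposition~\ref{block} the connecting label $\nu$ is an arbitrary element of $\Lambda(m)$. Your extraction of $L(\mu)$ relies on the surjection $\tilde\Delta(\mu)\twoheadrightarrow\bar\Delta(\mu)\twoheadrightarrow L(\mu)$, hence on $\mu\in\bar\Lambda$, so a link whose cell-module end is outside $\bar\Lambda$ is not covered. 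The missing configuration is a segment $\lambda_{i-1},\nu,\lambda_{i+1}$ with $\nu\notin\bar\Lambda$ serving as the cell index of both adjacent links (two consecutive labels outside $\bar\Lambda$ cannot occur, since in each pair the simple-indexing member lies in $\bar\Lambda$). Your own tools repair it: the truncation step gives $[\tilde\Delta(\nu):L(\lambda_{i-1})]\neq 0$ and $[\tilde\Delta(\nu):L(\lambda_{i+1})]\neq 0$; Corollary~\ref{ijxxexeu} then makes $\tilde\Delta(\nu)$ a section of a finite $\tilde\Delta$-flag of the indecomposable module $P(\lambda_{i-1})$, so its composition factor $L(\lambda_{i+1})$ occurs in $P(\lambda_{i-1})$ and $L(\lambda_{i-1})\sim L(\lambda_{i+1})$. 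Alternatively, prove the paper's auxiliary claim that each $\tilde\Delta(\nu)$ lies in a single block and chain the blocks of the $\tilde\Delta(\lambda_i)$ rather than of the simples. As written, however, the proposal only proves the proposition for chains all of whose terms lie in $\bar\Lambda$.
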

\begin{proof} By assumption and Proposition~\ref{xeijdiec1}(1)(3), there is a sequence $\lambda_1, \lambda_2,\ldots,\lambda_n$  such that $\lambda_1=\lambda$, $\lambda_n=\mu$ and either $[1_{  a}\tilde \Delta(\lambda_i): 1_{  a} L(\lambda_{i+1})]\neq 0$ or $[1_{  a}\tilde \Delta(\lambda_{i+1}): 1_{  a}L(\lambda_{i})]\neq 0$ for some $  a$ which depends on  $i$.
  So, either $[\tilde \Delta(\lambda_i): L(\lambda_{i+1})]\neq 0$ or $[\tilde \Delta(\lambda_{i+1}):  L(\lambda_{i})]\neq 0$.

It remains to prove  that   $\tilde \Delta(\nu)$ is in a single block for any $\nu\in\Lambda_a$ and any $a\in J$. Suppose  $\nu'\in\bar\Lambda_a$ such that
$[S_{  a }(\nu): L_{  a }(\nu')]\neq 0$. By \cite[Lemmas~2.18,2.19]{Ma},  $S_a(\nu)$ appears as a section  in a cell filtration of $P_a(\nu')$. Thanks to Lemma~\ref{exactj}(2), $\tilde \Delta(\nu)$ appears in a $\tilde \Delta$-flag
of $\Delta(\nu')$. Since $\Delta(\nu')$ is indecomposable (see Theorem~\ref{striateddndn}(2)), $\tilde \Delta(\nu)$ has to be in a single block.
\end{proof}

\section{ Endofunctors and categorical actions }\label{indkkk}
Adapting the usual string calculus for strict monoidal categories, we have that the composition of morphisms  in a strict monoidal category  is given by vertical   stacking and the tensor product of morphisms  is given by   horizontal concatenation.
In this section, we consider two kinds of $\Bbbk$-linear strict   monoidal categories $\mathcal C_1$ and $\mathcal C_2$
generated by a single generating object \begin{tikzpicture}[baseline = 1.5mm]
	\draw[-,thick,darkblue] (0.18,0) to (0.18,.4);
\end{tikzpicture}.  So the set of objects is $\mathbb N$ and   $m$ represents $\begin{tikzpicture}[baseline = 1.5mm]
	\draw[-,thick,darkblue] (0.18,0) to (0.18,.4);
\end{tikzpicture}^{\otimes m} $.
Furthermore, the generating morphisms of   $\mathcal C_1$ contain   at least  three      morphisms  $\lcap:2\rightarrow0$, $\lcup:0\rightarrow2$, and
\begin{tikzpicture}[baseline=-.5mm]
\draw[-,thick,darkblue] (0,-.3) to (0,.3);
      \node at (0,0) {$\color{darkblue}\scriptstyle\bullet$};
      %\node at (0,-0.23) {$\color{darkblue}\scriptstyle\diamondsuit$};
\end{tikzpicture}$:1\rightarrow1$ satisfying at least two  relations:

\begin{equation}\label{relation 1}
\begin{tikzpicture}[baseline = -0.5mm]
\draw[-,thick,darkblue] (0,0) to (0,0.3);
\draw[-,thick,darkblue] (0,0) to[out=down,in=left] (0.25,-0.25) to[out=right,in=down] (0.5,0);
\draw[-,thick,darkblue] (0.5,0) to[out=up,in=left] (0.75,0.25) to[out=right,in=up] (1,0);
\draw[-,thick,darkblue] (0,0) to (0,0.3);
\draw[-,thick,darkblue] (1,0) to (1,-0.3);
  \end{tikzpicture}
  ~=~
  \begin{tikzpicture}[baseline = -0.5mm]
  \draw[-,thick,darkblue] (0,-0.25) to (0,0.25);
   \node at (0.,-0.6) {$\text {I}$};
  \end{tikzpicture}
  ~=~\begin{tikzpicture}[baseline = -0.5mm]
%\draw[-,thick,darkblue] (0,0) to (0,0.3);
\draw[-,thick,darkblue] (0.5,0) to[out=down,in=left] (0.75,-0.25) to[out=right,in=down] (1,0);
\draw[-,thick,darkblue] (0,0) to[out=up,in=left] (0.25,0.25) to[out=right,in=up] (0.5,0);
\draw[-,thick,darkblue] (1,0) to (1,0.3);
\draw[-,thick,darkblue] (0,0) to (0,-0.3);
  \end{tikzpicture}~,\qquad
\begin{tikzpicture}[baseline = -0.5mm]
%\draw[-,thick,darkblue] (0,0) to[out=down,in=left] (0.28,-0.28) to[out=right,in=down] (0.56,0);
  \draw[-,thick,darkblue] (0,0) to[out=up,in=left] (0.28,0.28) to[out=right,in=up] (0.56,0);
  \draw[-,thick,darkblue] (0,0) to (0,-.2);
 \node at (0,0) {$\color{darkblue}\scriptstyle\bullet$};
 \draw[-,thick,darkblue] (0.56,0) to (0.56,-.2);
  \end{tikzpicture}
 \begin{tikzpicture}[baseline = -1mm]
	 \node at (0.,0) {$\text {=}$};
 \node at (0.,-0.6) {$\text {II}$};
\end{tikzpicture}-\begin{tikzpicture}[baseline = -0.5mm]
\draw[-,thick,darkblue] (0,0) to[out=up,in=left] (0.28,0.28) to[out=right,in=up] (0.56,0);
 \draw[-,thick,darkblue] (0.56,0) to (0.56,-.2);
 \draw[-,thick,darkblue] (0,0) to (0,-.2);
 \node at (0.56,0) {$\color{darkblue}\scriptstyle\bullet$};
  \end{tikzpicture}~,
  \end{equation}
  where $\begin{tikzpicture}[baseline = 10pt, scale=0.5, color=\clr]
                \draw[-,thick] (0,0.5)to[out=up,in=down](0,1.5);
    \end{tikzpicture}: 1\rightarrow 1$ is the identity morphism. Archetypal example is
the affine  Brauer category in \cite{RS3}.
For  $\mathcal C_2$, there are at least four  generating   morphisms  $\lcap:2\rightarrow0$, $\lcup:0\rightarrow2$,
\begin{tikzpicture}[baseline=-.5mm]
\draw[-,thick,darkblue] (0,-.3) to (0,.3);
      \node at (0,0) {$\color{darkblue}\scriptstyle\bullet$};
      %\node at (0,-0.23) {$\color{darkblue}\scriptstyle\diamondsuit$};
\end{tikzpicture}, \begin{tikzpicture}[baseline=-.5mm]
\draw[-,thick,darkblue] (0,-.3) to (0,.3);
      \node at (0,0) {$\color{darkblue}\scriptstyle\circ$};
      %\node at (0,-0.23) {$\color{darkblue}\scriptstyle\diamondsuit$};
\end{tikzpicture}$:1\rightarrow1$ satisfying the relation  \eqref{relation 1}I and at least two more relations  as  follows:

\begin{equation}\label{relation 3}
 \begin{tikzpicture}[baseline = -0.5mm]
\draw[-,thick,darkblue] (0,0) to (0,.5);
      \node at (0,0.3) {$\color{darkblue}\scriptstyle\bullet$};
      \draw[-,thick,darkblue] (0,0.2) to (0,-.2);
      \node at (0,0) {$\color{darkblue}\scriptstyle\circ$};
  \end{tikzpicture}
  ~=~
  \begin{tikzpicture}[baseline = -0.5mm]
  \draw[-,thick,darkblue] (0,-0.2) to (0,0.5); \node at (0.,-0.6) {$\text {I}$};
  \end{tikzpicture}
  ~=~\begin{tikzpicture}[baseline = -0.5mm]
  \draw[-,thick,darkblue] (0,0) to (0,.5);
      \node at (0,0.3) {$\color{darkblue}\scriptstyle\circ$};
      \draw[-,thick,darkblue] (0,0.2) to (0,-.2);
      \node at (0,0) {$\color{darkblue}\scriptstyle\bullet$};
  \end{tikzpicture}, \quad
  \begin{tikzpicture}[baseline = -0.5mm]
%\draw[-,thick,darkblue] (0,0) to[out=down,in=left] (0.28,-0.28) to[out=right,in=down] (0.56,0);
  \draw[-,thick,darkblue] (0,0) to[out=up,in=left] (0.28,0.28) to[out=right,in=up] (0.56,0);
  \draw[-,thick,darkblue] (0,0) to (0,-.2);
 \node at (0,0) {$\color{darkblue}\scriptstyle\bullet$};
 \draw[-,thick,darkblue] (0.56,0) to (0.56,-.2);
  \end{tikzpicture}
   \begin{tikzpicture}[baseline = -1mm]
  \node at (0.,0) {$\text {=}$};
 \node at (0.,-0.6) {$\text {II}$};
\end{tikzpicture}\begin{tikzpicture}[baseline = -0.5mm]
\draw[-,thick,darkblue] (0,0) to[out=up,in=left] (0.28,0.28) to[out=right,in=up] (0.56,0);
 \draw[-,thick,darkblue] (0.56,0) to (0.56,-.2);
 \draw[-,thick,darkblue] (0,0) to (0,-.2);
 \node at (0.56,0) {$\color{darkblue}\scriptstyle\circ$};
  \end{tikzpicture}.
  \end{equation}
So, \begin{tikzpicture}[baseline=-.5mm]
\draw[-,thick,darkblue] (0,-.3) to (0,.3);
      \node at (0,0) {$\color{darkblue}\scriptstyle\bullet$};
      \end{tikzpicture}
is invertible with inverse \begin{tikzpicture}[baseline=-.5mm]
\draw[-,thick,darkblue] (0,-.3) to (0,.3);
      \node at (0,0) {$\color{darkblue}\scriptstyle\circ$};
      \end{tikzpicture}.
Archetypal example is the affine Kauffman category in ~\cite{GRS}.

 For   $\mathcal C\in\{\mathcal C_1, \mathcal C_2\}$,   let $\mathbb D$  be the set of all diagrams  obtained  by tensor product and composition of generating   morphisms in $\mathcal C$ together with  \begin{tikzpicture}[baseline = 10pt, scale=0.5, color=\clr]
                \draw[-,thick] (0,0.5)to[out=up,in=down](0,1.5);
    \end{tikzpicture}.
 Fix any    right tensor ideal $K$ of $\mathcal C$ and consider the quotient category $ \mathcal A=\mathcal C/K$ and its  associated algebra
\begin{equation}\label{z1}A= \bigoplus_{a, b \in \mathbb N}\Hom_{\mathcal A}(  a,  b).\end{equation}
Since  $\mathcal A$ is a quotient category of $\mathcal C$, any diagram in $\mathbb D$ can be interpreted as an element in $A$.
 The monoidal functor $ \tau=?\otimes \begin{tikzpicture}[baseline = -0.5mm]
  \draw[-,thick,darkblue] (0,-0.25) to (0,0.25);
  \end{tikzpicture}:\mathcal C\rightarrow \mathcal C$ stabilizes any right tensor ideal of $\mathcal C$. It
  induces an algebra homomorphism \begin{equation} \label{tau2} \tau_{A}: A\rightarrow A\end{equation}  sending  $f$ to $ f ~\begin{tikzpicture}[baseline = -0.5mm]
  \draw[-,thick,darkblue] (0,-0.25) to (0,0.25);
  \end{tikzpicture}$ for all $f\in \mathbb D$.

\subsection{Assumptions}\label{assump0} In   this section, we keep the following assumptions.
\begin{itemize} \item [(A1)]The $\Bbbk$-algebra $A$ in \eqref{z1}    admits an upper finite weakly triangular decomposition  such that $I=J=\mathbb N$. The partial order $\preceq $
 on $\mathbb N$ is defined such that
$ i\preceq j$   if $i=j+2s$ for some $s\in\mathbb N$.
\item[(A2)] For any $m\in\mathbb N$,
 $ f ~\begin{tikzpicture}[baseline = -0.5mm]
  \draw[-,thick,darkblue] (0,-0.25) to (0,0.25);
  \end{tikzpicture}\in  Q(m+1)$ for all $f\in Q(m)$, where $Q\in \{X, Y, H\}$ (see Definition~\ref{defniogfxhy}).
  \item [(A3)] For any $m\in\mathbb N$,  there is a  $D(m+1)\subseteq H(m+1)$ such that $|H(m+1)|=|D(m+1)||H(m)|$ and  $H(m+1)=\{d\circ \tau_A(d')\mid (d, d')\in D(m+1)\times H(m)\}$.
      \end{itemize}
By (A1), we can use  previous results on upper finite weakly triangular decompositions.
Thanks to (A2) and Lemma~\ref{cellbasis},  $\tau_{A}$   is an algebra monomorphism. Since $A$ comes from a quotient category of  the $\Bbbk$-linear strict  monoidal category $\mathcal C$,    $ d\circ \tau_A(d')$ can be expressed via   the following diagram:
     \begin{equation}\label{subs1}
\begin{tikzpicture}[baseline = 3mm, color=\clr]
                                \draw (-0.1,0.4) rectangle (0.9,0);
                \draw(0.4, 0.2) node{$d'$}; \draw[-,thick] (1.06,0.53)to[out=up,in=down](1.06,0.01);
                \draw (1.2,0.55) rectangle (-0.1,0.85); \draw[-,thick] (0, 0.4)to[out=up,in=down](0,0.55); \draw(0.25, 0.48) node{$ \cdots$};  \draw[-,thick] (0.6,0.4)to[out=up,in=down](0.6,0.55);
                \draw(0.6, 0.7) node{$d$};
    \end{tikzpicture}.
 \end{equation}
\subsection{Endofunctors}Suppose  $A_L=\bigoplus_{m, n\in \mathbb N} 1_{ m} A_L1_{ n} $ and $A_R=\bigoplus_{m, n\in \mathbb N} 1_{ m} A_R1_{ n}$    such that
\begin{equation}\label{weights123} 1_{ m} A_L1_{ n}=1_{ {m+1}}A1_{ n}, \text{ $1_{ m} A_R1_{ n}=1_{ {m}}A1_{ {n+1}} $.}\end{equation}
 Then both  $A_L$ and  $A_R$ are   $(A,A)$-bimodules
 such that the right (resp., left) action of  $A$  on $A_L$ (resp., $A_R$) is given by the usual multiplication, whereas the left (resp., right) action of $A$ on   $A_L$ (resp., $A_R$) is given as follows:
\begin{equation}\label{act123} a\cdot f= \begin{tikzpicture}[baseline = 3mm, color=\clr]
                                \draw (-0.1,0.4) rectangle (1.2,0);
                \draw(0.5, 0.2) node{$f$};
                \draw (0.9,0.55) rectangle (-0.1,0.85); \draw[-,thick] (0, 0.4)to[out=up,in=down](0,0.55); \draw(0.25, 0.48) node{$ \cdots$};   \draw[-,thick] (0.6,0.4)to[out=up,in=down](0.6,0.55);
                \draw(0.5, 0.7) node{$a$}; \draw[-,thick] (1.06,0.4)to[out=up,in=down](1.06,0.85);
    \end{tikzpicture}, \quad  g\cdot a=\begin{tikzpicture}[baseline = 3mm, color=\clr]
                                \draw (-0.1,0.4) rectangle (0.9,0);
                \draw(0.4, 0.2) node{$a$}; \draw[-,thick] (1.06,0.53)to[out=up,in=down](1.06,0.01);
                \draw (1.2,0.55) rectangle (-0.1,0.85); \draw[-,thick] (0, 0.4)to[out=up,in=down](0,0.55); \draw(0.25, 0.48) node{$ \cdots$};  \draw[-,thick] (0.6,0.4)to[out=up,in=down](0.6,0.55);
                \draw(0.6, 0.7) node{$g$};
    \end{tikzpicture},\end{equation}
     for all   $ (f, g, a)\in A_L\times   A_R\times  A $.  Let \begin{equation}\label{EF} E=A_L\otimes _{A}?, \ \   F=A_R\otimes _{A}?.\end{equation}
\begin{Lemma}\label{bimoudisomah}  We have
\begin{itemize}\item[(1)] $E\cong F$ as functors,
\item[(2)] $E$ is self-adjoint and exact.\end{itemize}
\end{Lemma}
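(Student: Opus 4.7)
The plan is to prove (1) and (2) independently, exploiting the cup $\lcup$ and cap $\lcap$ together with the snake identity (relation $\text{I}$ of \eqref{relation 1}).

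For (1), I will realize $E \cong F$ as a bimodule isomorphism $A_L \cong A_R$. Given $f \in 1_m A_L 1_n = 1_{m+1} A 1_n$, viewed as a morphism $n \to m+1$ in $\mathcal A$, define $\phi(f) := (1_m \otimes \lcap) \circ (f \otimes 1_1)$, a morphism $n+1 \to m$, so $\phi(f) \in 1_m A 1_{n+1} = 1_m A_R 1_n$. Define the candidate inverse $\psi: A_R \to A_L$ by $\psi(g) := (g \otimes 1_1) \circ (1_n \otimes \lcup)$. The $(A,A)$-bimodule linearity of $\phi$ is straightforward: for $b \in 1_n A 1_l$, the right action $\phi(f \circ b) = \phi(f) \circ (b \otimes 1_1)$ is immediate by associativity, and for $a \in 1_k A 1_m$ the left action identity $\phi((a\otimes 1_1)\circ f) = a \circ \phi(f)$ reduces via the interchange law to $(1_k \otimes \lcap)\circ(a \otimes 1_2) = a\otimes \lcap = a\circ(1_m\otimes\lcap)$. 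The composition $\psi\circ\phi$ unfolds to
\[
\psi(\phi(f)) = (1_m \otimes \lcap \otimes 1_1) \circ (1_{m+1} \otimes \lcup) \circ f = \bigl(1_m \otimes [(\lcap \otimes 1_1)\circ(1_1 \otimes \lcup)]\bigr)\circ f,
\]
which equals $f$ by the snake identity (relation $\text{I}$). The computation $\phi\circ\psi = \mathrm{id}$ is symmetric. Tensoring over $A$ then gives $E = A_L \otimes_A ? \cong A_R \otimes_A ? = F$.

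For exactness in (2), observe that, forgetting the twisted left action, $A_L = \bigoplus_m 1_{m+1} A$ as a right $A$-module, and each $1_{m+1} A$ is a finitely generated projective right $A$-module. Hence $A_L$ is projective, thus flat, so $E = A_L \otimes_A ?$ is exact. For self-adjointness, I will directly exhibit unit and counit making $E \dashv E$. Using the identification $1_m(EEM) \cong 1_{m+2} M$ (coming from $1_m A_L \cong 1_{m+1} A$ as right $A$-modules), define
\[
\eta_M: M \to EEM, \quad v \mapsto (1_m \otimes \lcup)\cdot v \ \text{for } v \in 1_m M,
\]
\[
\epsilon_M: EEM \to M, \quad w \mapsto (1_m \otimes \lcap)\cdot w \ \text{for } w \in 1_{m+2} M = 1_m(EEM),
\]
where the dots denote the $A$-action on $M$. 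That these are $A$-module homomorphisms is again a consequence of interchange, and both triangle identities
\[
(\epsilon_{EM}) \circ (E\eta_M) = \mathrm{id}_{EM}, \qquad (E\epsilon_M) \circ (\eta_{EM}) = \mathrm{id}_{EM}
\]
reduce after cancellation to $(1_k \otimes \lcap \otimes 1_1)\circ(1_k \otimes 1_1 \otimes \lcup) = 1_{k+1}$, which is precisely the snake identity.

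The main obstacle is bookkeeping rather than mathematical depth: one must carefully keep straight the twisted left $A$-action on $A_L$ (and on $EM$, $EEM$) versus the unmodified right action, and verify bimodule or module equivariance separately at each step. Once conventions are fixed and the interchange law of the monoidal category is applied, every identity required collapses to the snake relation or a trivial instance of associativity, so no further structural input is needed.
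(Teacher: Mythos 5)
Your proof is correct. Part (1) coincides with the paper's own argument: the same maps $\phi(f)=(1_m\otimes\lcap)\circ(f\otimes 1_1)$ and $\psi(g)=(g\otimes 1_1)\circ(1_n\otimes\lcup)$ are used there (drawn diagrammatically), and mutual inverseness is exactly relation I of \eqref{relation 1}. In part (2) your route differs in two respects. The paper proves self-adjointness of $F$ by exhibiting the unit and counit as $(A,A)$-bimodule homomorphisms $\beta'\colon A\to A_R\otimes_A A_R$ and $\alpha'\colon A_R\otimes_A A_R\to A$ and checking the zigzag identities at the bimodule level, then deduces exactness from the adjunction and transfers everything to $E$ via (1); you instead obtain exactness independently, from the observation that $A_L\cong\bigoplus_{m}1_{m+1}A$ is projective, hence flat, as a right $A$-module, and you build the adjunction for $E$ directly on modules through the identification $1_m(EEM)\cong 1_{m+2}M$. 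The two adjunction constructions are equivalent (your module-level $\eta$ and $\epsilon$ correspond to the paper's bimodule maps), and your triangle identities do collapse to the snake relation as claimed; your flatness argument is a small shortcut that yields exactness without invoking the adjunction, while the paper's bimodule formulation keeps all verifications inside the diagrammatic calculus. The points you leave implicit (naturality of $\eta,\epsilon$ in $M$, and that the identification $1_m(EM)\cong 1_{m+1}M$ intertwines the $A$-action with $a\mapsto a\otimes 1_1$) are routine and at the same level of detail as the paper's own proof.
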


\begin{proof} Define $\phi: A_L\rightarrow A_R$ and $\psi: A_R\rightarrow A_L$ such that  $$\phi(f)=\begin{tikzpicture}[baseline = 1mm, color=\clr]
                \draw[-,thick] (0,0.5)to[out=up,in=down](0,0.8); \draw(0.2, 0.6) node{$ \cdots$};  \draw[-,thick] (0.4,0.5)to[out=up,in=down](0.4,.8);  \draw[-,thick,darkblue] (0.5,0.5) to[out=up,in=left] (0.78,0.78) to[out=right,in=up] (1.06,0.5);\draw (-0.1,0.5) rectangle (0.9,0);% ŸØÐÎ
                \draw(0.4, 0.25) node{$f$}; \draw[-,thick] (1.06,0.5)to[out=up,in=down](1.06,0.01);
    \end{tikzpicture},~~~\qquad\psi(g)=\begin{tikzpicture}[baseline = 1mm, color=\clr]
                \draw[-,thick] (0,-0.3)to[out=up,in=down](0,0); \draw(0.2, -0.2) node{$ \cdots$};  \draw[-,thick] (0.4,-0.3)to[out=up,in=down](0.4,0);  \draw[-,thick,darkblue] (0.5,0) to[out=down,in=left] (0.78,-0.28) to[out=right,in=down] (1.06,0);\draw (-0.1,0.5) rectangle (0.9,0);% ŸØÐÎ
                \draw(0.4, 0.25) node{$g$}; \draw[-,thick] (1.06,0.5)to[out=up,in=down](1.06,0.01);
    \end{tikzpicture},~$$ where $f\in 1_{ {m+1}}A$ and $g\in A 1_{ {m+1}}$.  By \eqref{relation 1}I, we have
      $$\psi\phi(f)=\begin{tikzpicture}[baseline = 1mm, color=\clr]
                \draw[-,thick] (0,0.5)to[out=up,in=down](0,0.8); \draw(0.2, 0.6) node{$ \cdots$};  \draw[-,thick] (0.4,0.5)to[out=up,in=down](0.4,.8);  \draw[-,thick,darkblue] (0.5,0.5) to[out=up,in=left] (0.78,0.78) to[out=right,in=up] (1.06,0.5);\draw (-0.1,0.5) rectangle (0.9,0);% ŸØÐÎ
                \draw(0.4, 0.25) node{$f$}; \draw[-,thick] (1.06,0.5)to[out=up,in=down](1.06,0);\draw[-,thick,darkblue] (1.06,0) to[out=down,in=left] (0.28+1.06,-0.28) to[out=right,in=down] (1.06-0.5+1.06,0);\draw[-,thick] (1.06-0.5+1.06,0.8)to[out=up,in=down](1.06-0.5+1.06,0);
    \end{tikzpicture}=f,~~~\qquad\phi\psi(g)=\begin{tikzpicture}[baseline = 1mm, color=\clr]
                \draw[-,thick] (0,-0.3)to[out=up,in=down](0,0); \draw(0.2, -0.2) node{$ \cdots$};  \draw[-,thick] (0.4,-0.3)to[out=up,in=down](0.4,0);  \draw[-,thick,darkblue] (0.5,0) to[out=down,in=left] (0.78,-0.28) to[out=right,in=down] (1.06,0);\draw (-0.1,0.5) rectangle (0.9,0);% ŸØÐÎ
                \draw(0.4, 0.25) node{$g$}; \draw[-,thick] (1.06,0.5)to[out=up,in=down](1.06,0.01);\draw[-,thick,darkblue] (1.06,0.5) to[out=up,in=left] (0.28+1.06,0.78) to[out=right,in=up] (1.06-0.5+1.06,0.5);\draw[-,thick] (1.06-0.5+1.06,0.5)to[out=up,in=down](1.06-0.5+1.06,-0.3);
    \end{tikzpicture}=g.~$$
 This proves $\phi^{-1}=\psi$.
Finally, it is easy to see that both  $\phi$ and $\psi$ are $(A,A)$-homomorphisms. So  $A_L\cong A_R$  as   $(A,A)$-bimodules and (1) follows.
The unit $\beta:\text{Id}_{A\text{-mod}}\rightarrow F^2$ is given by  the bimodule homomorphism
$A\overset{\beta'}\rightarrow A_R\otimes_A A_R$ such that
 $$\beta' (f)=( 1_{m}~\lcap)\otimes(f~\begin{tikzpicture}[baseline = 10pt, scale=0.5, color=\clr]
                \draw[-,thick] (0,0.5)to[out=up,in=down](0,1.2);
    \end{tikzpicture}~)$$ for all  $f\in  1_{ m}A$, and
  the counit $\alpha:F ^2\rightarrow \text{Id}_{A\text{-mod}}$ is given by   the bimodule homomorphism
$A_R\otimes _A A_R\overset{\alpha'}\rightarrow A$ such that
$$
\alpha'(f\otimes g)=\begin{tikzpicture}[baseline = 1mm, color=\clr]
                \draw[-,thick] (0,-0.3)to[out=up,in=down](0,0); \draw(0.2, -0.2) node{$ \cdots$};  \draw[-,thick] (0.4,-0.3)to[out=up,in=down](0.4,0);  \draw[-,thick,darkblue] (0.5,0) to[out=down,in=left] (0.78,-0.28) to[out=right,in=down] (1.06,0);\draw (-0.1,0.4) rectangle (0.9,0);% ŸØÐÎ
                \draw(0.4, 0.2) node{$g$}; \draw[-,thick] (1.06,0.5)to[out=up,in=down](1.06,0.01);
                \draw (1.2,0.55) rectangle (-0.1,0.85); \draw[-,thick] (0, 0.4)to[out=up,in=down](0,0.55); \draw(0.25, 0.48) node{$ \cdots$};  \draw[-,thick] (0.6,0.4)to[out=up,in=down](0.6,0.55); \draw(0.6, 0.7) node{$f$};
    \end{tikzpicture},$$ for all $ (f, g)\in A 1_{ {n+1}}\times  1_{ n}A 1_{ {m+1}}$. For all admissible diagrams $a\in 1_m A$, $ g\in A_{R}1_n$ and $ f\in 1_nA_{R}$ we have $$\begin{aligned}&(\text{Id}_{A_R}\otimes \alpha')(\beta'\otimes \text{Id}_{A_R})(a\otimes g)=(\text{Id}_{A_R}\otimes \alpha')(( 1_{ m}~\lcap)\otimes(a~\begin{tikzpicture}[baseline = 10pt, scale=0.5, color=\clr]
                \draw[-,thick] (0,0.5)to[out=up,in=down](0,1.2);
    \end{tikzpicture}~)\otimes g)\\&=( 1_{ m}~\lcap)\otimes\begin{tikzpicture}[baseline = 1mm, color=\clr]
                \draw[-,thick] (0,-0.3)to[out=up,in=down](0,0); \draw(0.2, -0.2) node{$ \cdots$};  \draw[-,thick] (0.4,-0.3)to[out=up,in=down](0.4,0);  \draw[-,thick,darkblue] (0.5,0) to[out=down,in=left] (0.78,-0.28) to[out=right,in=down] (1.06,0);\draw (-0.1,0.4) rectangle (0.9,0);% ŸØÐÎ
                \draw(0.4, 0.2) node{$g$}; \draw[-,thick] (1.06,0.85)to[out=up,in=down](1.06,0.01);
                \draw (0.9,0.55) rectangle (-0.1,0.85); \draw[-,thick] (0, 0.4)to[out=up,in=down](0,0.55); \draw(0.2, 0.48) node{$ \cdots$};  \draw[-,thick] (0.5,0.4)to[out=up,in=down](0.5,0.55); \draw(0.4, 0.7) node{$a$};
    \end{tikzpicture}
    =( 1_m~\lcap)\circ\begin{tikzpicture}[baseline = 1mm, color=\clr]
                \draw[-,thick] (0,-0.3)to[out=up,in=down](0,0); \draw(0.2, -0.2) node{$ \cdots$};  \draw[-,thick] (0.4,-0.3)to[out=up,in=down](0.4,0);  \draw[-,thick,darkblue] (0.5,0) to[out=down,in=left] (0.78,-0.28) to[out=right,in=down] (1.06,0);\draw (-0.1,0.4) rectangle (0.9,0);% ŸØÐÎ
                \draw(0.4, 0.2) node{$g$}; \draw[-,thick] (1.06,0.85)to[out=up,in=down](1.06,0.01);
                \draw (0.9,0.55) rectangle (-0.1,0.85); \draw[-,thick] (0, 0.4)to[out=up,in=down](0,0.55); \draw(0.2, 0.48) node{$ \cdots$};  \draw[-,thick] (0.5,0.4)to[out=up,in=down](0.5,0.55); \draw(0.4, 0.7) node{$a$};
                \draw[-,thick] (1.2,-0.3)to[out=up,in=down](1.2,0.89);
    \end{tikzpicture}\otimes 1_n
    =\begin{tikzpicture}[baseline = 1mm, color=\clr]
                \draw[-,thick] (0,-0.3)to[out=up,in=down](0,0); \draw(0.2, -0.2) node{$ \cdots$};  \draw[-,thick] (0.4,-0.3)to[out=up,in=down](0.4,0);  \draw[-,thick,darkblue] (0.5,0) to[out=down,in=left] (0.78,-0.28) to[out=right,in=down] (1.06,0);\draw (-0.1,0.4) rectangle (0.9,0);% ŸØÐÎ
                \draw(0.4, 0.2) node{$g$}; \draw[-,thick] (1.06,0.85)to[out=up,in=down](1.06,0.01);
                \draw (0.9,0.55) rectangle (-0.1,0.85); \draw[-,thick] (0, 0.4)to[out=up,in=down](0,0.55); \draw(0.2, 0.48) node{$ \cdots$};  \draw[-,thick] (0.5,0.4)to[out=up,in=down](0.5,0.55); \draw(0.4, 0.7) node{$a$};
                \draw[-,thick,darkblue] (1.06,0.85) to[out=up,in=left] (0.28+1.06,0.78+0.35) to[out=right,in=up] (1.06-0.5+1.06,0.85);\draw[-,thick] (1.06-0.5+1.06,0.85)to[out=up,in=down](1.06-0.5+1.06,0.05);
    \end{tikzpicture}\otimes 1_n
    \overset{\text{ \eqref{relation 1}I}}
    =(a\cdot g)\otimes 1_n\end{aligned}$$
         and
      $$\begin{aligned}&(\alpha'\otimes \text{Id}_{A_R})(\text{Id}_{A_R}\otimes \beta')(f\otimes a)=(\alpha'\otimes \text{Id}_{A_R})(f\otimes ( 1_{ m}~\lcap)\otimes(a~\begin{tikzpicture}[baseline = 10pt, scale=0.5, color=\clr]
                \draw[-,thick] (0,0.5)to[out=up,in=down](0,1.2);
    \end{tikzpicture}~))
    \\&=\begin{tikzpicture}[baseline = 3mm, color=\clr]
                                \draw (1.25,0.55) rectangle (-0.1,0.85); \draw[-,thick] (0, 0.1)to[out=up,in=down](0,0.55); \draw(0.2, 0.38) node{$ \cdots$};  \draw[-,thick] (0.5,0.1)to[out=up,in=down](0.5,0.55);
                \draw(0.6, 0.7) node{$f$};\draw[-,thick,darkblue] (0.73+0.15,0.3) to[out=down,in=left] (0.73+0.3,0.1) to[out=right,in=down] (0.73+0.45,0.3);\draw[-,thick,darkblue] (0.58,0.3) to[out=up,in=left] (0.73,0.5) to[out=right,in=up] (0.73+0.15,0.3);
                \draw[-,thick] (0.73+0.45,0.3)to[out=up,in=down](0.73+0.45,0.55);\draw[-,thick] (0.58,0.3)to[out=up,in=down](0.58,0.1);
    \end{tikzpicture}\otimes (a~\begin{tikzpicture}[baseline = 10pt, scale=0.5, color=\clr]
                \draw[-,thick] (0,0.5)to[out=up,in=down](0,1.2);
    \end{tikzpicture}~)\overset{\text{\eqref{relation 1}I}}=1_n\otimes f\cdot a.\end{aligned}$$  So,   $(\text{Id}_{A_R}\otimes \alpha')(\beta'\otimes \text{Id}_{A_R}): A\otimes_A A_{R}\rightarrow A_{R}\otimes_A A $ and $(\alpha'\otimes \text{Id}_{A_R})(\text{Id}_{A_R}\otimes \beta'): A_{R}\otimes_A A\rightarrow  A\otimes_A A_{R}$  are identity morphism  of $(A, A)$-bimodules $A_R$ (under the obvious isomorphisms $A\otimes_AA_R\cong A_R$ and $A_R\cong A_R\otimes_A A$),
     and hence $$  \alpha F  \circ   F   \beta =\text{Id}_{ F}=  F  \alpha \circ \beta   F .$$ This shows that $ F$ is self-adjoint and exact. Now, (2) follows from (1), immediately.
 \end{proof}

 Recall the algebra monomorphism  $\tau_{ A}$   in \eqref{tau2}. Then  $\tau_{ A}\mid_{A_{m-1}}:   A_{m-1}\rightarrow A_m$ is
   an  algebra monomorphism. Since   $n-1\preceq m-1 \Leftrightarrow n\preceq m$ and  $\tau_A(1_{ {n}})=1_{ n+1}$ for all $n\in \mathbb N$,   $\tau_A(A^{\npreceq m-1})\subseteq A^{\npreceq m}$.
 By Lemma~\ref{cellbasis} and Proposition~\ref{bara}(2), $\tau_{ A}$ induces  an algebra monomorphism $$\bar\tau_{m-1}:A_{\preceq  {m-1}}\rightarrow A_{\preceq  {m}}.$$
Restricting   $\bar\tau_{m-1}$ to $\bar A_{m-1}$ gives a monomorphism from $\bar A_{m-1}$ to $\bar A_{m}$.
So,   $ \bar A_{m-1}$  can be identified with a subalgebra of $\bar A_m$ via $\bar\tau_{m-1}$ and $ \bar A_{m}$ can be  viewed as  an  $(\bar A_{m},  \bar A_{m-1})$-bimodule and an $(\bar A_{m-1},  \bar A_{m})$-bimodule. By
 (A3) and Proposition~\ref{bara}(1),  $\bar A_m$ is a free right $\bar A_{m-1}$-module with basis $\{\bar h\mid h \in D(m)\}$. Furthermore, we have  two exact functors  \begin{equation}\label{reinfu1}
\text{ind}^m_{m-1}=  \bar A_{m}\otimes _{ \bar A_{m-1} }?, \ \   \
 \text{res}^m_{m-1} = \bar A_{m}\otimes _{\bar A_{m} }?.\end{equation}
Let $\bar {A}^\circ=\bigoplus_{m\in \mathbb N} \bar A_{m}$. Define  $ \bar A_L^\circ=\bigoplus_{m, n\in \mathbb N} 1_m A_L^\circ 1_n$ and $\bar A_R^\circ=\bigoplus_{m, n\in \mathbb N} 1_m A_R^\circ 1_n$ such that
\begin{equation}\label{ws123} \bar 1_{ m} \bar A_L^\circ\bar 1_{ n}=\bar 1_{ {m+1}}\bar A^\circ \bar 1_{ n}, \text{ $\bar 1_{ m} \bar A_R^\circ\bar 1_{ n}=\bar 1_{ {m}}\bar A^\circ\bar 1_{ {n+1}} $.}\end{equation}
 Then both  $ \bar A_L^\circ$ and $\bar A_R^\circ$ are $(\bar A^\circ ,\bar A^\circ)$-bimodules such that
the right (resp., left) action of  $\bar A^\circ$  on $\bar A_L^\circ$ (resp., $\bar A_R^\circ$) is given by the usual multiplication, whereas the left (resp., right) action of $\bar A^\circ$ on   $\bar A_L^\circ$ (resp., $\bar A_R^\circ$) is given by the formulae similar to those in \eqref{act123}. Then
\begin{equation} \label{indes} \bar E=\bigoplus_{m\in\mathbb N} \text{res}^{m+1}_{m}, \text{   $\bar F=\bigoplus_{m\in\mathbb N} \text{ind}^{m+1}_{m} $},
\end{equation}
 where   $\bar E=\bar A_L^\circ\otimes _{\bar A^\circ} ?$ and   $\bar F= \bar A_R^\circ\otimes _{\bar A^\circ} ?$.
For any  admissible $i$, define
\begin{equation}\label{xitil}
   X_i 1_{ m}=1_{ m} X_i= \begin{tikzpicture}[baseline = 12.5, scale=0.35, color=\clr]
       \draw[-,thick](0,1.1)to[out= down,in=up](0,2.5);
       \draw(0.5,1.1) node{$ \cdots$}; \draw(0.5,2.5) node{$ \cdots$};
       \draw[-,thick](1.8,1.1)to (1.8,2.5);
       %\draw[-,thick] (2,1) to[out=up, in=down] (3,4);
       \draw(3,3)node{\tiny$i$}; \draw(5.5,3)node{\tiny$m$};
        \draw[-,thick] (3,1) to[out=up, in=down] (3,2.5);
         \draw[-,thick](4,1.1)to[out= down,in=up](4,2.5);\draw (3,1.8) node{$ \bullet$};
         \draw(4.5,1.1) node{$ \cdots$}; \draw(4.5,2.5) node{$ \cdots$};
         \draw[-,thick](5.5,1.1)to[out= down,in=up](5.5,2.5);
     \end{tikzpicture},  \end{equation}
where the dot is on the $i$th strand from the left.
  If we know $m$ from the context, we simply denote $ X_i1_{ m}$ by $ X_i$.
  Since $\mathcal A$ is a quotient category of  the strict monoidal category $\mathcal  C$, by interchange law in a strict monoidal category, $X_m$  centralizes the subalgebra $\tau_{A}(A_{m-1})$.
  Obviously, $$\pi_m\tau_A=\bar\tau_{m-1}\pi_{m-1},$$ where   $\pi_m$ is given in Proposition~\ref{bara}(4). Let $\bar X_m=\pi_m(X_m)$. Then    $\bar X_m$ centralizes the subalgebra $\bar\tau_{m-1}(\bar A_{m-1})$. Multiplying  $\bar X_m$ on the right (resp., left) of $\bar A_{m}$  gives an endomorphism of the $(\bar A_{m}, \bar  A_{m-1})$-bimodule (resp., $( \bar A_{m-1}, \bar  A_{m})$-bimodule) $\bar A_{m}$. For any $i\in \Bbbk$, define
  \begin{equation}\label{iinduc}
  i\text{-ind}^m_{m-1}= (\bar A_{m})_{ R, i}\otimes_{\bar A_{m-1} }?,\  \text{   $i\text{-res}^m_{m-1} = ( \bar A_{m})_{ L, i}\otimes _{\bar A_{m} }?$,}\end{equation}
  where $ (\bar A_{m})_{R, i}$ (resp.,  $ (\bar A_{m})_{L, i} $) is  the generalized $i$-eigenspace  of $\bar X_{m}$  with respect to the right (resp., left) action on  $\bar A_{m}$. Then
\begin{equation}\label{bra1} \text{ind}^m_{m-1}=\bigoplus_{i\in\Bbbk}i\text{-ind}^m_{m-1}, \quad \text{res}^m_{m-1}=\bigoplus_{i\in\Bbbk}i\text{-res}^m_{m-1}. \end{equation}
The $X_i1_m$ in \eqref{xitil} induces    $\bar X_L^\circ\in\End_\Bbbk(\bar A^\circ_L)$ and    $ \bar X_R^\circ\in\End_\Bbbk(\bar A_R^\circ)$ such that
  the restriction of $ \bar X_L^\circ $ (resp., $\bar X_R^\circ$)  to  $ \bar 1_{ {m+1}}\bar A^\circ$ (resp., $\bar A^\circ \bar 1_{ {m+1}}$ ) is given by  the left (resp., right)
multiplication  of  $\bar X_{m+1} \in \bar A_{m+1}$.
If $\mathcal C=\mathcal C_2$, then   $\bar X_R^\circ $ (resp.,  $\bar X_L^\circ$) is   invertible and the corresponding inverse is  given by   the right
(resp., left) multiplication  of  $ \bar {X_{ m+1}^{-1}}$  on $ \bar 1_{ {m+1}}\bar A^\circ$ (resp., $\bar A^\circ \bar 1_{ {m+1}}$), where  $$X_{m+1}^{-1}=\begin{tikzpicture}[baseline = 12.5, scale=0.35, color=\clr]
              \draw(5.5,3)node{\tiny$m+1$}; 
        \draw[-,thick] (3,1) to[out=up, in=down] (3,2.5);
         \draw[-,thick](4,1.1)to[out= down,in=up](4,2.5);\draw (5.5,1.8) node{$\circ$};
         \draw(4.5,1.1) node{$ \cdots$}; \draw(4.5,2.5) node{$ \cdots$};
         \draw[-,thick](5.5,1.1)to[out= down,in=up](5.5,2.5);
           \end{tikzpicture}.$$
Since $ \bar X_{m}$ centralizes the subalgebra $ \bar A_{m-1}$  in $ \bar A_m$,  we have the following  result, immediately.
\begin{Lemma}\label{bim123}   Both $\bar X_L^\circ$ and  $ \bar X_R^\circ$
  are $(\bar A^\circ,\bar A^\circ)$-bimodule homomorphisms.\end{Lemma}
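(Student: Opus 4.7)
The plan is to verify the two bimodule axioms for $\bar X_L^\circ$ directly from the definitions, treating one side by associativity in $\bar A^\circ$ and the other by the interchange law in the strict monoidal category $\mathcal C$; the argument for $\bar X_R^\circ$ is completely symmetric, obtained by swapping the roles of left and right throughout.

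Right $\bar A^\circ$-linearity of $\bar X_L^\circ$ is immediate. For $f \in \bar 1_{m+1}\bar A^\circ = \bar 1_m \bar A_L^\circ$ and $a \in \bar A^\circ$, the right action coincides with ordinary multiplication in $\bar A^\circ$, so
\[
\bar X_L^\circ(f\cdot a) \;=\; \bar X_{m+1}(fa) \;=\; (\bar X_{m+1}f)\,a \;=\; \bar X_L^\circ(f)\cdot a
\]
by associativity. Left $\bar A^\circ$-linearity of $\bar X_R^\circ$ is dual.

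For left linearity of $\bar X_L^\circ$, fix $a \in \bar 1_{m'}\bar A^\circ\bar 1_m$ and $f \in \bar 1_{m+1}\bar A^\circ$. The analog of \eqref{act123} for $\bar A_L^\circ$ describes the left action as $a\cdot f = \bar a\,f$, where $\bar a \in \bar 1_{m'+1}\bar A^\circ\bar 1_{m+1}$ is obtained from $a$ by horizontal concatenation with a rightmost identity strand; this specializes to the inclusion $\bar\tau_{m-1}$ when $m' = m$. Since $\bar a f \in \bar 1_{m'+1}\bar A^\circ = \bar 1_{m'}\bar A_L^\circ$, the map $\bar X_L^\circ$ acts on it by left multiplication by $\bar X_{m'+1}$, so
\[
\bar X_L^\circ(a\cdot f) \;=\; \bar X_{m'+1}\,\bar a\,f, \qquad a\cdot \bar X_L^\circ(f) \;=\; \bar a\,\bar X_{m+1}\,f,
\]
and the desired equality reduces to the identity $\bar X_{m'+1}\bar a = \bar a\,\bar X_{m+1}$. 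This is exactly the interchange law applied to $a$ and the dot: the dot sits on the rightmost identity strand of $\bar a$, so it slides freely past $a$. The balanced case $m = m'$ is the centralizing property of $\bar X_m$ on $\bar\tau_{m-1}(\bar A_{m-1})$ already recorded just before the lemma, and the off-diagonal case is the same one-line diagrammatic argument.

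The only obstacle worth flagging is notational bookkeeping: one must track that the left action of an off-diagonal $a$ shifts the left weight from $\bar 1_m$ to $\bar 1_{m'}$, so that the two occurrences of $\bar X_\bullet$ in the displayed identity carry different subscripts and live in different matrix blocks of $\bar A^\circ$. Once this is clear, the analogous verification for $\bar X_R^\circ$ is obtained verbatim by exchanging left and right: its left linearity is associativity, and its right linearity reduces to the same interchange identity applied on the other side of the middle strand.
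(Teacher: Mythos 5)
Your argument is correct and is essentially the paper's own proof: right (resp.\ left) linearity of $\bar X_L^\circ$ (resp.\ $\bar X_R^\circ$) is just associativity, while the remaining side reduces to the fact that $\bar X_{m+1}$ centralizes $\bar\tau_m(\bar A_m)$ in $\bar A_{m+1}$, i.e.\ the interchange-law identity the paper records immediately before the lemma. Two trivial remarks: since $\bar A^\circ=\bigoplus_{m}\bar A_m$ is a direct sum of algebras, $\bar 1_{m'}\bar A^\circ\bar 1_m=0$ for $m'\neq m$, so the ``off-diagonal case'' you flag is vacuous, and with your indexing the concatenation map on $\bar A_m$ is $\bar\tau_m$, not $\bar\tau_{m-1}$.
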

  For $i\in  \Bbbk$, let $\bar E_i=\bar A_{L, i}^\circ\otimes _{\bar A^\circ} ?$   and   $\bar F_i= \bar A_{R, i}^\circ\otimes _{\bar A^\circ} ?$, where  $\bar A_{L, i}^\circ$ (resp.,  $\bar A_{R, i}^\circ$)   is the $(\bar A^\circ, \bar A^\circ)$-bimodule that is  the generalized $i$-eigenspace of $\bar X_L^\circ $ (resp., $\bar X_R^\circ $) on $\bar A_L^\circ$ (resp., $\bar A_R^\circ$).
Then \begin{equation}\label{filteration1}\bar E_{i}=\bigoplus _{m\in\mathbb N}i\text{-res}^{m+1}_m, \text{   }\bar F_{i}=\bigoplus _{m\in\mathbb N}i\text{-ind}^{m+1}_m.   \end{equation}

\begin{Assumption}In the remaining part of this section we keep the following assumption:
\begin{itemize}
\item [(A4)]For any $m\in\mathbb N$, let  $Y_1(m)=\tau_A (Y(m-1))$  and $Y_2(m)$ consists of all following diagrams
   \begin{equation}\begin{tikzpicture}[baseline = 1mm, color=\clr]
                \draw[-,thick] (0,-0.3)to[out=up,in=down](0,0); \draw(0.2, -0.2) node{$ \cdots$};  \draw[-,thick] (0.4,-0.3)to[out=up,in=down](0.4,0);  \draw[-,thick,darkblue] (0.5,0) to[out=down,in=left] (0.78,-0.28) to[out=right,in=down] (1.06,0);\draw (-0.1,0.4) rectangle (0.9,0);% ŸØÐÎ
                \draw(0.4, 0.2) node{$d$}; \draw[-,thick] (1.06,0.85)to[out=up,in=down](1.06,0.01);
                \draw (0.9,0.55) rectangle (-0.1,0.85); \draw[-,thick] (0, 0.4)to[out=up,in=down](0,0.55); \draw(0.2, 0.48) node{$ \cdots$};  \draw[-,thick] (0.5,0.4)to[out=up,in=down](0.5,0.55); \draw(0.4, 0.7) node{$f$};
    \end{tikzpicture}, \end{equation}
     for all $(f, d)\in Y(m+1)\times D(m+1)$. We assume that $\tilde Y(m):= Y_1(m)\overset{\cdot}\sqcup  Y_2(m)$ is a basis of $A^{-}1_m$.
     \end{itemize}
\end{Assumption}

 \begin{Lemma}\label{intailh1}
There is a short exact sequence of functors from $\bar A^\circ$-fdmod to $A$-lfdmod:
\begin{equation}\label{diejdhd1}
0\rightarrow \Delta\circ \bar E\rightarrow  E\circ \Delta\rightarrow\Delta\circ  \bar F\rightarrow0.
\end{equation}
\end{Lemma}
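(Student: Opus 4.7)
The plan is to construct the natural transformations $\iota:\Delta\circ\bar E\to E\circ\Delta$ and $\pi:E\circ\Delta\to\Delta\circ\bar F$ at the level of $(A,\bar A_{m+1})$-bimodules, then verify exactness by reading off the basis decomposition supplied by (A4). Throughout, fix $V\in\bar A_{m+1}$-fdmod. Since $A_L\cong A$ as right $A$-modules with only the left action twisted by $\tau_A$, the module $E(\Delta(V))$ is, as a $\Bbbk$-space, $\Delta(V)$ with grading shifted by one and $A$-action reinterpreted through $\tau_A$. By Lemma~\ref{exactj} and (A4), $E(\Delta(V))_k$ has a $\Bbbk$-basis indexed by $Y(k+1,m+1)=\tau_A(Y(k,m))\sqcup Y_2(k+1,m+1)$, splitting $E(\Delta(V))_k$ into a ``$Y_1$-part'' and a ``$Y_2$-part.''

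First I would construct $\iota$ from the inclusion $\bar\tau_m:\bar A_m\hookrightarrow\bar A_{m+1}$. Since $n\npreceq m$ iff $n+1\npreceq m+1$, we have $\tau_A(A^{\npreceq m})\subseteq A^{\npreceq m+1}$, so $\tau_A$ descends to a right $\bar A_m$-linear map $A_{\preceq m}\bar 1_m\to A_{\preceq m+1}\bar 1_{m+1}$ (where $\bar A_m$ acts on the target through $\bar\tau_m$). Tensoring over $\bar A_m$ with the restriction of $V$ and composing with $A_{\preceq m+1}\bar 1_{m+1}\otimes_{\bar A_m}V\to A_{\preceq m+1}\bar 1_{m+1}\otimes_{\bar A_{m+1}}V=E(\Delta(V))$ produces $\iota_V(\bar a\otimes v)=\overline{\tau_A(a)}\otimes v$. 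Naturality in $V$ is automatic, $A$-linearity under the twisted action follows from $\tau_A(ay')=\tau_A(a)\tau_A(y')$, and by construction $\iota_V$ identifies $\Delta(\bar E(V))_k$ with the $Y_1$-part of $E(\Delta(V))_k$.

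Next I would construct $\pi$ via the cap. For $x\in 1_{k+1}A\cdot\bar 1_{m+1}$ viewed as an element of $1_kA_L\cdot\bar 1_{m+1}$, set
\[
\tilde\pi(x)=\overline{(\mathrm{id}_k\otimes\lcap)\circ(x\otimes\mathrm{id}_1)}\in 1_kA_{\preceq m+2}\bar 1_{m+2}.
\]
Both left $A$-linearity (with the twisted action via $\tau_A$) and right $\bar A_{m+1}$-linearity reduce to the interchange law, using $\tau_A(a)\otimes\mathrm{id}_1=a\otimes\mathrm{id}_2$ together with $(\mathrm{id}_{k'}\otimes\lcap)(a\otimes\mathrm{id}_2)=a\otimes\lcap=a\cdot(\mathrm{id}_k\otimes\lcap)$. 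Tensoring with $V$ over $\bar A_{m+1}$ yields $\pi_V$. The two relevant calculations are: for $y=\tau_A(y')\in Y_1$, interchange gives $\tilde\pi(y)=\overline{y'\otimes\lcap}$, and $y'\otimes\lcap=y'\circ(\mathrm{id}_m\otimes\lcap)$ factors through $1_m$ with $m\npreceq m+2$, so $\tilde\pi(y)=0$ in $A_{\preceq m+2}\bar 1_{m+2}$; for $y=y_{f,d}=(fd\otimes\mathrm{id}_1)\circ(\mathrm{id}_{m+1}\otimes\lcup)\in Y_2$, the snake equation \eqref{relation 1}I collapses $\tilde\pi(y_{f,d})$ to $\overline{fd}$, which is the basis element $(fd)\otimes v$ of $\Delta(\bar F(V))_k$.

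Combining these, $\ker\pi_V$ contains $\mathrm{image}(\iota_V)=$ the $Y_1$-part, while $\pi_V$ sends the $Y_2$-basis bijectively onto the free $\bar A_{m+1}$-basis $\{fd\otimes v:f\in Y(k,m+2),d\in D(m+2)\}$ of $\Delta(\bar F(V))_k$; since the total $\Bbbk$-dimensions match, the sequence is exact. The main technical obstacle will be verifying that the $Y_1$-part is closed under the twisted $A$-action, so that $\mathrm{image}(\iota_V)=\ker(\pi_V)$ holds as $A$-submodules rather than merely as $\Bbbk$-subspaces; this requires a careful expansion of $\overline{\tau_A(ay')}$ via the weakly triangular basis, using that $\tau_A$ preserves $A^{-},A^{\circ},A^{+}$ by (A2) while $\tau_A(A^{\npreceq m})\subseteq A^{\npreceq m+1}$.
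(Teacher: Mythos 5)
Your proposal is correct and takes essentially the paper's route: the same two maps (adding a vertical strand via $\tau_A$ for the inclusion, capping off against a new strand for the projection), the same appeal to (A3)--(A4) and Lemma~\ref{exactj} for the free bases, the same vanishing of capped $Y_1$-elements because they factor through an object $n$ with $n\npreceq m+2$, the same use of the snake relation \eqref{relation 1}I on $Y_2$-elements, and the same basis/dimension count -- the paper merely packages this as the bimodule short exact sequence \eqref{shoretmor1} and then tensors with $V$, whereas you argue pointwise and naturally in $V$. The ``main technical obstacle'' you flag at the end is vacuous: once $\iota_V$ and $\pi_V$ are $(A)$-module homomorphisms (which you have already checked via $\tau_A$ being an algebra map and the interchange law), $\mathrm{image}(\iota_V)$ and $\ker(\pi_V)$ are automatically $A$-submodules, and their equality is a purely $\Bbbk$-linear statement settled by your basis computation, so no expansion of $\overline{\tau_A(ay')}$ in the weakly triangular basis is needed (nor does the paper carry one out).
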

\begin{proof} At moment, we define $B_1\otimes B_2=\{b_1\otimes b_2\mid b_1\in B_1, b_2\in B_2\}$ for any  sets $B_1, B_2$.
When $B\in \{H(m), Y(m)\}$ and $b\in B$, let   $\bar b$ be the image of $b$ in $A_{\preceq m}$ and set $\bar B=\{\bar b\mid b\in B\}$.
%For any $m\in\mathbb N$, by Proposition~\ref{bara}(2)-(3),  $A_{\preceq m}\bar 1_{m}$ is  a  free right   $\bar A_{m}$-module with basis $\bar  Y(m)$.

  Thanks to Lemma~\ref{exactj}(1) and \eqref{ws123},   $ A_{\preceq m-1}\bar 1_{m-1}\otimes _{\bar A_{m-1}}\bar A^\circ_L\bar 1_ m$  has  basis $ \bar Y(m-1)\otimes \bar H(m)$ (if $m=0$, this set is $\emptyset$ by convention).  Similarly,  $ A_{\preceq m+1}\bar 1_ {m+1}\otimes _{\bar A_{m+1}}\bar A_R^\circ \bar 1_ m$ has basis $ \bar Y(m+1)\otimes \bar H(m+1)$.
Since $A_L\otimes _{A}A_{\preceq m}\cong A_{\preceq m}$ as vector spaces, $A_L\otimes _{A}A_{\preceq m}\bar 1_m$ has basis $\tilde Y(m)\otimes \bar H(m)$.
To prove \eqref{diejdhd1}, it suffices to show the following is a short exact sequence of $(A,\bar A^\circ)$-bimodules
\begin{equation}\label{shoretmor1}
0\rightarrow A_{\preceq m-1}\bar 1_{m-1}\otimes _{\bar A_{m-1}}\bar A_L^\circ\bar 1_ m\overset {\varphi}\rightarrow A_L\otimes _{A}A_{\preceq m}\bar 1_ m\overset{\psi}\rightarrow A_{\preceq m+1}\bar 1_ {m+1}\otimes _{ \bar A_{m+1}}\bar A_R^\circ\bar 1_ m\rightarrow0
\end{equation}
for any $ m\in \mathbb N $,  where
$$\varphi(\bar f\otimes \bar g)= ( f~\begin{tikzpicture}[baseline = 10pt, scale=0.5, color=\clr]
                \draw[-,thick] (0,0.5)to[out=up,in=down](0,1.2);
    \end{tikzpicture})\otimes \bar g, \ \ \text{    $\psi( f\otimes \bar g)=\bar{\begin{tikzpicture}[baseline = 1mm, color=\clr]
                \draw[-,thick] (0,0.5)to[out=up,in=down](0,0.8); \draw(0.2, 0.6) node{$ \cdots$};  \draw[-,thick] (0.4,0.5)to[out=up,in=down](0.4,.8);  \draw[-,thick,darkblue] (0.5,0.5) to[out=up,in=left] (0.78,0.78) to[out=right,in=up] (1.06,0.5);\draw (-0.1,0.5) rectangle (0.9,0);% ŸØÐÎ
                \draw(0.4, 0.25) node{$ f$}; \draw[-,thick] (1.06,0.5)to[out=up,in=down](1.06,0.01);
    \end{tikzpicture}}\otimes (  \bar{g~\begin{tikzpicture}[baseline = 10pt, scale=0.5, color=\clr]
                \draw[-,thick] (0,0.5)to[out=up,in=down](0,1.2);
    \end{tikzpicture}~})$ },$$
         for any admissible basis elements $ f, \bar f$ and $ \bar g$.
     First of all,  it is routine  to check that both  $\varphi $ and $\psi$  are well-defined $(A,\bar A^\circ)$-homomorphisms. Furthermore,  we have $\bar {f~\lcap}=0$   in $A_{\preceq m+1}$
  if $f\in Y(m-1)$. So,  for all  $\bar f\otimes \bar g\in \bar Y(m-1)\otimes \bar H(m)$,
 $$\psi(\varphi(\bar f\otimes \bar g))=\bar{f~\lcap}\otimes(\bar{g~~\begin{tikzpicture}[baseline = 10pt, scale=0.5, color=\clr]
                \draw[-,thick] (0,0.5)to[out=up,in=down](0,1.2);
    \end{tikzpicture}~})=0. $$
 It remains to prove that $\varphi$ gives a bijective map between   $ \bar Y(m-1)\otimes \bar H(m)$ and  $\tilde Y_1$ and
    the restriction of $\psi$ to $\tilde Y_2$ gives a bijective map between $\tilde Y_2$ and   $ \bar Y(m+1)\otimes \bar H(m+1)$, where   $\tilde Y_i=Y_i(m)\otimes \bar H(m)$, and $Y_1(m), Y_2(m)$ are given in (A4). If so, since $\tilde Y(m)\otimes \bar H(m)=\tilde Y_1\overset {.} \sqcup \tilde Y_2$, we have the exactness of  \eqref{shoretmor1}.

   Obviously,  $\varphi$ sends $ \bar Y(m-1)\otimes \bar H(m)$ to $\tilde Y_1$ and the inverse map $\varphi_1$ of  $\varphi$  sends  $\bar f\otimes \bar g\mapsto  \bar {f_1}\otimes \bar g$ for any $ \bar f\otimes \bar g \in \tilde Y_1$,
   where $f_1$ is obtained from $f$ by deleting the rightmost vertical strand.
   For any $f\in Y(m+1)$, $d\in D(m+1)$ and $g_1\in H(m)$, by  \eqref{relation 1}I,  we have
   \begin{equation}\label{jsijdd} \psi (\begin{tikzpicture}[baseline = 1mm, color=\clr]
                \draw[-,thick] (0,-0.3)to[out=up,in=down](0,0); \draw(0.2, -0.2) node{$ \cdots$};  \draw[-,thick] (0.4,-0.3)to[out=up,in=down](0.4,0);  \draw[-,thick,darkblue] (0.5,0) to[out=down,in=left] (0.78,-0.28) to[out=right,in=down] (1.06,0);\draw (-0.1,0.4) rectangle (0.9,0);% ŸØÐÎ
                \draw(0.4, 0.2) node{$d$}; \draw[-,thick] (1.06,0.85)to[out=up,in=down](1.06,0.01);
                \draw (0.9,0.55) rectangle (-0.1,0.85); \draw[-,thick] (0, 0.4)to[out=up,in=down](0,0.55); \draw(0.2, 0.48) node{$ \cdots$};  \draw[-,thick] (0.5,0.4)to[out=up,in=down](0.5,0.55); \draw(0.4, 0.7) node{$f$};
    \end{tikzpicture}\otimes \bar{g_1})=\bar{\begin{tikzpicture}[baseline = 1mm, color=\clr]
                \draw[-,thick] (0,-0.3)to[out=up,in=down](0,0); \draw(0.2, -0.2) node{$ \cdots$};  \draw[-,thick] (0.4,-0.3)to[out=up,in=down](0.4,0);  \draw[-,thick,darkblue] (0.5,0) to[out=down,in=left] (0.78,-0.28) to[out=right,in=down] (1.06,0);\draw (-0.1,0.4) rectangle (0.9,0);% ŸØÐÎ
                \draw(0.4, 0.2) node{$d$}; \draw[-,thick] (1.06,0.85)to[out=up,in=down](1.06,0.01);
                \draw (0.9,0.55) rectangle (-0.1,0.85); \draw[-,thick] (0, 0.4)to[out=up,in=down](0,0.55); \draw(0.2, 0.48) node{$ \cdots$};  \draw[-,thick] (0.5,0.4)to[out=up,in=down](0.5,0.55); \draw(0.4, 0.7) node{$f$};
                \draw[-,thick,darkblue] (0.5+0.56,0.5+0.35) to[out=up,in=left] (0.78+0.56,0.78+0.35) to[out=right,in=up] (1.06+0.56,0.5+0.35);
                \draw[-,thick] (1.06+0.56,0.5+0.35)to[out=up,in=down](1.06+0.56,0);
    \end{tikzpicture}}\otimes\bar {g_1~\begin{tikzpicture}[baseline = 10pt, scale=0.5, color=\clr]
                \draw[-,thick] (0,0.5)to[out=up,in=down](0,1.2);
    \end{tikzpicture}}=\bar f\otimes \bar{\begin{tikzpicture}[baseline = 3mm, color=\clr]
                %\draw[-,thick] (0,-0.3)to[out=up,in=down](0,0);
                %\draw(0.2, -0.2) node{$ \cdots$}; % \draw[-,thick] (0.4,-0.3)to[out=up,in=down](0.4,0);  %\draw[-,thick,darkblue] (0.5,0) to[out=down,in=left] (0.78,-0.28) to[out=right,in=down] (1.06,0);
                \draw (-0.1,0.4) rectangle (0.9,0);% ŸØÐÎ
                \draw(0.4, 0.2) node{$ g_1$}; \draw[-,thick] (1.06,0.53)to[out=up,in=down](1.06,0.01);
                \draw (1.2,0.55) rectangle (-0.1,0.85); \draw[-,thick] (0, 0.4)to[out=up,in=down](0,0.55); \draw(0.25, 0.48) node{$ \cdots$};  \draw[-,thick] (0.6,0.4)to[out=up,in=down](0.6,0.55);
                \draw(0.6, 0.7) node{$d$};
    \end{tikzpicture}}=\bar f\otimes \bar g,   \end{equation}
    where $g=d\circ \tau_A(g_1)$. By (A3)
      $\psi$ sends  $\tilde Y_2$ to  $ \bar Y(m+1)\otimes \bar H(m+1)$.

      Suppose $g\in H(m+1)$. Thanks to (A3),
   $g=d\circ \tau_A(g_1)$ for some  $d\in D(m+1)$ and $ g_1\in H(m)$.
 For any   $\bar f\otimes \bar g\in \bar Y(m+1)\otimes \bar H(m+1)$, define
   $$\psi_1 (\bar f\otimes \bar g) = \begin{tikzpicture}[baseline = 1mm, color=\clr]
                \draw[-,thick] (0,-0.3)to[out=up,in=down](0,0); \draw(0.2, -0.2) node{$ \cdots$};  \draw[-,thick] (0.4,-0.3)to[out=up,in=down](0.4,0);  \draw[-,thick,darkblue] (0.5,0) to[out=down,in=left] (0.78,-0.28) to[out=right,in=down] (1.06,0);\draw (-0.1,0.4) rectangle (0.9,0);% ŸØÐÎ
                \draw(0.4, 0.2) node{$d$}; \draw[-,thick] (1.06,0.85)to[out=up,in=down](1.06,0.01);
                \draw (0.9,0.55) rectangle (-0.1,0.85); \draw[-,thick] (0, 0.4)to[out=up,in=down](0,0.55); \draw(0.2, 0.48) node{$ \cdots$};  \draw[-,thick] (0.5,0.4)to[out=up,in=down](0.5,0.55); \draw(0.4, 0.7) node{$f$};
    \end{tikzpicture} \otimes  \bar {g_1}.$$
   Thanks to  (A3)-(A4), $\psi_1 (\bar f\otimes \bar g)\in \tilde Y_2$. Moreover, by \eqref{jsijdd} $\psi\psi_1(\bar f\otimes \bar g)=\bar f\otimes \bar g$.
      Using \eqref{relation 1}I  yields
   $$\psi_1\psi(f\otimes \bar g)=\begin{tikzpicture}[baseline = 1mm, color=\clr]
                \draw[-,thick] (0,0.5)to[out=up,in=down](0,0.8); \draw(0.2, 0.6) node{$ \cdots$};  \draw[-,thick] (0.4,0.5)to[out=up,in=down](0.4,.8);  \draw[-,thick,darkblue] (0.5,0.5) to[out=up,in=left] (0.78,0.78) to[out=right,in=up] (1.06,0.5);\draw (-0.1,0.5) rectangle (0.9,0);% ŸØÐÎ
                \draw(0.4, 0.25) node{$f$}; \draw[-,thick] (1.06,0.5)to[out=up,in=down](1.06,0);\draw[-,thick,darkblue] (1.06,0) to[out=down,in=left] (0.28+1.06,-0.28) to[out=right,in=down] (1.06-0.5+1.06,0);\draw[-,thick] (1.06-0.5+1.06,0.8)to[out=up,in=down](1.06-0.5+1.06,0);
    \end{tikzpicture}\otimes \bar g=f\otimes \bar g.
   $$
 So,  $\psi_1$ is the inverse map  of  $\psi$. This completes  the proof.
\end{proof}
The $X_i1_m$ in \eqref{xitil} also  induces a linear map $X\in\End_\Bbbk(A_L)$ such that the restriction of $ X$  to $ 1_{ {m+1}}A$ is given by  the left multiplication of $X_{m+1}1_{{m+1}}$.
\begin{Lemma}\label{msikdjde1} The endomorphism $X$ is an $(A,A)$-bimodule homomorphism.  If  $\varphi$ and $\psi$  are given  in \eqref{shoretmor1}, then
\begin{itemize}
\item[(1)] $( X\otimes \text{Id})\circ \varphi=\varphi \circ (\text{Id}\otimes\bar X_L^\circ )$,
 \item [(2)] $(\text{Id}\otimes  (\bar X_R^\circ )^{-1} )\circ \psi=\psi\circ ( X\otimes \text{Id})$ if $\mathcal C=\mathcal C_2$,
\item [(3)] $(\text{Id}\otimes \bar X_R^\circ )\circ \psi=-\psi\circ ( X\otimes \text{Id})$ if $\mathcal C=\mathcal C_1$.
     \end{itemize}
\end{Lemma}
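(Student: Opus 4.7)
The plan is to verify the four claims (the bimodule property of $X$, and the three identities (1)--(3)) by repeated use of the interchange law in a strict monoidal category, combined with the specific defining relations of $\mathcal{C}_1$ (resp.\ $\mathcal{C}_2$) and the standard freedom to move elements across a tensor product over a common subalgebra.

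For the bimodule property of $X$, right $A$-linearity is immediate from associativity of multiplication in $A$. For left $A$-linearity, recall that for $a\in A_m$ the left action on $f\in 1_{m+1}A = 1_mA_L$ is given by $a\cdot f=\tau_A(a)\,f$, where $\tau_A(a)=a\otimes\mathrm{id}_1$ has an identity strand at position $m+1$; since $X_{m+1}$ is a dot on exactly that strand, the interchange law in the strict monoidal category yields $X_{m+1}\tau_A(a)=\tau_A(a)X_{m+1}$, so $X(a\cdot f)=a\cdot X(f)$.

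For (1), take $\bar f\otimes\bar g$ with $f\in 1_eA 1_{m-1}$ and $g\in 1_mA^\circ 1_m$, so that $\varphi(\bar f\otimes\bar g)=\tau_A(f)\otimes\bar g$. Applying $X\otimes\mathrm{Id}$ gives $X_{e+1}\tau_A(f)\otimes\bar g$. The interchange law permits sliding the dot down the rightmost identity strand of $\tau_A(f)$, producing the identity $X_{e+1}\tau_A(f)=\tau_A(f)\,X_m$. Since $X_m 1_m\in A_m\subset A$, one can push it across the $\otimes_A$ balance to obtain $\tau_A(f)\otimes\overline{X_m g}=\tau_A(f)\otimes\bar X_m\bar g$, which by definition of $\bar X_L^\circ$ on $\bar 1_{m-1}\bar A_L^\circ=\bar 1_m\bar A^\circ$ equals $\varphi(\bar f\otimes\bar X_L^\circ\bar g)$.

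For (2) and (3), the starting point is $\psi(X(f)\otimes\bar g)=\overline{\phi(X_{e+1}f)}\otimes\overline{g\otimes\mathrm{id}_1}$. The diagram $\phi(X_{e+1}f)$ has a dot on the left strand of the cap at the top of $\phi(f)$. In $\mathcal{C}_2$, relation \eqref{relation 3}II converts this to a $\circ$-dot on the right strand, which then slides down the rightmost identity strand of $\tau$ (again by interchange) to yield $\phi(X_{e+1}f)=\phi(f)\,X_{m+1}^{-1}$; in $\mathcal{C}_1$, relation \eqref{relation 1}II instead produces $\phi(X_{e+1}f)=-\phi(f)\,X_{m+1}$. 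Passing to the quotient and using that $X_{m+1}^{\pm 1}$ commutes with $g\otimes\mathrm{id}_1=\tau_A(g)$ by interchange, one can move $\bar X_{m+1}^{\pm 1}\in\bar A_{m+1}$ across the $\otimes_{\bar A_{m+1}}$ balance to convert the expression into $(\mathrm{Id}\otimes(\bar X_R^\circ)^{-1})\psi(f\otimes\bar g)$ in case $\mathcal{C}=\mathcal{C}_2$, and into $-(\mathrm{Id}\otimes\bar X_R^\circ)\psi(f\otimes\bar g)$ in case $\mathcal{C}=\mathcal{C}_1$.

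The only non-trivial step is the diagrammatic simplification of $\phi(X_{e+1}f)$, which is where the distinction between $\mathcal{C}_1$ and $\mathcal{C}_2$ enters and which forces the sign (resp.\ the inverse) in the two cases. Everything else is bookkeeping: tracking source/target indices of the various diagrams, checking that all manipulations descend to the quotients $A_{\preceq m\pm 1}$ (which is automatic because the relevant ideals $A^{\not\preceq m\pm 1}$ are stable under the operations used), and correctly identifying $\bar X_L^\circ$ and $\bar X_R^\circ$ with the appropriate shifted multiplication on $\bar A_L^\circ$ and $\bar A_R^\circ$.
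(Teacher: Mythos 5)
Your proposal is correct and takes essentially the same route as the paper's proof: the bimodule property and identity (1) come from sliding the dot along the added identity strand (the interchange law, as in \eqref{act321}), while (2) and (3) come from applying \eqref{relation 3}II, resp.\ \eqref{relation 1}II, to push the dot across the cap in $\phi(X f)$, giving $\phi(f)X_{m+1}^{-1}$, resp.\ $-\phi(f)X_{m+1}$, and then transporting $\bar X_{m+1}^{\mp 1}\in\bar A_{m+1}$ across the balanced tensor product using that it commutes with $\tau_A(g)$. The only difference is cosmetic: you verify (2)--(3) starting from $\psi\circ(X\otimes\mathrm{Id})$ instead of from $(\mathrm{Id}\otimes(\bar X_R^\circ)^{\mp 1})\circ\psi$, which is the paper's computation read in reverse.
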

\begin{proof}For any $g\in \Hom_{\mathcal C}( m,  n)$,
\begin{equation} \label{act321} X_{n+1}1_{ {n+1}}\circ (g~\begin{tikzpicture}[baseline = 10pt, scale=0.5, color=\clr]
                \draw[-,thick] (0,0.5)to[out=up,in=down](0,1.2);
    \end{tikzpicture})=g~\begin{tikzpicture}[baseline = 10pt, scale=0.5, color=\clr]
                \draw[-,thick] (0,0.5)to[out=up,in=down](0,1.2);\draw (0,0.8) node{$ \bullet$};
    \end{tikzpicture}=(g~\begin{tikzpicture}[baseline = 10pt, scale=0.5, color=\clr]
                \draw[-,thick] (0,0.5)to[out=up,in=down](0,1.2);
    \end{tikzpicture}) \circ  X_{m+1}1_{ {m+1}}.\end{equation} So,  $X$ is an $(A,A)$-homomorphism.
Suppose $ (f, g)\in Y(m-1)\times H(m)$ and  $m\geq 1$. By  \eqref{act321},  $$( X\otimes \text{Id})\circ \varphi(\bar f\otimes \bar g)=( X\otimes \text{Id})(( f~\begin{tikzpicture}[baseline = 10pt, scale=0.5, color=\clr]
                \draw[-,thick] (0,0.5)to[out=up,in=down](0,1.2);
    \end{tikzpicture})\otimes \bar g)=(f~\begin{tikzpicture}[baseline = 10pt, scale=0.5, color=\clr]
                \draw[-,thick] (0,0.5)to[out=up,in=down](0,1.2);\draw (0,0.8) node{$ \bullet$};
    \end{tikzpicture} )\otimes \bar g=f\otimes  \bar X_{m+1}\bar g =\varphi \circ (\text{Id}\otimes  \bar X_L^\circ )(\bar f\otimes \bar g)
 ,$$ proving (1). If $ (f, g)\in Y(m)\times  H(m)$, then
    $$ \begin{aligned}
    &(\text{Id}\otimes  (\bar X_R^\circ)^{-1} )\circ \psi(f\otimes \bar g)=(\text{Id}\otimes (\bar X_R^\circ)^{-1} )( \bar{\begin{tikzpicture}[baseline = 1mm, color=\clr]
                \draw[-,thick] (0,0.5)to[out=up,in=down](0,0.8); \draw(0.2, 0.6) node{$ \cdots$};  \draw[-,thick] (0.4,0.5)to[out=up,in=down](0.4,.8);  \draw[-,thick,darkblue] (0.5,0.5) to[out=up,in=left] (0.78,0.78) to[out=right,in=up] (1.06,0.5);\draw (-0.1,0.5) rectangle (0.9,0);% ŸØÐÎ
                \draw(0.4, 0.25) node{$f$}; \draw[-,thick] (1.06,0.5)to[out=up,in=down](1.06,0.01);
    \end{tikzpicture}}\otimes ( \bar{g~~\begin{tikzpicture}[baseline = 10pt, scale=0.5, color=\clr]
                \draw[-,thick] (0,0.5)to[out=up,in=down](0,1.2);
    \end{tikzpicture}~}))=\bar{\begin{tikzpicture}[baseline = 1mm, color=\clr]
                \draw[-,thick] (0,0.5)to[out=up,in=down](0,0.8); \draw(0.2, 0.6) node{$ \cdots$};  \draw[-,thick] (0.4,0.5)to[out=up,in=down](0.4,.8);  \draw[-,thick,darkblue] (0.5,0.5) to[out=up,in=left] (0.78,0.78) to[out=right,in=up] (1.06,0.5);\draw (-0.1,0.5) rectangle (0.9,0);% ŸØÐÎ
                \draw(0.4, 0.25) node{$f$}; \draw[-,thick] (1.06,0.5)to[out=up,in=down](1.06,0.01);
    \end{tikzpicture}}\otimes ( \bar {g~\begin{tikzpicture}[baseline = 10pt, scale=0.5, color=\clr]
                \draw[-,thick] (0,0.5)to[out=up,in=down](0,1.2);\draw (0,0.8) node{$ \circ$};
    \end{tikzpicture}}) \\&
    = \bar{\begin{tikzpicture}[baseline = 1mm, color=\clr]
                \draw[-,thick] (0,0.5)to[out=up,in=down](0,0.8); \draw(0.2, 0.6) node{$ \cdots$};  \draw[-,thick] (0.4,0.5)to[out=up,in=down](0.4,.8);  \draw[-,thick,darkblue] (0.5,0.5) to[out=up,in=left] (0.78,0.78) to[out=right,in=up] (1.06,0.5);\draw (-0.1,0.5) rectangle (0.9,0);% ŸØÐÎ
                \draw(0.4, 0.25) node{$f$}; \draw[-,thick] (1.06,0.5)to[out=up,in=down](1.06,0.01); \draw (1.06,0.3) node{$ \circ$};
    \end{tikzpicture}}\otimes ( \bar{g~~\begin{tikzpicture}[baseline = 10pt, scale=0.5, color=\clr]
                \draw[-,thick] (0,0.5)to[out=up,in=down](0,1.2);
    \end{tikzpicture}~})
    \overset{\text{\eqref{relation 3}II}}=\bar{ \begin{tikzpicture}[baseline = 1mm, color=\clr]
                \draw[-,thick] (0,0.5)to[out=up,in=down](0,0.8); \draw(0.2, 0.6) node{$ \cdots$};  \draw[-,thick] (0.4,0.5)to[out=up,in=down](0.4,.8);  \draw[-,thick,darkblue] (0.5,0.5) to[out=up,in=left] (0.78,0.78) to[out=right,in=up] (1.06,0.5);\draw (-0.1,0.5) rectangle (0.9,0);% ŸØÐÎ
                \draw(0.4, 0.25) node{$f$}; \draw[-,thick] (1.06,0.5)to[out=up,in=down](1.06,0.01); \draw (0.55,0.6) node{$ \bullet$};
    \end{tikzpicture}}\otimes ( \bar{g~~\begin{tikzpicture}[baseline = 10pt, scale=0.5, color=\clr]
                \draw[-,thick] (0,0.5)to[out=up,in=down](0,1.2);
    \end{tikzpicture}~})
    =\psi\circ ( X\otimes \text{Id}) (f\otimes  \bar g),
    \end{aligned} $$
   proving (2). Replacing  \eqref{relation 3}II by  \eqref{relation 1}II, one can verify (3) by arguments similar to those for (2). We omit details.
\end{proof}

\begin{Defn}\label{sharpind} If $\mathcal C=\mathcal C_1$, we define  $\sharp:\Bbbk\rightarrow \Bbbk$ such that $i^\sharp= -i$ for any $i\in \Bbbk$.  If $\mathcal C=\mathcal C_2$, we define   $\sharp:\Bbbk^\times \rightarrow \Bbbk^\times $ such that
 $i^\sharp=i^{-1}$ for any $i\in \Bbbk^\times$.\end{Defn}
For each $i\in \Bbbk$, let  \begin{equation}\label{ei}  E_i:= A_{L, i}\otimes _{A}?,\end{equation} where $A_{L,i}$ is the generalized $i$-eigenspace of $ X$ on $A_L$.
\begin{Theorem}\label{usuactifuc1} For each $i\in \Bbbk$,
   $ E_i$ is an exact functor. Furthermore, there is a short exact sequence of functors  from $\bar A^\circ$-fdmod to $A$-lfdmod:
 \begin{equation}\label{keyses} 0\rightarrow \Delta\circ \bar E_i\rightarrow  E_i\circ \Delta\rightarrow\Delta\circ  \bar F_{i^\sharp}\rightarrow0.\end{equation}
\end{Theorem}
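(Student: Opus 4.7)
The plan is to obtain Theorem~\ref{usuactifuc1} by restricting the short exact sequence of Lemma~\ref{intailh1} to generalized eigenspaces of the relevant endomorphisms, using Lemma~\ref{msikdjde1} as the bridge. First, for the exactness of $E_i$: since $X$ is an $(A,A)$-bimodule endomorphism of $A_L$ by Lemma~\ref{msikdjde1}, and since $A_L$ is locally finite dimensional with $X$ preserving each finite-dimensional component $1_{m+1}A1_n$, the generalized eigenspace decomposition $A_L=\bigoplus_{i\in\Bbbk}A_{L,i}$ is a decomposition of $(A,A)$-bimodules. Consequently $E=\bigoplus_{i\in\Bbbk}E_i$ as functors, and since $E$ is exact by Lemma~\ref{bimoudisomah}, each direct summand $E_i$ is exact.

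For the short exact sequence~\eqref{keyses}, the plan is to restrict the short exact sequence~\eqref{shoretmor1} of $(A,\bar A_m)$-bimodules to the generalized $i$-eigenspace of $X\otimes\text{Id}$ on the middle term. By Lemma~\ref{msikdjde1}(1), the map $\varphi$ intertwines $\text{Id}\otimes\bar X_L^\circ$ on the left with $X\otimes\text{Id}$ on the middle; by Lemma~\ref{msikdjde1}(2)--(3), the map $\psi$ intertwines $X\otimes\text{Id}$ on the middle with $\text{Id}\otimes(\bar X_R^\circ)^{-1}$ in case $\mathcal C_2$, or with $-\text{Id}\otimes\bar X_R^\circ$ in case $\mathcal C_1$, on the right. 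A standard elementary check shows that if maps of vector spaces fit into a short exact sequence and intertwine commuting endomorphisms, then the corresponding generalized eigenspaces also fit into a short exact sequence. Here the intertwining sends the generalized $i$-eigenspace on the middle to the generalized $i$-eigenspace on the left, and to the generalized $i^\sharp$-eigenspace on the right: in case $\mathcal C_1$, the sign in $(\text{Id}\otimes\bar X_R^\circ)\circ\psi=-\psi\circ(X\otimes\text{Id})$ converts an $i$-eigenvalue into $-i$, and in case $\mathcal C_2$, inversion converts an $i$-eigenvalue of $(\bar X_R^\circ)^{-1}$ into an $i^{-1}$-eigenvalue of $\bar X_R^\circ$; in both cases this matches Definition~\ref{sharpind}.

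Next, the plan is to identify each restricted eigenspace with a functor composition by the same calculations used in the proof of Lemma~\ref{intailh1}. After tensoring the restricted SES with $M\in\bar A_m$-fdmod over $\bar A_m$, the left term becomes $\Delta(\bar E_i M)$ via the compatibility of the generalized eigenspace decomposition of $\bar A_L^\circ$ with the tensor product over $\bar A_{m-1}$; the middle becomes $E_i(\Delta M)$ directly, since $A_{L,i}$ is an $(A,A)$-sub-bimodule and tensor products distribute over the bimodule direct sum; and the right becomes $\Delta(\bar F_{i^\sharp}M)$ by the analogous compatibility for $\bar A_R^\circ$. The exactness after tensoring with $M$ is inherited from the argument of Lemma~\ref{intailh1}, since each eigenspace is a direct summand of the corresponding bimodule, and the flatness/projectivity properties over $\bar A_m$ used there pass to direct summands.

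The main obstacle is the bookkeeping needed to verify that the generalized eigenspace decompositions of $X$, $\bar X_L^\circ$, and $\bar X_R^\circ$ are compatible with the relevant tensor products and bimodule structures in~\eqref{shoretmor1}, and that the eigenvalue shift on the right is precisely $i\mapsto i^\sharp$ of Definition~\ref{sharpind}. Once these compatibilities are confirmed, the exactness of the restricted short exact sequence is automatic from the fact that every term splits as a direct sum of its generalized eigenspaces under the commuting bimodule endomorphisms.
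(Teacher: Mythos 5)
Your proposal is correct and follows essentially the same route as the paper: the paper likewise notes that $E_i$ is exact as a direct summand of the exact functor $E$, and obtains \eqref{keyses} from the sequence of Lemma~\ref{intailh1} by passing to generalized eigenspaces via the intertwining relations of Lemma~\ref{msikdjde1}, with the sign (case $\mathcal C_1$) or inversion (case $\mathcal C_2$) accounting for the shift $i\mapsto i^\sharp$. Your write-up simply makes explicit the eigenspace bookkeeping that the paper leaves implicit.
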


\begin{proof}Since  $ E_i$ is a summand  of the exact functor $ E$, so is $E_i$. Thanks to  Lemma~\ref{msikdjde1}, the short exact sequence of \eqref{keyses} follows from \eqref{diejdhd1} by
 passing to appropriate generalized eigenspaces.\end{proof}

\subsection{Characters}Fix an $m\in \mathbb N$.  Then     $\{ X_i1_{ m}\mid 1\le i\le m\}$ generates a finite dimensional  commutative subalgebra of  $ A_m$, where
 $ X_i1_{ m}$'s are given in \eqref{xitil}.
 For any $\mathbf i=(i_1,i_2,\ldots, i_m)\in \Bbbk^m$, there is an idempotent $1_{\mathbf i}\in  A_m$ which projects any $ A_m$-module $M$
onto $M_{\mathbf i}$,  the simultaneous generalized eigenspace of $ X_11_{ m} ,\ldots, X_{ m}1_{ m}$ with respect to $\mathbf i$.

\begin{Assumption}\label{res} In the remaining part of this section, we keep the following assumptions.

\begin{itemize}
\item [(A5)] There   is
 a $\Bbbk$-linear  monoidal contravariant functor $\sigma: \mathcal C\rightarrow \mathcal C$ such that   $\sigma^2=\text{Id}$ and $\sigma$ preserves the generating object \begin{tikzpicture}[baseline = 1.5mm]
	\draw[-,thick,darkblue] (0.18,0) to (0.18,.4);
\end{tikzpicture} and  the right ideal $K$. Hence $\sigma$ induces an anti-involution $\sigma_{A}:A\rightarrow A$ with  $\sigma_{A}(1_{ m})=1_ m$ for any $ m\in\mathbb N$.
 \item[(A6)] For any $m\in\mathbb N$,    $\bar A_m$ is a cellular algebra with cellular basis given in \eqref{ceboaa}.  The corresponding  cell module is  $S_m(\lambda)$ (see \eqref{basisofcell}), $\forall \lambda\in \Lambda_m$.
\item[(A7)] For any $i\in \Bbbk$, $m\in \mathbb N$ and any  $\lambda\in \Lambda_m$,
there are   $\mathscr A_{i,\lambda}\subseteq \Lambda_{m+1}$ and   $\mathscr R_{i,\lambda}\subseteq \Lambda_{m-1}$ such that $\bar E_i S_m(\lambda)$ (resp., $\bar F_i S_m(\lambda)$) has a multiplicity-free filtration with sections $S_{m-1}(\mu)$ (resp., $S_{m+1}(\mu)$) for   $ \mu\in\mathscr R_{i,\lambda}$ (resp., $ \mu\in\mathscr A_{i,\lambda}$).
\item [(A8)] $\mu\in\mathscr A_{i,\lambda} $ if and only if $\lambda\in\mathscr R_{i,\mu}$, for any $i\in\Bbbk$ and  $\lambda,\mu\in\Lambda=\bigcup_{m\in \mathbb N} \Lambda_m$.
\item[(A9)]$\bar A_0=\Bbbk\bar 1_0\cong\Bbbk$.
\end{itemize}\end{Assumption}
 Recall that $\bar\Lambda=\bigcup_{m\in\mathbb N}\bar\Lambda_m$, where $\bar \Lambda_m$ parameterizes all the pairwise inequivalent simple  $\bar A_m$-modules in the sense that  the cell module $S_m(\lambda)$ has the simple head $L_m(\lambda)$ for  $\lambda\in \bar\Lambda_m$.    So, $\Lambda_0=\bar \Lambda_0$,   the set containing  a single element,  say  $\emptyset$.  To simplify the notation, we denote $S_m(\lambda)$ by $S(\lambda)$.

 Let  $\Xi$ be  the graph such that the set of  vertices is  $\Lambda$ and  any edge is of form   $\lambda$---$\mu$
whenever  $ \mu\in\mathscr A_{i,\lambda}$ or $ \mu\in\mathscr R_{i,\lambda}$ (equivalently by (A8),  $ \lambda\in\mathscr R_{i,\mu}$ or $ \lambda\in\mathscr A_{i,\mu}$)
 for some $i\in\Bbbk$.

For any $\lambda, \mu\in \Lambda$,
a  path $\gamma:\lambda \rightsquigarrow\mu$ is of length $\ell(\gamma)$ if  there are $\lambda_i\in \Lambda$, $0\le i\le \ell(\gamma)$  such that    $\lambda_0=\lambda$, $\lambda_{\ell(\gamma)}=\mu$ and $\lambda_{j-1}$---$\lambda_{j}$ for $1\leq j\leq \ell(\gamma)$.  Furthermore, the edge  $\lambda_{j-1}$---$\lambda_{j}$ is
  colored by  $i_j\in \Bbbk$ such that $\lambda_j\in\mathscr A_{i_j,\lambda_{j-1} }$ or  $\lambda_j\in\mathscr R_{i_j^\sharp,\lambda_{j-1} }$.
    In this case, we say that   $\gamma$ is of content  $c(\gamma)=( i_1, i_2, \ldots, i_{\ell(\gamma)})$.
When $\ell(\gamma)=0$,  there is a unique path from $\emptyset$ to $\emptyset$ with length $0$ and the  corresponding content  is  $ \emptyset$.

\begin{Defn} For any $V\in A$-lfdmod, define
\begin{equation}
\text{ch}V=\sum_{m\in\mathbb N, \mathbf i\in \Bbbk^m}(\dim 1_{\mathbf i}V)e^{\mathbf i},
\end{equation}
where $1_{\mathbf i}V$ is the   simultaneous generalized eigenspace of $ X_11_{ m} ,\ldots, X_m 1_{ m}$ corresponding to $\mathbf i$.
\end{Defn}

%The strategy for the proof of the following result is similar to the case treated in \cite{Br,Re}.
\begin{Prop}\label{charcter1}
For any $\lambda\in\Lambda$,
$\text{ch}\tilde\Delta(\lambda)=\sum_{\gamma}e^{c(\gamma)}$
where the summation ranges over all paths $\gamma: \emptyset \rightsquigarrow \lambda$ and $\tilde\Delta(\lambda)$ is given in \eqref{tildelta}.
\end{Prop}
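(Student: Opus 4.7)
The strategy is to show that $\text{ch}\tilde\Delta(\lambda)$ and the path generating series $\chi'(\lambda):=\sum_{\gamma:\emptyset\rightsquigarrow\lambda}e^{c(\gamma)}$ satisfy the same recursion in $\lambda$, governed by the exact functors $E_i$, and then to conclude by induction on the length of the content tuple.

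The key step on the character side is the decomposition
\[
\text{ch}V=(\dim 1_0V)\,e^\emptyset+\sum_{i\in\Bbbk}\text{ch}(E_iV)\cdot e^i,
\]
valid for any $V\in A\text{-lfdmod}$, where multiplication by $e^i$ means appending $i$ to each tuple. This follows from the explicit description $E_i=A_{L,i}\otimes_A\,?$: on weight $n$ one has $(E_iV)_n\cong(1_{n+1}V)_i$, the generalized $i$-eigenspace of $X_{n+1}$, and the left $A_n$-action factors through $X_j\mapsto X_j\otimes\text{id}=X_j\,1_{n+1}$, so the simultaneous $(X_1,\ldots,X_n)$-eigenspace of $(E_iV)_n$ at $(j_1,\ldots,j_n)$ equals the $(X_1,\ldots,X_{n+1})$-eigenspace of $1_{n+1}V$ at $(j_1,\ldots,j_n,i)$.

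Applying the short exact sequence from Theorem~\ref{usuactifuc1} with input $S(\lambda)$,
\[
0\to\Delta(\bar E_iS(\lambda))\to E_i\tilde\Delta(\lambda)\to\Delta(\bar F_{i^\sharp}S(\lambda))\to 0,
\]
together with the exactness of $\Delta$ (Lemma~\ref{exactj}(2)) and the filtrations in (A7), gives
\[
\text{ch}(E_i\tilde\Delta(\lambda))=\sum_{\mu\in\mathscr R_{i,\lambda}}\text{ch}\tilde\Delta(\mu)+\sum_{\mu\in\mathscr A_{i^\sharp,\lambda}}\text{ch}\tilde\Delta(\mu).
\]
Combined with the evaluation $\dim 1_0\tilde\Delta(\lambda)=\delta_{\lambda,\emptyset}$---which follows from Proposition~\ref{xeijdiec1}(1), since (A1) forces $\Gamma_0=\{0\}$, and (A9) gives $\Lambda_0=\{\emptyset\}$ with $S(\emptyset)\cong\Bbbk$---this yields the recursion
\[
\text{ch}\tilde\Delta(\lambda)=\delta_{\lambda,\emptyset}\,e^\emptyset+\sum_{i\in\Bbbk}\!\Bigg(\sum_{\mu\in\mathscr R_{i,\lambda}}\!\text{ch}\tilde\Delta(\mu)+\sum_{\mu\in\mathscr A_{i^\sharp,\lambda}}\!\text{ch}\tilde\Delta(\mu)\!\Bigg)e^i.
\]
On the combinatorial side, splitting $\gamma$ by its last step yields the same recursion for $\chi'(\lambda)$: a length-zero path exists exactly when $\lambda=\emptyset$, and a path of positive length with last colour $i$ into $\lambda$ has penultimate vertex $\mu$ satisfying $\lambda\in\mathscr A_{i,\mu}$ or $\lambda\in\mathscr R_{i^\sharp,\mu}$, equivalent by (A8) to $\mu\in\mathscr R_{i,\lambda}$ or $\mu\in\mathscr A_{i^\sharp,\lambda}$.

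To finish, I would induct on the length $n$ of the tuple $\mathbf j$: for $n\geq 1$ the recursion expresses the coefficient $[e^{(j_1,\ldots,j_n)}]$ at $\lambda$ in terms of coefficients $[e^{(j_1,\ldots,j_{n-1})}]$ at the relevant $\mu\in\mathscr R_{j_n,\lambda}\cup\mathscr A_{j_n^\sharp,\lambda}$; since the two series agree at $n=0$ for every $\lambda$, the induction forces equality. The main subtlety is that the recursion couples $\lambda\in\Lambda_m$ to vertices $\mu$ in both $\Lambda_{m-1}$ and $\Lambda_{m+1}$, so one cannot induct on the level $m$; the correct induction variable is the tuple length $n$, which the recursion strictly decreases, and (A8) is precisely what pairs the two sides correctly.
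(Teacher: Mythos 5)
Your proposal is correct and follows essentially the same route as the paper: the paper likewise reduces to the weight-space identity $\dim 1_{\mathbf i}\tilde\Delta(\lambda)=\dim 1_{\mathbf i'}E_{i_m}\tilde\Delta(\lambda)$ (your character decomposition via $E_i$), invokes the short exact sequence \eqref{keyses} together with (A7)--(A8) to get the multiplicity-free $\tilde\Delta$-filtration of $E_{i_m}\tilde\Delta(\lambda)$, and inducts on the length of the content tuple, with the base case handled by $1_0\tilde\Delta(\lambda)=\delta_{\lambda,\emptyset}\Bbbk$ (the paper reads this off from the basis in \eqref{zmmzzm1} rather than from Proposition~\ref{xeijdiec1}(1), a negligible difference).
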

\begin{proof}It suffices to show that $\dim 1_{\mathbf i}\tilde \Delta(\lambda)=| \{\gamma:\emptyset \rightsquigarrow\lambda\mid c(\gamma)=\mathbf i\}|$ for all $\mathbf  i\in  \Bbbk^m$ (resp., $(\Bbbk^\times)^m$) and all $m\in \mathbb N$ if $\mathcal C=\mathcal C_1$ (resp., $\mathcal C_2$).

Suppose $\lambda\in\Lambda_n$. By Lemma~\ref{exactj}(1),   $\tilde\Delta(\lambda)$ has a basis $\{\bar y\otimes w\mid y\in Y(n), w\in  \mathcal S(\lambda)\}$,
where  $\mathcal S(\lambda)$ is a basis of $S(\lambda)$. So,
\begin{equation}\label{zmmzzm1}
\tilde\Delta(\lambda)=\bigoplus_{d\in\mathbb N}1_{ {n+2d}}\tilde\Delta(\lambda), \ \ \text{  } 1_{ n} \tilde\Delta(\lambda)= S(\lambda).
\end{equation}
We prove the result by induction on $m\in \mathbb N $. Suppose that  $m=0$. If $\lambda\neq\emptyset$, then $1_\emptyset \tilde\Delta(\lambda)=0$. Otherwise,   $1_\emptyset \tilde\Delta(\lambda)=\Bbbk$. So, the result holds for $m=0$.

Suppose that $m>0$.
First, we have
 $1_{ {m-1}}E \tilde \Delta(\lambda)=1_{ m}A\otimes_A \tilde \Delta(\lambda)$,  and an $A_{{m-1}}$-isomorphism
$\phi: 1_{ m}A\otimes_A \tilde \Delta(\lambda)\rightarrow 1_{ m} \tilde \Delta(\lambda)$, sending $b\otimes v$ to  $ bv$
  for all $(b, v)\in 1_{ m}A \times   \tilde \Delta(\lambda)$.
Note that $1_{ {m-1}} E_{i} \tilde \Delta(\lambda)=1_{ {m-1}} (A_L)_i\otimes _A \tilde \Delta(\lambda)= (1_{ m} A)_i\otimes _A \tilde \Delta(\lambda)$, where
$ (1_{ m} A)_i$ is the generalized $i$-eigenspace of $X_m1_{ m}$ with respect to the left action on $1_{ m}A$.
Restricting  $\phi$ to $1_{ {m-1}} E_{i} \tilde \Delta(\lambda) $ gives an isomorphism
between $ 1_{ {m-1}}E_{i} \tilde \Delta(\lambda)$ and the
generalized $i$-eigenspace of $ X_m1_{ m}$ on $1_{ m} \tilde \Delta(\lambda)$ and hence
\begin{equation}\label{cdcedccde1}
\dim1_{\mathbf i}\tilde\Delta(\lambda)=\dim1_{\mathbf i'} E_{i_m}\tilde\Delta(\lambda),
\end{equation}
for  any $\mathbf i\in \Bbbk^m$, where $\mathbf i'=(i_1,\ldots,i_{m-1})$.
Thanks to \eqref{keyses} and (A7)-(A8),  $ E_{i_m}\tilde \Delta(\lambda)$ has a multiplicity-free
$\tilde\Delta$-filtration such that $\tilde\Delta(\mu)$ appears as a section if and only if either
 $ \mu\in\mathscr R_{i_m,\lambda}$ or $ \mu\in\mathscr A_{i_m^\sharp ,\lambda}$.
Now the result follows from \eqref{cdcedccde1} and induction on $m$.
\end{proof}

\begin{Cor}\label{twopath1}
Suppose  $\lambda\in\Lambda, \mu\in\bar \Lambda_m$.  If $[\tilde \Delta(\lambda):L(\mu)]\neq0$, then there are two  paths $\gamma:\emptyset \rightsquigarrow \lambda$
and $\delta: \emptyset\rightsquigarrow\mu$  of length $m$ such that  $c(\gamma)=c(\delta)$.
\end{Cor}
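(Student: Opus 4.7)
My plan is to leverage the character formula from Proposition~\ref{charcter1} together with the exactness of the idempotent truncation functors $1_{\mathbf i}$. The whole argument reduces to exhibiting a single tuple $\mathbf i \in \Bbbk^m$ (or $(\Bbbk^\times)^m$ if $\mathcal C = \mathcal C_2$) that simultaneously witnesses both a path to $\lambda$ and a path to $\mu$.

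First I would locate a suitable weight tuple $\mathbf i$. Since $\mu \in \bar\Lambda_m$, we have $1_m L(\mu) = L_m(\mu) \neq 0$ by \eqref{kkk1234}. Because $L_m(\mu)$ is a nonzero finite dimensional $\bar A_m$-module on which the commuting operators $\bar X_1 1_m, \dots, \bar X_m 1_m$ act, it admits a simultaneous generalized eigenspace decomposition, so there exists some $\mathbf i = (i_1,\dots,i_m)$ with $1_{\mathbf i} L(\mu) = 1_{\mathbf i} L_m(\mu) \neq 0$. When $\mathcal C = \mathcal C_2$, the invertibility of each $X_k$ (via \eqref{relation 3}I) forces $\mathbf i \in (\Bbbk^\times)^m$, matching Definition~\ref{sharpind}.

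Next, since $[\tilde\Delta(\lambda):L(\mu)] \neq 0$, the functor $1_{\mathbf i}$ (being exact, as an idempotent truncation) shows $1_{\mathbf i}\tilde\Delta(\lambda) \neq 0$. Proposition~\ref{charcter1} then supplies a path $\gamma: \emptyset \rightsquigarrow \lambda$ with $c(\gamma)=\mathbf i$; its length is automatically $m$ since $\mathbf i\in\Bbbk^m$. For the second path, I would observe that $L(\mu)$ is a quotient of $\tilde\Delta(\mu) = \Delta(S_m(\mu))$: indeed $S_m(\mu)$ has simple head $L_m(\mu)$ (as $\mu \in \bar\Lambda_m$), and since $\Delta$ is exact (Lemma~\ref{exactj}(2)), $\bar\Delta(\mu) = \Delta(L_m(\mu))$ is a quotient of $\tilde\Delta(\mu)$; Theorem~\ref{striateddndn}(2) then gives $L(\mu)$ as a further quotient. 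Consequently $1_{\mathbf i}\tilde\Delta(\mu) \neq 0$, and Proposition~\ref{charcter1} yields a path $\delta: \emptyset \rightsquigarrow \mu$ of length $m$ with $c(\delta) = \mathbf i = c(\gamma)$.

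I do not anticipate a substantive obstacle: every step is either an invocation of Proposition~\ref{charcter1} or a standard exactness argument. The one point that requires a little care is the ``$L(\mu)$ is a quotient of $\tilde\Delta(\mu)$'' step, because the paper defines $\bar\Delta(\mu) = \Delta(L_m(\mu))$ rather than building it as a quotient of $\Delta(P_m(\mu))$; but exactness of $\Delta$ applied to the surjection $S_m(\mu) \twoheadrightarrow L_m(\mu)$ handles this cleanly. A second small sanity check is matching the path length: paths of length $m$ in $\Xi$ ending at an element of $\Lambda_m$ must consist entirely of up-steps, which is exactly what a path from $\emptyset$ to $\mu \in \bar\Lambda_m \subseteq \Lambda_m$ of minimal length looks like. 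This is automatic from $\mathbf i \in \Bbbk^m$ in Proposition~\ref{charcter1}, so no separate argument is required.
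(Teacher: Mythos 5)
Your argument is correct and follows essentially the same route as the paper: pick $\mathbf i$ with $1_{\mathbf i}L(\mu)\neq 0$ using \eqref{kkk1234}, deduce $1_{\mathbf i}\tilde\Delta(\lambda)\neq 0$ from $[\tilde\Delta(\lambda):L(\mu)]\neq 0$, deduce $1_{\mathbf i}\tilde\Delta(\mu)\neq 0$ from the surjection $\tilde\Delta(\mu)\twoheadrightarrow\bar\Delta(\mu)$, and apply Proposition~\ref{charcter1} to get both paths. The only cosmetic difference is that you justify $[\tilde\Delta(\mu):L(\mu)]\neq 0$ via the head of $\bar\Delta(\mu)$ (Theorem~\ref{striateddndn}(2) plus exactness), whereas the paper cites $[\bar\Delta(\mu):L(\mu)]=1$ from Lemma~\ref{barcom}(1); the two justifications are interchangeable.
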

\begin{proof}Thanks to \eqref{kkk1234},  $1_{m}L(\mu)=L_m(\mu)$ for any  $\mu\in \bar\Lambda_m$.  Pick an $ \mathbf i\in \Bbbk^m$ such that $1_{\mathbf i}L(\mu)\neq0$. Since $[\tilde \Delta(\lambda):L(\mu)]\neq0$,  we have $1_{\mathbf i}\tilde\Delta(\lambda)\neq0$.

Thanks to Lemma~\ref{barcom}(1), $[\bar \Delta(\mu): L(\mu)]=1$. Since $\Delta$ is exact and $L_m(\mu)$ is the simple head of $S(\mu)$, $[\tilde \Delta(\mu):L(\mu)]\neq0$, forcing    $1_{\mathbf i}\tilde\Delta(\mu)\neq0$. Now, the result follows from Proposition~\ref{charcter1}, immediately.
\end{proof}

\begin{Defn}\label{deofi0} Let $\mathbb I=\mathbb I_0\cup \mathbb I_0^{\sharp}$, where  $\mathbb I_0=\{i\in \Bbbk\mid \bar E_i\neq0\}$ and
$\mathbb I_0^{\sharp}=\{i^\sharp\mid i\in \mathbb I_0\}$.
\end{Defn}

\begin{Theorem}\label{sjdsdh} Keep the Assumption~\ref{pdual}.
If  $\mathbb I_0\cap\mathbb I_0^\sharp=\emptyset$, then   \begin{itemize}\item [(1)] $P(\mu)=\Delta(\mu)$ for all  $\mu\in \bar\Lambda$,
\item [(2)]  $\Delta: \bar A^\circ\text{-mod}\rightarrow A\text{-mod} $ is an equivalence of categories. \end{itemize}
\end{Theorem}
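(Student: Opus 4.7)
My plan is to reduce both statements to a single multiplicity calculation via the path machinery of Corollary~\ref{twopath1}. For part~(1), fix $\mu\in\bar\Lambda_m$. By Proposition~\ref{dektss1}(2) together with Lemma~\ref{isodual1}(2) (applicable under the running Assumption~\ref{pdual}),
\[
(P(\mu):\Delta(\nu))=[\bar\nabla(\nu):L(\mu)]=[\bar\Delta(\nu):L(\mu)]\qquad(\nu\in\bar\Lambda),
\]
and applying the exact functor $\Delta$ (Lemma~\ref{exactj}(2)) to $S_{\rho(\nu)}(\nu)\twoheadrightarrow L_{\rho(\nu)}(\nu)$ gives $\tilde\Delta(\nu)\twoheadrightarrow\bar\Delta(\nu)$, whence $[\tilde\Delta(\nu):L(\mu)]\ge[\bar\Delta(\nu):L(\mu)]$. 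Thus it suffices to show that $[\tilde\Delta(\nu):L(\mu)]\ne 0$ forces $\rho(\nu)=m$, for then Lemma~\ref{barcom}(1) pins $\nu=\mu$ and Proposition~\ref{dektss1}(1) collapses the $\Delta$-flag of $P(\mu)$ to the single section $\Delta(\mu)$.

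The grade-collapse is where the hypothesis is used. Suppose $[\tilde\Delta(\nu):L(\mu)]\ne 0$. Corollary~\ref{twopath1} supplies paths $\gamma\colon\emptyset\rightsquigarrow\nu$ and $\delta\colon\emptyset\rightsquigarrow\mu$ of length $m$ with $c(\gamma)=c(\delta)$. Each A-edge raises the grade by one and each R-edge lowers it by one; since $\delta$ goes from grade $0$ to grade $m$ in exactly $m$ steps, every edge of $\delta$ must be A-type. Using (A7)--(A8), an A-edge coloured $i$ forces $\mathscr R_{i,\cdot}\ne\emptyset$, hence $\bar E_i\ne 0$, so $i\in\mathbb I_0$; meanwhile, by Definition~\ref{deofi0} and the coloring convention for $\Xi$ (together with the fact that $\sharp$ is an involution in both cases of Definition~\ref{sharpind}), every R-edge carries a colour in $\mathbb I_0^\sharp$. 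Therefore every colour in $c(\gamma)=c(\delta)$ lies in $\mathbb I_0$, and the disjointness hypothesis $\mathbb I_0\cap\mathbb I_0^\sharp=\emptyset$ forbids any R-edge in $\gamma$. Consequently $\gamma$ is purely A-type, $\rho(\nu)=\ell(\gamma)=m$, and part~(1) is proved.

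Part~(2) is then formal, imitating the closing paragraph of Theorem~\ref{ss}: the exact functor $\Delta$ sends the complete set of indecomposable projectives $\{P_m(\mu):m\in\mathbb N,\,\mu\in\bar\Lambda_m\}$ of $\bar A^\circ\text{-pmod}$ bijectively onto the complete family $\{P(\mu):\mu\in\bar\Lambda\}$ of indecomposable projectives of $A\text{-pmod}$ by part~(1), so \cite[Corollary~2.5]{BRUNDAN} upgrades this to the desired equivalence $\Delta\colon\bar A^\circ\text{-mod}\to A\text{-mod}$. The main technical obstacle I foresee is the bookkeeping of the two $\sharp$-twists---one inside the short exact sequence of Theorem~\ref{usuactifuc1}, one in the coloring convention for $\Xi$---which must be aligned with (A8) so that the dictionary ``A-edges live in $\mathbb I_0$, R-edges live in $\mathbb I_0^\sharp$'' is genuinely correct; once this is nailed down, the disjointness hypothesis closes the argument at once.
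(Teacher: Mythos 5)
Your argument is correct and follows essentially the same route as the paper: reduce via Proposition~\ref{dektss1}(2) and Lemma~\ref{isodual1}(2) to showing $[\tilde\Delta(\nu):L(\mu)]\neq 0$ forces $\rho(\nu)=m$, then use Corollary~\ref{twopath1} and the hypothesis $\mathbb I_0\cap\mathbb I_0^\sharp=\emptyset$ to rule out any R-edge (your careful bookkeeping of which colours lie in $\mathbb I_0$ versus $\mathbb I_0^\sharp$ is exactly what the paper's terse ``otherwise $\mathbb I_0\cap\mathbb I_0^\sharp\neq\emptyset$'' leaves implicit), and finish part~(2) by \cite[Corollary~2.5]{BRUNDAN}. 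The only cosmetic difference is that you pin down $\nu=\mu$ using Lemma~\ref{barcom}(1), whereas the paper instead applies the idempotent $1_m$ and the identifications $1_m\bar\Delta(\nu)=L_m(\nu)$, $1_mL(\mu)=L_m(\mu)$; both closings are valid.
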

\begin{proof} Take an arbitrary $\mu\in \bar \Lambda_m$. If  $P(\mu)\neq \Delta(\mu)$, by  Lemma~\ref{isodual1}(2) (the place we have to keep the Assumption~\ref{pdual}) and   Proposition~\ref{dektss1},   $[\bar\Delta(\lambda):L(\mu)]\neq 0$ for some $ \lambda\in \bar\Lambda_n$ and $\lambda\neq \mu$.
Since $\Delta$ is exact and $L_n(\lambda)$ is the simple head of $S(\lambda)$,   there is an epimorphism from $\tilde \Delta(\lambda)$ to $\bar \Delta(\lambda)$. So,  $[\tilde\Delta(\lambda):L(\mu)]\neq 0$.
By Corollary~\ref{twopath1}, there are    two paths   $\gamma:\emptyset \rightsquigarrow \lambda$  and $\delta: \emptyset\rightsquigarrow\mu$  such that
$c(\gamma)=c(\delta)\in \Bbbk^m$.

We claim     $n=m$. Otherwise, $n+2s=m$ for some $s\in \mathbb N\backslash\{0\}$. Therefore, there is an edge, say $\lambda_{j-1}$---$\lambda_j$ such that
$\lambda_j \in \mathscr R_{i,\lambda_{j-1}}$ for some $i\in \Bbbk$. So,  $\mathbb I_0\cap\mathbb I_0^\sharp\neq\emptyset$, a contradiction.
Thanks to  Lemma~\ref{exactj}(1), $1_{ m}\bar\Delta(\lambda)=L_m(\lambda)$. Since $1_m L(\mu)=L_m(\mu)$, it is a composition factor of the simple  $\bar A_m$-module $L_m(\lambda)$, forcing $\lambda=\mu$, a contradiction.  So, $P(\mu)=\Delta(\mu)$ for all  $\mu\in \bar\Lambda$.

  By (1),  the exact functor $\Delta$ sends  projective $\bar A^\circ$-modules $P_m(\lambda)$'s  to  projective $A$-modules $P(\lambda)$'s for any $\lambda\in \bar\Lambda_m, m\in\mathbb N$. Note that
$\{P_m(\lambda)\mid \lambda\in \bar\Lambda_m, m\in\mathbb N \}$ gives a complete set of representatives of all indecomposable objects of  $  \bar A^\circ\text{-pmod} $.
 Thanks to the general  result on locally unital  algebras in  \cite[Corollary~2.5]{BRUNDAN},  $\Delta$ is an equivalence of categories, proving (2).
\end{proof}

\subsection{Categorical actions on $A\text{-mod}^\Delta$} Let $K_0(\bar A^\circ\text{-pmod})$ be the split   Grothendieck group of $\bar  A^{\circ}\text{-pmod}$, and
 let $K_0(A \text{-mod}^\Delta)$ be the Grothendieck group of $A\text{-mod}^\Delta$. Define
\begin{equation}\label{kgroup}[\bar A^\circ\text{-pmod} ]=\mathbb C\otimes_{\mathbb Z}K_0(\bar A^\circ\text{-pmod}), \ \   [ A\text{-mod}^\Delta ]=\mathbb C\otimes_{\mathbb Z}K_0(A\text{-mod}^\Delta).\end{equation}
Let $\mathfrak g$ be a    Kac-Moody Lie algebra (over the complex field $\mathbb C$) with Cartan subalgebra $\mathfrak h$. Its  Chevalley generators are  $\{e_i,f_i\mid i\in \mathbb I_0\}$ subject to
the Serre relations.
We say that there is a  categorical $\mathfrak g$-action on $\bar A^\circ$-pmod
if $\bar E_i$ and $\bar F_i$ are biadjoint with each other and   $[\text{$\bar A^\circ$-pmod}]$ is a $\mathfrak g$-module on which the Chevalley generators $e_i$ and $f_i$ act via  the endomorphisms induced by the  exact functors  $\bar E_i$ and $\bar F_i$  in \eqref{filteration1} for any $i\in\mathbb I_0$.

\begin{Theorem}\label{gsharpiso}  Let $\mathfrak g^\sharp$ be the Lie subalgebra of $\mathfrak g$ generated by $\{\tilde e_i\mid i\in \mathbb I\}$, where $\mathbb I$ is given in Definition~\ref{deofi0}, $\tilde e_i=e_i+f_{i^\sharp}$ and
$e_i=0$ (resp.,  $f_{i^\sharp}=0$)  if $i\notin \mathbb I_0$ (resp.,  $i^\sharp\notin \mathbb I_0$). If there is a  categorical $\mathfrak g$-action on $\bar A^\circ$-pmod, then $[A\text{-mod}^\Delta]$ is a left  $\mathfrak g^\sharp$-module on which   $\tilde e_i$ acts via the exact functor $E_i$ in Theorem~\ref{usuactifuc1}  for any $i\in \mathbb I$.
\end{Theorem}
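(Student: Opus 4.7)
The plan is to transfer the categorical $\mathfrak g$-action on $\bar A^\circ\text{-pmod}$ to a $\mathfrak g^\sharp$-action on $[A\text{-mod}^\Delta]$ via the standardization functor $\Delta$, using the short exact sequence
\[
0\to \Delta\circ \bar E_i \to E_i\circ \Delta \to \Delta\circ \bar F_{i^\sharp}\to 0
\]
of Theorem~\ref{usuactifuc1} as the fundamental bridge. First I would verify that $E_i$ restricts to an endofunctor of $A\text{-mod}^\Delta$. Given $M\in A\text{-mod}^\Delta$, induct along a finite $\Delta$-flag: for each standard section $\Delta(\lambda) = \Delta(P_{\rho(\lambda)}(\lambda))$, the short exact sequence identifies $E_i\Delta(\lambda)$ as an extension of $\Delta(\bar F_{i^\sharp} P_{\rho(\lambda)}(\lambda))$ by $\Delta(\bar E_i P_{\rho(\lambda)}(\lambda))$. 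Because $\bar E_i$ and $\bar F_i$ are biadjoint they preserve $\bar A^\circ\text{-pmod}$, so both $\bar E_i P_{\rho(\lambda)}(\lambda)$ and $\bar F_{i^\sharp} P_{\rho(\lambda)}(\lambda)$ are projective; applying the exact functor $\Delta$ then yields objects with finite $\Delta$-flags, and exactness of $E_i$ (Lemma~\ref{bimoudisomah}) combined with Proposition~\ref{dektss1} extends this to all of $M$.

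Next I would identify the Grothendieck groups. The classes $\{[P_{\rho(\lambda)}(\lambda)]\mid \lambda\in\bar\Lambda\}$ form a basis of $[\bar A^\circ\text{-pmod}]$ while $\{[\Delta(\lambda)]\mid \lambda\in\bar\Lambda\}$ form a basis of $[A\text{-mod}^\Delta]$ (the latter since every $\Delta$-flag decomposes uniquely by \eqref{abc1}), and $\Delta$ sends the former to the latter. Hence $\Delta$ induces a $\mathbb C$-linear isomorphism
\[
[\Delta]\colon [\bar A^\circ\text{-pmod}]\longrightarrow [A\text{-mod}^\Delta].
\]
Taking classes in the short exact sequence above gives, for any $P\in\bar A^\circ\text{-pmod}$,
\[
[E_i\Delta(P)] = [\Delta(\bar E_i P)] + [\Delta(\bar F_{i^\sharp} P)] = [\Delta]\bigl([\bar E_i P] + [\bar F_{i^\sharp} P]\bigr),
\]
which is precisely the image of $(e_i + f_{i^\sharp})[P] = \tilde e_i[P]$ under $[\Delta]$, with the convention that $e_i$ or $f_{i^\sharp}$ is zero when the corresponding index lies outside $\mathbb I_0$.

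Finally, to conclude that $[A\text{-mod}^\Delta]$ is a $\mathfrak g^\sharp$-module with $\tilde e_i$ acting as $[E_i]$, I would note that the Lie subalgebra $\mathfrak g^\sharp\subseteq \mathfrak g$ acts on $[\bar A^\circ\text{-pmod}]$ by restriction of the categorical $\mathfrak g$-action, and transport this structure along the isomorphism $[\Delta]$. The Serre and bracket relations among the $\tilde e_i$ hold automatically in $\mathfrak g^\sharp$, and they are preserved on the Grothendieck level because $[\Delta]$ intertwines the operators $\tilde e_i$ on the source with $[E_i]$ on the target by the displayed computation. The main obstacle is the first step, namely confirming that $E_i$ genuinely preserves $A\text{-mod}^\Delta$ at the level of objects (not just on Grothendieck classes) so that $[E_i]$ is a well-defined endomorphism of $[A\text{-mod}^\Delta]$; this reduces, via the exact sequence and exactness of $\Delta$, to the preservation of projectivity by $\bar E_i$ and $\bar F_i$, which is guaranteed by their biadjointness in the categorical $\mathfrak g$-action.
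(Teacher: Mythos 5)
Your proposal is correct and follows essentially the same route as the paper: transport the $\mathfrak g$-structure from $[\bar A^\circ\text{-pmod}]$ to $[A\text{-mod}^\Delta]$ along the isomorphism induced by the exact functor $\Delta$, and then use the short exact sequence of Theorem~\ref{usuactifuc1} to compute $[E_i\Delta(\lambda)]=[\Delta(\bar E_iP_{\rho(\lambda)}(\lambda))]+[\Delta(\bar F_{i^\sharp}P_{\rho(\lambda)}(\lambda))]=\tilde e_i([\Delta(\lambda)])$. Your extra verification that $E_i$ preserves $A\text{-mod}^\Delta$ at the level of objects (via projectivity of $\bar E_iP$, $\bar F_{i^\sharp}P$ and exactness of $\Delta$ and $E_i$) is a point the paper leaves implicit, and it is a welcome addition rather than a deviation.
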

\begin{proof}We are assuming that $A$ satisfies
 assumptions (A1)-(A9),   all previous results  in this section are available.
The exact functor $\Delta$ gives a linear isomorphism \begin{equation}\label{iso1} \Delta: [\bar A^\circ\text{-pmod} ]\rightarrow [A\text{-mod}^\Delta ]\end{equation}  which sends $[P_m(\lambda)]$ to $[\Delta(\lambda)]$ for all $\lambda\in \bar\Lambda_m$ and $m\in\mathbb N$.
If we define  $$e_i[\Delta(\lambda)]=[\Delta(\bar E_i P_m(\lambda))],  \text{
   $f_i[\Delta(\lambda)]=[\Delta(\bar F_i P_m(\lambda))]$},$$  for all $  \lambda\in\bar \Lambda_m, m\in\mathbb N, i\in\mathbb I_0$,
 then  $ [A\text{-mod}^\Delta ]$ is also a left  $\mathfrak g$-module, and hence  the linear isomorphism   $\Delta$ in \eqref{iso1} is  a $\mathfrak g$-isomorphism. In particular, it is  a $\mathfrak g^\sharp$-isomorphism.  For any $i\in \mathbb I$,
 by Theorem~\ref{usuactifuc1},
 $$ [{ E_i} \Delta(\lambda)]  =[\Delta(\bar E_i P_m(\lambda))]+[\Delta(\bar F_{i^\sharp} P_m(\lambda))] =\tilde e_i([\Delta(\lambda)]).
$$
\end{proof}
\begin{rem}Since $E$ is self-adjoint (see Lemma~\ref{bimoudisomah}), it sends projective module to projective module  and so does $E_i$. This  makes    $[A\text{-pmod}]$ as a $\mathfrak g^\sharp$-submodule of  $ [A\text{-mod}^\Delta]$ (see Proposition~\ref{dektss1}(1)), i.e., there is also a categorical $\mathfrak g^\sharp$-action on $A\text{-pmod}$. Moreover, by Proposition~\ref{dektss1}(2), $[A\text{-pmod}]\cong [A\text{-mod}^\Delta]$ as $\mathfrak g^\sharp$-modules.
\end{rem}

\subsection{Block decomposition} Let  $\mathfrak g$ be the Kac-Moody algebra in  subsection~5.4. Fix simple roots   $\Pi=\{\alpha_i\mid i\in \mathbb I_0\}$. The  weight lattice is
$$P= \{\omega\in \mathfrak h^*\mid \langle h_i,\omega \rangle\in \mathbb Z\text{ for all }i\in \mathbb I_0\},$$
 where $h_i=[e_i,f_i]$.       For any $\lambda, \mu\in P$, write \begin{equation}\label{prec}  \lambda\preceq \mu \ \ \text{ if
  $\bar \lambda=\bar \mu $  and $ \lambda\leq \mu$, } \end{equation}
where $\bar \lambda$ is  the image of $\lambda$ in  $P/Q^\sharp$,
$Q^\sharp=\sum_{i\in\mathbb I_0 \cap \mathbb I_0^\sharp}  \mathbb Z(\alpha_i+\alpha_{i^\sharp}) $ and   $\leq$  is the usual dominance  order on $P$ such that $\lambda\leq \mu$ if $\mu-\lambda\in \mathbb N \Pi$.
Then $\preceq$ is another  partial order  on $P$. In Proposition~\ref{dedji} and Theorem~\ref{hlink},
we keep the following assumption:
 \begin{itemize}\item[(A10)] For any $\mu\in \Lambda_m$ and $m\in\mathbb N$, any two paths  $\gamma,\delta:\emptyset \rightsquigarrow \mu $ of length $m$ have the same contents up to a permutation.
 \end{itemize}
  For any  $\mu\in \Lambda_m$ and any  path $\gamma :\emptyset \rightsquigarrow \mu $ of length $m$ , define  \begin{equation} \label{wt1123}
\text{wt}(\mu)= -\sum_{1\leq j\leq m}\alpha_{i_j}\in P,\end{equation} where $(i_1,\ldots, i_m)=c(\gamma)$.
By (A10), $\text{wt}(\mu)$ in \eqref{wt1123} is independent of  a path.

\begin{Prop}\label{dedji}
Suppose  $\lambda\in \Lambda$ and $\mu\in\bar \Lambda$. If
$[\tilde \Delta(\lambda):L(\mu)] \neq 0$, then   $\text{wt}(\mu) \preceq \text{wt}(\lambda)$.
\end{Prop}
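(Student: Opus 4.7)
The plan is to apply Corollary~\ref{twopath1} to produce a common content tuple and then run a telescoping weight computation along the path to $\lambda$, which is typically not monotone.

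Fix $\mu\in\bar\Lambda_m$. Corollary~\ref{twopath1} yields paths $\gamma:\emptyset\rightsquigarrow\lambda$ and $\delta:\emptyset\rightsquigarrow\mu$, both of length $m$, with common content $c(\gamma)=c(\delta)=(i_1,\ldots,i_m)$. Since $\mu\in\Lambda_m$, the length-$m$ path $\delta$ must consist entirely of up-edges, so by \eqref{wt1123} we have $\text{wt}(\mu)=-\sum_{j=1}^{m}\alpha_{i_j}$; in particular $i_j\in\mathbb I_0$ for all $j$. Writing $\gamma=(\lambda_0,\ldots,\lambda_m)$, I would partition $\{1,\ldots,m\}=U\sqcup D$ according to whether the $j$-th edge is an up-edge ($\lambda_j\in\mathscr A_{i_j,\lambda_{j-1}}$) or a down-edge (so $\lambda_{j-1}\in\mathscr A_{i_j^\sharp,\lambda_j}$ by (A8)).

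The core computation is the telescoping identity
\[
\text{wt}(\lambda_j)-\text{wt}(\lambda_{j-1})=\begin{cases}-\alpha_{i_j},& j\in U,\\ +\alpha_{i_j^\sharp},& j\in D.\end{cases}
\]
In the up-case this is immediate: concatenating any length-$|\lambda_{j-1}|$ all-up path to $\lambda_{j-1}$ with the single up-edge to $\lambda_j$ gives a length-$|\lambda_j|$ all-up path to $\lambda_j$ whose content differs by the single entry $i_j$, so (A10) forces the stated difference. In the down-case, the same construction applied to $\lambda_j$ followed by the up-edge of colour $i_j^\sharp$ to $\lambda_{j-1}$ yields $\text{wt}(\lambda_{j-1})=\text{wt}(\lambda_j)-\alpha_{i_j^\sharp}$. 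Telescoping along $\gamma$ produces
\[
\text{wt}(\lambda)=-\sum_{j\in U}\alpha_{i_j}+\sum_{j\in D}\alpha_{i_j^\sharp},\qquad\text{so}\qquad\text{wt}(\lambda)-\text{wt}(\mu)=\sum_{j\in D}\bigl(\alpha_{i_j}+\alpha_{i_j^\sharp}\bigr).
\]

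For each $j\in D$ the down-edge is supplied by $\bar E_{i_j^\sharp}\neq 0$, forcing $i_j^\sharp\in\mathbb I_0$; combined with $i_j\in\mathbb I_0$ from above, this places $i_j\in\mathbb I_0\cap\mathbb I_0^\sharp$. Consequently the right-hand side lies in $\mathbb N\Pi\cap Q^\sharp$, giving simultaneously $\text{wt}(\mu)\leq\text{wt}(\lambda)$ in the dominance order and $\overline{\text{wt}(\mu)}=\overline{\text{wt}(\lambda)}$ in $P/Q^\sharp$, i.e.\ $\text{wt}(\mu)\preceq\text{wt}(\lambda)$. The chief technical subtlety will be the telescoping identity itself: the definition in \eqref{wt1123} only covers all-up paths, so one needs a uniform way to extract an all-up path to each intermediate $\lambda_j$ and to compare it with the next one differing by a single edge, which is precisely the bookkeeping that (A10) enables.
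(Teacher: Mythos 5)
Your proof is correct and essentially coincides with the paper's: both combine Corollary~\ref{twopath1} with the local weight changes along up- and down-edges furnished by \eqref{wt1123}, (A8) and (A10), and with the fact that the colour of a down-edge lies in $\mathbb I_0\cap\mathbb I_0^\sharp$, so that each such edge contributes $\alpha_{i}+\alpha_{i^\sharp}\in\mathbb N\Pi\cap Q^\sharp$. The only difference is presentational: the paper runs an induction on $m$ by deleting the last edge of both paths, whereas you unroll that induction into the telescoping identity $\text{wt}(\lambda)-\text{wt}(\mu)=\sum_{j\in D}\bigl(\alpha_{i_j}+\alpha_{i_j^\sharp}\bigr)$, and the existence of all-up paths to the intermediate vertices that you flag is used just as implicitly in the paper when it evaluates $\text{wt}(\lambda')$.
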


\begin{proof} Suppose $\gamma:\emptyset\rightsquigarrow \lambda$ and  $\delta: \emptyset \rightsquigarrow\mu$ are two paths  such that  $c(\gamma)=c(\delta)=(i_1,\ldots, i_m)$.
We  claim   $\text{wt}(\mu) \preceq \text{wt}(\lambda)$
 if $\lambda\in \Lambda$ and $\mu\in \Lambda_m$.

 In fact, the claim is trivial if $m=0$. In this case,   $\lambda=\mu=\emptyset$. Suppose  $m>0$. Removing the last edge in  both $\gamma$  and $\delta$ yields two shorter paths $\gamma': \emptyset\rightsquigarrow \lambda'$
and $\delta': \emptyset \rightsquigarrow \mu'$ such that $c(\gamma')=c(\delta')$ and $\mu'\in  \Lambda_{m-1}$.
By induction assumption on $m-1$, $\text{wt} (\mu')\preceq \text{wt}(\lambda')$.

We have either $\lambda\in \mathscr A_{i_m,\lambda' }$ or  $\lambda\in \mathscr R_{i^\sharp_m,\lambda' }$.
In the first case,  thanks to ~\eqref{wt1123}, $\text{wt}(\lambda')=\text{wt}(\lambda)+\alpha_{i_m}$. Since $\mu\in  \Lambda_m$, $\text{wt}(\mu')=\text{wt}(\mu)+\alpha_{i_m}$.
So,   $\text{wt}(\mu) \preceq \text{wt}(\lambda)$. In the second case,
 $\lambda\in \mathscr R_{i_m^\sharp,\lambda' }$ (hence $i_m\in \mathbb I_0\cap \mathbb I_0^\sharp$), and
 $\text{wt}(\lambda')=\text{wt}(\lambda)-\alpha_{i^\sharp_m}$ and $\text{wt}(\mu')=\text{wt}(\mu)+\alpha_{i_m}$.
 So,
 $$\text{wt}(\lambda)-\text{wt}(\mu)=\text{wt}(\lambda')-\text{wt}(\mu')+\alpha_{i_m}+\alpha_{i_m^\sharp}\in \mathbb N\Pi,\ \  \overline{\text{wt}(\lambda)}=\overline{\text{wt}(\mu)},$$
 and hence   $\text{wt}(\mu) \preceq \text{wt}(\lambda)$. Now, our result follows immediately from Corollary ~\ref{twopath1} and the above claim.
 \end{proof}
\begin{Theorem}\label{hlink}Keep the Assumption~\ref{pdual}.
Suppose  $\lambda,\mu\in\bar \Lambda$. If  $L(\lambda)$ and $L(\mu)$ are in the  same block,  then $\bar{\text{wt}(\lambda)}=\bar{\text{wt}(\mu)}$.
\end{Theorem}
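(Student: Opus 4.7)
The plan is to reduce Theorem~\ref{hlink} to a statement about composition factors of indecomposable projective $A$-modules, and then chain together Corollary~\ref{ijxxexeu} and Proposition~\ref{dedji}.

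First I would invoke the standard characterization of blocks of a locally unital algebra: $L(\lambda)$ and $L(\mu)$ lie in the same block of $A$ if and only if there is a finite chain
\begin{equation*}
L(\lambda)=L(\nu_0),\; L(\nu_1),\; \ldots,\; L(\nu_k)=L(\mu)
\end{equation*}
such that, for each $0\le i< k$, both $L(\nu_i)$ and $L(\nu_{i+1})$ appear as composition factors of a single indecomposable projective $P(\eta_i)$ with $\eta_i\in\bar\Lambda$. (This is just the elementary observation that $\text{Ext}^1_A(L(\nu_i),L(\nu_{i+1}))\neq 0$ forces $L(\nu_{i+1})$ to be a composition factor of $P(\nu_i)$, in which case $\eta_i:=\nu_i$ works.) Since equality modulo $Q^\sharp$ is an equivalence relation on $P$, it suffices to establish the following local claim: \emph{for every $\eta\in\bar\Lambda$ and every composition factor $L(\xi)$ of $P(\eta)$, one has $\bar{\text{wt}(\xi)}=\bar{\text{wt}(\eta)}$.}

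To prove this local claim, I would apply Corollary~\ref{ijxxexeu}, which under Assumption~\ref{pdual} furnishes a finite $\tilde\Delta$-flag of $P(\eta)$ in which $\tilde\Delta(\nu)$ appears with multiplicity exactly $[\tilde\Delta(\nu):L(\eta)]$. Any composition factor $L(\xi)$ of $P(\eta)$ must therefore be a composition factor of some $\tilde\Delta(\nu)$ occurring in the flag, and occurrence in the flag forces $[\tilde\Delta(\nu):L(\eta)]\neq 0$; thus we have simultaneously
\begin{equation*}
[\tilde\Delta(\nu):L(\xi)]\neq 0 \quad\text{and}\quad [\tilde\Delta(\nu):L(\eta)]\neq 0.
\end{equation*}
Applying Proposition~\ref{dedji} to each nonvanishing multiplicity yields $\text{wt}(\xi)\preceq\text{wt}(\nu)$ and $\text{wt}(\eta)\preceq\text{wt}(\nu)$. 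By the very definition \eqref{prec} of $\preceq$, each of these relations implies equality of images in $P/Q^\sharp$, so $\bar{\text{wt}(\xi)}=\bar{\text{wt}(\nu)}=\bar{\text{wt}(\eta)}$, proving the claim. Applying it to $\xi=\nu_i$ and $\xi=\nu_{i+1}$ relative to $\eta=\eta_i$ and chaining along $i=0,\ldots,k-1$ then gives $\bar{\text{wt}(\lambda)}=\bar{\text{wt}(\mu)}$.

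The argument presents no serious obstacle since all the essential work has already been carried out in Proposition~\ref{dedji} (using (A10)) and Corollary~\ref{ijxxexeu} (using Assumption~\ref{pdual}); the only point requiring attention is bookkeeping of standing hypotheses, and both are in force by the assumptions of Section~\ref{indkkk} together with the explicit hypothesis of Theorem~\ref{hlink}.
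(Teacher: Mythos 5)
Your proof is correct and follows essentially the same route as the paper: both rest on the $\tilde\Delta$-flag of $P(\eta)$ from Corollary~\ref{ijxxexeu} (the paper phrases this as the sum formula $[P(\mu):L(\lambda)]=\sum_{\nu}[\tilde\Delta(\nu):L(\mu)][\tilde\Delta(\nu):L(\lambda)]$, you phrase it via composition factors of flag sections), then apply Proposition~\ref{dedji} twice and the definition \eqref{prec} of $\preceq$. The only difference is that you spell out the standard block-linkage chaining, which the paper treats as implicit after proving the case $[P(\mu):L(\lambda)]\neq 0$.
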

\begin{proof} Suppose $ [P(\mu):L(\lambda)]\neq0$.
By Corollary~\ref{ijxxexeu},
$P(\mu)$ has  a $\tilde \Delta$-flag and
\begin{equation} \label{b3}[P(\mu):L(\lambda)]=\sum_{\nu\in  \Lambda} [\tilde \Delta(\nu):L(\mu)] [\tilde \Delta(\nu): L(\lambda)]. \end{equation}
So there is at least  $\nu\in\Lambda$
such that $ [\tilde \Delta(\nu):L(\mu)] [\tilde \Delta(\nu): L(\lambda)]\neq 0$.
Thanks to  Proposition~\ref {dedji},  $\text{wt}(\lambda)\preceq \text{wt}(\nu)$ and $\text{wt}(\mu)\preceq \text{wt}(\nu)$, forcing
 $\bar{\text{wt}(\lambda)}=\bar{\text{wt}(\nu)}=\bar{\text{wt}(\mu)}$.
\end{proof}

\begin{rem}Suppose that the  blocks of $\bar A_m$ are given by $\text{wt}$ in the sense that  $L_m(\lambda)\sim L_m(\mu)$ if and only if $\text{wt}(\lambda)=\text{wt}(\mu)$. Then the stratification of the upper finite stratified category $A$-lfdmod in Theorem~\ref{newstratification} can be refined such that
$\rho$ is replaced by the weight function $\text{wt}:\bar \Lambda\rightarrow P$ with the order $\preceq$ on $P$ given in \eqref{prec}. See \cite{RS4} for Brauer category. In the current  case, the result follows from  arguments similar to those for Brauer category  in  \cite{RS4}. Of course, we need   the fact that $\text{wt}(\mu)\prec\text{wt}(\lambda)$ if $[\tilde \Delta(\lambda):L(\mu)]\neq0$ and $\lambda\neq \mu$. One can check this fact by  Proposition~\ref{dedji} and Proposition~ \ref{dektss1}.
\end{rem}

\section{Cyclotomic Brauer categories and  cyclotomic Kauffman categories }

In this section we apply the general theory in sections~2--5 to study  cyclotomic Brauer categories and cyclotomic Kauffman categories. First of all we recall some results on (degenerate) cyclotomic Hecke algebras. Throughout,  $m$ is a fixed positive integer.
\subsection{(Degenerate) cyclotomic Hecke algebras}\label{hecke}
Given  $ \mathbf u=(u_1,u_2,\ldots, u_m)\subset (\Bbbk^\times)^m$, the cyclotomic Hecke algebra $\mathscr H_{m, n}(\mathbf u)$ is the associative  $\Bbbk$-algebra generated by $L_1, T_1, \ldots, T_{n-1}$ subject to the relations:
\begin{itemize}\item[(1)] $(T_i-q)(T_i+q^{-1})=0$, $1\le i\le n-1$ and $q\in \Bbbk^\times$,
\item[(2)] $T_iT_j=T_jT_i$, $1\le i<j-1\le n-2$,
\item [(3)] $T_iT_{i+1}T_i=T_{i+1}T_iT_{i+1} $, $1\le i\le n-2$,
\item [(4)] $T_1L_1T_1L_1=L_1T_1L_1T_1$,
\item [(5)] $L_1T_i=T_iL_1$, $2\le i\le n-1$,
\item [(6)] $(L_1-u_1) \cdots (L_1-u_m)=0$.
\end{itemize}
It is known that   $\mathscr H_{m, n}(\mathbf u)$ is a cellular algebra with certain cellular basis (see \cite[Theorem~3.26]{DJM}).  The corresponding cell modules are denoted by   $S(\lambda)$, $\lambda\in \Lambda_{m, n}$, where  $\Lambda_{m, n}$
is the set of all $m$-tuple of partitions (or $m$-partition) $(\lambda^{(1)}, \lambda^{(2)}, \ldots, \lambda^{(m)})$  of $n$. When all   $u_j$'s  are in the same $q^2$-orbit in the sense that $u_iu_j^{-1}\in q^{2\mathbb Z}$ for all $1\le i< j\le m$, it is proved in \cite{Ari1} that  $S(\lambda)$ has the simple head $D(\lambda)$ if and only if $\lambda\in \bar\Lambda_{m, n}$, where $\bar\Lambda_{m, n}$ is the set of all \textsf{$\mathbf u$-Kleshchev $m$-partitions}  of $n$. If $\mathbf u$ is  a disjoint union of certain $q^2$-orbits, then the above result on the classification of simple modules  is still available. In this case,  one can use Morita equivalence theorem for   cyclotomic Hecke algebra in \cite[Theorem~1.1]{DM}.

Let     $L_i=T_{i-1}L_{i-1}T_{i-1}$ for  $ 2\le i\le n$. Then $L_1, L_2, \ldots, L_n$ are known as the \textsf{Jucys-Murphy elements} of  $\mathscr H_{m, n}(\mathbf u)$.
Thanks to the results on representations of $\mathscr H_{m, n}(\mathbf u)$, all generalized eigenvalues of $L_i$,  $1\le i\le n$,   are  of forms $u_jq^{2k}$,    $1\le j\le m$ and $1-n\le k\le n-1$ (e.g. \cite[\S3]{JM}).  When $\mathbf u$ is a $q^2$-orbit, define  $ \mathbb I_{\mathbf u}=\{  u_1q^{2j}\mid  j\in\mathbb Z\}$.
Let  $ \mathfrak {sl}_{\mathbb  I_\mathbf u}$  be  the Kac-Moody Lie algebra    with respect to the Cartan matrix $(a_{i,j})_{i,j\in \mathbb I_\mathbf u}$, where \begin{equation}\label{cart1} a_{i,j}=\begin{cases}
    2, & \text{if $ij^{-1}=1$,} \\
    -1, & \text{if $ij^{-1}=q^{\pm2}$ and $e\neq 2$,} \\
 -2, & \text{if $ij^{-1}=q^2$ and $e=2$,} \\
    0, & \text{ otherwise,}
  \end{cases}\end{equation}
  and  $e$ is  the quantum characteristic of $q^2$.  In this case, $\mathfrak{sl}_{\mathbb I_\mathbf u}=\mathfrak{sl}_\infty$ if $e=\infty$ and $\mathfrak {sl}_{\mathbb I_\mathbf u}=\hat{ \mathfrak{sl}}_e$ if $e<\infty$.
 Recall that the content of a node $x$  with respect to  an $m$-partition $\lambda$   is $c(x)=u_jq^{2(k-l)}$, if $x$ is at the $l$th row and $k$th column of   the $j$th component of the Young diagram $[\lambda]$ with respect to  $\lambda$~\cite{Ma}.
  Fix  simple roots  $\Pi=\{\alpha_i\mid i\in \mathbb I_\mathbf u\}$.
Let $$\mathcal F_{\mathbf u} = \bigoplus_{\lambda\in \Lambda(m)} \mathbb C v_\lambda,$$
where $\Lambda(m)=\bigcup_{n=0}^\infty \Lambda_{m, n}$. Then $\mathcal F_{\mathbf u}$ is a left  $\mathfrak {sl}_{ \mathbb I_\mathbf u}$-module (e.g. \cite{Ari}) such that for any  $i\in \mathbb I_\mathbf u$, \begin{itemize}
\item[(1)]  $e_iv_\lambda=\sum_\mu v_\mu$, where $\mu$'s  are obtained from $\lambda$
      by removing  a removable node of content $i$.
\item [(2)] $f_iv_\lambda=\sum_\mu v_\mu$, where $\mu$'s  are obtained from $\lambda$
      by adding   an addable node of content $i$.
\item [(3)]The action of the Cartan subalgebra $\mathfrak h$ is defined so that $v_\lambda$ is a weight vector of weight
\begin{equation}\label{omg1} \text{wt}(\lambda):= \omega_\mathbf u-\sum_{x\in[\lambda]}\alpha_{c(x)},\end{equation} where $\omega_\mathbf u=\sum_{j=1}^m\omega_{u_j}$, and $\omega_i$'s (resp., $e_i, f_i$) are fundamental weights (resp., Chevalley generators) for any $i\in \mathbb I_\mathbf u$.
\end{itemize}
 Let $V(\omega_{\mathbf u})$ be  the $\mathfrak {sl}_{ \mathbb I_\mathbf u}$-submodule of  $\mathcal F_{\mathbf u}$ generated by $v_\emptyset$. Then
  $V(\omega_{\mathbf u})$ is the integrable highest weight module with the highest weight $\omega_{\mathbf u}$.
  Let $\mathscr H(\mathbf u)=\bigoplus_{n=0}^\infty \mathscr H_{m,n}(\mathbf u)$. As $ \mathfrak {sl}_{ \mathbb I_\mathbf u}$-modules~\cite{Ari} \begin{equation}\label{isomor}  [\mathscr H(\mathbf u) \text{-pmod}] \cong V(\omega_\mathbf u).\end{equation}   The required isomorphism $\phi$ is given by
\begin{equation} \label{iso321} \phi([Y(\lambda)])= \sum_{\mu\in  \Lambda(m)} [S(\mu): D(\lambda)]v_\mu,\end{equation}
for any $\lambda\in \bar \Lambda(m):=\bigcup_{n=0}^\infty \bar \Lambda_{m, n}$, where  $Y(\lambda)$ is the projective cover of $D(\lambda)$ (e.g. \cite[\S~5.3]{K}) and
the actions of $e_i$ and $f_i$ are given by $i$-restriction functor $\bar E_i$ and $i$-induction functor $\bar F_i$ for all $i\in \mathbb I_\mathbf u$ (see, e.g.  \cite[\S~7.4]{K}).

When $\mathbf u=\mathbf u_1\coprod \ldots \coprod\mathbf u_k$, a disjoint union of certain $q^2$-orbits with $m_j=\sharp \mathbf u_j$,
arrange  $u_i$'s so  that $u_l\in \mathbf u_j $ if $\sum_{i=1}^{j-1}m_i+1\leq l\leq \sum_{i=1}^{j}m_i$. We  have the Cartan matrix $D_j$ with respect to $\mathbb I_{\mathbf u_j}$ in the sense of \eqref{cart1}, and $D=\text{diag} (D_1, D_2, \ldots, D_k)$ where $D$ is the Cartan matrix with respect to $\mathbb I_\mathbf u=\bigcup_{j=1}^k  \mathbb I_{\mathbf u_j}  $. So,
\begin{equation}\label{juded}
\mathfrak {sl}_{ \mathbb I_\mathbf u}\cong \bigoplus _{j=1}^k \mathfrak {sl}_{\mathbb  I_{\mathbf u_j}}.
\end{equation}
Similarly,  for any $j$, $1\leq j\leq k$,  we have the $ \mathfrak {sl}_{\mathbb  I_{\mathbf u_j}}$-module $\mathcal F_{\mathbf u_j}$ and its  $\mathfrak {sl}_{ \mathbb I_{\mathbf u_j}}$-submodule
$V(\omega_{\mathbf u_j})$  generated by $v_\emptyset$. Thanks to  \eqref{juded}, $V(\omega_\mathbf u)$ can be identified with  $V(\omega_{\mathbf u_1})\otimes V(\omega_{\mathbf u_2})\otimes \ldots \otimes V(\omega_{\mathbf u_k})$, and $\mathcal F_{\mathbf u}$ can be identified with  $\mathcal F_{\mathbf u_1}\otimes \mathcal F_{\mathbf u_2}\otimes \ldots \otimes \mathcal F_{\mathbf u_k}$. The following result should be well-known to the expert. We need it when we study  the cyclotomic Kauffman categories later on.

\begin{Lemma}\label{dddddcate}
As $ \mathfrak {sl}_{ \mathbb I_\mathbf u}$-modules,     $ [\mathscr H(\mathbf u) \text{-pmod}] \cong V(\omega_\mathbf u)$ and the
 required isomorphism $\phi$ is given by
$$\phi([Y(\lambda)])= \sum_{\mu\in  \Lambda(m)} [S(\mu): D(\lambda)]v_\mu.$$
Moreover, the Chevalley generators $e_i, f_i$ act  on $ [\mathscr H(\mathbf u) \text{-pmod}]$ via the endomorphisms induced  by the  $i$-restriction and $i$-induction functors $\bar E_i,\bar F_i$ for all $i\in\mathbb I_\mathbf u$.
 \end{Lemma}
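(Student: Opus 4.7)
The strategy is to reduce the multi-orbit case $\mathbf{u}=\mathbf{u}_1\coprod\cdots\coprod\mathbf{u}_k$ to the single-orbit case recorded in \eqref{isomor}--\eqref{iso321} via the Dipper--Mathas Morita equivalence \cite[Theorem~1.1]{DM}, and then assemble the pieces compatibly with \eqref{juded}. First I would invoke \cite[Theorem~1.1]{DM} to obtain, for each $n$, a Morita equivalence
\[
\mathscr{H}_{m,n}(\mathbf{u})\text{-mod}\;\simeq\;\bigoplus_{n_1+\cdots+n_k=n}\bigl(\mathscr{H}_{m_1,n_1}(\mathbf{u}_1)\otimes\cdots\otimes\mathscr{H}_{m_k,n_k}(\mathbf{u}_k)\bigr)\text{-mod},
\]
under which the Kleshchev labels for $\mathscr{H}_{m,n}(\mathbf{u})$ match up with $k$-tuples $(\lambda^{(1)},\ldots,\lambda^{(k)})$ of Kleshchev multipartitions for the individual orbits and the cell module $S(\mu)$ corresponds to the outer tensor product $S(\mu^{(1)})\boxtimes\cdots\boxtimes S(\mu^{(k)})$. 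Summing over $n$ yields
\[
[\mathscr{H}(\mathbf{u})\text{-pmod}]\;\cong\;\bigotimes_{j=1}^{k}[\mathscr{H}(\mathbf{u}_j)\text{-pmod}].
\]

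Next, I would apply the single-orbit isomorphism \eqref{isomor}--\eqref{iso321} to each tensor factor, obtaining an isomorphism of $\bigoplus_{j}\mathfrak{sl}_{\mathbb{I}_{\mathbf{u}_j}}$-modules
\[
\bigotimes_{j=1}^{k}[\mathscr{H}(\mathbf{u}_j)\text{-pmod}]\;\cong\;\bigotimes_{j=1}^{k}V(\omega_{\mathbf{u}_j})\;=\;V(\omega_{\mathbf{u}}),
\]
where the last equality is the standard factorization of the integrable highest weight module with respect to \eqref{juded}. Composing, I get the linear isomorphism $\phi$ whose explicit form on basis vectors $[Y(\lambda)]$ is $\phi([Y(\lambda)])=\sum_{\mu}[S(\mu):D(\lambda)]v_{\mu}$ by construction of the Dipper--Mathas equivalence and \eqref{iso321}.

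It remains to check that the Chevalley generators $e_i,f_i$ with $i\in\mathbb{I}_{\mathbf{u}_j}$ act through $\bar E_i,\bar F_i$ on $[\mathscr{H}(\mathbf{u})\text{-pmod}]$. The crucial observation is that the Jucys--Murphy element $L_n$ has all generalized eigenvalues in $\mathbb{I}_{\mathbf{u}}$; under the Morita equivalence the generalized $i$-eigenspace of $L_n$ is supported on the summand indexed by the orbit containing $i$. Consequently $\bar E_i$ and $\bar F_i$ act only on the $j$-th tensor factor (and as identity on the others), matching the way the Chevalley generators of $\mathfrak{sl}_{\mathbb{I}_{\mathbf{u}_j}}\subset\mathfrak{sl}_{\mathbb{I}_{\mathbf{u}}}$ act on $V(\omega_{\mathbf{u}_1})\otimes\cdots\otimes V(\omega_{\mathbf{u}_k})$. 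Within each factor, compatibility is exactly \eqref{isomor}--\eqref{iso321}. The main obstacle is the bookkeeping in verifying that the Morita equivalence intertwines $\bar E_i,\bar F_i$ with restriction/induction between tensor factors; this amounts to checking that the idempotent implementing Dipper--Mathas commutes with the relevant idempotent decomposition of $L_n$, which is standard since both are polynomials in the Jucys--Murphy elements.
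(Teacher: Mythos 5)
Your overall architecture—reduce to the single-orbit case \eqref{isomor}--\eqref{iso321} via Dipper--Mathas and assemble the factors using \eqref{juded}—is genuinely different from the paper's proof. The paper never transports the functors through the Morita equivalence: it proves the intertwining of $e_i,f_i$ with $[\bar E_i],[\bar F_i]$ directly for arbitrary $\mathbf u$, by dualizing through the non-degenerate Cartan pairing on $K_0(\mathscr H(\mathbf u)\text{-pmod})\times K_0(\mathscr H(\mathbf u)\text{-fdmod})$ and the orthonormal form on $\mathcal{F}_{\mathbf{u}}$; the dual map sends $v_\lambda\mapsto [S(\lambda)]$, where the branching rules for cell modules (\cite[Corollary~1.12]{AM}, \cite[Proposition~3.7]{JM}) apply verbatim in the multi-orbit setting, and \cite{DM} enters only at the very end, at the level of decomposition numbers, to identify the image of $\phi_{\mathbf u}$ with $V(\omega_{\mathbf u})$.

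The gap in your route is precisely the step you defer to ``bookkeeping'': that the Dipper--Mathas equivalence intertwines $\bar E_i,\bar F_i$ with the factor-wise $i$-restriction and $i$-induction functors. This is the entire content of the intertwining claim, and it does not follow from the remark that ``both are polynomials in the Jucys--Murphy elements'': the equivalence of \cite[Theorem~1.1]{DM} is implemented by a progenerator built from non-idempotent products of terms $(L_k-u_j)$, not by an idempotent truncation, so compatibility with induction along $\mathscr H_{m,n}(\mathbf u)\subset\mathscr H_{m,n+1}(\mathbf u)$ together with the generalized eigenspace decomposition of $L_{n+1}$ has to be proved, not asserted. If instead you try to verify the corresponding identity only on the Grothendieck group $[\mathscr H(\mathbf u)\text{-pmod}]$ against the basis $\{[Y(\lambda)]\}$, you must compute $[\bar E_iY(\lambda)]$, which is exactly what the lemma is establishing; the natural escape is to pair against $[\mathscr H(\mathbf u)\text{-fdmod}]$ and invoke the cell-module branching rules, at which point you have reproduced the paper's duality argument and the Morita-first detour is no longer doing any work for this step. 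So either supply a proof (or a precise reference) that the Morita equivalence commutes with the $i$-induction/restriction functors, or switch to the pairing argument; as written the crucial intertwining step is missing.
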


\begin{proof} For any $\lambda\in \bar \Lambda(m)$, let $b_\lambda:=\sum_{\mu\in  \Lambda(m)} [S(\mu): D(\lambda)]v_\mu$. Then the  elements $b_\lambda$'s are linearly independent since the rectangular matrix
$([S(\mu):D(\lambda)])$ is unitriangular. This shows that  the linear map
\begin{equation}\label{proj123} \phi_\mathbf u:  [\mathscr H(\mathbf u) \text{-pmod}] \rightarrow \mathcal  F_\mathbf u, ~~ [Y(\lambda)]\mapsto b_\lambda\end{equation}
is injective.
There is a non-degenerate Cartan pairing
$$(\ ,\ ): K_0(\mathscr H(\mathbf u) \text{-pmod} )\times K_0( \mathscr H(\mathbf u) \text{-fdmod}) \rightarrow \mathbb Z$$
such that $ ([Y(\lambda)], [D(\mu)])=\dim\Hom_{\mathscr H(\mathbf u)}(Y(\lambda),D(\mu))=\delta_{\lambda,\mu}$.
It is known that $\bar E_i$ and $\bar F_i$ are biadjoint with each other,  and hence  $[\bar E_i]$ and $[\bar F_i]$
are biadjoint with respect to the above pairing, where $[F]$ is the linear map induced by the exact functor $F$.
There is also a non-degenerate symmetric  bilinear form  $(\ ,\ )$ on $\mathcal F_{\mathbf u}$  so that $\{v_\lambda\mid \lambda\in\Lambda(m)\}$
is an orthonormal basis, and moreover, $e_i$ is biadjoint to $f_i$ with this  form. So, the  $ \mathfrak {sl}_{ \mathbb I_\mathbf u}$-module  $\mathcal F_{\mathbf u}$ can be identified  with $\mathcal F_{\mathbf u}^*$ and  the dual map of $\phi_\mathbf u$ is :
$$\phi_\mathbf u^*: \mathcal F_{\mathbf u} \rightarrow [\mathscr H(\mathbf u) \text{-fdmod} ], ~~ v_\lambda\mapsto   \sum_{\mu\in  \bar\Lambda(m)} [S(\lambda): D(\mu)][D(\mu)].$$
Note that $\phi_\mathbf u^*(v_\lambda)=[S(\lambda)]$. It is clear from the brunching rule of $S(\lambda)$ (e.g. \cite[Corollary~1.12]{AM} or \cite[Proposition~3.7]{JM}) that
$\phi_\mathbf u^*$ intertwines $e_i$, $f_i$ with $[\bar E_i]$, $[\bar F_i]$, respectively.
It is routine to check that $(\phi_\mathbf u(x), y)=(x,\phi_\mathbf u^*(y))$ for any $x\in K_0(\mathscr H(\mathbf u) \text{-pmod} )$ and $y\in \mathcal F_{\mathbf u}$.
We have
$$\begin{aligned}(\phi_\mathbf u([\bar E_iY(\lambda)]), v_\mu)&=([\bar E_iY(\lambda)], \phi_\mathbf u^*(v_\mu))=
([Y(\lambda)], [\bar F_i](\phi_\mathbf u^*(v_\mu) ))\\
=&([Y(\lambda)],  \phi^*_\mathbf u(f_iv_\mu) )=
(\phi_\mathbf u([Y(\lambda)]),   f_iv_\mu )= ( e_i\phi_\mathbf u([Y(\lambda)]), v_\mu ).  \end{aligned} $$
Similar equality holds for $\bar F_i$ and $f_i$. So,  $\phi_\mathbf u$ intertwines $[\bar E_i]$, $[\bar F_i]$ with $e_i$, $f_i$, respectively. This proves that
 $ [\mathscr H(\mathbf u) \text{-pmod}]$ is an  $ \mathfrak {sl}_{ \mathbb I_\mathbf u}$-module on which   $e_i$ and $f_i$ act via restriction functor $\bar E_i$ and induction functor  $\bar F_i$ for all $i\in \mathbb I_\mathbf u$,
 and  $\phi_\mathbf u$ is an $ \mathfrak {sl}_{ \mathbb I_\mathbf u}$-homomorphism.
It remains to prove that the image of $\phi_\mathbf u$ is the submodule $V(\omega_\mathbf u)$.
Thanks to \eqref{isomor},   this is true  if there is only one orbit in $\mathbf u$. So,  the result holds for $\mathscr H(\mathbf u_j)$ , $1\leq j\leq k$, where $\mathscr H(\mathbf u_j)=\bigoplus_{n=0}^\infty \mathscr H_{m_j,n}(\mathbf u_j)$. In general,  we can  partition $\lambda $ into  $k$ parts such that  $ \lambda=(\lambda_1, \lambda_2,\ldots, \lambda_k)$ and  $\lambda_j$ is an $m_j$-partition.  By the Morita equivalence theorem  \cite[Theorem 1.1, Proposition~4.11]{DM}, $[S(\mu): D(\lambda)]= \prod_{j=1}^k[S(\mu_j): D(\lambda_j)]$.
Identifying $\mathcal F_{\mathbf u}$  with  $\mathcal F_{\mathbf u_1}\otimes \mathcal F_{\mathbf u_2}\otimes \ldots \otimes \mathcal F_{\mathbf u_k}$ yields
$ b_\lambda=b_{\lambda_1}\otimes \ldots\otimes  b _{\lambda_k}$. So,
the image of $\phi_\mathbf u$ is  $V(\omega_\mathbf u)$.\end{proof}

\begin{rem}\label{degerecate}
Let $ H_{m, n}(\mathbf u)$ be the degenerate cyclotomic Hecke algebra~\cite{K} over $\Bbbk$ with respect to the parameters $\mathbf u=(u_1,u_2,\ldots,u_m)\in\Bbbk^m$. In this case,   $u_i$ and $u_j$ are in the same orbit if and only if $u_i-u_j\in \mathbb Z$. Similar to the non-degenerate case, we write  $$\mathbf u=\mathbf u_1\coprod \ldots \coprod\mathbf u_k,$$ a disjoint union of certain orbits with $m_j=\sharp \mathbf u_j$,  where  $u_i$'s are  arranged such that $u_l\in \mathbf u_j $ if $\sum_{i=1}^{j-1}m_i+1\leq l\leq \sum_{i=1}^{j}m_i$. Moreover,  $H_{m,n}(\mathbf u)$ is a cellular algebra with cellular basis given in \cite[Theorem~6.3]{AMR}.
Let $  H(\mathbf u)=\bigoplus_{n=0}^\infty   H_{m,n}(\mathbf u)$ and  $  H(\mathbf u_j)=\bigoplus_{n=0}^\infty   H_{m_j,n}(\mathbf u_j)$.
By arguments similar to those   in the proof of Lemma~\ref{dddddcate}, one can see that there is a result   similar to that in Lemma~\ref{dddddcate} for  $H(\mathbf u)$. See remarks
after \cite[Theorem~8.5]{AMR} in which Ariki et.~al stated that there is a Morita equivalence theorem for degenerate cyclotomic Hecke algebra similar to those for cyclotomic Hecke algebra in \cite[Theorem~1.1]{DM}.
In this case, the complete set of inequivalent simple  $H_{m,n}(\mathbf u)$-modules are parameterized by  $\bar\Lambda_{m, n}$, the set of all $\mathbf u$-restricted $m$-partitions of $n$~\cite{K}. Furthermore,
\begin{itemize}
\item  $c(x)= u_j+k-l$ if the box  $x$ is at the $l$th row and $k$th column of   the $j$th component of the Young diagram with respect to  $\lambda$ (e.g.\cite[\S~6]{AMR}),
\item $\mathbb I_{\mathbf u_j}=\{u_i+n\mid n\in \mathbb Z \}$, where $u_i$ is any element in $\mathbf u_j$, and
 $\mathbb I_\mathbf u=\cup_{j=1}^k  \mathbb I_{\mathbf u_j}  $,
\item  $ \mathfrak {sl}_{ \mathbb I_\mathbf u}$ is the Kac-Moody Lie algebra with respect to the Cartan matrix $(a_{i,j})_{i,j\in \mathbb I_\mathbf u}$ such that
$$a_{i,j}=\begin{cases}
    2, & \text{if $i=j$;} \\
    -1, & \text{if $i=j\pm 1$ and $p\neq 2$;} \\
     -2, & \text{if $i=j-1$ and $p= 2$;} \\
    0, & \hbox{ otherwise.}
  \end{cases}
$$
where $p$ is the characteristic of $\Bbbk$.\end{itemize}

\end{rem}

 \subsection {Representations of cyclotomic Brauer categories}\label{cba} Suppose $\mathbf u=(u_1, u_2, \ldots, u_m)\in \Bbbk^m$ and $\text{char}~ \Bbbk=p\neq 2$. The (specialized) cyclotomic  Brauer category $\CB(\mathbf u)$ with respect to the polynomial $f(t)=\Pi_{i=1}^m(t-u_i)$  (denoted by $\CB^f(\omega)$ in \cite[Theorem~C]{RS3}) is introduced in \cite[Definition~1.7]{RS3}. It is a quotient category of the  affine Brauer category $\AB$  (a $ \Bbbk$-linear strict monoidal category). Furthermore,  the affine Brauer category $\AB$ is one of $\mathcal C_1$ since it satisfies \eqref{relation 1}.

Let $A$ be  the algebra  associated to  $\CB(\mathbf u)$.
  When  $\CB(\mathbf u)$ is the  Brauer category, i.e., $m=1$,  thanks to \cite{RS4},  $A$   admits an upper finite triangular decomposition in the sense of \cite{BS}.  Hence $A$ admits  an upper finite weakly triangular decomposition in the sense of Definition~\ref{WT}.

A  dotted $(a, b)$-Brauer diagram is   an $(a, b)$-Brauer diagram given in subsection~2.2 on which there are finite many $\bullet$ on each strand.
A normally ordered dotted $(a, b)$-Brauer diagram is   a dotted $(a, b)$-Brauer diagram on which
 \begin{itemize}\item [(a)] there are at most $ m-1$ dots on the boundary of the lower row\footnote{The original definition in  \cite[Definition~1.3]{RS3} is upper row here. However, using \cite[(3.3)]{RS3} the basis theorem remains true if it is  replaced by the lower row by induction on the number of dots.} if they  appear on a vertical strand,
\item [(b)] there are at most $ m-1$ dots on the leftmost boundary of a cap (resp., rightmost boundary  of a cup)  if they appear on a cap (resp., cup).\end{itemize}
     Suppose $m=4$.  The  right one is a normally ordered dotted  $(5, 5)$-Brauer diagram and the left one is not:
\begin{equation}\label{two AOBC diagrams}
    \begin{tikzpicture}[baseline = 25pt, scale=0.35, color=\clr]
        \draw[-,thick] (2,0) to[out=up,in=down] (0,5);
        \draw[-,thick] (6,0) to[out=up,in=down] (6,5);
        \draw[-,thick] (7,0) to[out=up,in=down] (7,5);
            \draw[-,thick] (0,0) to[out=up,in=left] (2,1.5) to[out=right,in=up] (4,0);
        \draw[-,thick] (2,5) to[out=down,in=left] (3,4) to[out=right,in=down] (4,5);
           \draw(4,0.4)\bdot;
      \draw(0.1,4.5)\bdot;
    \end{tikzpicture}~,
    \qquad\qquad\qquad
    \begin{tikzpicture}[baseline = 25pt, scale=0.35, color=\clr]
        \draw[-,thick] (2,0) to[out=up,in=down] (0,5);
        \draw[-,thick] (6,0) to[out=up,in=down] (6,5);
         \draw[-,thick] (6.7,0) to[out=up,in=down] (6.7,5);
               \draw[-,thick] (0,0) to[out=up,in=left] (2,1.5) to[out=right,in=up] (4,0);
        \draw[-,thick] (2,5) to[out=down,in=left] (3,4) to[out=right,in=down] (4,5);
                              \draw(0,0.3)\bdot;
  \draw(6,0.6)\bdot;\draw(3.95,4.6)\bdot;
    \draw(2,0.6)\bdot;
        \draw (0.5,0.4) node{$3$};
           \end{tikzpicture}~.
\end{equation}
In the above diagrams, $ \bullet~ 3$ represents that   there are  three  $ \bullet$'s on the leftmost boundary on the cap.
Two  normally ordered dotted   $(a,b)$-Brauer diagrams are said to be equivalent if the underlying   Brauer diagrams are equivalent and there are the same number of dots on their corresponding strands.

 Let $ \mathbb{B}_{a,b}$ be the set of all equivalence classes of   normally ordered dotted  $(a,b)$-Brauer diagrams.
Thanks to   \cite[Theorem~C]{RS3},
    $\Hom_{\CB(\mathbf u)} (  a,   b)$ is of maximal dimension  if and only if   the $\mathbf u$-admissible condition holds in the sense of \cite[Definition~1.8]{RS3}\footnote{We do not need explicit description on \cite[Definition~1.8]{RS3} in the current paper.}.
 In this case,  $\Hom_{\CB(\mathbf u)} ( a,    b)$ has  basis given by $ \mathbb{B}_{a,b}$ and two equivalent diagrams represent the same morphism in $\CB(\mathbf u)$. So,  we can identify each equivalence class   with any element in it.  Let $\mathbb{B}=\bigcup_{a,b\in \mathbb N}\mathbb{B}_{a,b}$.

\begin{Prop}\label{CBWA} If $\mathbf u$-admissible condition  holds in the sense of \cite[Definition~1.8]{RS3}, then  $\CB(\mathbf u)$  is an upper finite weakly triangular category.
 \end{Prop}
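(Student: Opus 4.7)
The plan is to mimic the proof of Proposition~\ref{COBWA} for $\OB(\mathbf u)$. Since Brauer diagrams are unoriented, the relevant equivalence relation on the object set $J = \mathbb{N}$ is the trivial one (each equivalence class is a singleton), so I would set $I = J = \mathbb{N}$ and equip it with the partial order $i\preceq j$ iff $i = j + 2s$ for some $s\in\mathbb{N}$, which is manifestly upper finite and coincides with the order used in assumption (A1).

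For (W2), I would define
\begin{itemize}
\item[(a)] $A^-$ as the $\Bbbk$-span of those elements in $\mathbb{B}$ with no caps, no crossings among vertical strands, and no dots on vertical strands;
\item[(b)] $A^\circ$ as the span of those elements in $\mathbb{B}$ with neither cups nor caps; and
\item[(c)] $A^+$ as the span of those elements in $\mathbb{B}$ with no cups, no crossings among vertical strands, and no dots on vertical strands.
\end{itemize}
The direct sum decompositions required in (W2) are immediate. For (W3), a morphism in $A^-_{a,b}$ goes from $b$ strands to $a$ strands using only cups and (un-dotted, non-crossing) vertical strands; since each cup increases the strand count by two, $A^-_{a,b} = 0$ unless $a = b + 2s$, i.e.\ unless $a\preceq b$. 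The analogous argument for caps gives the condition on $A^+$, and the equalities $A^\pm_a = \Bbbk 1_a$ are clear from the definitions.

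The real content is (W4), which asserts that the multiplication map
\[
A^-\otimes_{\mathbb{K}} A^\circ\otimes_{\mathbb{K}} A^+\longrightarrow A
\]
is a $\Bbbk$-linear isomorphism. Surjectivity onto the span of $\mathbb{B}$ follows by cutting any normally ordered dotted Brauer diagram along two horizontal lines so that all cups lie below the lower line and all caps above the upper line; the middle region is then a dotted permutation diagram, representing an element of $A^\circ$ up to lower-order corrections. The catch is that after reassembling $y\circ h\circ x$ with $(y,h,x)\in(\mathbb{B}\cap A^-1_b)\times(\mathbb{B}\cap A^\circ_{b,c})\times(\mathbb{B}\cap 1_c A^+)$ the dots may no longer be in normal form (they can end up on vertical strands, or on the wrong boundary of a cup/cap). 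I would invoke the defining relations of the affine Brauer category $\AB$ — in particular the analogues of the dot-slide relations \eqref{relation 1}II in this section — which, together with the relations moving a dot past a cup or cap, allow a dot to be pushed into its normally ordered position at the cost of a $\Bbbk$-linear combination of diagrams with strictly fewer dots.

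The main obstacle will be formalising this straightening-and-correction procedure so that one obtains both a well-defined bijection between the product basis and $\mathbb{B}$ up to strictly lower-order error, and a proof of injectivity. I would run a triangular induction on the total number of dots in the diagram: the leading term of the product of a basis triple $(y,h,x)$ is a unique element of $\mathbb{B}$, while all error terms lie in the $\Bbbk$-span of products of the form $(y',h',x')$ with strictly fewer dots. Combined with the basis theorem $\Hom_{\CB(\mathbf u)}(a,b) = \Bbbk\mathbb{B}_{a,b}$ of \cite[Theorem~C]{RS3} (whose hypothesis is exactly the $\mathbf u$-admissible condition), this shows that the multiplication map sends a basis of the triple tensor product bijectively onto $\mathbb{B}$, establishing (W4) and completing the verification that $\CB(\mathbf u)$ is an upper finite weakly triangular category.
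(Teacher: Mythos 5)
Your proposal is correct and follows essentially the same route as the paper: the same choice of $(I,\preceq)$ and of $A^-,A^\circ,A^+$, the same verification of (W3), and the same proof of (W4) via the dot-slide relation \eqref{movedot} together with a triangularity argument on the number of dots and the basis theorem \cite[Theorem~C]{RS3} under the $\mathbf u$-admissible condition. No substantive differences from the paper's argument.
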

\begin{proof}  The objects in
 $\CB(\mathbf u)$ are given by  $I=J=\mathbb N$.
So, we have the data in (W1) with $(\mathbb N, \preceq)$ given in (A1).  Obviously, $(\mathbb N, \preceq)$ is upper finite.
 For the data in (W2), we define
\begin{itemize}\item [(a)] $A^-$:  the $\Bbbk$-space with basis consisting  of all elements in $\mathbb B$ on which there are  neither  caps  nor crossings among vertical strands and there are no dots on vertical strands,
\item [(b)] $A^\circ$: the $\Bbbk$-space with basis consisting  of all  elements in $\mathbb B$ on which there are neither  cups nor caps,
 \item [(c)] $A^+$ : the $\Bbbk$-space with basis consisting of all  elements in $\mathbb B$ on which there are neither cups nor crossings among vertical strands and there are no dots on vertical strands.
\end{itemize}
So,  $\{1_a\}=\mathbb B\cap A_{a}^-=\mathbb B\cap A_{a}^+$.
Moreover, if $a\npreceq b$ (i.e., $a\neq b+2s$ for all $s\in\mathbb N$), then  $\mathbb B\cap  A_{a, b}^-=\mathbb B\cap A_{b, a}^+=\emptyset$
and hence (W3) follows.

 For any $(g,h,k)\in (  A^-1_a\cap \mathbb B, A^\circ_a\cap \mathbb B, 1_aA^+\cap \mathbb B)$, the composition $g\circ h\circ k$ is not  normally ordered in general, i.e., the dots on the vertical strands may not be  on the lower line. For example: $$g\circ h \circ k=\begin{tikzpicture}[baseline = 25pt, scale=0.35, color=\clr]
        \draw[-,thick] (2,0) to[out=up,in=down] (0,5);
       %\draw[-,line width=4pt,white] (0,0) to[out=up,in=left] (2,1.5) to[out=right,in=up] (4,0);
       \draw[-,thick] (0,0) to[out=up,in=left] (2,1.5) to[out=right,in=up] (4,0);%(cap)
        \draw[-,thick] (2,5) to[out=down,in=left] (3,4) to[out=right,in=down] (4,5);%(cup)
       \draw (0,0.3) \bdot;
        \draw  (3.95,4.6) \bdot;
        \draw  (1.2,2.1) \bdot;
           \end{tikzpicture}$$ if $(g, h, k)=\left(\ \ \begin{tikzpicture}[baseline = -0.5mm]
 \draw[-,thick,darkblue] (0,0) to[out=down,in=left] (0.28,-0.28) to[out=right,in=down] (0.56,0);
 \node at (0.56,0) {$\color{darkblue}\scriptstyle\bullet$};
  \draw[-,thick,darkblue] (0,0) to (0,.2);
  \draw[-,thick,darkblue] (-0.1,-0.28) to (-0.1,.2);
   \draw[-,thick,darkblue] (0.56,0) to (0.56,.2);
 \end{tikzpicture},~\begin{tikzpicture}[baseline = -0.5mm]
 \node at (0,0) {$\color{darkblue}\scriptstyle\bullet$};
  \draw[-,thick,darkblue] (0,-0.28) to (0,.2);
 \end{tikzpicture},~\begin{tikzpicture}[baseline = 2.5mm]
 \draw[-,thick,darkblue] (0.28,0) to[out=90,in=-90] (-0.28,.6);
 \draw[-,thick,darkblue] (-0.28,0) to[out=90,in=-90] (0.28,.6);
 \node at (-0.26,0.1) {$\color{darkblue}\scriptstyle\bullet$};
  \draw[-,thick,darkblue] (0.28,.6) to[out=up,in=left] (0.56,0.85) to[out=right,in=up] (0.56+0.28,0.6);
  \draw[-,thick,darkblue] (0.56+0.28,0.6) to (0.56+0.28,0);
\end{tikzpicture}\  \ \right)$.
 By the defining relation for affine Brauer category $\AB$ in \cite{RS3},  \begin{equation}\label{movedot}
                \begin{tikzpicture}[baseline = 7.5pt, scale=0.5, color=\clr]
            \draw[-,thick] (0,0) to[out=up, in=down] (1,2);
            \draw[-,thick] (0,2) to[out=up, in=down] (0,2.2);
            \draw[-,thick] (1,0) to[out=up, in=down] (0,2);
            \draw[-,thick] (1,2) to[out=up, in=down] (1,2.2);
             \draw(0,1.9)\bdot;
        \end{tikzpicture}
        ~-~
        \begin{tikzpicture}[baseline = 7.5pt, scale=0.5, color=\clr]
            \draw[-,thick] (0,0) to[out=up, in=down] (1,2);\draw[-,thick] (0,0) to[out=up, in=down] (0,-0.2);
             \draw[-,thick] (1,0) to[out=up, in=down] (0,2);\draw[-,thick] (1,0) to[out=up, in=down] (1,-0.2);
                        \draw(1,0.1)\bdot;
        \end{tikzpicture}
        ~=~
       \begin{tikzpicture}[baseline = 10pt, scale=0.5, color=\clr]
         \draw[-,thick] (2,2) to[out=down,in=right] (1.5,1.5) to[out=left,in=down] (1,2);
            \draw[-,thick] (2,0) to[out=up, in=right] (1.5,0.5) to[out=left,in=up] (1,0);
        \end{tikzpicture}
        ~-~
        \begin{tikzpicture}[baseline = 7.5pt, scale=0.5, color=\clr]
            \draw[-,thick] (0,0) to[out=up, in=down] (0,2);
            \draw[-,thick] (1,0) to[out=up, in=down] (1,2);
                   \end{tikzpicture}~.
    \end{equation}
 The local relation ~\eqref{movedot} explains that the dots on each strand of $g\circ h\circ k$ can  slid freely up to  a  linear combination of some  elements in $\mathbb B$ with fewer dots~\cite[(3.3)]{RS3}. So, all dots  can be slid such that the resulting diagram is normally ordered. This shows that  the multiplication map from $A^-\otimes_{\mathbb K }  A^\circ \otimes_{\mathbb K}  A^+$ to $A$ sends  any basis element to a unique corresponding  basis element in $\mathbb B$ (up to  a  linear combination of some  elements in $\mathbb B$  with fewer dots). Furthermore, the multiplication map is obviously surjective. So, it is the required  isomorphism and (W4) holds.
 \end{proof}
 We are going to show that Assumption~\ref{pdual} and  (A1)-(A10)  hold for $A$ associated to $\CB(\mathbf u)$  provided that the $\mathbf u$-admissible condition holds. In this case, $\mathcal C=\mathcal C_1=\AB$. If so, then all the results in sections~2-5 can be applied to $\CB(\mathbf u)$.

  The Brauer category has been studied in  \cite{RS4}. So, we assume  $m>1$.
Thanks to Proposition~\ref{CBWA}, (A1) holds. Define (see Definition~\ref{defniogfxhy})
  \begin{equation}\label{dfxyh}
  Y(b,a)=  \mathbb B\cap A^-_{b, a} ,
   ~~H(a)=\mathbb B\cap  A_a^\circ,
   ~~X(a,b)=\mathbb B\cap  A^+_{a, b}.\end{equation}
 Then (A2) holds obviously.
To prove that (A3) holds, we define
\begin{equation}\label{jdijdijxs}
\begin{aligned}
           S_i 1_{a}=1_{a}S_i= \begin{tikzpicture}[baseline = 25pt, scale=0.35, color=\clr]
       \draw[-,thick](0,1.1)to[out= down,in=up](0,3.9);
       \draw(0.5,1.1) node{$ \cdots$}; \draw(0.5,3.9) node{$ \cdots$};
       \draw[-,thick](1.5,1.1)to[out= down,in=up](1.5,3.9);
       \draw[-,thick] (2,1) to[out=up, in=down] (3,4);
       \draw(2,4.5)node{\tiny$i$}; \draw(3.2,4.5)node{\tiny$i+1$};
        \draw[-,thick] (3,1) to[out=up, in=down] (2,4);
         \draw[-,thick](4,1.1)to[out= down,in=up](4,3.9);
         \draw(4.5,1.1) node{$ \cdots$}; \draw(4.5,3.9) node{$ \cdots$};
         \draw[-,thick](5.5,1.1)to[out= down,in=up](5.5,3.9);
         \draw(5.5,4.5)node{\tiny$a$};
           \end{tikzpicture}\in A_a,
           \end{aligned}
           \end{equation}
           for any admissible $i, a\in \mathbb N$. If we know $a$ from the context, we simply denote $ S_i1_{a}$ by $ S_i$. Let
           $$D(a+1)=\{S_{i, a+1}X_{a+1}^j| 1\leq i \leq a+1, 0\leq j\leq m-1\}\subseteq A_{a+1},$$
           where $X_i$ is in \eqref{xitil} and $S_{i, a+1}=S_{i}\circ S_{i+1}\circ \cdots \circ S_{a}$, if $i<a+1$ and $S_{a+1, a+1}=1_{a+1}$.     Then $$D(a+1)\subseteq H(a+1),\ \  |H(a+1)|=|H(a)||D(a+1)|.$$ Given any  $g\in H_{a+1} $ such that the $i$th vertex on the top row is connected with the rightmost  vertex on the bottom row and the corresponding strand has no dots,
    there is some $\tilde g\in H(a) $ such that $S_{i,a+1}^{-1}\circ g= \tilde g~\begin{tikzpicture}[baseline = 10pt, scale=0.5, color=\clr]
                \draw[-,thick] (0,0.5)to[out=up,in=down](0,1.2);
    \end{tikzpicture}$.
    Then $g=S_{i,a+1}\tau_A(\tilde g)$. Since $X_{a+1}\tau_A(d')=\tau_A(d')X_{a+1}$, we have $H(a+1)=\{d\circ \tau_A(d')\mid (d, d')\in D(a+1)\times H(a)\}$. So, (A3) holds.

     Let $Y_1(a)$ be the subset of $Y(a)$ consists of all $d\in Y(a)$ such that the rightmost vertex on the top row of $d$ is on the rightmost vertical strand.  By  \eqref{dfxyh}, $ Y_1(a)=\tau_A(Y(a-1))$ and
  $Y(a)=Y_1(a)\overset {.} \sqcup K_2(a)$, where the rightmost  vertex on the top row  of $d$ is on a cup if  $d\in K_2(a)$.    By \eqref{relation 1} (or  \cite[Lemma~3.1]{RS3}),  \begin{equation}\label{cup-dot}\begin{tikzpicture}[baseline = -0.5mm]
 \draw[-,thick,darkblue] (0,0) to[out=down,in=left] (0.28,-0.28) to[out=right,in=down] (0.56,0);
 \node at (0,0) {$\color{darkblue}\scriptstyle\bullet$};
  \draw[-,thick,darkblue] (0,0) to (0,.2);
   \draw[-,thick,darkblue] (0.56,0) to (0.56,.2);
 \end{tikzpicture}
 ~=~
 - \begin{tikzpicture}[baseline = -0.5mm]
 \draw[-,thick,darkblue] (0,0) to[out=down,in=left] (0.28,-0.28) to[out=right,in=down] (0.56,0);
 \node at (0.56,0) {$\color{darkblue}\scriptstyle\bullet$};
  \draw[-,thick,darkblue] (0,0) to (0,.2);
   \draw[-,thick,darkblue] (0.56,0) to (0.56,.2);
 \end{tikzpicture}~~~.
  \end{equation}
  We have  $Y_2(a)\subseteq K_2(a)$ up to a sign since
$$ (f~\begin{tikzpicture}[baseline = 10pt, scale=0.5, color=\clr]
                \draw[-,thick] (0,0.5)to[out=up,in=down](0,1.2);
    \end{tikzpicture})\circ (S_{i,a+1}~\begin{tikzpicture}[baseline = 10pt, scale=0.5, color=\clr]
                \draw[-,thick] (0,0.5)to[out=up,in=down](0,1.2);
    \end{tikzpicture}) \circ (1_{a}~\lcup)\in Y(a),$$  for any  $f\in Y(a+1)$ where $Y_2(a)$ is in (A3).
  Conversely, suppose   $f\in K_2(a)$. Then   the rightmost  vertex on the top row of $f$ is  connected with  another vertex, say the $i$th vertex on the top row. If   there are $j$  $ \bullet$'s on  the corresponding cup, then there is a $f_1\in Y(a+1)$ such that $$ f=(f_1~\begin{tikzpicture}[baseline = 10pt, scale=0.5, color=\clr]
                \draw[-,thick] (0,0.5)to[out=up,in=down](0,1.2);
    \end{tikzpicture})\circ (S_{i,a+1}~\begin{tikzpicture}[baseline = 10pt, scale=0.5, color=\clr]
                \draw[-,thick] (0,0.5)to[out=up,in=down](0,1.2);
    \end{tikzpicture}) \circ (1_{a}~\begin{tikzpicture}[baseline = -0.5mm]
 \draw[-,thick,darkblue] (0,0) to[out=down,in=left] (0.28,-0.28) to[out=right,in=down] (0.56,0);
 \node at (0.56,0) {$\color{darkblue}\scriptstyle\bullet$};
 \draw (0.66,0) node{$\color{darkblue} j$};
  \draw[-,thick,darkblue] (0,0) to (0,.2);
   \draw[-,thick,darkblue] (0.56,0) to (0.56,.2);
 \end{tikzpicture}).$$
So,  $K_2(a)\subseteq Y_2(a)$ up to a sign by \eqref{cup-dot} and hence $Y_2(a)= K_2(a)$ up to a sign. Since $Y(a)$ is a basis of $A^-1_a$,  (A4) follows.

There is a    monoidal contravariant functor $\sigma: \mathcal \AB\rightarrow \mathcal \AB$
switching  $\lcup$ and $\lcap$ and fixing  the generating object \begin{tikzpicture}[baseline = 1.5mm]
	\draw[-,thick,darkblue] (0.18,0) to (0.18,.4);
\end{tikzpicture} and all other generating   morphisms (see \cite{RS3}).  Moreover, $\sigma^2=\text{Id}$ and  $\sigma$
  induces an anti-involution $\sigma_A$ on $A$ and (A5) holds.
 By \cite[(4.45)]{RS3}, there is an algebra isomorphism $ \phi:  A_{a}   \cong W_{m, a}$,  where  $W_{m, a}$ is the cyclotomic Nazarov-Wenzl
algebra in \cite{AMR}.
 Moreover, it's known that $ W_{m, a}/\phi(N_a)\cong H_{m,a}(\mathbf u)$,  where
  $N_a$ is the two-sided ideal of $A_{a}$ generated by $\begin{tikzpicture}[baseline = 15pt, scale=0.35, color=\clr]
        \draw[-,thick] (0,2.2) to[out=down,in=left] (0.5,1.7) to[out=right,in=down] (1,2.2);
         \draw[-,thick] (0,1) to[out=up,in=left] (0.5,1.5) to[out=right,in=up] (1,1);
         %\draw(0,4.5)node{\tiny$i$}; \draw(1.3,4.5)node{\tiny$i+1$};
         \draw[-,thick](1.5,1.1)to[out= down,in=up](1.5,2.2);
         \draw(2.5,1.1) node{$ \cdots$}; \draw(2.5,2.2) node{$ \cdots$};
         \draw[-,thick](3.5,1.1)to[out= down,in=up](3.5,2.2);
           \end{tikzpicture}~ $ and $H_{m,a}(\mathbf u)$   is   the degenerate cyclotomic  Hecke
algebra~\cite{K}. Note that $\text{Ker} \pi_a|_{A_a}=N_a$, where $\pi_a$ is given in Proposition~\ref{bara}(4). So, the above isomorphism  results in an   induced isomorphism $ \bar \phi: \bar A_a=A_{a}  /N_a\rightarrow H_{m,a}(\mathbf u)$.
 The anti-involution $\sigma_{\bar A_a}$ on $H_{m,a}(\mathbf u)\cong \bar A_a$  coincides with the usual anti-involution of $H_{m,a}(\mathbf u)$ fixing all generators.  By the well-known results for $H_{m,a}(\mathbf u)$ in \cite{Ma} we have (A6)-(A10). In this case, $\Lambda_n=\Lambda_{m,n}$ and $\bar\Lambda_n=\bar\Lambda_{m,n}$ for all $n\in\mathbb N$.
The function  $ \sharp $ in Definition~\ref{sharpind} satisfies   $i^\sharp=-i$ and $\mathbb I_0$ in Definition~\ref{deofi0} is $\mathbb I_\mathbf u$ in Remark~\ref{degerecate}.
%$\bar\Lambda_m$ is a subset of  $ \Lambda_m$ that indexes the simple head of all cell modules.
For any $\lambda\in\Lambda_{m,a} $, $\mathscr A_{i,\lambda}$ (resp., $\mathscr R_{i,\lambda}$)
is the set of all $\mu\in \Lambda_{m,a+1}$ (resp., $\Lambda_{m,a-1}$) such that $\mu$ is obtained from $\lambda$ by adding (resp., removing) a box with content $i$.

Since  $H_{m,a}(\mathbf u)$ is a cellular algebra, by \cite[Chapter~2, Exercise ~7]{Ma},   we have  \begin{equation}\label{key1234}D(\lambda)^\circledast\cong D(\lambda)\end{equation} for all $\lambda\in\bar\Lambda_{m, a}$.
It is proved in  \cite[Theorem~A.2]{BK3} that  $H_{m,a}(\mathbf u)$ is a symmetric algebra.   By
\cite[Theorem~4.4.4]{GHK}, $\text{soc} Y(\lambda)=\text{hd} Y(\lambda)=D(\lambda)$ for any $\lambda\in \bar\Lambda_{m,a}$. Thanks to  \eqref{key1234},
$Y(\lambda)^\circledast\cong Y(\lambda)$.
So, Assumption~\ref{pdual} holds for $\bar A_{a}$ for all $a\in\mathbb N$.

\begin{Theorem}\label{CNW} Let $A$ be the $\Bbbk$-algebra  associated to the cyclotomic Brauer category $\CB(\mathbf u)$.  Suppose the $\mathbf u$-admissible condition holds.
 \begin{itemize} \item [(1)] The complete set of pairwise inequivalent  simple $A$-modules are indexed by $\bar\Lambda(m)$, i.e., the  set of all $\mathbf u$-restricted $m$-partitions in the sense of \cite{K}.
 \item [(2)] $A$-lfdmod is an upper finite fully  stratified category in the sense of \cite[Definition~3.36]{BS} with respect to the stratification $\rho:\bar\Lambda(m)\rightarrow \mathbb N$ such that $\rho(\lambda)=n$ if $\lambda\in \bar\Lambda_{m,n}$.
 \item [(3)]$A$-lfdmod is an upper finite highest weight  category in the sense of \cite[Definition~3.36]{BS} if   $ p=0$ and $u_i-u_j\notin \mathbb Z 1_{\Bbbk}$ for all   $1\leq i< j\leq m$.

     \item [(4)] $A$ is Morita equivalent to $\bigoplus_{n=0}^\infty  H_{m, n}(\mathbf u)$ if $u_i+u_j\not\in \mathbb Z1_\Bbbk$ for all $1\le i\le j\le m$.
         \item [(5)] $A$ is semisimple over $\Bbbk$  if and only if $ p=0$ and $2u_i\notin \mathbb Z 1_\Bbbk$, $u_i\pm u_j\notin \mathbb Z 1_\Bbbk$ for all different positive integers $i, j\leq m$.\end{itemize}
\end{Theorem}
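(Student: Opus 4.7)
The plan is to read each item off the general machinery developed in sections~2--5, together with the identification $\bar A_a\cong H_{m,a}(\mathbf u)$ and the realization of $A_a$ as the cyclotomic Nazarov--Wenzl algebra $W_{m,a}$ that is recalled immediately before the theorem. Throughout, assumption~(A1)--(A10) and Assumption~\ref{pdual} are in force, so every result of sections~2--5 applies.

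For (1), Theorem~\ref{striateddndn}(3) says that the simple $A$-modules are parametrized by $\bar\Lambda=\bigcup_{a\in\mathbb N}\bar\Lambda_a$. Under $\bar A_a\cong H_{m,a}(\mathbf u)$ the set $\bar\Lambda_a$ is the set $\bar\Lambda_{m,a}$ of $\mathbf u$-restricted $m$-partitions of $a$ (Remark~\ref{degerecate}), which gives (1). Item (2) is then just Theorem~\ref{newstratification} applied to this $A$, the stratification $\rho$ being the one of Definition~\ref{rho}. For (3), Theorem~\ref{newstratification} says the stratified structure is upper finite highest weight provided each $\bar A_a$ is semisimple; under the stated hypotheses ($p=0$, $u_i-u_j\notin\mathbb Z1_\Bbbk$) the degenerate cyclotomic Hecke algebras $H_{m,a}(\mathbf u)$ are well known to be semisimple for every $a$, so (3) follows.

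For (4), I would invoke Theorem~\ref{sjdsdh}. Under (A1)--(A10) it suffices to verify $\mathbb I_0\cap\mathbb I_0^\sharp=\emptyset$, where in the Brauer/degenerate setting $\mathbb I_0=\mathbb I_\mathbf u=\bigcup_{i=1}^m(u_i+\mathbb Z1_\Bbbk)$ and $\sharp$ is the involution $i\mapsto -i$ of Definition~\ref{sharpind}. An element of $\mathbb I_\mathbf u\cap(-\mathbb I_\mathbf u)$ is exactly an equality $u_i+k=-(u_j+\ell)$ in $\Bbbk$ with $k,\ell\in\mathbb Z$, i.e.\ $u_i+u_j\in\mathbb Z1_\Bbbk$. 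The assumption in (4) rules this out, so $\mathbb I_0\cap\mathbb I_0^\sharp=\emptyset$ and Theorem~\ref{sjdsdh}(2) gives an equivalence $\Delta:\bar A^\circ\text{-mod}\xrightarrow{\sim}A\text{-mod}$. Since $\bar A^\circ=\bigoplus_{a\in\mathbb N}\bar A_a\cong\bigoplus_{a\in\mathbb N}H_{m,a}(\mathbf u)$, the Morita equivalence of (4) follows.

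For (5), Theorem~\ref{ss} reduces semisimplicity of $A$ to semisimplicity of $\bigoplus_{a\in\mathbb N}A_a$, and via the isomorphism $A_a\cong W_{m,a}$ this is exactly the question of simultaneous semisimplicity of all cyclotomic Nazarov--Wenzl algebras. The explicit necessary and sufficient conditions for this, namely $p=0$ together with $2u_i\notin\mathbb Z1_\Bbbk$ and $u_i\pm u_j\notin\mathbb Z1_\Bbbk$ for all $i\neq j$, are exactly those established in \cite{RuiS}, so (5) follows by quoting that result. The main work is thus conceptual: assembling the correct dictionaries ((A1)--(A10), Assumption~\ref{pdual}, $\bar A_a\cong H_{m,a}(\mathbf u)$, $A_a\cong W_{m,a}$) so that each item is a one-line consequence of the theorems in sections~3--5, and (for items (3)--(5)) citing the appropriate semisimplicity/classification results for the cyclotomic Hecke and Nazarov--Wenzl algebras. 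The only delicate point is the translation in (4) between the ``$\sharp$-stability'' condition $\mathbb I_0\cap\mathbb I_0^\sharp=\emptyset$ and the numerical condition $u_i+u_j\notin\mathbb Z1_\Bbbk$; this is routine but is where one needs to be careful with the sign convention $i^\sharp=-i$ coming from \eqref{relation 1}II.
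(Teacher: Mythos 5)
Your proposal reproduces the paper's own proof essentially step for step: Proposition~\ref{CBWA} plus Theorem~\ref{striateddndn}(3) and the identification $\bar A_a\cong H_{m,a}(\mathbf u)$ for (1), the general stratification theorem for (2), semisimplicity of all the degenerate cyclotomic Hecke algebras for (3), Theorem~\ref{sjdsdh} with the check $\mathbb I_0\cap\mathbb I_0^\sharp=\emptyset$ for (4), and Theorem~\ref{ss} together with the cyclotomic Nazarov-Wenzl semisimplicity criterion for (5). The only discrepancies are cosmetic citation slips: part (2) should invoke Theorem~\ref{ext1} (which is the statement for the stratification $\rho$ of Definition~\ref{rho}) rather than Theorem~\ref{newstratification} (which concerns the refined stratification $\tilde\rho$), and part (5) should cite the criterion for cyclotomic Nazarov-Wenzl algebras in \cite{RuiS1} rather than the Birman-Murakami-Wenzl result in \cite{RuiS}.
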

\begin{proof} (1) follows from Proposition~\ref{CBWA} and Theorem~\ref{striateddndn}(3) together with \cite[Theorem~5.4]{K}. See Remark~\ref{degerecate} when  $\mathbf u$ are  divided into several orbits.
(2) follows immediately from Proposition~\ref{CBWA} and Theorem~\ref{ext1}. (3) follows from Theorem~\ref{newstratification} and \cite[Theorem~6.11]{AMR}, where  \cite[Theorem~6.11]{AMR} gives  a necessary and sufficient condition for degenerate cyclotomic Hecke algebra  being semisimple over $\Bbbk$.
 (4) follows from Theorem~\ref{sjdsdh} since $\mathbb I_0\cap \mathbb I_0^\sharp =\emptyset$ under the assumptions.
 Finally, (5) follows from Proposition~\ref{CBWA}, Theorem~\ref{ss} and \cite[Theorem~7.9]{RuiS1},
  where \cite[Theorem~7.9]{RuiS1} gives  a necessary and sufficient condition for cyclotomic Nazarov-Wenzl algebras being semisimple over $\Bbbk$.
\end{proof}

Suppose $\mathfrak g= \mathfrak {sl}_{ \mathbb I_\mathbf u}$ in Remark~\ref{degerecate} and  $\omega_\mathbf u=\sum_{j=1}^m\omega_{u_j}$ where   $\omega_i$'s   are fundamental weights indexed by $i\in\mathbb I_\mathbf u$. Recall the Lie subalgebra $\mathfrak g^\sharp$ of $\mathfrak g$ in Theorem~\ref{gsharpiso} with $\mathbb I_0=\mathbb I_\mathbf u$.
\begin{Theorem}\label{gsharpisobrauercyc} Let $A$ be the $\Bbbk$-algebra  associated to the cyclotomic Brauer category $\CB(\mathbf u)$ such that   the $\mathbf u$-admissible condition holds. As $\mathfrak g^\sharp$-modules,  $[A\text{-mod}^\Delta]\cong V(\omega_{\mathbf u})$.
The corresponding isomorphism $\phi^\sharp$ satisfies
$\phi^\sharp([\Delta(\lambda)])=\sum_{\mu\in  \Lambda(m)} [S(\mu): D(\lambda)]v_\mu$ for any $\lambda\in\bar \Lambda (m)$.
Moreover, $\tilde e_i $ acts via the exact functor $ E_i$ in Theorem~\ref{usuactifuc1} for any $i\in\mathbb I$.
\end{Theorem}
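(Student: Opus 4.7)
The plan is to realize this statement as a direct application of the general categorification result Theorem~\ref{gsharpiso}, once we match up the pieces with the known representation theory of the degenerate cyclotomic Hecke algebras. In subsection~6.2 it was already verified that the algebra $A$ associated to $\CB(\mathbf u)$ satisfies assumptions (A1)--(A10) together with Assumption~\ref{pdual}, with $\mathcal C=\mathcal C_1=\AB$, with $I=J=\mathbb N$ and, crucially, with an algebra isomorphism $\bar\phi: \bar A_a\xrightarrow{\sim} H_{m,a}(\mathbf u)$ intertwining the anti-involutions and sending $\bar X_a$ to the Jucys--Murphy element $L_a$. Consequently $\bar A^\circ=\bigoplus_{a\in\mathbb N}\bar A_a$ is identified with $H(\mathbf u)=\bigoplus_{a\in\mathbb N}H_{m,a}(\mathbf u)$, and under this identification the functors $\bar E_i,\bar F_i$ of \eqref{filteration1} coincide with the $i$-restriction and $i$-induction functors on $H(\mathbf u)$ determined by the generalized eigenspace decomposition of $L_a$.

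The first main step is to invoke the degenerate analogue of Lemma~\ref{dddddcate} (Remark~\ref{degerecate}): the space $[H(\mathbf u)\text{-pmod}]$ carries a categorical $\mathfrak g$-action with $\mathfrak g=\mathfrak{sl}_{\mathbb I_{\mathbf u}}$, via the functors $\bar E_i,\bar F_i$, and the resulting $\mathfrak g$-module is isomorphic to the integrable highest weight module $V(\omega_\mathbf u)$ through the map $[Y(\lambda)]\mapsto\sum_{\mu\in\Lambda(m)}[S(\mu):D(\lambda)]v_\mu$. Restricting the action along the inclusion $\mathfrak g^\sharp\hookrightarrow\mathfrak g$ yields a $\mathfrak g^\sharp$-module structure on $[\bar A^\circ\text{-pmod}]$, where by Definition~\ref{sharpind} we have $i^\sharp=-i$ for $\mathcal C=\mathcal C_1$, so $\tilde e_i=e_i+f_{-i}$.

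Next I would apply Theorem~\ref{gsharpiso}. It provides the linear isomorphism
\[
\Delta:[\bar A^\circ\text{-pmod}]\xrightarrow{\ \sim\ }[A\text{-mod}^\Delta],\qquad [P_a(\lambda)]\mapsto[\Delta(\lambda)],
\]
and equips $[A\text{-mod}^\Delta]$ with the $\mathfrak g^\sharp$-module structure in which the generator $\tilde e_i$ acts by the exact functor $E_i$ of Theorem~\ref{usuactifuc1}; by construction $\Delta$ is then a $\mathfrak g^\sharp$-module isomorphism. Composing with the isomorphism of the previous step gives
\[
\phi^\sharp:[A\text{-mod}^\Delta]\xrightarrow{\ \sim\ }V(\omega_\mathbf u),\qquad [\Delta(\lambda)]\mapsto\sum_{\mu\in\Lambda(m)}[S(\mu):D(\lambda)]v_\mu,
\]
which is the formula asserted in the theorem. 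Under $\bar\phi$, the projective cover $P_a(\lambda)$ of $L_a(\lambda)$ corresponds to $Y(\lambda)$, which is why the coefficients $[S(\mu):D(\lambda)]$ appear unchanged.

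The main technical obstacle will be checking compatibility of the two categorical actions being glued together: one needs $\bar E_i,\bar F_i$, defined in \eqref{filteration1} via generalized eigenspaces of $\bar X_L^\circ,\bar X_R^\circ$, to match precisely the $i$-restriction and $i$-induction functors on $H(\mathbf u)$ that give rise to the integrable $\mathfrak g$-module $V(\omega_\mathbf u)$. This reduces to the statement $\bar\phi(\bar X_a)=L_a$, already noted in subsection~6.2, together with the fact that eigenvalues of $L_a$ lie in $\mathbb I_{\mathbf u}$, so $\mathbb I_0=\mathbb I_{\mathbf u}$ and $\mathbb I=\mathbb I_{\mathbf u}\cup(-\mathbb I_{\mathbf u})$ as required in the statement. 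A subsidiary point, also routine from the identification, is that the image of $\phi^\sharp$ lies in $V(\omega_\mathbf u)$ rather than the larger Fock space $\mathcal F_{\mathbf u}$: this follows because $[P_0(\emptyset)]$ maps to $v_\emptyset$ and the $\mathfrak g^\sharp$-submodule generated by $[\Delta(\emptyset)]$ already exhausts $[A\text{-mod}^\Delta]$ by Theorem~\ref{striateddndn}(1) applied to the weakly triangular structure of $A$.
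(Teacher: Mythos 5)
Your proposal is correct and follows essentially the same route as the paper: one verifies Assumptions (A1)--(A10) and Assumption~\ref{pdual} for $\CB(\mathbf u)$ (done before Theorem~\ref{CNW}), identifies $\bar A^\circ$ with $H(\mathbf u)$ so that $\bar E_i,\bar F_i$ become the $i$-restriction and $i$-induction functors, and then combines Theorem~\ref{gsharpiso} with the degenerate analogue of Lemma~\ref{dddddcate} from Remark~\ref{degerecate}. Your closing ``subsidiary point'' is redundant, since the degenerate Lemma~\ref{dddddcate} already identifies the image with $V(\omega_{\mathbf u})$ and composing with the $\mathfrak g^\sharp$-isomorphism $\Delta$ settles everything; note also that Theorem~\ref{striateddndn}(1) alone would not justify $\mathfrak g^\sharp$-generation by $[\Delta(\emptyset)]$, but the main argument does not rely on this.
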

\begin{proof} By Proposition~\ref{CBWA}, $A$ admits an upper finite weakly triangular decomposition. We have verified the Assumptions (A1)-(A9) before Theorem~\ref{CNW}. So, we are able to use  Theorem~\ref{gsharpiso}. By    the degenerate case of  Lemma~\ref{dddddcate} (see Remark~\ref{degerecate}), we immediately have the result.
In this case, $\bar E_i,\bar F_i$ in Theorem~\ref{gsharpiso} are those in Remark~\ref{degerecate} for the degenerate cyclotomic Hecke algebras with the same notation.
\end{proof}

\subsection{Representations of cyclotomic Kauffman categories}\label{CBMW} Suppose $ q, q-q^{-1}\in \Bbbk^\times$ and $\mathbf u=(u_1, u_2, \ldots, u_m)\in(\Bbbk^\times)^m$.
 The cyclotomic Kauffman category $\CK(\mathbf u)$ with respect to the polynomial $f(t)=\Pi_{i=1}^m(t-u_i)$ (denoted by $\CK^f$ in \cite[Theorem~1.15]{GRS}) is introduced in \cite[Definition~1.11]{GRS}.  Moreover, $\CK(\mathbf u)$
 is a quotient of the affine Kauffman category $\AK$ (a strict monoidal category) which is one of $\mathcal C_2$ in section~5.

Assume $m=1$. Then $\CK(\mathbf u)$ is the  Kauffman category and  $\Hom_{\CK(\mathbf u)} ( a,   b)$ has basis given by all equivalence classes of  reduced totally descending $(a, b)$-tangle diagrams~\cite{VT}. We  depict the reduced totally descending tangle diagrams as follows. Roughly speaking,
an  $(a, b)$-tangle diagram is a diagram obtained from an $(a, b)$-Brauer diagram by replacing its crossings via   either over crossings or under crossings.
 We label  the  endpoints at the upper (resp., lower) line  of an $(a, b)$-tangle diagram by  $\bar 1, \bar 2,\ldots,\bar b$ (resp., $1,2,\ldots, a$) from left to right and assume $i< i+1 < \bar j <\bar {j-1}$ for all admissible $i$ and $j$.
A reduced totally descending $(a, b)$-tangle diagram is an $(a, b)$-tangle diagram on which
\begin{itemize}\item [(1)]  two strands cross each other at most once,
\item [(2)] neither a strand crosses  itself nor there is  a loop,
\item [(3)] the strand $(i, j)$ passes  over the strand  $(k, l)$ whenever  $min\{i, j\}<min\{k, l\}$ and $(i, j)$ crosses $(k, l)$, where $i, j, k, l\in\{\bar 1, \bar 2,\ldots,\bar b, 1,2,\ldots, a\}$.
\end{itemize}
Two reduced totally descending $(a, b)$-tangle diagrams are said to be equivalent if their underlying Brauer diagrams are equivalent.

 Suppose that $m\geq 1$.  A reduced totally descending dotted $(a, b)$-tangle diagram is  a reduced totally descending $(a, b)$-tangle diagram on which there are finitely many  dots (i.e., $\bullet$ or $\circ$) on each strand. A normally ordered reduced totally descending dotted $(a, b)$-tangle diagram is  a reduced totally descending dotted $(a, b)$-tangle diagram with dots ($\bullet$ or $\circ$) on it such that:
 \begin{itemize}

 \item [(1)] Whenever a dot ($\bullet$ or $\circ$) appears on a vertical strand, it is on the boundary of the lower row.
    \item [(2)] Whenever a dot ($\bullet$ or $\circ$) appears on  a cap (resp.,  cup),  it is on   the leftmost boundary of the cap (resp.,   the rightmost boundary  of a cup).
    \item [(3)] $\circ$ and $\bullet$  can not occur on the same strand simultaneously.
    \item [(4)] If there are $i$ (resp.,$j$) $\bullet$'s near each endpoint at upper (resp., lower) row, then $$-i, j\in \left\{\lfloor \frac{m-1}{2}\rfloor, \lfloor\frac{m-1}{2}\rfloor-1,\ldots,-\lfloor \frac{m}{2}\rfloor\right\},$$ where $h$ ``$\bullet$" is the same as $-h$ ``$\circ$" if $h<0$.
    \end{itemize}
    In particular, the identity element $1_a$ in     $\Hom_{\CK(\mathbf u)} (  a,  a)$ is
$\begin{tikzpicture}[baseline = 15pt, scale=0.35, color=\clr]
         \draw[-,thick](1.5,1.1)to[out= down,in=up](1.5,2.2);
         \draw(2.5,1.1) node{$ \cdots$}; \draw(2.5,2.2) node{$ \cdots$};
         \draw[-,thick](3.5,1.1)to[out= down,in=up](3.5,2.2);
           \end{tikzpicture}~ $, where the number of strands is $a$.
 When $m=1$, a normally ordered reduced totally descending dotted tangle diagram is just a  reduced totally descending  tangle diagram.
           We give an example on  normally ordered reduced totally descending dotted tangle diagrams. Suppose $m=5$.  The  right one is a  normally ordered reduced totally descending dotted  $(3, 3)$-tangle diagram and the left one is not:

         \begin{equation}\label{11}
    \begin{tikzpicture}[baseline = 25pt, scale=0.35, color=\clr]
        \draw[-,thick] (2,0) to[out=up,in=down] (0,5);
       \draw[-,line width=4pt,white] (0,0) to[out=up,in=left] (2,1.5) to[out=right,in=up] (4,0);
       \draw[-,thick] (0,0) to[out=up,in=left] (2,1.5) to[out=right,in=up] (4,0);%(cap)
        \draw[-,thick] (2,5) to[out=down,in=left] (3,4) to[out=right,in=down] (4,5);%(cup)
       \draw (0,0.3) \wdot;
        \draw  (3.95,4.6) \bdot;
        \draw  (1.2,2.1) \bdot;
           \end{tikzpicture}
    \quad, \qquad
    \begin{tikzpicture}[baseline = 25pt, scale=0.35, color=\clr]
        \draw[-,thick] (2,0) to[out=up,in=down] (0,5);
       \draw[-,line width=4pt,white] (0,0) to[out=up,in=left] (2,1.5) to[out=right,in=up] (4,0);
       \draw[-,thick] (0,0) to[out=up,in=left] (2,1.5) to[out=right,in=up] (4,0);%(cap)
        \draw[-,thick] (2,5) to[out=down,in=left] (3,4) to[out=right,in=down] (4,5);%(cup)
       \draw (0,0.3) \wdot;
        \draw  (3.95,4.6) \bdot;\draw  (2,0.4) \bdot;
           \end{tikzpicture}.
\end{equation}
 Two normally ordered reduced totally descending dotted  $(a, b)$-tangle diagrams are said to be equivalent if their underlying Brauer diagrams are equivalent
 and there are the same number of $\bullet$ or $\circ$ on their corresponding strands.

Let $ \mathbb{DB}_{a,b}$ be the set of all equivalence classes of  normally ordered reduced totally descending dotted $(a, b)$-tangle diagrams. Thanks to  \cite[Theorem~1.15]{GRS},  $\Hom_{\CK(\mathbf u)} ( a,   b)$ is of maximal dimension  if and only if   the $\mathbf u$-admissible condition holds in the sense of \cite[Definition~1.13]{GRS}\footnote{We do not need the explicit description of  \cite[Definition~1.13]{GRS}. Furthermore, this condition automatically holds when $m=1$.}.
 In this case,  $\Hom_{\CK(\mathbf u)} ( a,   b)$ has  basis given by $ \mathbb{DB}_{a,b}$  and two equivalent diagrams represent the same morphism in $\CK(\mathbf u)$. So, we can identify each equivalence class   with any element in it.  Let $\mathbb{DB}=\bigcup_{a,b\in \mathbb N}\mathbb{DB}_{a,b}$.

 \begin{Prop}\label{CKWA}   If $\mathbf u$-admissible condition holds  in the sense of  \cite[Definition~1.13]{GRS}, then the cyclotomic Kauffman  category $\CK(\mathbf u)$ is an upper finite weakly  triangular category.
 \end{Prop}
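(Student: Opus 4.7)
The plan is to mimic almost verbatim the argument used for Proposition~\ref{CBWA} (and Proposition~\ref{COBWA}), substituting the diagrammatic basis $\mathbb{DB}$ of $\CK(\mathbf u)$ supplied by \cite[Theorem~1.15]{GRS} for the Brauer-type basis $\mathbb B$. First I would set $I=J=\mathbb N$ with the order $a\preceq b$ iff $a=b+2s$ for some $s\in\mathbb N$; this is upper finite, which disposes of (W1). For (W2) I would take $A^-$ (resp.\ $A^+$) to be the $\Bbbk$-span of those elements of $\mathbb{DB}$ having no caps (resp.\ no cups), no crossings among vertical strands, and no dots on vertical strands, and $A^\circ$ the span of those having neither cups nor caps. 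Direct inspection of these diagrammatic bases gives $\{1_a\}=\mathbb{DB}\cap A^-_a=\mathbb{DB}\cap A^+_a$ and $\mathbb{DB}\cap A^\pm_{a,b}=\emptyset$ whenever the required $\preceq$ fails, so (W3) is automatic.

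The essential step is (W4): the multiplication map $A^-\otimes_{\mathbb K}A^\circ\otimes_{\mathbb K}A^+\to A$ is a linear isomorphism. Surjectivity is clear since every element of $\mathbb{DB}$ decomposes visually as (cups with at most one normally-placed dot per cup) stacked onto (a dotted permutation-type diagram) stacked onto (caps, similarly). For injectivity the strategy is the usual dot/crossing-counting one: for any triple $(g,h,k)\in(\mathbb{DB}\cap A^-1_b)\times(\mathbb{DB}\cap A^\circ_{b,c})\times(\mathbb{DB}\cap 1_cA^+)$, the vertical composite $g\circ h\circ k$ is shown to equal a unique normally-ordered element of $\mathbb{DB}$ modulo a linear combination of elements of $\mathbb{DB}$ of strictly smaller dot-count. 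This rests on the defining local relations of the affine Kauffman category $\AK$ (an instance of $\mathcal C_2$): relations \eqref{relation 3} together with the Kauffman skein identity allow any dot ($\bullet$ or $\circ$) to be slid past a crossing at the cost of a Kauffman resolution term whose diagrams carry strictly fewer dots, while analogous tuck moves normalize dots on caps to the leftmost boundary and on cups to the rightmost boundary. Iterating these local moves produces a strictly decreasing dot-count filtration whose associated graded sends the basis vectors $g\otimes h\otimes k$ bijectively to $\mathbb{DB}$, whence the multiplication map is an isomorphism.

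The principal obstacle will be the bookkeeping inherent in this reduction. Unlike the Brauer setting, where a single local identity \eqref{movedot} suffices, the Kauffman case mixes $\bullet$'s with their inverses $\circ$'s and must handle both under- and over-crossings as well as the reduced-totally-descending constraint built into $\mathbb{DB}$. One has to verify that each time a dot is slid past a crossing or redistributed along a cap or cup, every error diagram produced is itself normally ordered (or can be further normalized) and carries strictly fewer dots, so that the induction on dot count terminates. Happily, all of the requisite local moves are encoded either in the affine Kauffman relations or in the totally-descending tangle normalization already carried out in the proof of \cite[Theorem~1.15]{GRS}; so this step, although diagrammatically involved, is essentially present in that basis theorem. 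Assembling (W1)--(W4) then yields an upper finite weakly triangular decomposition of $\CK(\mathbf u)$.
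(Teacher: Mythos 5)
Your setup of (W1)--(W3) and the surjectivity half of (W4) match the paper, but the injectivity argument you sketch has a genuine gap: the Kauffman analogue of the dot--crossing relation does \emph{not} produce error terms with strictly fewer dots. In the Brauer case the local relation \eqref{movedot} expresses the failure of a dot to slide past a crossing by diagrams with \emph{no} dots, which is exactly what makes the induction on dot count in Proposition~\ref{CBWA} terminate. In the affine Kauffman category the exact relations are those in \eqref{111}: a dot slides past a crossing \emph{on the nose}, but only at the price of switching an over-crossing to an under-crossing (or vice versa). The resulting diagram is then generally no longer reduced totally descending, and restoring that condition forces you to change crossings back using the skein relation, whose correction terms (identity and cup--cap resolutions) carry exactly the same number of dots as the original diagram. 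So the filtration ``by dot count with strictly smaller error terms'' that you propose simply does not exist here, and the induction you describe does not close.

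The paper gets around this by a different mechanism. It observes that every composite $g\circ h\circ k$ with $(g,h,k)\in (A^-1_a\cap\mathbb{DB},A^\circ_a\cap\mathbb{DB},1_aA^+\cap\mathbb{DB})$ is obtained from a normally ordered basis diagram by ``movements of type I,'' and that each such movement factors as a combination of the exact dot-slides \eqref{111} (type II, which are honest equalities in $\CK(\mathbf u)$) and bare crossing switches (type I$'$). The crucial external input is \cite[Proposition~6.15(1)]{GRS}, which guarantees that linear independence of a family of diagrams is preserved under arbitrary crossing switches; combined with the equalities \eqref{111} this shows directly that the set $\mathbb N_{a,b}$ of composites is itself a basis of $A_{b,a}$, with no filtration or triangularity argument at all (plus a base-change step from the generic ground ring to $\Bbbk$). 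If you want to salvage your approach you would need a more refined order (tracking crossings as well as dots, and controlling the totally-descending normalization), but the cleaner route is the one above, and it is not a cosmetic variant of the Brauer argument --- it genuinely relies on the independence-under-crossing-change statement from \cite{GRS}.
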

\begin{proof} Let $A$ be the algebra associated to $\CK(\mathbf u)$. The upper finite data in (W1) is given by $(\mathbb N, \preceq)$ in (A1).
For the data in (W2), let
\begin{itemize}\item [(a)]  $A^-$:  the $\Bbbk$-space with basis consisting  of all elements in $\mathbb {DB}$ on which there are  neither  caps  nor crossings among vertical strands and there are no dots on vertical strands.
\item [(b)] $A^\circ :$ the $\Bbbk$-space with basis consisting of all elements in $\mathbb {DB}$ on which there are neither  cups nor caps.
 \item [(c)]  $A^+$ : the $\Bbbk$-space with basis consisting of  all elements in $\mathbb {DB}$ on which there are neither cups  nor crossings among vertical strands and there are no dots on vertical strands.
\end{itemize}
Then one can check  (W3)  by arguments similar to those for (W3) in subsection~\ref{cba}. We omit details here.

  For any   $(g,h,k)\in (A^-1_a\cap  \mathbb{DB}, A^\circ_a\cap  \mathbb{DB}, 1_aA^+\cap  \mathbb{DB})$, the composition $g\circ h\circ k$ is not  normally ordered in general.
 Let $\mathbb N_{a,b}$  be the set of all such   $g\circ h\circ k$'s,
  where   $(g,h,k)\in (A_{b, c}^-\cap  \mathbb{DB}, A_{c}^\circ\cap  \mathbb{DB}, A^+_{c, a}\cap  \mathbb{DB}) $   for all possible  $c$,  and  $c\succeq b, a$.
   Each $g\circ h\circ k$ in $\mathbb N _{a,b}$ satisfies  conditions (2)-(4) as above. However, it does not satisfy condition (1) since  the  dots on the vertical strands may not be on the lower line. We say that a movement is of type I if it is one of the movements as follows:
   $$
\text{Movement I:}\quad \begin{tikzpicture}[baseline = 2.5mm]
	\draw[-,thick,darkblue] (0.28,0) to[out=90,in=-90] (-0.28,.6);
	\draw[-,line width=4pt,white] (-0.28,0) to[out=90,in=-90] (0.28,.6);
	\draw[-,thick,darkblue] (-0.28,0) to[out=90,in=-90] (0.28,.6);
   \node at (-0.26,0.5) {$\color{darkblue}\scriptstyle\bullet$};\node at (-0.15,0.5) {$\color{darkblue}\scriptstyle t$};
\end{tikzpicture}\leftrightsquigarrow \begin{tikzpicture}[baseline = 2.5mm]
	\draw[-,thick,darkblue] (0.28,0) to[out=90,in=-90] (-0.28,.6);
	\draw[-,line width=4pt,white] (-0.28,0) to[out=90,in=-90] (0.28,.6);
	\draw[-,thick,darkblue] (-0.28,0) to[out=90,in=-90] (0.28,.6);
 \node at (0.26,0.1) {$\color{darkblue}\scriptstyle\bullet$};
 \node at (0.15,0.1) {$\color{darkblue}\scriptstyle t$};
\end{tikzpicture},~~\quad
         \begin{tikzpicture}[baseline = 2.5mm]
	\draw[-,thick,darkblue] (-0.28,0) to[out=90,in=-90] (0.28,.6);
	\draw[-,line width=4pt,white] (0.28,0) to[out=90,in=-90] (-0.28,.6);
	\draw[-,thick,darkblue] (0.28,0) to[out=90,in=-90] (-0.28,.6);
   \node at (-0.26,0.5) {$\color{darkblue}\scriptstyle\bullet$};
     \node at (-0.15,0.5) {$\color{darkblue}\scriptstyle t$};
\end{tikzpicture}\leftrightsquigarrow \begin{tikzpicture}[baseline = 2.5mm]
	\draw[-,thick,darkblue] (-0.28,0) to[out=90,in=-90] (0.28,.6);
	\draw[-,line width=4pt,white] (0.28,0) to[out=90,in=-90] (-0.28,.6);
	\draw[-,thick,darkblue] (0.28,0) to[out=90,in=-90] (-0.28,.6);
 \node at (0.26,0.1) {$\color{darkblue}\scriptstyle\bullet$};
 \node at (0.1,0.1) {$\color{darkblue}\scriptstyle\ t$};
\end{tikzpicture},~~\quad
\begin{tikzpicture}[baseline = 2.5mm]
	\draw[-,thick,darkblue] (0.28,0) to[out=90,in=-90] (-0.28,.6);
	\draw[-,line width=4pt,white] (-0.28,0) to[out=90,in=-90] (0.28,.6);
	\draw[-,thick,darkblue] (-0.28,0) to[out=90,in=-90] (0.28,.6);
   \node at (0.26,0.5) {$\color{darkblue}\scriptstyle\bullet$};
   \node at (0.15,0.5) {$\color{darkblue}\scriptstyle t$};
\end{tikzpicture}\leftrightsquigarrow \begin{tikzpicture}[baseline = 2.5mm]
	\draw[-,thick,darkblue] (0.28,0) to[out=90,in=-90] (-0.28,.6);
	\draw[-,line width=4pt,white] (-0.28,0) to[out=90,in=-90] (0.28,.6);
	\draw[-,thick,darkblue] (-0.28,0) to[out=90,in=-90] (0.28,.6);
 \node at (-0.26,0.1) {$\color{darkblue}\scriptstyle\bullet$};
 \node at (-0.15,0.1) {$\color{darkblue}\scriptstyle t$};
\end{tikzpicture},~~\quad
\begin{tikzpicture}[baseline = 2.5mm]
	\draw[-,thick,darkblue] (-0.28,0) to[out=90,in=-90] (0.28,.6);
	\draw[-,line width=4pt,white] (0.28,0) to[out=90,in=-90] (-0.28,.6);
	\draw[-,thick,darkblue] (0.28,0) to[out=90,in=-90] (-0.28,.6);
   \node at (0.26,0.5) {$\color{darkblue}\scriptstyle\bullet$};
    \node at (0.15,0.5) {$\color{darkblue}\scriptstyle t$};
\end{tikzpicture}\leftrightsquigarrow \begin{tikzpicture}[baseline = 2.5mm]
	\draw[-,thick,darkblue] (-0.28,0) to[out=90,in=-90] (0.28,.6);
	\draw[-,line width=4pt,white] (0.28,0) to[out=90,in=-90] (-0.28,.6);
	\draw[-,thick,darkblue] (0.28,0) to[out=90,in=-90] (-0.28,.6);
 \node at (-0.26,0.1) {$\color{darkblue}\scriptstyle\bullet$};
 \node at (-0.15,0.1) {$\color{darkblue}\scriptstyle t$};
\end{tikzpicture}~, ~ t= -1, 1.
%\end{aligned}
$$
Applying the movements of type I on each $g\circ h\circ k$ in $\mathbb N _{a,b}$  we  obtain  the set ${\mathbb {DB}}_{a,b}$.
For example, the left one in \eqref{11} is the composition of
$$\left(\ \ \begin{tikzpicture}[baseline = -0.5mm]
 \draw[-,thick,darkblue] (0,0) to[out=down,in=left] (0.28,-0.28) to[out=right,in=down] (0.56,0);
 \node at (0.56,0) {$\color{darkblue}\scriptstyle\bullet$};
  \draw[-,thick,darkblue] (0,0) to (0,.2);
  \draw[-,thick,darkblue] (-0.1,-0.28) to (-0.1,.2);
   \draw[-,thick,darkblue] (0.56,0) to (0.56,.2);
 \end{tikzpicture},~\begin{tikzpicture}[baseline = -0.5mm]
 \node at (0,0) {$\color{darkblue}\scriptstyle\bullet$};
  \draw[-,thick,darkblue] (0,-0.28) to (0,.2);
 \end{tikzpicture},~\begin{tikzpicture}[baseline = 2.5mm]
 \draw[-,thick,darkblue] (0.28,0) to[out=90,in=-90] (-0.28,.6);
 \draw[-,line width=4pt,white] (-0.28,0) to[out=90,in=-90] (0.28,.6);
 \draw[-,thick,darkblue] (-0.28,0) to[out=90,in=-90] (0.28,.6);
 \node at (-0.26,0.1) {$\color{darkblue}\scriptstyle\circ$};
  \draw[-,thick,darkblue] (0.28,.6) to[out=up,in=left] (0.56,0.85) to[out=right,in=up] (0.56+0.28,0.6);
  \draw[-,thick,darkblue] (0.56+0.28,0.6) to (0.56+0.28,0);
\end{tikzpicture}\  \ \right)$$ and the right one in \eqref{11} is obtained from  the left one by applying a movement in type I as above. Thanks to  \cite[Theorem~1.15]{GRS},   $A_{b, a}$ has basis given by ${\mathbb {DB}}_{a,b}$.  By \cite[Proposition~6.15(1)]{GRS},  any set  obtained from ${\mathbb {DB}}_{a,b}$ by applying movements of type I$'$ is also linear independent over $\mathbb C$, where the movement of type I$'$ is
$ \begin{tikzpicture}[baseline = 2.5mm]
	\draw[-,thick,darkblue] (0.28,0) to[out=90,in=-90] (-0.28,.6);
	\draw[-,line width=4pt,white] (-0.28,0) to[out=90,in=-90] (0.28,.6);
	\draw[-,thick,darkblue] (-0.28,0) to[out=90,in=-90] (0.28,.6);
\end{tikzpicture}\leftrightsquigarrow \begin{tikzpicture}[baseline = 2.5mm]
	\draw[-,thick,darkblue] (-0.28,-.0) to[out=90,in=-90] (0.28,0.6);
	\draw[-,line width=4pt,white] (0.28,-.0) to[out=90,in=-90] (-0.28,0.6);
	\draw[-,thick,darkblue] (0.28,-.0) to[out=90,in=-90] (-0.28,0.6);
\end{tikzpicture}$.
 So,  it is linear independent   over the suitable domain in  which we used
  to define $\CK(\mathbf u)$ in  \cite{GRS} and hence it is a basis of  $A_{b, a}$.  By base change, we obtain  the corresponding result over $\Bbbk$.

   Thanks to the defining relation for $\AK$ in \cite{GRS}, we have \begin{equation}\label{111}
\begin{tikzpicture}[baseline = 2.5mm]
	\draw[-,thick,darkblue] (0.28,0) to[out=90,in=-90] (-0.28,.6);
	\draw[-,line width=4pt,white] (-0.28,0) to[out=90,in=-90] (0.28,.6);
	\draw[-,thick,darkblue] (-0.28,0) to[out=90,in=-90] (0.28,.6);
   \node at (-0.26,0.5) {$\color{darkblue}\scriptstyle\bullet$};%\node at (-0.15,0.5) {$\color{darkblue}\scriptstyle t$};
\end{tikzpicture}= \begin{tikzpicture}[baseline = 2.5mm]
	\draw[-,thick,darkblue] (-0.28,0) to[out=90,in=-90] (0.28,.6);
	\draw[-,line width=4pt,white] (0.28,0) to[out=90,in=-90] (-0.28,.6);
	\draw[-,thick,darkblue] (0.28,0) to[out=90,in=-90] (-0.28,.6);
\node at (0.26,0.1) {$\color{darkblue}\scriptstyle\bullet$};
 %\node at (0.15,0.1) {$\color{darkblue}\scriptstyle t$};
\end{tikzpicture},~~\quad
         \begin{tikzpicture}[baseline = 2.5mm]
	\draw[-,thick,darkblue] (-0.28,0) to[out=90,in=-90] (0.28,.6);
	\draw[-,line width=4pt,white] (0.28,0) to[out=90,in=-90] (-0.28,.6);
	\draw[-,thick,darkblue] (0.28,0) to[out=90,in=-90] (-0.28,.6);
   \node at (-0.26,0.5) {$\color{darkblue}\scriptstyle\circ$};
    % \node at (-0.15,0.5) {$\color{darkblue}\scriptstyle t$};
\end{tikzpicture}= \begin{tikzpicture}[baseline = 2.5mm]
\draw[-,thick,darkblue] (0.28,0) to[out=90,in=-90] (-0.28,.6);
	\draw[-,line width=4pt,white] (-0.28,0) to[out=90,in=-90] (0.28,.6);
	\draw[-,thick,darkblue] (-0.28,0) to[out=90,in=-90] (0.28,.6);
 \node at (0.26,0.1) {$\color{darkblue}\scriptstyle\circ$};
 %\node at (0.1,0.1) {$\color{darkblue}\scriptstyle\ t$};
\end{tikzpicture},~~\quad
\begin{tikzpicture}[baseline = 2.5mm]
	\draw[-,thick,darkblue] (0.28,0) to[out=90,in=-90] (-0.28,.6);
	\draw[-,line width=4pt,white] (-0.28,0) to[out=90,in=-90] (0.28,.6);
	\draw[-,thick,darkblue] (-0.28,0) to[out=90,in=-90] (0.28,.6);
   \node at (0.26,0.5) {$\color{darkblue}\scriptstyle\circ$};
   %\node at (0.15,0.5) {$\color{darkblue}\scriptstyle t$};
\end{tikzpicture}= \begin{tikzpicture}[baseline = 2.5mm]
	\draw[-,thick,darkblue] (-0.28,0) to[out=90,in=-90] (0.28,.6);
	\draw[-,line width=4pt,white] (0.28,0) to[out=90,in=-90] (-0.28,.6);
	\draw[-,thick,darkblue] (0.28,0) to[out=90,in=-90] (-0.28,.6);
 \node at (-0.26,0.1) {$\color{darkblue}\scriptstyle\circ$};
% \node at (-0.15,0.1) {$\color{darkblue}\scriptstyle t$};
\end{tikzpicture},~~\quad
\begin{tikzpicture}[baseline = 2.5mm]
	\draw[-,thick,darkblue] (-0.28,0) to[out=90,in=-90] (0.28,.6);
	\draw[-,line width=4pt,white] (0.28,0) to[out=90,in=-90] (-0.28,.6);
	\draw[-,thick,darkblue] (0.28,0) to[out=90,in=-90] (-0.28,.6);
   \node at (0.26,0.5) {$\color{darkblue}\scriptstyle\bullet$};
    %\node at (0.15,0.5) {$\color{darkblue}\scriptstyle t$};
\end{tikzpicture}= \begin{tikzpicture}[baseline = 2.5mm]
	\draw[-,thick,darkblue] (0.28,0) to[out=90,in=-90] (-0.28,.6);
	\draw[-,line width=4pt,white] (-0.28,0) to[out=90,in=-90] (0.28,.6);
	\draw[-,thick,darkblue] (-0.28,0) to[out=90,in=-90] (0.28,.6);
 \node at (-0.26,0.1) {$\color{darkblue}\scriptstyle\bullet$};
 %\node at (-0.15,0.1) {$\color{darkblue}\scriptstyle t$};
\end{tikzpicture}.\end{equation}
By a movement of type II, we mean that we exchange  two diagrams via each equality in  \eqref{111}.
So, any set obtained from ${\mathbb {DB}}_{a,b}$ by applying movements of types I$'$ and II is also a basis of $A_{b, a} $.
Note that  any  movement of  type I can be obtained by applying the movements of  types I$'$ and II.    So,
$\mathbb N_{a,b}$ can also be obtained from $\mathbb {DB}_{a,b}$ via   movements of types I$'$ and II. This proves that
   $\mathbb N_{a,b}$ is   a basis of $A_{b,a}$ and
   (W4) follows.
\end{proof}
%By arguments similar to those  for $\CB(\mathbf u)$, we can show that
We are going to explain that Assumption~\ref{pdual} and (A1)-(A10)  hold for the $\Bbbk$-algebra $A$ associated to $\CK(\mathbf u)$ (in this case, $\mathcal C=\mathcal C_2=\AK$) provided that the $\mathbf u$-admissible condition holds. If so, then all the results in sections~2--5 can be applied to $\CK(\mathbf u)$.

In fact, (A1) follows from Proposition~\ref{CKWA}. (A2) follows if we  define
\begin{equation}\label{dfxyh1}
  Y(b,a)=  \mathbb {DB}\cap A^-_{b, a} ,
   ~~H(a)=\mathbb {DB}\cap  A_a^\circ,
   ~~X(a,b)=\mathbb {DB}\cap  A^+_{a, b}.\end{equation}
To prove (A3), we define $$
\begin{aligned}
           t_i 1_{a}=1_{a}t_i= \begin{tikzpicture}[baseline = -0.5mm, color=\clr]
            \draw[-,thick,darkblue](-1,-0.3)to[out= down,in=up](-1,0.3);
             \draw[-,thick,darkblue](-0.6,-0.3)to[out= down,in=up](-0.6,0.3);
              \draw(-0.8,-0.25) node{$ \cdots$}; \draw(-0.8,0.25) node{$ \cdots$};
              \draw[-,thick,darkblue](1,-0.3)to[out= down,in=up](1,0.3);
             \draw[-,thick,darkblue](0.6,-0.3)to[out= down,in=up](0.6,0.3);
              \draw(0.8,-0.25) node{$  \cdots $}; \draw(0.8,0.25) node{$ \cdots$};
	\draw[-,thick,darkblue] (0.28,-0.3) to[out=90,in=-90] (-0.28,.3);
	\draw[-,line width=4pt,white] (-0.28,-0.3) to[out=90,in=-90] (0.28,.3);
	\draw[-,thick,darkblue] (-0.28,-.3) to[out=90,in=-90] (0.28,.3);
 \draw(1,0.4)node{\tiny$a$}; \draw(0.28,0.4)node{\tiny$i+1$}; \draw(-0.28,.4)node{\tiny$i$};
  \end{tikzpicture}
     \in A_a
           \end{aligned}
          $$
           for any admissible $i, a\in \mathbb N$. Let $$D(a+1)=\left\{t_{i, a+1}X_{a+1}^j| 1\leq i \leq a+1, -\lfloor \frac{m}{2}\rfloor\leq j\leq \lfloor \frac{m-1}{2}\rfloor\right\}\subseteq A_{a+1},$$ where $X_i$ is in \eqref{xitil} and $t_{i, a+1}=t_i \circ t_{i+1}\circ \cdots\circ t_{a}$ if $i<a+1$ and $t_{a+1, a+1}=1$. Then (A3)-(A4) can be proved by arguments similar to those for the cyclotomic Brauer category $\CB(\mathbf u)$ in subsection~\ref{cba}. The only difference is that  we replace  \eqref{cup-dot} by
            \begin{tikzpicture}[baseline = -0.5mm]
 \draw[-,thick,darkblue] (0,0) to[out=down,in=left] (0.28,-0.28) to[out=right,in=down] (0.56,0);
 \node at (0,0) {$\color{darkblue}\scriptstyle\bullet$};
  \draw[-,thick,darkblue] (0,0) to (0,.2);
   \draw[-,thick,darkblue] (0.56,0) to (0.56,.2);
 \end{tikzpicture}
 ~=~
  \begin{tikzpicture}[baseline = -0.5mm]
 \draw[-,thick,darkblue] (0,0) to[out=down,in=left] (0.28,-0.28) to[out=right,in=down] (0.56,0);
 \node at (0.56,0) {$\color{darkblue}\scriptstyle\circ$};
  \draw[-,thick,darkblue] (0,0) to (0,.2);
   \draw[-,thick,darkblue] (0.56,0) to (0.56,.2);
 \end{tikzpicture} in \cite[(1.9)]{GRS} and    \begin{tikzpicture}[baseline = -0.5mm]
 \draw[-,thick,darkblue] (0,0) to[out=down,in=left] (0.28,-0.28) to[out=right,in=down] (0.56,0);
 \node at (0,0) {$\color{darkblue}\scriptstyle\circ$};
  \draw[-,thick,darkblue] (0,0) to (0,.2);
   \draw[-,thick,darkblue] (0.56,0) to (0.56,.2);
 \end{tikzpicture}
 ~=~
 \begin{tikzpicture}[baseline = -0.5mm]
 \draw[-,thick,darkblue] (0,0) to[out=down,in=left] (0.28,-0.28) to[out=right,in=down] (0.56,0);
 \node at (0.56,0) {$\color{darkblue}\scriptstyle\bullet$};
  \draw[-,thick,darkblue] (0,0) to (0,.2);
   \draw[-,thick,darkblue] (0.56,0) to (0.56,.2);
 \end{tikzpicture} in \cite[Lemma~1.5(3)]{GRS} when we prove (A3).

The required  $\Bbbk$-linear  monoidal contravariant functor is $\sigma: \mathcal \AK\rightarrow \mathcal \AK$, which  switches $\lcup$ and $\lcap$ and fixes the generating object \begin{tikzpicture}[baseline = 1.5mm]
	\draw[-,thick,darkblue] (0.18,0) to (0.18,.4);
\end{tikzpicture} and all other  generating   morphisms. So,
 $\sigma^2=\text{Id}$ and $\sigma$
   induces an anti-involution $\sigma_A$ on $A$, and hence  (A5) follows.
 By \cite[Corollary~6.22]{GRS}, there is an algebra isomorphism $ \phi:  A_{a}    \cong \mathscr W_{m, a}$ for any $a\in\mathbb N$, where  $\mathscr W_{m, a}$ is the cyclotomic
 Birman-Wenzl-Murakami algebra  in \cite{Olden}. When $m=1$, it is the
 Birman-Wenzl-Murakami algebra in \cite{BW}.
 Moreover, $ \bar A_a=\mathscr W_{m, a}/\phi(N_a)=\mathscr H_{m,a}(\mathbf u)$,   where
  $N_a$ is the two-sided ideal of $A_a$ generated by $ \begin{tikzpicture}[baseline = 15pt, scale=0.35, color=\clr]
%\draw[-,thick](-2,1.1)to[out= down,in=up](-2,3.9);
       %  \draw(-1.5,1.1) node{$ \cdots$}; \draw(-1.5,3.9) node{$ \cdots$};
    %     \draw[-,thick](-0.5,1.1)to[out= down,in=up](-0.5,3.9);
        \draw[-,thick] (0,2.2) to[out=down,in=left] (0.5,1.7) to[out=right,in=down] (1,2.2);
         \draw[-,thick] (0,1) to[out=up,in=left] (0.5,1.5) to[out=right,in=up] (1,1);
         %\draw(0,4.5)node{\tiny$i$}; \draw(1.3,4.5)node{\tiny$i+1$};
         \draw[-,thick](1.5,1.1)to[out= down,in=up](1.5,2.2);
         \draw(2.5,1.1) node{$ \cdots$}; \draw(2.5,2.2) node{$ \cdots$};
         \draw[-,thick](3.5,1.1)to[out= down,in=up](3.5,2.2);
           \end{tikzpicture}~ $ and $\mathscr H_{m,a}(\mathbf u)$   is   the  cyclotomic  Hecke
algebra in subsection~\ref{hecke}. By arguments similar to those for the degenerate cyclotomic Hecke algebras as above,   Assumption~\ref{pdual} and  (A6)-(A10)  follow from corresponding    well-known results on cyclotomic Hecke algebras (e.g., $\mathscr H_{m,a}(\mathbf u)$ is  a symmetric algebra \cite{MM}). In particular, the function  $ \sharp $ in Definition~\ref{sharpind} satisfies   $i^\sharp=i^{-1}$ and  $\mathbb I_0=\mathbb I_\mathbf u$ in Lemma~\ref{dddddcate}.
We omit details here.

\begin{Theorem}\label{CBMW1} Let $A$ be the $\Bbbk$-algebra  associated to the cyclotomic Kauffman category $\CK(\mathbf u)$. Let $e$ be the quantum characteristic of $q^2$.  Suppose   the $\mathbf u$-admissible condition holds.
 \begin{itemize}\item [(1)] The complete set of pairwise inequivalent  simple $A$-modules are indexed by $\bar\Lambda(m)$, i.e., the set of all $\mathbf u$-Kleshchev $m$-partitions in the sense of \cite{Ari1}.
 \item [(2)] $A$-lfdmod is an upper finite fully  stratified category in the sense of \cite[Definition~3.36]{BS} with respect to the stratification $\rho:\bar\Lambda(m)\rightarrow \mathbb N$ such that $\rho(\lambda)=n$ if $\lambda\in \bar\Lambda_{m,n}$.
  \item [(3)]$A$-lfdmod is an upper finite highest weight  category in the sense of \cite[Definition~3.36]{BS} if $ e=\infty $  and  $u_i u_j^{-1} \not\in q^{2\mathbb Z} $ if $1\le i<j\le m$.
     \item [(4)] $A$ is Morita equivalent to $\bigoplus_{n=0}^\infty \mathscr  H_{m, n}(\mathbf u)$ if $u_i u_j\not\in q^{2\mathbb Z}$ for all $1\le i\le j\le m$.
         \item [(5)] $A$ is semisimple over $\Bbbk$  if and only if   $e=\infty $ and $u_iu_j\not\in q^{2\mathbb Z } $, $1\le i,j\le m$ and  $u_i u_j^{-1} \not\in q^{2\mathbb Z} $ if $1\le i<j\le m$.\end{itemize}
\end{Theorem}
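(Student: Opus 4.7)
The plan is to mimic the proof of Theorem~\ref{CNW} for the cyclotomic Brauer category, replacing each algebra/criterion by its Kauffman analogue. Everything is set up for this: Proposition~\ref{CKWA} gives the upper finite weakly triangular decomposition, and the paragraphs following it verify the hypotheses (A1)--(A10) and Assumption~\ref{pdual} through the isomorphism $A_a\cong\mathscr W_{m,a}$ with the cyclotomic Birman--Murakami--Wenzl algebra and the induced isomorphism $\bar A_a\cong\mathscr H_{m,a}(\mathbf u)$ with the cyclotomic Hecke algebra (so here $\mathcal C=\mathcal C_2=\AK$ and $\sharp$ is inversion, giving $\mathbb I_0=\mathbb I_\mathbf u$).

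For part~(1), I would invoke Theorem~\ref{striateddndn}(3): simples for $A$ are indexed by $\bar\Lambda=\bigcup_{n}\bar\Lambda_n$, where $\bar\Lambda_n$ parameterizes simples for $\bar A_n\cong\mathscr H_{m,n}(\mathbf u)$; by Ariki's theorem~\cite{Ari1} (combined with \cite[Theorem~1.1]{DM} to reduce to a single $q^2$-orbit when $\mathbf u$ splits into several orbits, exactly as recalled in subsection~\ref{hecke}) this set is the $\mathbf u$-Kleshchev $m$-partitions. Part~(2) is then immediate from Proposition~\ref{CKWA} and Theorem~\ref{ext1}, with stratification function $\rho$ given by $|\lambda|$. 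For part~(3), I would apply Theorem~\ref{newstratification}: under the stated conditions, standard results on semisimplicity of cyclotomic Hecke algebras (e.g.\ Ariki--Mathas) imply $\bar A_n\cong\mathscr H_{m,n}(\mathbf u)$ is semisimple for all $n$, which is what Theorem~\ref{newstratification} needs to upgrade fully stratified to highest weight.

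Part~(4) is an application of Theorem~\ref{sjdsdh}(2): I need to check $\mathbb I_0\cap\mathbb I_0^\sharp=\emptyset$, that is, $\mathbb I_\mathbf u\cap\mathbb I_\mathbf u^{-1}=\emptyset$, which translates precisely to $u_iu_j\notin q^{2\mathbb Z}$ for all $1\le i\le j\le m$; once this holds, $\Delta$ is an equivalence $\bar A^\circ\text{-mod}\to A\text{-mod}$, and $\bar A^\circ=\bigoplus_{n\ge0}\mathscr H_{m,n}(\mathbf u)$ gives the Morita equivalence. Part~(5) combines two ingredients: Theorem~\ref{ss} reduces semisimplicity of $A$ to semisimplicity of $\bigoplus_n A_n=\bigoplus_n\mathscr W_{m,n}(\mathbf u)$, and the explicit criterion for semisimplicity of cyclotomic BMW algebras from \cite{RuiS3} (the Kauffman analogue of the Nazarov--Wenzl criterion \cite[Theorem~7.9]{RuiS1} used in Theorem~\ref{CNW}(5)) yields exactly the stated conditions on $e$ and on the $u_i u_j^{\pm1}$.

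There is no significant obstacle once the preparatory verification of (A1)--(A10) and Assumption~\ref{pdual} (already carried out in the paragraphs preceding the theorem) is in hand; the only place that demands care is part~(5), where one must quote the correct semisimplicity criterion for $\mathscr W_{m,n}(\mathbf u)$ valid over an arbitrary field --- the analogue of \cite[Theorem~7.9]{RuiS1} --- and check that its hypotheses assemble into precisely the form given, namely $e=\infty$ together with $u_iu_j,\,u_iu_j^{-1}\notin q^{2\mathbb Z}$ for the appropriate ranges of $i,j$. The rest of the argument is a clean transposition of the proof of Theorem~\ref{CNW}.
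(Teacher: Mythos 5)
Your proposal follows essentially the same route as the paper's own proof: (1)--(2) from Proposition~\ref{CKWA}, Theorem~\ref{striateddndn}(3), Theorem~\ref{ext1} and Ariki's classification, (3) from Theorem~\ref{newstratification} plus the semisimplicity criterion for cyclotomic Hecke algebras, (4) from Theorem~\ref{sjdsdh} via $\mathbb I_0\cap\mathbb I_0^\sharp=\emptyset$, and (5) from Theorem~\ref{ss} combined with the semisimplicity criteria for (cyclotomic) Birman--Murakami--Wenzl algebras. The only discrepancy is bibliographic: for (5) the paper invokes \cite[Theorem~5.9]{RuiS} and \cite[Theorem~6.5]{RuiS2}, whereas you point to \cite{RuiS3} (the blocks paper) --- a citation slip rather than a mathematical gap.
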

\begin{proof}(1) follows from Proposition~\ref{CKWA} and Theorem~\ref{striateddndn}(3) together with Ariki's result on the classification of simple modules for   cyclotomic Hecke algebras in \cite{Ari1}.
(2) follows immediately from Proposition~\ref{CKWA} and Theorem~\ref{ext1}. (3) follows from Theorem~\ref{newstratification} and \cite[Main Theorem]{Ari0} which gives  a necessary and sufficient condition for cyclotomic Hecke algebras  being semisimple over $\Bbbk$.
 (4) follows from Theorem~\ref{sjdsdh} since $\mathbb I_0\cap \mathbb I_0^\sharp=\emptyset$.  Finally, (5) follows from Proposition~\ref{CKWA}, Theorem~\ref{ss} and \cite[Theorem~5.9]{RuiS} and \cite[Theorem~6.5]{RuiS2} in which semisimple criteria on  Birman-Murakami-Wenzl algebras and
 cyclotomic Birman-Murakami-Wenzl algebras are given over an arbitrary field.
 \end{proof}

Suppose that $\mathfrak g=\mathfrak {sl}_{\mathbb I_\mathbf u}$ in Lemma~\ref{dddddcate} and
$\omega_{\mathbf u}=\sum_{i=1}^m\omega_{u_i}$, where   $\omega_j$'s   are fundamental weights indexed by $j\in\mathbb I_\mathbf u$. Recall the Lie subalgebra $\mathfrak g^\sharp$ of $\mathfrak g$ in Theorem~\ref{gsharpiso} with $\mathbb I_0=\mathbb I_\mathbf u$.
The following result follows from  arguments similar to those  in the proof of Theorem~\ref{gsharpisobrauercyc}.
\begin{Theorem}\label{gsharpisoKaucyc}Let $A$ be the $\Bbbk$-algebra  associated to the cyclotomic Kauffman category $\CK(\mathbf u)$ such that  the $\mathbf u$-admissible condition holds.
As $\mathfrak g^\sharp$-modules, $[A\text{-mod}^\Delta]\cong V(\omega_{\mathbf u})$.
The corresponding isomorphism  $\phi^\sharp$ satisfies
$\phi^\sharp([\Delta(\lambda)])=\sum_{\mu\in  \Lambda(m)} [S(\mu): D(\lambda)]v_\mu$ for any $\lambda\in\bar\Lambda(m)$.
Moreover, $\tilde e_i $ acts via the exact functor  $E_i$   in Theorem~\ref{usuactifuc1} for any $i\in\mathbb I$.
\end{Theorem}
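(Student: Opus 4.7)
The plan is to follow the blueprint of the proof of Theorem~\ref{gsharpisobrauercyc} verbatim, substituting the non-degenerate Lemma~\ref{dddddcate} for its degenerate counterpart. First, Proposition~\ref{CKWA} guarantees that $A$ admits an upper finite weakly triangular decomposition, so all general results of Sections 2--5 are available. The verification of Assumptions (A1)--(A10) and Assumption~\ref{pdual} was carried out in the paragraphs preceding Theorem~\ref{CBMW1}: the basis from $\mathbb{DB}$ yields (A1)--(A4); the contravariant monoidal involution on $\AK$ swapping $\lcup$ and $\lcap$ gives (A5); the isomorphism $\bar A_a\cong\mathscr H_{m,a}(\mathbf u)$ and cellularity of the cyclotomic Hecke algebra yield (A6)--(A10); and the symmetry of $\mathscr H_{m,a}(\mathbf u)$ together with the self-duality of $D(\lambda)$ gives Assumption~\ref{pdual}.

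With these in hand, I would invoke Theorem~\ref{gsharpiso} to obtain a $\mathfrak g^\sharp$-module structure on $[A\text{-mod}^\Delta]$ where the Chevalley-type generator $\tilde e_i$ acts by the exact functor $E_i$ of Theorem~\ref{usuactifuc1}, and where the underlying linear isomorphism
\begin{equation*}
\Delta\colon [\bar A^\circ\text{-pmod}]\longrightarrow [A\text{-mod}^\Delta],\qquad [P_m(\lambda)]\mapsto [\Delta(\lambda)]
\end{equation*}
intertwines the actions of $\mathfrak g^\sharp$ on both sides, where on the left side $\mathfrak g^\sharp\subset\mathfrak g=\mathfrak{sl}_{\mathbb I_\mathbf u}$ acts through the categorical $\mathfrak g$-action on $\bar A^\circ$-pmod. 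Here the function $\sharp$ from Definition~\ref{sharpind} is precisely $i\mapsto i^{-1}$ in the Kauffman setting (as noted after Lemma~\ref{msikdjde1}), so the Lie subalgebra $\mathfrak g^\sharp$ is the correct one for this context.

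Next, via the isomorphism $\bar A_a\cong \mathscr H_{m,a}(\mathbf u)$, the category $\bar A^\circ\text{-pmod}$ is identified with $\mathscr H(\mathbf u)\text{-pmod}$, and under this identification $\bar E_i,\bar F_i$ become the $i$-restriction and $i$-induction functors on $\mathscr H(\mathbf u)\text{-pmod}$ associated to the Jucys--Murphy elements. Applying Lemma~\ref{dddddcate} now yields an $\mathfrak{sl}_{\mathbb I_\mathbf u}$-isomorphism
\begin{equation*}
\phi\colon [\mathscr H(\mathbf u)\text{-pmod}]\stackrel{\sim}{\longrightarrow} V(\omega_\mathbf u),\qquad [Y(\lambda)]\mapsto \sum_{\mu\in\Lambda(m)}[S(\mu):D(\lambda)]\,v_\mu,
\end{equation*}
and in particular a $\mathfrak g^\sharp$-isomorphism. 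Composing $\phi$ with the inverse of $\Delta$ produces the desired $\mathfrak g^\sharp$-isomorphism $\phi^\sharp\colon [A\text{-mod}^\Delta]\to V(\omega_\mathbf u)$ with $\phi^\sharp([\Delta(\lambda)])=\sum_\mu[S(\mu):D(\lambda)]v_\mu$, and the action of $\tilde e_i$ via $E_i$ is then automatic from Theorem~\ref{gsharpiso}.

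The genuinely non-routine point is not in the present proof but in the background data being invoked: the exactness and biadjointness of the functors $\bar E_i,\bar F_i$ on $\mathscr H(\mathbf u)\text{-pmod}$ (needed for the categorical $\mathfrak g$-action hypothesis of Theorem~\ref{gsharpiso}) and the short exact sequence \eqref{keyses} relating $E_i\circ\Delta$ to $\Delta\circ\bar E_i$ and $\Delta\circ\bar F_{i^{-1}}$ (which is what forces the combination $\tilde e_i=e_i+f_{i^\sharp}$ and thereby cuts $\mathfrak g$ down to $\mathfrak g^\sharp$). Once those are taken as given from Section 5 and Lemma~\ref{dddddcate}, the Kauffman case reduces to an assembly of these ingredients in exactly the same pattern as Theorem~\ref{gsharpisobrauercyc}.
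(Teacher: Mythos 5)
Your proposal is correct and follows essentially the same route as the paper: the paper's proof likewise reduces the Kauffman case to the pattern of Theorem~\ref{gsharpisobrauercyc}, invoking Proposition~\ref{CKWA}, the verification of Assumption~\ref{pdual} and (A1)--(A10) carried out before Theorem~\ref{CBMW1}, then Theorem~\ref{gsharpiso} together with the (non-degenerate) Lemma~\ref{dddddcate}, exactly as you assemble these ingredients.
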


\begin{rem}\label{jsjxsss}
Suppose that $\lambda,\mu\in\bar\Lambda(m)$. The multiplicity of $(P(\lambda):\Delta(\mu))$ can be solved via the equations in
Proposition~\ref{xeijdiec1}(2). So,   $\phi^\sharp([P(\lambda)])$ in  Theorem~\ref{gsharpisobrauercyc} (resp., Theorem~\ref{gsharpisoKaucyc}) is   determined by the decomposition numbers
$[1_a\tilde \Delta(\nu): 1_aL(\gamma)]$ (which is equal to $[\tilde S_a(\nu):1_aL(\gamma)]$ by Proposition~\ref{xeijdiec1}(1)) of $A_a$ for all $a\in\mathbb N$.
Recall  $\bar A_a\cong H_{m,a}(\mathbf u)$ (resp., $\mathscr H_{m,a}(\mathbf u)$). Using the well-known cellular basis of $H_{m,a}(\mathbf u) $ in \cite[Theorem~6.3]{AMR} (resp., $\mathscr H_{m,a}(\mathbf u)$ in \cite[Theorem~3.26]{DJM}),  one can check that
the cell module $\tilde S_a(\lambda)$ of $A_a$ in \eqref{cellbasis1}  coincides with the cell modules $C(f,\lambda)$  in \cite{RuiS1} for cyclotomic Nazarov-Wenzl algebras (resp.,  in \cite{RX} for  cyclotomic Birman-Murakami-Wenzl algebras) for all $\lambda\in \bigcup_{k \succeq a}\Lambda_{m,k}$.
If $\Bbbk=\mathbb C$, the decomposition numbers of $H_{m,a}(\mathbf u)$, $\mathscr H_{m,a}(\mathbf u)$, Brauer algebras\cite{CV} and Birman-Murakami-Wenzl algebras \cite{RS2} and  cyclotomic Nazarov-Wenzl  algebras \cite{RS3} are known.
 In the remaining cases, we have no further information.
\end{rem}

\subsection{Blocks  } Let $A$ be the $\Bbbk$-algebra  associated to either the cyclotomic Brauer category $\CB(\mathbf u)$  or the cyclotomic Kauffman category $\CK(\mathbf u)$ such that the $\mathbf u$-admissible conditions hold. Since Assumption~\ref{pdual} and (A1)-(A10) hold for $A$, Theorem~\ref{hlink} gives a partial results on blocks.

\begin{Prop}\label{block} For any $\lambda, \mu\in \bar \Lambda(m)$, $L(\lambda)$ and $L(\mu)$ are in the same
block     if and only if  there is a sequence $\lambda=\lambda_1,\lambda_2,\ldots,\lambda_n=\mu$ such that each pair
  $\lambda_{i}$ and $\lambda_{i+1}$ are cell-link in a weakly   cellular algebra  $A_a$ for some $a\in\mathbb N$.
\end{Prop}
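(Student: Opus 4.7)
The \textbf{if} direction is exactly the content of Proposition~\ref{blockp}, which applies to both $\CB(\mathbf u)$ and $\CK(\mathbf u)$ since Assumption~\ref{pdual} has been verified for them in Section~6. So the plan is to establish the converse. Given $L(\lambda)$ and $L(\mu)$ in the same block of $A$, standard block theory produces a chain $\lambda=\lambda_0,\lambda_1,\ldots,\lambda_n=\mu$ in $\bar\Lambda(m)$ such that for each $i$, either $[P(\lambda_i):L(\lambda_{i+1})]\neq 0$ or $[P(\lambda_{i+1}):L(\lambda_i)]\neq 0$. By transitivity it then suffices to connect any pair $\lambda,\mu\in\bar\Lambda(m)$ with $[P(\lambda):L(\mu)]\neq 0$ by a chain of cell-links, and I focus on this reduced task.

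For such a pair, Corollary~\ref{ijxxexeu} asserts that $P(\lambda)$ carries a finite $\tilde\Delta$-flag with multiplicities $(P(\lambda):\tilde\Delta(\nu))=[\tilde\Delta(\nu):L(\lambda)]$, so
\[
0\;\neq\;[P(\lambda):L(\mu)]\;=\;\sum_{\nu\in\Lambda(m)}[\tilde\Delta(\nu):L(\lambda)]\cdot[\tilde\Delta(\nu):L(\mu)].
\]
I would pick $\nu\in\Lambda(m)$ making both factors of some summand non-zero. Proposition~\ref{xeijdiec1}(2) applied to each factor forces $\rho(\nu)\succeq\rho(\lambda)$ and $\rho(\nu)\succeq\rho(\mu)$; unravelling (A1), this means that in $\mathbb N$ one has $\rho(\nu)\le\rho(\lambda)$ and $\rho(\nu)\le\rho(\mu)$, with all differences even. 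Choose $a=\max(\rho(\lambda),\rho(\mu))\in\mathbb N$, so that $\rho(\lambda),\rho(\mu),\rho(\nu)$ all lie in $\Gamma_a$.

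With this choice of $a$, Proposition~\ref{xeijdiec1}(1) identifies $1_a\tilde\Delta(\nu)$ with the cell module $\tilde S_a(\nu)$ of the weakly cellular algebra $A_a$ (cellularity from Theorem~\ref{cellstructureofa0}), while Proposition~\ref{xeijdiec1}(3) identifies $1_a L(\lambda)$ and $1_a L(\mu)$ with the simples $\tilde L_a(\lambda)$ and $\tilde L_a(\mu)$, both non-zero. Applying the exact idempotent functor $1_a(\cdot)$ to a subquotient of $\tilde\Delta(\nu)$ isomorphic to $L(\lambda)$ (respectively $L(\mu)$) yields
\[
[\tilde S_a(\nu):\tilde L_a(\lambda)]\neq 0,\qquad [\tilde S_a(\nu):\tilde L_a(\mu)]\neq 0.
\]
Since $\lambda,\mu\in\tilde\Lambda^\sharp_{\succeq a}\cap\bar\Lambda$, both $\tilde L_a(\lambda)$ and $\tilde L_a(\mu)$ lie in the simple index set of $A_a$, so $\nu$ is cell-linked to $\lambda$ and $\nu$ is cell-linked to $\mu$ in $A_a$. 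Concatenating over the original chain gives the required sequence $\lambda=\lambda_0\mathrel{\text{---}}\nu_0\mathrel{\text{---}}\lambda_1\mathrel{\text{---}}\nu_1\mathrel{\text{---}}\cdots\mathrel{\text{---}}\lambda_n=\mu$ of cell-links, potentially in different $A_{a_i}$'s, which is permitted by the statement of the proposition.

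The main bookkeeping nuisance is the counter-intuitive convention in (A1), which makes $\succeq$ correspond to $\le$ in $\mathbb N$; one must confirm that picking $a=\max(\rho(\lambda),\rho(\mu))$ really places all three of $\rho(\lambda),\rho(\mu),\rho(\nu)$ inside $\Gamma_a$, and this depends on the parities of $\rho(\lambda)-\rho(\mu)$ and $\rho(\lambda)-\rho(\nu)$ matching, which in turn follows from their common lower bound $\rho(\nu)$. Note also that the intermediate $\nu$ need not lie in $\bar\Lambda(m)$, but the definition of cell-link only requires the simple side (namely $\lambda,\mu\in\bar\Lambda(m)$) to index a simple $A_a$-module, which Proposition~\ref{xeijdiec1}(3) delivers.
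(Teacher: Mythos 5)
Your ``if'' direction and the overall shape of your ``only if'' argument (reduce to a pair with $[P(\lambda):L(\mu)]\neq0$, expand along a $\tilde\Delta$-flag via Corollary~\ref{ijxxexeu}, pick $\nu$ with $[\tilde\Delta(\nu):L(\lambda)]\,[\tilde\Delta(\nu):L(\mu)]\neq0$, and truncate by $1_a$ with $a=\max(\rho(\lambda),\rho(\mu))$) agree with the paper. But there is a genuine gap at the decisive step: you assert that Proposition~\ref{xeijdiec1}(3) ``delivers'' $1_aL(\lambda)=\tilde L_a(\lambda)\neq0$ and $1_aL(\mu)=\tilde L_a(\mu)\neq0$. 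That proposition is conditional: it says $1_aL(\lambda)=\tilde L_a(\lambda)$ \emph{if} $\lambda\in\tilde\Lambda^\sharp_{\succeq a}\cap\bar\Lambda$, and $1_aL(\lambda)=0$ otherwise; it gives no criterion for deciding which alternative holds. Membership of $\lambda$ in $\tilde\Lambda^\sharp_{\succeq a}$ is precisely the statement that $\lambda$ indexes a simple $A_a$-module, i.e.\ that the cell module $\tilde S_a(\lambda)$ has non-zero head. For the member of the pair whose size equals $a$ this is automatic (by \eqref{kkk1234}, $1_{\rho(\lambda)}L(\lambda)=L_{\rho(\lambda)}(\lambda)\neq0$), but for the one with $\rho$ strictly smaller than $a$ it is not a formal consequence of the weakly triangular machinery, and your argument becomes circular. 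The paper supplies this input from outside the general theory: by \cite[Theorem~3.12]{RuiS1} and \cite[Theorem~5.3]{RX}, $\bigcup_{0\neq n\succeq k}\bar\Lambda_{m,n}\subseteq\tilde\Lambda_{\succeq k}$, i.e.\ every \emph{non-empty} restricted/Kleshchev multipartition of admissible size indexes a simple module of the cyclotomic Nazarov--Wenzl, resp.\ cyclotomic BMW, algebra $A_k$. Some such non-vanishing result must be invoked.

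Moreover, those classification results exclude the empty multipartition, and the exclusion is not cosmetic: for suitable parameters (already in the Brauer specialization, e.g.\ $\delta=0$) the cell module of $A_k$, $k>0$, indexed by $\emptyset$ has identically zero invariant form, so $\emptyset\notin\tilde\Lambda_{\succeq k}$ and $1_kL(\emptyset)=0$; then the cell-links you propose in $A_a$ with $\emptyset$ on the ``simple side'' simply do not exist. This is exactly why the paper splits off the cases $\rho(\lambda)=0$ or $\rho(\mu)=0$: there it shows directly that $[\tilde\Delta(\emptyset):L(\mu)]\neq0$ (using $P(\emptyset)=\Delta(\emptyset)=\tilde\Delta(\emptyset)$, Proposition~\ref{dektss1} and Lemma~\ref{isodual1}), and then links $\mu$ and $\emptyset$ inside $A_{\rho(\mu)}$, where only $\mu$ must index a simple module --- automatic at its own level --- while $\emptyset$ occurs only on the cell-module side of the (asymmetric) cell-link relation. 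Your write-up therefore needs both the external non-vanishing input for the generic case $\rho(\lambda),\rho(\mu)>0$ and a separate device of this kind when one of $\lambda,\mu$ is the empty multipartition.
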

\begin{proof}By Proposition~\ref{blockp}, it suffices to prove only if part of this result.
 Suppose $\lambda\in \bar \Lambda_{m, c}, \mu\in \bar \Lambda_{m, d}$ and
 \begin{equation}\label{kst1}  \Hom_A(P(\lambda),P(\mu))= [P(\mu):L(\lambda)]\neq0.\end{equation}
There are three cases we have to discuss as follows.

First, we assume
  $\min\{c, d\} >0$. Thanks to  \eqref{b3}
there is at least a $\nu\in \Lambda(m)$
such that $ [\tilde \Delta(\nu):L(\mu)] [\tilde \Delta(\nu): L(\lambda)]\neq 0$.
So $ \rho(\nu) \succeq  d,  c$.  Let $ k =\max\{  c,  d\}$. Then $k \neq 0$.
Note that the cell modules $1_k\tilde \Delta(\nu)$'s  coincide with cell modules of $A_k$ studied in \cite{RuiS1} and \cite{RX} (see Remark \ref{jsjxsss}).
 By \cite[Theorem~3.12]{RuiS1} and \cite[Theorem~5.3]{RX}, $\bigcup_{0 \neq  n\succeq k}\bar \Lambda_{m, n}\subseteq\tilde\Lambda_{\succeq k}$, where $\tilde\Lambda_{\succeq  k}\subseteq \Lambda_{\succeq k}$  parameterizes  inequivalent simple $A_{k} $-modules and $\Lambda_{\succeq k}=\bigcup_{n\succeq k}  \Lambda_{m, n}$.  Thanks to  Proposition~\ref{xeijdiec1}(3),  $1_k L(\lambda)\neq 0$ and $1_k L(\mu)\neq 0$.  Hence $[1_k\tilde \Delta(\nu):1_k L(\mu)] [1_k\tilde \Delta(\nu): 1_k L(\lambda)]\neq 0$, and  $\lambda, \nu, \mu$ is the required sequence.

 Secondly, we assume $ c=0$. Then   $\lambda=\emptyset$.
Thanks to Proposition~\ref{dektss1}(1) and \eqref{kst1} , there is at least a $\nu$ such that  $[\Delta (\nu): L(\lambda)](P(\mu):\Delta (\nu))\neq 0$.
Since $1_{0}  L(\lambda)\cong \Bbbk$,  by Lemma~\ref{exactj}(1),  $1_0 \Delta (\nu)\neq 0$ only if $\nu=\emptyset$.  By Proposition~\ref{dektss1}(2), Lemma~\ref{isodual1}(2) and \eqref{kst1},
      $[\bar\Delta (\emptyset): L(\mu)]=(P(\mu):\Delta (\emptyset))\neq 0$  and hence $[\tilde\Delta (\emptyset): L(\mu)]\neq 0$. Therefore
      $      [1_d\tilde\Delta (\emptyset): 1_dL(\mu)]\neq 0$. By Proposition~\ref{xeijdiec1}, $\lambda, \mu$ is the required sequence.

       Finally, we assume  $d=0$, and hence  $\mu=\emptyset$.
By Proposition~\ref{dektss1}, $P(\emptyset)=\Delta(\emptyset)$. Since $P_0(\emptyset)=S_0(\emptyset)=\Bbbk$, We have $\Delta(\emptyset)=\tilde \Delta(\emptyset)$. Therefore,
 $[\tilde\Delta (\emptyset): L(\lambda)]=[P(\emptyset): L(\lambda)]\neq 0$ and
 $[1_c\tilde\Delta (\emptyset): 1_cL(\lambda)]\neq 0$. By Proposition~\ref{xeijdiec1}, $ \lambda,\mu$ is the required sequence.
\end{proof}

 When the  Hecke algebra (resp., (degenerate) cyclotomic Hecke algebra) is semisimple over $\Bbbk$ (with characteristic not two), a necessary and sufficient condition on cell-link for the Birman-Murakami-Wenzl algebra (resp.,  the cyclotomic Nazarov-Wenzl algebra and the cyclotomic Birman-Murakami-Wenzl algebra) is given in
 \cite[Propositions~4.6,~4.10]{RuiS3} (resp.,  \cite[Propositions~5.10,~5.21] {RuiS4}). Using Proposition~\ref{block} yields an explicit combinatorial description on the  blocks of $A$. We omit details since it is a routine work.   In the remaining cases,  we have no further information.

\small

\begin{thebibliography}{DWH99}
\bibitem{Ari0} {\scshape S.~ Ariki}, {\og On the semi-simplicity of the  Hecke algebra of
$(  Z/rZ) \wr\mathfrak S_n$\fg }, \emph{J. Algebra.},
\textbf{169},   (1994), 216--225.

\bibitem{Ari} {\scshape S.~ Ariki}, {\og On the decomposition numbers of the Hecke algebra of
$G(m,1,n) $\fg }, \emph{J. Math. Kyoto Univ.},
\textbf{36}, Number 4 (1996), 789--808.
\bibitem{Ari1}{\scshape S.~ Ariki}, {\og  On the classification of simple modules for cyclotomic Hecke algebras of type
$G(m,1,n) $ and Kleshchev multipartitions\fg }, \emph{Osaka J. Math.},
\textbf{38}, (2001), 827--837.


\bibitem{AM}{\scshape S.~Ariki, A.~Mathas},
{\og {The number of simple modules of the Hecke algebras of type $G(r,1,n) $}\fg}, \emph{Math. Zeit.},
  \textbf{233} (2000), 601--623.


\bibitem{AMR}{\scshape S.~Ariki, A.~Mathas,  H. Rui},
{\og {Cyclotomic Nazarov-Wenzl algebras}\fg}, \emph{Nagoya Math.J.}, Special issue in honor of Prof. G. Lusztig's sixty birthday,
  \textbf{182} (2006), 47--134.

\bibitem{B1}{\scshape H.Bao},
{\og Kazhdan-Lustig theory of the super type D and quantum sysmetric pairs\fg}, \emph{Represent. Theory.}, \textbf{21}(2017), 247--276.

\bibitem{BWa}{\scshape H.Bao, W. Wang},
{\og A new approach to Kanzhdan-Lusztig theory of type $B$ via quantum symmetric pairs\fg}, \emph{Asterisque}, \textbf{402}(2018), vii+134.

\bibitem{BWan}{\scshape H.Bao,  W. Wang},
{\og Canonical bases arsing from  quantum symmetric pairs\fg}, \emph{Invent. math.}, \textbf{213}(2018), 1099-1177.

\bibitem{BWang}{\scshape H.Bao, W. Wang},
{\og Canonical bases arsing from  quantum symmetric pairs of Kac-Moody type\fg},  arXiv:1811.0984v1[math. QA].

\bibitem{BWW}{\scshape H.Bao, W. Wang, H. Watanabe},
{\og Canonical bases for tensor products and Kazhdan-Lusztig theory\fg}, \emph{J. Pure Appl. Algebra}, \textbf{224}(2020), 106347.

\bibitem{BW}{\scshape J.   Birman, H. Wenzl}, {\og Braids, link polynomials and a new algebra\fg}, \emph{ Trans. Amer. Math. Soc.} \textbf{313} (1989), no. 1, 249--273.

\bibitem{B}{\scshape R.~Brauer}, {\og On algebras which are connected with the semisimple
  continuous groups\fg}, \emph{Ann. of Math.}, \textbf{38}, (1937), 857--872.
\bibitem{Br}{\scshape J.Brundan}, {\og Representations of the oriented skein category \fg},  arXiv:1712.08953 [math.RT].



\bibitem{BCNR}   {\scshape J. Brundan, J. Comes, D. Nash, A. Reynolds}, {\og  A basis theorem for the affine oriented Brauer category
and its cyclotomic quotients \fg},\emph{ Quantum Topology.} \textbf{8}, (2017),  75--112.
\bibitem{BRUNDAN} {\scshape J.Brundan, N. Davision}, {\og  Categorical actions and crystals \fg}, \emph{Contemp. Math.} \textbf{684}, (2017), 116--159.
\bibitem{BK3} {\scshape J.Brundan, A.Kleshchev}, {\og  Schur-Weyl duality for higher levels \fg}, \emph{ Selecta Math.} \textbf{14}, (2008), 1--57.
\bibitem{BK1} {\scshape J.Brundan, A.Kleshchev}, {\og  Graded decomposition numbers for cyclotomic Hecke algebras \fg}, \emph{Adv. in Math.} \textbf{222}, (2009), 1883--1942.

%\bibitem{BK2} {\scshape J.Brundan and A.Kleshchev}, {\og  The degenerate analohue of Ariki's categorification theorem \fg},\emph{ Math.Z} \textbf{266}(4), (2010), 877--919.
%\bibitem{BKW} {\scshape J.Brundan, A.Kleshchev and W. Wang}, {\og  Graded Specht modules \fg},\emph{  J. Reine Angew. Math} \textbf{655} (2011), 61--87.

\bibitem{BS}{\scshape J.Brundan, C. Stroppel}, {\og Semi-infinite highest weight categories \fg},
arXiv:1808.08022 [math.RT]

\bibitem{CV} {\scshape A.~Cox, M.~ De Visscher},
{\og Diagrammatic Kazhdan-Lusztig theory for the (walled) Brauer algebra\fg}, \emph{J. Algebra},
\textbf{ 340},  (2011),  151--181.
\bibitem{CVM} {\scshape A.~Cox,  M.~ De Visscher, P.Martin},
{\og The  blocks of the  Brauer algebra in characteristic zero\fg}, \emph{Represent. Theory},
\textbf{ 13},  (2009),  272--308.
\bibitem{DJM}{\scshape R. Dipper, G. James,  A. Mathas}, {\og  Cyclotomic $q$-Schur algebras\fg}, \emph{Math. Zeit.}, \textbf{ 229},  (1998),  385--416.
\bibitem{DM}{\scshape R. Dipper, A. Mathas}, {\og  Morita equivalences of Ariki-Koike algebras \fg}, \emph{Math. Zeit.}, \textbf{ 240},  (2002),  570--610.

\bibitem{GHK}{\scshape M. Hazewinkel, N. Gubareni, V. Kirichenko}, {\og  Algebras, Rings and Modules \fg}, \emph{Mathematics
and Its Applications}, \textbf{ 586},  (2007),  570--610.
%[GHK] M. Hazewinkel, N. Gubareni and V. Kirichenko, Algebras, Rings and Modules, vol. 2, Mathematics
%and Its Applications (Springer), 586, Springer, Dordrecht, 2007.
\bibitem{GRS} {\scshape M. Gao, H. Rui,  L. Song}, {\og A basis theorem for the affine Kauffman category and its cyclotomic quotients}, preprint, 2020.

\bibitem{GRS1} {\scshape M. Gao, H. Rui,  L. Song}, {\og Representations of cyclotomic oriented Brauer categories\fg }, in preparation.

\bibitem{G05} {\scshape F. Goodman}, {\og Cellularity of cyclotomic Birman-Wenzl-Murakami algebras\fg}, \emph{J. Algebra} \textbf{321} (2009), 3299-3320.

\bibitem{GL} {\scshape J.~J. Graham {\normalfont \smfandname} G.~I. Lehrer}, {\og Cellular  algebras\fg}, \emph{Invent. Math.} \textbf{123} (1996), 1--34.
%\bibitem{GJ} {\scshape G. James}, {\og The representation theory of the symmetric groups\fg}, \emph{Proceedings of Symposis in Pure Mathmatics} \textbf{47} (1987), 111--126.
\bibitem{JM} {\scshape G. James, A. Mathas}, {\og The Jantzen sum formula for cyclotomic $q$-Schur algebras\fg}, \emph{Trans. Amer. Math. Soc.} \textbf{352} (2000), 5381--5404.



\bibitem{K} {\scshape A. Kleshchev}, {\og Representation theory of symmetric groups and related Hecke algberas\fg}, \emph{ Bulletin of the AMS}, \textbf{47}(3) (2010), 419--481.
%\bibitem{Kle} {\scshape A. Kleshchev}, {\og Linear and projective representations of symmetric groups\fg}, { Cambridge Tracts in Mathematics, 163. Cambridge University Press, Cambridge}, 2005.




\bibitem{Ko} {\scshape S.Kolb}, {\og  Quantum symmetric Kac-Moody pairs\fg},  \emph{ Adv. in Math.}, \textbf{267}  (2014), 395--469.
\bibitem{LZ} {\scshape G. Lehrer, R.B. Zhang}, {\og  The Brauer Category and Invariant Theory\fg},  \emph{J.  Eur. Math. Soc.}, \textbf{17} (9) (2015), 2311--2351

\bibitem{Le} {\scshape G. Letzter}, {\og Quantum sysmetric pairs and their zonal spherical functions \fg},  \emph{Transf. Groups}, \textbf{8} (2003), 261--292.


\bibitem{LW} {\scshape I. Losev, B. Webster}, {\og On Uniqueness of tensor products of irreducible categorifications\fg},  \emph{Selecta Math.}, \textbf{21} (2015) :345--377.


\bibitem{MM}{\scshape A, Malle, A.  Mathas}, \emph{\og Symmetric cyclotomic Hecke algebras \fg },  \emph{J. Algebra}, \textbf{205},(1998), 275--293.

\bibitem{Ma}{\scshape A.  Mathas}, \emph{\og Iwahori-Hecke algebras and Schur algebras of the symmetric group \fg },  \emph{University Lecture Series}, \textbf{15},  American Mathematical Society, Providence, RI, 1999.


\bibitem{Olden} {\scshape R. Haering-Oldenburg}, {\og Cyclotomic Birman-Murakami-Wenzl algebras \fg}, \emph{J. Pure Appl. Algebra}, \textbf{161}, (2001), 113--144. %MR 1834081
 \bibitem{Re} {\scshape A. Reynolds}, {\og Representation of Oriented Brauer categories\fg}, PhD thesis, University of Oregon, 2015.

 %\bibitem{R}{\scshape J. Rickard}, {\og  Equivalences of derived categories for symmetric algebras\fg}, \emph{J. Algebra}
%\textbf{257} (2002),   460--481.


\bibitem{RuiS1}{\scshape H. Rui, M. Si}, {\og  On the structure of cyclotomic Nazarov--Wenzl algebras\fg}, \emph{J. Pure Appl. Algebra}
\textbf{212} (2008), no.10, 2209--2235.

 \bibitem{RuiS}{\scshape H. Rui, M. Si}, {\og  Gram determinants and semisimplicity criteria for Birman-Wenzl algebras\fg}, \emph{J.
Reine Angew. Math}
\textbf{631} (2009), no.1, 153--179.

\bibitem{RuiS4}{\scshape H. Rui, M. Si}, {\og  Non-vanishing Gram determinants for cyclotomic Nazarov-Wenzl and Birman-Murakami-Wenzl algebras\fg}, \emph{J. Algebra}, \textbf{335} (2011), 188--219.

\bibitem{RuiS3}{\scshape H. Rui, M. Si},{\og Blocks of Birman-Murakami-Wenzl algebras\fg}, \emph{Int. Math. Res. Not. IMRN}  2011, no. 2, 452--486.

\bibitem{RuiS2}{\scshape H. Rui, M. Si}, {\og  The representations of cyclotomic BMW algebras,  II\fg}, \emph{Algebr. Represent. Theory}
(2012), no.15, 551--579.
\bibitem{RS2}{\scshape H. Rui,  L. Song}, {\og  Decomposition matrices of Birman-Murakami-Wenzl algebras\fg}, \emph{J.Algebra}, \textbf{444} (2015), 246--271.

\bibitem{RS3}{\scshape H. Rui, L. Song}, {\og  Affine Brauer category and parabolic category $\mathcal O$ in types $B,C,D$\fg}, \emph{Math. Zeit.}, \text{293}, (2019), 503--550.

 \bibitem{RS4}{\scshape H. Rui, L. Song}, {\og  Representations of Brauer category and categorifications\fg}, \emph{J. Algebra}, \textbf{557},  (2020), 1--36.% MR 4090834

\bibitem{RX}{\scshape H. Rui, J. Xu}, {\og The representations of cyclotomic BMW algebras\fg}, \emph{ J. Pure Appl. Algebra}, \textbf{213}  (2009), no. 12, 2262-2288.
\bibitem{SS} {\scshape S. Sam, A. Snowden}, {\og The representation theory of Brauer categories, I Triangular categories\fg}, \emph{preprint}, 2020.




\bibitem{VT} {\scshape V.Turaev}, {\og Operator invariants of tangles, and $R$ -matrices.\fg}, \emph{Mathematics of the USSR-Izvestiya}, \textbf{35}, (1990), no.2, 411--444. %MR 1024455

\bibitem{W} {\scshape B. Webster}, {\og Canonical bases and higher representation theory \fg}, \emph{Compos. Math.}, \textbf{151}, (2015), no.1, 121-166.

\end{thebibliography}
\end{document}